\DeclareSymbolFont{cyrletters}{OT2}{wncyr}{m}{n}
\DeclareMathSymbol{\Sha}{\mathalpha}{cyrletters}{"58}
\DeclareMathAlphabet{\mathcal}{OMS}{cmsy}{m}{n}
\definecolor{mediumtealblue}{rgb}{0.0, 0.33, 0.71}
\definecolor{red(munsell)}{rgb}{0.96, 0.12, 0.24}
\definecolor{forestgreen}{rgb}{0.13, 0.55, 0.13}
\definecolor{ginger}{rgb}{0.69, 0.4, 0.0}
\newcommand\myshade{85}
\colorlet{mylinkcolor}{red(munsell)}
\colorlet{mycitecolor}{mediumtealblue}
\colorlet{myurlcolor}{forestgreen}
\mathchardef\mhyphen="2D
\theoremstyle{plain}
\newtheorem{theorem}{Theorem}[section]
\newtheorem{proposition}[theorem]{Proposition}
\newtheorem{lemma}[theorem]{Lemma}
\newtheorem{corollary}[theorem]{Corollary}
\theoremstyle{definition}
\newtheorem{definition}[theorem]{Definition}
\newtheorem{example}[theorem]{Example}
\newtheorem{construction}[theorem]{Construction}
\newtheorem{set-up}[theorem]{Set-up}
\newtheorem{assumption}[theorem]{Assumption}
\newtheorem{remark}[theorem]{Remark}
\newcommand{\IA}{\mathbb{A}}
\newcommand{\IB}{\mathbb{B}}
\newcommand{\IC}{\mathbb{C}}
\newcommand{\ID}{\mathbb{D}}
\newcommand{\IF}{\mathbb{F}}
\newcommand{\IL}{\mathbb{L}}
\newcommand{\IN}{\mathbb{N}}
\newcommand{\IP}{\mathbb{P}}
\newcommand{\IQ}{\mathbb{Q}}
\newcommand{\IR}{\mathbb{R}}
\newcommand{\IV}{\mathbb{V}}
\newcommand{\IZ}{\mathbb{Z}}
\newcommand{\sC}{\mathcal{C}}
\newcommand{\sK}{\mathcal{K}}
\newcommand{\End}{\mathrm{End}}
\newcommand{\Hom}{\mathrm{Hom}}
\newcommand{\Aut}{\mathrm{Aut}}
\renewcommand{\deg}{\mathrm{deg}}
\newcommand{\supp}{\mathrm{supp \,}}
\newcommand{\Spec}{\mathrm{Spec}}
\newcommand{\rank}{\rm rank} 
\newcommand\iso{\,{\cong}\,} 
\newcommand\tensor{{\otimes}}
\newcommand{\<}{\langle}
\renewcommand{\>}{\rangle}
\newcommand{\into}{\hookrightarrow}
\def\d/{/\mspace{-6.0mu}/}
\def\wt{\widetilde}
\def\what{\widehat}
\newcommand{\sU}{\mathcal{U}}
\newcommand{\sV}{\mathcal{V}}
\newcommand{\sF}{\mathcal{F}}
\newcommand{\sH}{\mathcal{H}}
\newcommand{\w}{\omega}
\newcommand{\Ohm}{\Omega}
\newcommand{\fm}{\mathfrak{m}}
\newcommand{\Proj}{\mathrm{Proj\,}}
\newcommand{\Pic}{\mathrm{Pic}\,}
\newcommand{\sE}{\mathcal{E}}
\newcommand{\sL}{\mathcal{L}}
\newcommand{\sO}{\mathcal{O}}
\newcommand{\Cl}{\mathrm{Cl}}
\newcommand{\Gal}{\mathrm{Gal}}
\newcommand{\NS}{\mathrm{NS}}
\newcommand{\Sch}{\mathsf{Sch}}
\newcommand{\et}{\mathrm{{\acute{e}}t}}
\newcommand{\Sh}{\mathrm{Sh}}
\newcommand{\shS}{\mathscr{S}}
\newcommand{\shZ}{\mathscr{Z}}
\newcommand{\sfD}{\mathsf{D}}
\newcommand{\sfM}{\mathsf{M}}
\newcommand{\sA}{\mathcal{A}}
\newcommand{\sX}{\mathcal{X}}
\newcommand{\sY}{\mathcal{Y}}
\newcommand{\sT}{\mathcal{T}}
\newcommand{\bL}{\mathbf{L}}
\newcommand{\LEnd}{\mathrm{LEnd}}
\renewcommand{\H}{\mathrm{H}}
\newcommand{\PH}{\mathrm{PH}}
\newcommand{\crys}{\mathrm{crys}}
\newcommand{\PNS}{\mathrm{PNS}}
\newcommand{\sfK}{\mathsf{K}}
\newcommand{\sfH}{\mathsf{H}}
\newcommand{\sfW}{\mathsf{W}}
\newcommand{\sfB}{\mathsf{B}}
\newcommand{\Def}{\mathrm{Def}}
\newcommand{\sS}{\mathcal{S}}
\newcommand{\bH}{\mathbf{H}}
\newcommand{\bP}{\mathbf{P}}
\newcommand{\dR}{\mathrm{dR}}
\newcommand{\SO}{\mathrm{SO}}
\newcommand{\tf}{\mathrm{tf}}
\newcommand{\sto}{\stackrel{\sim}{\to}}
\newcommand{\FCrys}{\mathrm{FCrys}}
\newcommand{\CSpin}{\mathrm{CSpin}}
\newcommand{\fh}{\mathfrak{h}}
\newcommand{\Loc}{\mathsf{Loc}}
\newcommand{\alg}{{\mathrm{alg}}}
\renewcommand{\sc}{\mathsf{c}}
\newcommand{\Fil}{\mathrm{Fil}}
\newcommand{\bpi}{\boldsymbol{\pi}}
\definecolor{mb}{rgb}{0.36, 0.54, 0.66}
\newcommand{\fS}{\mathfrak{S}}
\renewcommand{\sp}{\mathrm{sp}}
\newcommand{\mpr}{\textcolor{purple}}
\newcommand{\GSp}{\mathrm{GSp}}
\newcommand{\sD}{\mathcal{D}}
\newcommand{\geo}{{\mathrm{geo}}}
\renewcommand{\div}{\mathrm{div}}
\newcommand{\val}{\mathrm{val}}
\newcommand{\shC}{\mathscr{C}}
\RenewDocumentCommand{\title}{om}{%
   \IfNoValueTF{#1}
     {\gdef\shorttitle{The Tate conjecture and singularities}}
     {\gdef\shorttitle{#1}}%
   \gdef\@title{#2}%
}
\title{The Tate conjecture for surfaces of geometric genus one---embracing singularities}
\author{Haoyang Guo and Ziquan Yang}
\date{June 10, 2025}
\begin{document}

\begin{abstract}
    In this article, we aim to largely complete the program of proving the Tate conjecture for surfaces of geometric genus one, by introducing techniques to analyze those surfaces whose ``natural models'' are singular. 
    As an application, we show that every elliptic curve of height one over a global function field of genus one and characteristic $p \ge 11$ satisfies the Birch--Swinnerton-Dyer conjecture. 
\end{abstract}

\maketitle

\vspace{-2em}
\tableofcontents

\section{Introduction}
\subsection{Statements of main results}
This paper aims to largely complete the program of proving the Tate conjecture via the Kuga--Satake construction for surfaces of geometric genus one, whenever the moduli is of an expected form. 
Previously, this has been achieved for K3 surfaces, after several decades of efforts made by many people in various contexts and generality. 
Here is a non-exhaustive list: \cite{DelK3Weil, Nygaard, NO, Tan1, Tan2, Andre, Charles, Maulik, MPTate, KimMP16, IIK}. 
Later on, in characteristic $0$, Moonen \cite{Moonen} proved the divisorial Tate conjecture (as well as the Mumford--Tate conjecture for the cohomology in degree $2$) for general varieties with Hodge number $h^{2, 0} = 1$, under a mild assumption on moduli. 
This was then extended to postive characteristics by the second author with Hamacher and Zhao in \cite{HYZ}.

In this paper, we introduce techniques to treat the singularities on the natural models of elliptic and general type surfaces, an important issue left open in \cite{HYZ}. 
Here the ``natural models'' refer to the Weierstrass normal forms for elliptic surfaces, and the canonical models for general type surfaces.
\footnote{Note that, interestingly, for K3 surfaces there is no such thing as a ``natural model'', and one has to artificially choose a quasi-polarization in order to consider moduli spaces.}
The challenge posed by the singularities arises from the \textit{failure of purity} of abelian schemes over a general positive or mixed characteristic base, which obstructs a direct construction of mixed characteristic period morphisms. 
We shall explain more about this in \cref{sub:ideals_of_proof} below.

Our motivation to tackle this problem is twofold. One is to investigate the mysterious link between singularities and the Tate conjecture, which was already noted in the 1982 paper by Rudakov--Zink--Shafarevich \cite{RZS} (cf. \cref{rmk: singularities and Tate}). The other is to contribute to the literature a clean example of the BSD conjecture, for which evidence is still meager.

\begin{theorem}
\emph{(\cref{thm: BSD_more_general})}
\label{thm: BSD}
    Over a global function field $K$ of genus $1$ and characteristic $p \ge 11$, every elliptic curve $\sE$ of height $1$ satisfies the BSD conjecture.\footnote{It is also proved for $p = 5, 7$ except for 3 special cases in \cref{prop: inspearable j} (cf. \cref{rmk: purity and supersingular}).}
\end{theorem}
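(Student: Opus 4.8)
The plan is to convert the BSD conjecture for $\sE/K$ into the divisorial Tate conjecture for an associated surface, and then to invoke the period-theoretic machinery developed in this paper (which in particular yields \cref{thm: BSD_more_general}). Write $K = \IF_q(C)$ with $C/\IF_q$ smooth projective of genus $1$, and let $\pi\colon X \to C$ be the relatively minimal smooth elliptic surface with section attached to $\sE$; it is defined over $\IF_q$, and its natural model is the Weierstrass normal form $W \to C$. By the classical results of Tate, Grothendieck and Artin--Tate, together with Kato--Trihan handling the $p$-part, the full BSD conjecture for $\sE/K$ is equivalent to the finiteness of $\Sha(\sE/K) \cong \mathrm{Br}(X)$, which for one (equivalently every) $\ell \ne p$ is equivalent to the $\ell$-adic Tate conjecture for divisors on $X$ over $\IF_q$, that is, to the surjectivity of $\NS(X_{\overline{\IF}_q})^{\Gal} \otimes \IQ_\ell \to \H^2_{\et}(X_{\overline{\IF}_q}, \IQ_\ell(1))^{\Gal}$; the Mordell--Weil rank is then recovered from $\mathrm{ord}_{s=1} L(\sE/K, s)$ via the Shioda--Tate exact sequence. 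So it remains to prove that Tate conjecture.

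I would next record why this $X$ lies in the range treated here. As $\sE$ has height $1$, the fundamental line bundle $\sL := (R^1\pi_*\sO_X)^\vee$ has $\deg \sL = 1$, so $\chi(\sO_X) = 1$; since $\deg \sL > 0$ forces $\H^0(C, \sL^{-1}) = 0$ one gets $q(X) = g(C) = 1$, hence $p_g(X) = h^{2,0}(X) = 1$. Thus $X$ is an elliptic surface with $h^{2,0} = 1$ whose natural model $W$ has only rational double points, located over the additive fibers of $\pi$; and one checks that the family of all such $X$ --- the Weierstrass fibrations attached to a degree-$1$ line bundle on a genus-$1$ curve --- has moduli of the expected form, the hypothesis under which \cref{thm: BSD_more_general} applies.

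For the main case, $j(\sE) \in K$ non-constant, the strategy is the Kuga--Satake one: after quasi-polarizing the transcendental part of $\H^2$, construct over an integral model of the moduli space a Kuga--Satake abelian scheme and a period morphism to a Siegel ($\GSp$-)moduli space, then deduce the Tate conjecture for each $X$ from the Tate conjecture for abelian varieties over finite fields (Tate) together with a Chebotarev density / CM-specialization argument of the kind already used for $h^{2,0}=1$ in \cite{Moonen, HYZ}. The step I expect to be the genuine obstacle --- and the whole point of the paper --- is that this period morphism is a priori defined only away from the rational double points of $W$: in positive or mixed characteristic the Kuga--Satake abelian scheme need not extend over the resulting codimension-$\ge 2$ locus because purity for abelian schemes fails there, so one must build the missing mixed-characteristic period morphism across the singular points by the new techniques of this paper, and it is for the attendant $p$-adic Hodge theory and moduli inputs that one imposes $p \ge 11$.

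When $j(\sE)$ is constant the surface is isotrivial, and after a finite base change $C' \to C$ (of degree $2$, or of degree $4$ or $6$ when $j \in \{0, 1728\}$, using $p \nmid 6$) it is dominated by a product of curves, so the Tate conjecture for $X$ --- and hence BSD --- follows from Tate's isogeny theorem and Galois descent. For $p = 5, 7$ the same scheme works except where the $j$-map is inseparable, producing the supersingular-type degenerations isolated in \cref{prop: inspearable j} that the method does not reach; assembling the cases gives BSD for every height-$1$ elliptic curve over $K$ once $p \ge 11$.
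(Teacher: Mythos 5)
Your proposal follows essentially the same route as the paper: reduce BSD for $\sE$ to the Tate conjecture for the minimal elliptic surface $X$ (Milne, Kato--Trihan), observe that its Weierstrass model has only RDPs and fits the universal Weierstrass family over the genus-one base, invoke the paper's extended-period/Kuga--Satake machinery (i.e.\ \cref{thm: BSD_more_general} via \cref{thm: general theorem}), and dispose of the isotrivial case classically. One small correction: the restriction $p\ge 11$ is not imposed for ``$p$-adic Hodge theory and moduli inputs'' (the general theorem already works for $p\ge 5$); it enters exactly where your last paragraph says, namely to force the $j$-map to be separable so that $X$ has the expected Hodge diamond (\cref{thm: Hodge diamond of elliptic surface}, \cref{prop: inspearable j}), which is the hypothesis of \cref{thm: BSD_more_general} — and the period morphism lands in an orthogonal (GSpin) integral Shimura variety embedded in a Siegel space, not directly in a Siegel moduli space.
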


Let us briefly recall the terminology. 
Suppose that $K$ is the function field of a smooth proper curve $C$ and $\pi : X \to C$ is the elliptic surface (with a zero section) obtained by taking the minimal model of $\sE$. 
Then the height of $\sE$ is the degree of the fundamental line bundle $(R^1 \pi_* \sO_X)^\vee$. 
We refer the reader to \cite{Mil75} and \cite{KT03} on the equivalence between the Tate conjecture for $X$ and the BSD conjecture for $\sE$, and remark that elliptic K3 surfaces, treated back in \cite{ASD}, correspond to the case when the genus of $K$ is $0$ and the height is $2$. 
In \cite[Thm.\ A]{HYZ}, the above result was obtained when all fibers of $\pi$ are irreducible, or equivalently the Weierstrass normal form of $X$ is smooth.

Similarly, in \cite[Thm.\ C]{HYZ}, we proved the Tate conjecture for certain surfaces of geometric genus one over $k$, assuming that the canonical divisor is ample and not just big and nef. Now we can drop this assumption. 
Below for a smooth projective surface $X$, we use $K_X$ to denote its canonical divisor, and use $p_g = h^{2,0}$ to denote its geometric genus.
\begin{theorem}
\label{thm: general type}
    Let $k$ be a field finitely generated over $\IF_p$ for $p \ge 5$. Then any minimal smooth projective surface $X$ over $k$ with $p_g = K_X^2 = 1$ and $h^{1,0} = h^{0, 1} = 0$ satisfies the Tate conjecture.
\end{theorem}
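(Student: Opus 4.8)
The plan is to run the Kuga--Satake method for the Tate conjecture, following \cite{HYZ} (which in turn follows \cite{Moonen}): build a period morphism from the moduli of such surfaces into an orthogonal Shimura variety, carry along the Kuga--Satake abelian scheme, and thereby reduce the Tate conjecture for $\H^2$ of the surface to the Tate conjecture for abelian varieties together with the algebraicity of the Kuga--Satake correspondence. It suffices to treat $\H^2$. For such an $X$ one has $\chi(\shO_X) = 2$, and, using the vanishing available for $p \ge 5$, the canonical model $\bar X$ --- obtained from $X$ by contracting its $(-2)$-curves --- is a specific weighted projective variety with at worst rational double points, concretely a complete intersection of type $(6,6)$ in $\IP(1,2,2,3,3)$. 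Hence such surfaces are classified by a moduli space $\fM$ of finite type over $\IZ[1/N]$ equipped with a universal family $\bar\sX \to \fM$ of canonical models, and the given $X$ corresponds to a point $x \in \fM(k)$. Since the exceptional $(-2)$-classes on $X$ are algebraic, the Tate conjecture for $\H^2(X_{\bar k}, \IQ_\ell(1))$ is equivalent to it for the primitive ``K3-type'' piece $\bP$ --- the orthogonal complement of the classes pulled back from $\bar X$ --- and $\bP$ is canonically unaffected by the contraction $X \to \bar X$.

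Over the open locus $\fM^\circ \subset \fM$ on which $\bar\sX$ is smooth --- equivalently $K_X$ is ample, equivalently $X$ carries no $(-2)$-curve --- the results of \cite{HYZ} apply as they stand: there is, after passing to a suitable finite cover, a period morphism $\rho^\circ \colon \fM^\circ \to \mathsf{Sh}_{\mathsf K}$ into the integral canonical model of a $\CSpin$ Shimura variety, a Kuga--Satake abelian scheme $A^\circ \to \fM^\circ$ pulled back from it, and realization-compatible embeddings of $\bP$ (in its $\ell$-adic and crystalline incarnations) into $\End$ of the homology of $A^\circ$. Combined with the Tate conjecture for abelian varieties over finitely generated fields (Tate, Zarhin, Faltings) and the algebraicity of the Kuga--Satake correspondence --- Tate-invariant special endomorphisms of $A^\circ$ producing divisor classes on the fibers --- this yields the Tate conjecture for every fiber over $\fM^\circ$, which is \cite[Thm.\ C]{HYZ}. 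The problem is the complement $\fM^{\mathrm{sing}} := \fM \setminus \fM^\circ$. In characteristic zero there would be nothing more to do: $\bP$ has trivial local monodromy around $\fM^{\mathrm{sing}}$ (rational double points contribute no vanishing cycles to $\bP$), so $\rho^\circ$ extends over $\fM^{\mathrm{sing}}$ and the Kuga--Satake abelian scheme extends by purity of abelian schemes. It is exactly this purity that fails over a positive or mixed characteristic base, which is why the period morphism cannot be produced so directly there.

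To get past $\fM^{\mathrm{sing}}$ I would first, near $x$, pass to a finite surjective cover $\fM' \to U$ of an \'etale neighborhood $U$ of $x$ over which the rational double points of $\bar\sX$ admit a simultaneous resolution (Artin, Brieskorn); the resulting proper family $\sX' \to \fM'$ is \emph{smooth}, its fibers are the minimal resolutions --- the surfaces themselves --- and it agrees with $\bar\sX$ over the preimage of $\fM^\circ$. The primitive $\IZ_\ell$-local system $\bP_\ell$ and the crystalline $F$-isocrystal sheaf on $\fM'$ are lisse over all of $\fM'$ --- again because rational double points do not affect $\bP$ --- so what remains is to extend $\rho^\circ$ from the complement of a divisor in the regular scheme $\fM'$ to all of $\fM'$. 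For this one exploits the N\'eron/extension property of the integral canonical model $\mathsf{Sh}_{\mathsf K}$ together with the facts that the obstruction lives in codimension one and the primitive local system is already lisse across it; this should produce $\rho' \colon \fM' \to \mathsf{Sh}_{\mathsf K}$ extending $\rho^\circ$, hence a Kuga--Satake abelian scheme $A' \to \fM'$ whose realizations match $\bP$ over the dense open $\fM'^\circ$, hence everywhere. The algebraicity of the Kuga--Satake correspondence likewise propagates from $\fM'^\circ$ across the boundary by the same extension statement together with specialization of divisor classes. Specializing to $x$ then gives an abelian variety $A'_x$ over $k$ satisfying the Tate conjecture, with its Tate classes in $\bP_\ell$ represented by divisors on $X$; together with the trivially algebraic complement this proves the Tate conjecture for $X$. (An alternative avoids the simultaneous resolution by working with $\bar\sX \to \fM$ directly and replacing $\H^2$ of a singular fiber by its primitive intersection cohomology, which is canonically $\bP$; the same extension argument then applies.)

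The technical heart --- and the main obstacle --- is precisely this extension across $\fM^{\mathrm{sing}}$: constructing the Kuga--Satake abelian scheme, equivalently the mixed-characteristic period morphism, in spite of the failure of purity for abelian schemes, and verifying in tandem that the passage from Tate-invariant special endomorphisms to actual divisors also survives the passage across the singular locus, so that one genuinely recovers the Tate conjecture for the minimal resolutions of the singular members. Everything else should be bookkeeping on top of the smooth case \cite{HYZ}.
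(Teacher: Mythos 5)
Your reduction to the moduli space of $(6,6)$ complete intersections in $\IP(1,2,2,3,3)$ and the treatment of the smooth locus via \cite{HYZ} match the paper, but the core of your argument --- simultaneously resolving $\sX$ over a finite cover $\fM'$ (Artin--Brieskorn) and then extending the period morphism over all of $\fM'$ by the extension property of the integral canonical model --- is precisely the ``naive approach'' that the paper identifies as the obstruction and does \emph{not} carry out. The extension property of Kisin's integral models is not a N\'eron-type property across a divisor: it requires the test scheme to be regular and to satisfy a purity condition for abelian schemes (Vasiu--Zink), and Artin--Brieskorn gives no control whatsoever on $\fM'$, which can be badly singular and ramified over $\sfM$; your parenthetical ``the regular scheme $\fM'$'' is unjustified, and even after resolving $\fM'$ it is unknown whether purity holds. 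Lisse-ness of $\bP_\ell$ across the boundary only yields, via N\'eron--Ogg--Shafarevich, extension of the Kuga--Satake abelian scheme over discrete valuation rings, i.e.\ away from a codimension-$2$ locus; the remaining extension is exactly where purity fails in positive and mixed characteristic, which is the failure your second paragraph correctly diagnoses but your third paragraph then assumes away. (A side error: the local monodromy of $\bP$ around $\fM^{\mathrm{sing}}$ is not trivial --- the vanishing cycles of an RDP degeneration are orthogonal to $K$ and hence lie in $\bP$; it is merely finite, which is why a finite base change is needed at all.)

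The second genuine gap is the passage from Tate-invariant special endomorphisms back to divisors at a point of the singular locus (``motivic alignment'' in the paper). This does not ``propagate by specialization'': specialization of divisor classes goes the wrong way, and the lifting of special endomorphisms to characteristic zero, which in \cite{MPTate} rests on the \'etaleness of the period morphism, is unavailable here. The paper's actual route avoids any global extension of $\rho$: it only constructs \emph{extended period morphisms} along carefully chosen low-dimensional test schemes (arithmetic curves through points of $\sfD^+$, built with Abhyankar's lemma so the boundary is \'etale over $\IZ_p$ and Grothendieck's specialization of tame $\pi_1$ applies; characteristic-$p$ surfaces through points of $\sfD^-$), and then proves motivic alignment by deforming pairs $(t,\zeta)$ along special divisors of $\shS$, by matching crystalline realizations via the $p$-adic Riemann--Hilbert functor, and --- crucially for $\sfD^-$ --- by showing via MMP for threefolds and a Beauville--Laszlo gluing argument that $\pi$-exceptional curves are $\tau$-exceptional, followed by a Hodge-index-theorem argument. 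None of these ideas appears in your proposal; as written, the steps ``this should produce $\rho'$'' and ``algebraicity likewise propagates'' are exactly the unproved technical heart, so the proof is incomplete.
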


Both \cref{thm: BSD} and \cref{thm: general type} are special cases of a general theorem.
We state it in the rough form below and refer the reader to \cref{thm: general theorem} for the precise conditions. 

\begin{theorem}
\emph{(\cref{thm: general theorem})}
\label{intro_main_thm}
    Let $p\geq 5$ be a prime number, let $\sfM$ be a smooth $\IZ_{(p)}$-scheme and let $\sX \to \sfM$ be a projective family of surfaces whose geometric fibers are generically smooth with $p_g = 1$ and have at worst rational double point (RDP) singularities. 
    
    Assume that $\sfM$ admits a smooth morphism to another smooth $\IZ_{(p)}$-scheme $\sfB$ with a relative compactification $\overline{\sfM}$, and satisfies two additional conditions: 
    \begin{enumerate}[label=\upshape{(\roman*)}]
        \item The discriminant locus $\sfD\subseteq \sfM$ of the family $\sX\to \sfM$ and the relative boundary $\sfH \colonequals \overline{\sfM} \smallsetminus \sfM$ are both generically reduced in codimension $1$ modulo $p$.
        \item Over $\IC$, the Kodaira--Spencer map generically has rank $\ge 2$ and does not vanish along a general fiber of $\sfM \to \sfB$. 
    \end{enumerate}
    Then the minimal (and hence any) resolution of any fiber of $\sX$ satisfies the Tate conjecture as long as it has the same Hodge diamond as the generic fiber or is non-supersingular\footnote{In this paper, a surface is said to be \textit{supersingular} if its second crystalline cohomology is purely of slope $1$.}. 
\end{theorem}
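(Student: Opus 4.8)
The plan is to run the Kuga--Satake method in families, the genuinely new ingredient being that the Kuga--Satake abelian scheme has to be extended across the loci where the fibers degenerate and where the compactification $\overline{\sfM}$ is glued on. I would start over $\IC$. After replacing $\sfM_\IC$ by an appropriate finite \'etale cover (trivialising the relevant local systems), form the family of minimal resolutions $\wt{\sX} \to \sfM_\IC$. Condition (ii) on the Kodaira--Spencer map ensures that the \emph{primitive} part of $\H^2$ --- the orthogonal complement of the span of the classes that remain algebraic throughout the family, among them the classes of the exceptional curves of the RDP resolutions along $\sfD$ --- underlies a polarised variation of Hodge structure of K3 type whose period map is non-degenerate in the sense of \cite{Moonen, HYZ}. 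This produces a Kuga--Satake abelian scheme $A \to \sfM_\IC$ together with the Kuga--Satake correspondence $\H^2(\wt{\sX}) \hookrightarrow \H^1(A)^{\otimes 2}$, which is absolute Hodge.

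Next I would spread out. Over a dense open $\sfU \subseteq \sfM$ on which $\wt{\sX}$ is smooth, \cite{HYZ} already provides the Kuga--Satake abelian scheme $A_\sfU \to \sfU$ over $\IZ_{(p)}$ together with the $\ell$-adic and crystalline realisations of the Kuga--Satake correspondence. The crux is to extend $A_\sfU$ --- equivalently, the associated $p$-divisible group with its crystalline Tate classes, equivalently the CSpin-level structure --- across the discriminant divisor $\sfD$ and the relative boundary $\sfH$. This is exactly where \emph{purity fails}: over a base of dimension $\ge 2$ in positive or mixed characteristic, an abelian scheme need not extend across a codimension-one locus. Here I would use hypothesis (i): generic reducedness in codimension one modulo $p$ bounds the ramification of the cover that trivialises the CSpin-structure, so that after a controlled finite base change the local monodromy around $\sfD$ and around $\sfH$ acts trivially (or unipotently) on the relevant lattice, and the abelian scheme extends by a semistable-reduction / N\'eron-model argument together with the known extension results for abelian schemes and $p$-divisible groups over regular bases. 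For the $\sfD$-direction one crucially uses that resolving RDPs is crepant: the de Rham and crystalline $\H^2$ of $\wt{\sX}_s$ for $s$ in $\sfD$ coincide with those of a nearby smooth fiber, compatibly with the Hodge and slope filtrations, so the period point extends across $\sfD$ and the extended abelian scheme still carries the Kuga--Satake correspondence.

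With this in hand I would conclude fiberwise. Fix a point $s$ of $\sfM$ over a field $\k$ finitely generated over $\IF_p$ and let $X$ be the minimal resolution of $\sX_s$; since any two resolutions of RDPs differ by crepant birational maps and contribute only the algebraic classes of the exceptional curves, Tate for one resolution is equivalent to Tate for any. By the extension above there is a Kuga--Satake abelian variety $A_s$ over a finite extension of $\k$ such that Tate classes in $\H^2_\et(X_\kbar)$ correspond, via the Kuga--Satake dictionary, to Tate classes in $\H^1_\et(A_{s,\kbar})^{\otimes 2}$ and hence --- by the Tate conjecture for abelian varieties over finitely generated fields of positive characteristic (Tate, Zarhin) --- to special endomorphisms of $A_s$. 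The Tate conjecture for $X$ then follows once every such special endomorphism is seen to deform to a divisor class on a member of the family, i.e.\ once the relevant space of pairs (surface, extra class) is nonempty. This is where the final hypothesis enters: if $X$ has the same Hodge diamond as the generic fiber, then $s$ lies in a locus of $\sfM$ along which no Hodge number jumps and the characteristic-zero picture deforms across it; if $X$ is non-supersingular, the primitive $\H^2$ carries slopes both below and above $1$, the period point avoids the supersingular stratum, and one connects $s$ to a smooth fiber along a curve with large monodromy on which the transcendental lattice specialises injectively, exactly as in the elliptic K3 case. The excluded case --- supersingular with a different Hodge diamond --- is precisely the one where the period point degenerates into the supersingular locus and neither the extension of Step 2 nor this deformation argument is available.

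The main obstacle is the second step: extending the Kuga--Satake abelian scheme across $\sfD$ and $\sfH$ in mixed characteristic. In characteristic zero one has Borel-type extension theorems, but the failure of purity for abelian schemes over higher-dimensional bases in characteristic $p$ makes this genuinely delicate --- it is precisely the issue left open in \cite{HYZ}. Condition (i) is the hypothesis engineered to make it work, and checking that the (crepant) RDP degenerations do not obstruct the extension of the Hodge and slope data --- so that the period point really does cross $\sfD$ --- is the technical heart of the argument.
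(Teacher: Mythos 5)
Your Step 2 is where the argument breaks down, and it breaks down at exactly the point this paper identifies as the central obstruction. You propose to extend the Kuga--Satake abelian scheme $A_\sfU$ from the smooth locus over all of $\sfM$ (or a controlled finite cover) across $\sfD$ and $\sfH$ in mixed characteristic, invoking hypothesis (i) to bound ramification and then ``semistable-reduction / N\'eron-model arguments together with known extension results for abelian schemes over regular bases.'' No such extension results exist in the needed generality: over a base of dimension $\ge 2$ over $\IZ_{(p)}$ or $\IF_p$, abelian schemes do not satisfy purity (this is the Vasiu--Zink phenomenon cited in the paper), N\'eron models are a one-dimensional tool, and the only known way to map into the integral canonical model $\shS$ is via its extension property, whose test schemes must themselves satisfy purity. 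Moreover, any cover over which a simultaneous resolution (equivalently, a single-valued period map) exists is produced by Artin--Brieskorn resolution and is in general ramified and singular over $\sfM$, with no control allowing one to verify purity after resolving it. Condition (i) is not engineered to fix this: in the paper it is used only to run Bertini-type arguments producing \emph{admissible curves} meeting $\sfD \cup \sfH$ transversely along $\sfD^+ \cup \sfH^+$, and to lift such curves to $W$ with \'etale boundary via Abhyankar's lemma, so that the N\'eron--Ogg--Shafarevich criterion (a genuinely one-dimensional statement) extends the period map along the curve. The paper never extends the period morphism over $\sfM$; instead it introduces ``extended period morphisms'' $\tau : T \to \shS$ on auxiliary arithmetic curves (for $s \in \sfM^\circ \cup \sfD^+$) or on surfaces over $\IF_p$ (for $s \in \sfD^-$), together with the notion of motivic alignment, and the $\sfD^-$ case then requires substantial machinery you do not account for: showing every $\pi$-exceptional curve is $\tau$-exceptional via MMP for threefolds in characteristic $p \ge 5$ and a Beauville--Laszlo gluing statement for projective schemes, rigidity of F-crystals, and a Hodge-index argument to rule out special endomorphisms deforming only along contracted curves.

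A secondary but real gap: you assert the Kuga--Satake correspondence is absolute Hodge. For K3 surfaces this is available, but for general surfaces with $p_g = 1$ it is not known, and the paper explicitly avoids assuming it (or that the motives are abelian); this is why the matching of de Rham and crystalline realizations of the period morphism has to be established independently, via the $p$-adic Riemann--Hilbert functor of Diao--Lan--Liu--Zhu, Bhatt--Morrow--Scholze's integral comparison, and the density of Noether--Lefschetz loci. Without either the absolute-Hodge input or this replacement, your fiberwise translation between Tate classes on $X$ and special endomorphisms of $A_s$ in the crystalline/supersingular-sensitive cases is not justified.
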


In the above theorem, $\sfM$ should be interpreted as a coarse moduli space, $\sX\to \sfM$ the family of natural models, and $\sfB$ the moduli space of some auxilliary varieties. 
For example, when $\sfM$ parametrizes fibered surfaces, $\sfB$ parametrizes the base curves; and when $\sfM$ parametrizes complete intersections of $r$ hypersurfaces in some fixed ambient space, $\sfB$ parametrizes the $3$-fold given by intersecting the first $(r - 1)$ hypersurfaces. 
Moreover, the fibers of $\sfM \to \sfB$ are often open subspaces of certain linear systems, and hence can be compactified in a natural way, giving rise to the relative compactification $\overline{\sfM}$---this is exactly what happens for both \cref{thm: BSD} and~\ref{thm: general type}. We also remind the reader that on natural models of elliptic or general type surfaces over an algebraically closed field, singularities are at worst rational double points. 

\begin{remark}[Other examples]
    We expect that the hypotheses of our general theorem apply to almost every class of surfaces with $p_g = 1$ (e.g., those listed in \cite[Thm.~9.4]{Moonen}) with minor modifications. 
    For another example, the canonical model of a projective surface $X$ over $\IC$ with $p_g = q = 1$ and $K_X^2 = 2$ is a double cover of the symmetric square $\mathrm{Sym}^2 E$ of an elliptic curve $E$, ramified along a divisor in a fixed linear system on $\mathrm{Sym}^2 E$ (\cite[Prop.~20]{Catanese3}). We can naturally consider such surfaces in positive characteristics, and construct a coarse moduli space $\sfM$ by taking $\sfB$ to parametrize $E$ and $\sfM$ to parametrize suitable divisors on $\mathrm{Sym}^2 E$. 
    Then \cref{intro_main_thm} applies to $\sfM$ for all but a few $p$ where condition (i) might fail (the generic fiber of $\sX \to \sfM$ is polarized by its canonical divisor, which has degree $2$. To check the condition on Kodaira--Spencer in (ii), see \cite[p.283]{Catanese3}).
However, we do not investigate these other examples here and leave the task to the interested readers, as verifying the hypotheses of \cref{intro_main_thm} becomes largely algebro-geometric considerations, like those in \cref{sec: thms 1.1 and 1.2}. 
\end{remark}

\begin{remark}[Singularities and the Tate conjecture]
\label{rmk: singularities and Tate} Before we delve into the proof, we emphasize that singularities in natural models are not mere technical obstacles.
    Rather, they are fundamentally linked to the Tate conjecture: 
    \begin{itemize}
        \item The resolution of these singularities is itself an important source of divisors on the smooth models of these surfaces. 
        \item Many coarse moduli spaces $\sfM$ are nearly simply connected, meaning the singularities on the fibers are \emph{necessary} to induce non-trivial geometric monodromy on the family $\sX$.
    Otherwise, there would be no interesting variations of zeta functions or geometric Picard number among fibers. 
    \end{itemize}
    In fact, this latter point is manifested in our arguments. 
    Let us also remark that back in 1982, Rudakov, Zink and Shafarevich (\cite{RZS}) proved the Tate conjecture for supersingular K3 surfaces with a degree $2$ polarization precisely by considering singular degenerations: if such a surface is elliptic, then this follows from \cite{ASD}; otherwise, the surface can be deformed to a singular surface, whose resolution must be elliptic, so one concludes by combining \cite{ArtinSS}.  
    This strategy was later vastly extended by Maulik in \cite{Maulik}. 
    In this paper, by both making use of singularities and overcoming the obstacles they induce, we hope to further elucidate the mysterious connection singularities and the Tate conjecture. 
\end{remark}

\subsection{Ideas of proof}
\label{sub:ideals_of_proof}
We now explain the ideas of proof.
Our approach builds upon Madapusi' work in \cite{MPTate}, but introduces several key innovations that greatly expand its scope. Therefore, we start with a recollection of Madapusi's method and point out the difficulties for generalizations.

\textbf{Recollection of Madapusi's Method.} Let $\sfM$ be the moduli space of (quasi-polarized) K3 surfaces over $\IZ_{(p)}$ and $\sX\to \sfM$ be the universal family.
The first step is to construct a period morphism $\rho:\sfM \to \shS$ (up to an \'etale cover of $\sfM$), where $\shS$ is the canonical integral model of an orthogonal Shimura variety, over which there is a universal Kuga--Satake abelian scheme $\sA$. 
Then the second step is to show that for each point $s \in \sfM$, the original Tate conjecture for divisors of the K3 surface $\sX_s$ can be translated to a variant of Tate's theorem for a distinguished class of endomorphisms $\LEnd(\sA_{\rho(s)})\subset \End(\sA_{\rho(s)})$, called \textit{special endomorphisms}, of the abelian variety $\sA_{\rho(s)}$.

In order to achieve the above strategy, the following two facts played essential roles in Madapusi's arguments:
\begin{enumerate}[label=\upshape{(\Alph*)}]
	\item\label{Problem_A}  Quasi-polarized K3 surfaces over $\IZ_{(p)}$ have a \textit{fine} and regular moduli space $\sfM$, and abelian schemes over it satisfy a \emph{purity} condition. 
	\item\label{Problem_B}  The integral period morphism $\rho : \wt{\sfM} \to \shS$ is \textit{\'etale}, where $\wt{\sfM}$ is some \'etale cover of $\sfM$. 
\end{enumerate}
Property \ref{Problem_A} is needed for the very construction of an integral period morphism $\rho : \sfM \to \shS$. 
Indeed, unlike Shimura varieties of PEL type, those of Hodge or abelian type (including $\shS$ used here) lack a direct moduli interpretation on their integral models (cf. \cite{KisinInt}). 
Instead, the only known general method of constructing morphisms to those integral Shimura varieties is to use their extension property as in \textit{loc. cit.}, where the test schemes must satisfy a purity condition for abelian schemes.\footnote{For a given scheme $T$, the purity of abelian schemes over $T$ is a condition on $T$.
	It states that any abelian scheme $A_U\to U$ can be extended over the entire scheme $T$, where $U\subset T$ is an open subscheme away from a codimension $2$ locus.}
Even for Siegel moduli spaces, the purity condition is strictly necessary for the extension property (cf. \cite[\S6~Remarks]{VasiuZink}).
Luckily, deformations of quasi-polarized K3 surfaces are sufficiently well understood so that the purity condition can be checked explicitly using Vasiu--Zink's criterion in \textit{loc. cit.} (\cite[Thm.~3.8]{MPTate}).

In \cite{MPTate}, property \ref{Problem_B} was used to establish the aforementioned translation between the divisor classes of $\sX_s$ and the special endomorphisms of $\sA_{\rho(s)}$. 
More precisely, the \'etaleness is used to show that special endomorphisms can be lifted to characteristic $0$, so that one can construct the corresponding divisor on $\sX_s$ via the Lefschetz $(1, 1)$-theorem and a specialization argument.

For general surfaces of geometric genus one, \textit{neither} \ref{Problem_A} nor \ref{Problem_B} is available.
Our contribution is to introduce new ideas that enable us to extend the above strategy without these two conditions, and to provide techniques for studying the singular fibers and their periods.
Before we explain the details, the following picture illustrates the relationship of various geometric objects that appear in the arguments.


\tikzset{every picture/.style={line width=0.75pt}} 

\vspace{1em}
\tikzset{every picture/.style={line width=0.75pt}} 
\begin{center}
\scalebox{0.7}{
\begin{tikzpicture}[x=0.75pt,y=0.75pt,yscale=-1,xscale=1]

\draw   (135.66,137.6) -- (420.2,137.6) -- (420.2,267.6) -- (135.66,267.6) -- cycle ;
\draw  [dash pattern={on 4.5pt off 4.5pt}]  (137.15,198.07) .. controls (210.77,172.85) and (283.24,212.49) .. (292.12,241.46) ;
\draw  [dash pattern={on 4.5pt off 4.5pt}]  (292.12,241.46) .. controls (365.64,200.06) and (398.11,208.04) .. (420.2,226.54) ;
\draw   (129,48.48) .. controls (129,44.68) and (141.58,41.6) .. (157.1,41.6) .. controls (172.62,41.6) and (185.2,44.68) .. (185.2,48.48) .. controls (185.2,52.28) and (172.62,55.36) .. (157.1,55.36) .. controls (141.58,55.36) and (129,52.28) .. (129,48.48) -- cycle ;
\draw    (129,48.48) -- (181.35,106.91) ;
\draw    (185.2,48.48) -- (131.57,106.91) ;
\draw    (131.57,106.91) .. controls (145.28,117.77) and (167.02,117.77) .. (181.35,106.91) ;
\draw   (362,27.98) .. controls (362,25.01) and (374.13,22.6) .. (389.1,22.6) .. controls (404.07,22.6) and (416.2,25.01) .. (416.2,27.98) .. controls (416.2,30.95) and (404.07,33.36) .. (389.1,33.36) .. controls (374.13,33.36) and (362,30.95) .. (362,27.98) -- cycle ;
\draw    (364.48,73.68) .. controls (377.7,82.18) and (398.67,82.18) .. (412.48,73.68) ;
\draw    (362,27.98) .. controls (372.75,46.16) and (391.81,62.06) .. (364.48,73.68) ;
\draw    (416.2,27.98) .. controls (388.38,48.5) and (400.57,69.55) .. (412.48,73.68) ;
\draw   (248,89.47) .. controls (248,77.95) and (250.48,68.6) .. (253.54,68.6) .. controls (256.6,68.6) and (259.08,77.95) .. (259.08,89.47) .. controls (259.08,101) and (256.6,110.35) .. (253.54,110.35) .. controls (250.48,110.35) and (248,101) .. (248,89.47) -- cycle ;
\draw    (253.54,68.6) -- (309.22,127.36) ;
\draw    (253.54,110.35) -- (327.69,109.93) ;
\draw    (318.45,74.24) -- (280.63,126.95) ;
\draw    (318.45,74.24) .. controls (331.65,87.11) and (329.85,96.76) .. (327.69,109.93) ;
\draw    (280.63,126.95) .. controls (290.75,129.85) and (295.58,133.59) .. (309.22,127.36) ;
\draw  [dash pattern={on 0.84pt off 2.51pt}]  (393,86) -- (393.19,166.25) ;
\draw [shift={(393.2,168.6)}, rotate = 89.86] [color={rgb, 255:red, 0; green, 0; blue, 0 }  ][line width=0.75]      (0, 0) circle [x radius= 3.35, y radius= 3.35]   ;
\draw  [dash pattern={on 0.84pt off 2.51pt}]  (291.2,145.6) -- (292.1,239.11) ;
\draw [shift={(292.12,241.46)}, rotate = 89.45] [color={rgb, 255:red, 0; green, 0; blue, 0 }  ][line width=0.75]      (0, 0) circle [x radius= 3.35, y radius= 3.35]   ;
\draw  [dash pattern={on 0.84pt off 2.51pt}]  (159.2,121.6) -- (160.17,190.25) ;
\draw [shift={(160.2,192.6)}, rotate = 89.19] [color={rgb, 255:red, 0; green, 0; blue, 0 }  ][line width=0.75]      (0, 0) circle [x radius= 3.35, y radius= 3.35]   ;
\draw    (488,95.6) -- (289.84,180.81) ;
\draw [shift={(288,181.6)}, rotate = 336.73] [color={rgb, 255:red, 0; green, 0; blue, 0 }  ][line width=0.75]    (10.93,-3.29) .. controls (6.95,-1.4) and (3.31,-0.3) .. (0,0) .. controls (3.31,0.3) and (6.95,1.4) .. (10.93,3.29)   ;
\draw    (199.2,247.34) .. controls (220.83,202.71) and (253.27,260.88) .. (274.89,216.25) ;
\draw    (203.51,205.91) .. controls (225.13,161.28) and (257.57,219.45) .. (279.2,174.82) ;
\draw    (199.2,247.34) -- (203.51,205.91) ;
\draw    (274.89,216.25) -- (279.2,174.82) ;
\draw    (256.2,207) ;
\draw [shift={(256.2,207)}, rotate = 0] [color={rgb, 255:red, 0; green, 0; blue, 0 }  ][fill={rgb, 255:red, 0; green, 0; blue, 0 }  ][line width=0.75]      (0, 0) circle [x radius= 3.35, y radius= 3.35]   ;
\draw    (201.2,221.34) .. controls (222.83,176.71) and (255.27,234.88) .. (276.89,190.25) ;
\draw [shift={(245.12,207.23)}, rotate = 193.25] [color={rgb, 255:red, 0; green, 0; blue, 0 }  ][line width=0.75]    (10.93,-3.29) .. controls (6.95,-1.4) and (3.31,-0.3) .. (0,0) .. controls (3.31,0.3) and (6.95,1.4) .. (10.93,3.29)   ;
\draw    (489,54) -- (547,48.6) ;
\draw    (489,54) .. controls (484,101.6) and (539,75.6) .. (545,113.6) ;
\draw    (547,48.6) .. controls (540,100.6) and (501,64.6) .. (495,116.6) ;
\draw    (495,116.6) -- (545,113.6) ;
\draw    (523,124.6) -- (523,205.6) ;
\draw [shift={(523,207.6)}, rotate = 270] [color={rgb, 255:red, 0; green, 0; blue, 0 }  ][line width=0.75]    (10.93,-3.29) .. controls (6.95,-1.4) and (3.31,-0.3) .. (0,0) .. controls (3.31,0.3) and (6.95,1.4) .. (10.93,3.29)   ;
\draw  [dash pattern={on 4.5pt off 4.5pt}]  (428,217) -- (500,218.56) ;
\draw [shift={(502,218.6)}, rotate = 181.24] [color={rgb, 255:red, 0; green, 0; blue, 0 }  ][line width=0.75]    (10.93,-3.29) .. controls (6.95,-1.4) and (3.31,-0.3) .. (0,0) .. controls (3.31,0.3) and (6.95,1.4) .. (10.93,3.29)   ;
\draw    (492,72.6) -- (545,59.6) ;
\draw    (504,83.6) -- (540,69.6) ;
\draw    (502,95.6) -- (538,99.6) ;
\draw    (498,105.6) -- (542,107.6) ;

\draw (161.07,167.14) node [anchor=north west][inner sep=0.75pt]   [align=left] {$\sfD^+$};
\draw (267.42,238.59) node [anchor=north west][inner sep=0.75pt]   [align=left] {$\sfD^-$};
\draw (372,174) node [anchor=north west][inner sep=0.75pt]   [align=left] {$\sfM^\circ$};
\draw (412,48) node [anchor=north west][inner sep=0.75pt]   [align=left] {smooth};
\draw (103,18) node [anchor=north west][inner sep=0.75pt]   [align=left] {mildest singularity};
\draw (226,47) node [anchor=north west][inner sep=0.75pt]   [align=left] {worse singularity};
\draw (362,274) node [anchor=north west][inner sep=0.75pt]   [align=left] {$\sfM$};
\draw (518,218) node [anchor=north west][inner sep=0.75pt]   [align=left] {$\shS$};
\draw (241,204) node [anchor=north west][inner sep=0.75pt]   [align=left] {$s$};
\draw (548,80) node [anchor=north west][inner sep=0.75pt]   [align=left] {$T$};
\draw (433,98) node [anchor=north west][inner sep=0.75pt]   [align=left] {$\pi$};
\draw (449,193) node [anchor=north west][inner sep=0.75pt]   [align=left] {$\rho$};
\draw (424,238) node [anchor=north west][inner sep=0.75pt]   [align=left] {(defined on $\wt{\sfM}^\circ$)};
\draw (73,71) node [anchor=north west][inner sep=0.75pt]   [align=left] {fibers:};
\draw (535,155) node [anchor=north west][inner sep=0.75pt]   [align=left] {$\tau$};
\end{tikzpicture}}
\end{center}

As in \cref{intro_main_thm}, we let $\sX \to \sfM$ be the family of natural models over a coarse moduli space, with $\sfD$ being the discriminant locus parametrizing singular fibers. The meaning of other symbols will be explained below.

\textbf{Extended Period Morphisms}
We first let $\sfM^\circ := \sfM \smallsetminus \sfD$ be the open smooth locus. 
As observed in the previous work of the second author with Hamacher and Zhao \cite{HYZ}, one can still construct a period morphism $\rho : \wt{\sfM}^\circ \to \shS$, where $\shS$ is a suitable integral orthogonal Shimura variety and $\wt{\sfM}^\circ$ is an \'etale cover of $\sfM^\circ$.

To analyze the singular fibers, the naive approach is to simultaneously resolve the family $f:\sX\to \sfM$ over some cover $\sfM'$ of $\sfM$ (which exists thanks to Artin--Brieskorn resolution \cite{Artin-Res}), and then extend $\rho$ over $\sfM'$. 
However, as discussed above, extending $\rho$ is a very delicate problem, which underscores the necessity of \ref{Problem_A} in \cite{MPTate}. 
As Artin--Brieskorn resolution does not offer any effective control on $\sfM'$, which can a priori be very singular or ramified over $\sfM$, it is unclear whether $\sfM'$ can be further resolved such that the purity of abelian schemes holds over it.

In order to remedy the above issues, we instead seek to partially extend the period morphism, and hence introduce the following notion.
\begin{definition}[cf. \cref{def: extended period}]
	Given a normal $\sfM$-scheme $T$ (with structure morphism denoted by $\pi$), a map $\tau : T \to \shS$ is called an \emph{extended period morphism} if the family $\sX|_T\to T$ admits a simultaneous resolution, and for some open dense subscheme $T^\circ \subseteq T$, the map $\tau$ fits into a commutative diagram as below 
	\begin{equation}
		\label{eqn: extended period in intro}
		\begin{tikzcd}
			T^\circ & T \\
			{\wt{\sfM}^\circ} & {\sfM} \\
			&& \shS.
			\arrow[hook, from=1-1, to=1-2]
			\arrow[from=1-1, to=2-1]
			\arrow["\pi", from=1-2, to=2-2]
			\arrow["\tau", curve={height=-12pt}, from=1-2, to=3-3]
			\arrow[from=2-1, to=2-2]
			\arrow["\rho", curve={height=12pt}, from=2-1, to=3-3]
		\end{tikzcd}
	\end{equation}
\end{definition}
The above notion is inspired by the idea of taking ``limiting Hodge structures'' in classical Hodge theory.
Let us use $\breve{X}$ to denote the minimal resolution of a surface $X$. 
Given $s \in \sfM$, our method to prove the Tate conjecture for $\breve{\sX}_s$ is to find a suitable arithmetic surface $S$ in $\sfM$ passing through $s$ such that:
\begin{itemize}
	\item after passing to a generically finite cover $T$ of $S$, there is an extended period morphism $\tau : T \to \shS$, and 
	\item the morphism $\tau$ is \textit{motivically aligned} at $s$, meaning that for each $t \in T$ that lifts $s$, there is a correspondence bewteen special endomorphisms of the Kuga-Satake abelian variety $\sA_{\tau(s)}$ and divisors on $\breve{\sX}_s = \breve{\sX}_t$ (see \cref{def: motivically aligned}).
\end{itemize}
With the above construction established, the Tate conjecture for $\breve{\sX}_s$ then follows quickly from a variant of Tate's theorem for special endomorphisms as in \cite{MPTate}. 

Below are two important aspects for extended period morphisms. 
\begin{enumerate}[label=\upshape{(\Roman*)}]
	\item \textit{Deformation techniques:} To prove that a point $t \in T$ is motivically aligned, the general method is to show that, for every special endomorphism $\zeta \in \LEnd(\sA_{\tau(t)})$, the pair $(t, \zeta)$ can be deformed to some $t' \in T$, such that $t'$ is motivically aligned. 
        This conceptualizes the key argument in \cite{MPTate} for K3 surfaces that eventually appeals to the Lefschetz $(1, 1)$-theorem. 
	In our more general situation, we shall extend this idea by not only considering mixed characteristic deformations, but also equi-characteristic ones. 
	\item \textit{Matching local systems:} For a point $t$ in  $T$ such that $\pi(t)$ is in the singular locus $\sfD$, we need to prove that the Shimura variety $\shS$ really ``knows'' the period of the resolution $\breve{\sX}_t$. 
	Precisely, this means that the pullbacks of various automorphic local systems $\bL_?$, with $?$ being Betti, de Rham, $\ell$-adic, or crystalline, admit natural isomorphisms with the $?$-cohomology of the resolution of $\sX$ over $T$.
	This is achieved by first doing this within the smooth locus $\sfM^\circ$, and then proving that this is also true over the boundary $T \times_\sfM \sfD$. 
	For $? = \ell, B$, this follows from the basic facts about local systems and results of \cite{HYZ}. 
	For $? = \crys$, we make an essential use of the rigidity theorem on extending F-(iso)crystals due to de Jong and Kedlaya--Drinfeld (see \cref{lem: rigidity of F-crystals}). 
\end{enumerate}

\textbf{The constructions for different loci.}
Now we explain how to construct the desired extended period morphisms $\tau : T \to \shS$. 
Considerations of enumerative geometry suggests that one should expect a rough correspondence between the singularities on $\sX_s$ and the singularity of $\sfD$ at $s$.\footnote{For plane curves, this is discussed in \cite[\S7.7.2]{3264}, but the principle is quite general.} 
In particular, the maximal smooth locus $\sfD^+$ of $\sfD$ should correspond to those fibers with mildest singularities. 
As we will see below, points on $\sfD^+$ and those on its complement $\sfD^-  \colonequals \sfD \smallsetminus \sfD^+$ will be treated separately via different constructions.\\\\
\textit{Case $s \in \sfD^+$:} By a standard Bertini-type argument, we can find a curve $C \subseteq \sfM_{\IF_p}$ that passes through the point $s$ and interacts $\sfD^+_{\IF_p}$ transversely.
We also choose a mixed characteristic scheme $\sC$ that lifts the curve $C$.
Then, up to passing to a generically \'etale cover $\sC_1 \to \sC$, we may find an extended period morphism over the two dimensional scheme $T = \sC_1$, where
the morphism $\sC_1 \times_\sfM \sfM^\circ \to \shS$ extends over $\sC_1 \times_\sfM \sfD$ thanks to the N\'eron--Ogg--Shafarevich criterion.
Here it is critical that the curve $\sC_1$ is delicately constructed with the aid of Abhyankar's lemma so that $\sC_1 \times_\sfM \sfD$ is \'etale over $\IZ_p$. 
In particular, the construction enables us to analyze the behavior of the monodromy using Grothendieck's specialization theorem for the tame fundamental groups at $\sC_1$.
Afterwards, the arguments of \cite{HYZ} for the smooth locus $\wt{\sfM}^\circ$ can be extended to the boundary $\sC_1 \times_\sfM \sfD$ (see \cref{prop: good points}):
The idea is that if there is a special endomorphism $\zeta \in \LEnd(\sA_{\tau(t)})$ such that the pair $(t, \zeta)$ does not lift to a point on $\sC_1$ in characteristic zero, then one can derive a contradiction regarding the dimension of monodromy invariants. \\\\
\textit{Case $s \in \sfD^-$:} This case is significantly more complicated and takes more inputs, as $\sfD^-$ parametrize those with worse singularities and $\sfD$ itself might be very singular.
In particular, the aforementioned mixed characteristic curve may not exist, so this time we let $T$ be an actual surface over $\IF_p$, where a combination of Artin--Brieskorn resolution of the family $\sX|_T\to T$ and Lipman's resolution of the base $T$ allow us to construct an extended period morphism $\tau:T\to \shS$.

At this point, we know that the morphism $\tau$ is motivically aligned at any point $t' \in T$, as long as the image $\pi(t')\in \sfM$ is contained in $\sfM\smallsetminus \sfD^-$. 
If for any given special endomorphism $\zeta\in \LEnd(\sA_{\tau(t)})$, the pair $(t, \zeta)$ can be deformed to any such $t'$, then we win. 
On the other hand, by a fact on special divisors for orthogonal Shimura variety, the pair $(t, \zeta)$ always deforms along some divisor on $T$.
However, it is a priori possible that this divisor is contracted by $\pi : T \to \sfM$, and in particular the motivic alignedness has not been established on any point of this divisor.
We also note that the periods of the point $s$, defined as the image subscheme $\tau(\pi^{-1}(s)) \subset \shS$, can in principle be positive dimensional. 

However, we make the following critical observation that prevents the above scenario:
\begin{equation*}
	\label{eqn: exceptional}
	\text{Any $\pi$-exceptional curve is also $\tau$-exceptional.} 
\end{equation*}
Heuristically, the above is saying that up to finite ambiguity, any point in $\sfD^-$ admits a unique ``period'' on $\shS$. 
To achieve the above observation, we apply the minimal model program for 3-folds in positive characteristic, a Beauville--Laszlo  type gluing theorem for projective schemes, as well as the matching of F-crystals established in (II) above. 
In the non-supersingular case, we also give a different argument using the positivity of Hodge bundle on $\shS$. 
Here we remark that the $\pi$-exceptional curves are not necessarily $(-1)$-curves, and in particular it is not clear if they can be blown down for the morphism $\tau$. 
Nonetheless, we can show that it is not possible for $(z, \zeta)$ to deform only along $\pi$-exceptional curves, as this would contradict the Hodge index theorem on some compactification of $T$. 

Below we also comment on some technical caveats and subtleties of this paper. 
\begin{remark}[Matching isomorphism for crystalline coefficients]
    \label{rmk: matching crystalline} In \cite{HYZ}, it was not necessary to match $\bL_\crys$ with the (primitive) crystalline cohomology of $\sX$ over $\wt{\sfM}^\circ$. But now it becomes important when dealing with points on $\sfD^-$, so we fill this in. As in \cite{HYZ}, much of the technicalities arise because we do not assume that the classical Kuga--Satake construction is \textit{absolute Hodge}, or that the motives of the varieties in question are abelian.
    So Madapusi's argument to do this step in \cite{MPTate}, which relies on Blasius' result on the de Rhamness of the Hodge cycles on abelian motives \cite{Blasius}, does not apply. 
    
    In this paper, we shall establish the matching of these F-(iso)crystals using the recent advances on the $p$-adic Riemann--Hilbert functor in \cite{DLLZ} and integral $p$-adic Hodge theory of Bhatt--Morrow--Scholze in \cite{BMS}. 
    Then we prove a compatibility statement for the outputs of the $p$-adic and classical Riemann--Hilbert functors using the analytic density of Noether--Lefschetz loci (see \cref{sub:match crystalline}). 
    This steps addresses a question with a similar flavor to \cite[Conjecture~1.4]{DLLZ} in our specific setting.
    Therefore, we think these techniques might be of independent interest. 
\end{remark}
\begin{remark}[Assumptions on $p$]
    The assumptions on $p$ in this paper only come from condition (i) in \cref{thm: general theorem}, together with the requirement that minimal resolutions of fibers of $\sX$ should have the correct Hodge numbers whenever they are supersingular. 
    In fact, the latter is not even required for points on $\sfM^\circ$ or $\sfD^+$. 
    Beyond the potential influence on Hodge numbers of resolutions, our method is insenstive to the \textit{type} of singularities of fibers over $\sfD^-$ with respect to $p$. 
    For example, we do \emph{not} assume that $p$ is coprime to the order of the Weyl group of the relevant RDPs (see \cref{exp: not good prime})---imposing such restrictions is often necessary for studying the local monodromy around RDPs over mixed characteristic discrete valuation rings. 
    So in this regard our method is quite robust. 
\end{remark}

\subsection{Notations and conventions} 
Let $S$ be a base scheme. 
Given a field-valued point $s$ on $S$, we write $k(s)$ for its residue field; we write $\bar{s}$ for a geometric point over $s$, when such a choice does not need to be specified. 
When the letter $k$ denotes a perfect field of characteristic $p > 0$, we simply use $W$ to denote its ring of Witt vectors $W(k)$. 
When $k$ is not necessarily perfect, we write $k^p$ for the perfect closure of $k$.
For a morphism $f : S \to T$ of finite type between schemes, we customarily use $\overline{S}$ to denote some relative compactification, i.e., a scheme $\overline{S}$ which is proper over $T$ and contains $S$ as an open dense subscheme.
We also remind the reader that a \emph{global functional field} is the fraction field of a curve over a finite field.

By a (relative) surface $f : X \to S$ we mean a separated morphism in the category of algebraic space whose fibers are of dimension $2$. 
Note that a smooth proper surface over a field is automatically a scheme (\cite[Thm~4.7]{Artin-AS}) and hence projective (\cite[p.~328, Cor.~4]{Kl66}). 
Given a normal surface $X$ over a field, we write $\breve{X} \to X$ for the minimal resolution of $X$, which is unique up to unique isomorphism (see for example \cite[\S9.3.4, Prop.~3.32]{LiuBook}).

\section{Geometric preparations}
To prepare for the proof of main theorems, we introduce various geometric ingredients in this section, which can be of independent interests.
These include results on F-crystals, resolutions of surface singularities, the minimal model program for threefolds, certain application of relative Abhyankar's lemma, and geometry of elliptic surfaces in positive characteristics.

\subsection{Rigidity of F-crystals}
\label{sub:F-crystal}
In this subsection, we prove that the restriction functor of F-crystals along an open immersion is fully faithful, and give a fiberwise criterion for the local freeness of crystalline cohomology.

\begin{definition}
    Let $S$ be a scheme such that $p \sO_S = 0$, $F_S : S \to S$ be the absolute Frobenius morphism, and $F_{S/\IZ_p}$ be the inducded morphism on the crystalline topos $(S/\IZ_p)_\crys$. An F-crystal $H$ on $S$ is a crystal of vector bundles on $(S/\IZ_p)_\crys$ together with a morphism $F : F_{S/\IZ_p}^* H \to H$. It is said to be \textit{non-degenerate} if for some $i \ge 0$, there exists a morphism $V : H \to F^*_{S/ \IZ_p} H$ such that $VF = p^i \mathrm{id}_H$. Denote the category of non-degenerate F-crystals on $S$ by $\FCrys(S)$.   
\end{definition}

The following theorem follows from a combination of results of \cite{deJong} and \cite{DK17}: 

\begin{theorem}
\label{lem: rigidity of F-crystals}
Let $k$ be a perfect field of characteristic $p > 0$ with $W \colonequals W(k)$. Let $S$ be a smooth $k$-variety and $U$ be a dense open subset of $S$. Then the restriction functor $\FCrys(X) \to \FCrys(U)$ is fully faithful. 
\end{theorem}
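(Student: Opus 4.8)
The statement asserts that for a smooth $k$-variety $S$ with dense open $U \subseteq S$, the restriction functor $\FCrys(S) \to \FCrys(U)$ is fully faithful. The standard reduction is as follows. Full faithfulness for a functor between additive categories can be rephrased in terms of internal homs: given $H_1, H_2 \in \FCrys(S)$, the internal hom $\mathcal{H}om(H_1, H_2)$ is again a crystal of vector bundles on $(S/\IZ_p)_\crys$, though not in general an F-crystal (the Frobenius structure on an internal hom of F-crystals need not be non-degenerate in the naive sense because of denominators). Nonetheless, $\mathcal{H}om(H_1, H_2)$ is an \emph{isocrystal with Frobenius structure} after inverting $p$, and more to the point one can bound denominators: if $H_1, H_2$ are non-degenerate with $V_iF_i = p^{n_i}\id$, then $\mathcal{H}om(H_1,H_2)$ carries a Frobenius $F$ and a $V$ with $VF = p^{n_1+n_2}\id$ (up to a twist), so it lies in $\FCrys(S)$ as well. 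A morphism $H_1|_U \to H_2|_U$ is a global section of $\mathcal{H}om(H_1,H_2)|_U$ that is horizontal and Frobenius-equivariant; so it suffices to show that the restriction map on Frobenius-invariant global sections $\Gamma_{F}(S, \mathcal{H}om(H_1,H_2)) \to \Gamma_{F}(U, \mathcal{H}om(H_1,H_2)|_U)$ is a bijection. Hence the theorem reduces to the following: for any $H \in \FCrys(S)$, every Frobenius-invariant (horizontal) section of $H$ over $U$ extends uniquely to one over $S$.

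Uniqueness of such an extension is immediate because $S$ is integral and a section of a vector bundle crystal is determined by its restriction to any dense open (one can test on the de Rham realization over a smooth formal lift, where this is the statement that a flat section vanishing on a dense open vanishes). For existence I would invoke de Jong's extension theorem for F-crystals. Concretely, \cite{deJong} proves that on a smooth variety over a perfect field, F-crystals (equivalently, after work on the comparison with convergent/overconvergent isocrystals) satisfy an extension property across a closed subscheme of codimension $\ge 1$ in the following sense: the relevant fully faithfulness statement for the restriction functor is exactly Theorem 1.1 / the main extension result there for the generic-fiber situation, combined with a purity argument. The complementary input is the theorem of Kedlaya and Drinfeld \cite{DK17}, which among other things establishes that a crystal (or $F$-isocrystal) on $U$ that is ``tame'' or satisfies appropriate local conditions extends; more usefully, \cite{DK17} provides the key \emph{descent}/excision tools to pass from the case $S \smallsetminus U$ of codimension $\ge 2$ (handled by a normality/Hartogs-type argument directly on the sections) to the case of a divisor.

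So the argument would run in two steps. First, if $Z = S \smallsetminus U$ has codimension $\ge 2$, then because $\mathcal{H}om(H_1,H_2)$ has a coherent crystal structure and $S$ is smooth hence normal, a horizontal section over $U$ extends over $S$ by Hartogs: locally on a smooth $p$-adic formal lift $\widetilde{S}$ of (an affine open of) $S$, the de Rham realization is a vector bundle with integrable connection, and its sections over the complement of a codimension-$\ge 2$ set extend; one then checks the extension is still horizontal and Frobenius-equivariant, which is automatic by density. Second, the essential case is when $Z$ is a divisor: here one uses the rigidity/extension theorem of de Jong together with Kedlaya--Drinfeld to conclude that restriction is fully faithful even across a divisor — this is precisely where non-degeneracy of the F-crystals is used, since it is false for general (iso)crystals (monodromy around the divisor can be nontrivial). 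By Zariski-locally shrinking $Z$ to be smooth and then a purity/limit argument, one reduces to a single smooth divisor, where the combination of \cite{deJong} and \cite{DK17} applies. Patching the codimension-$\ge 2$ case with the divisor case gives the general $U$.

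\textbf{Main obstacle.} The crux is the divisorial case — extending a horizontal Frobenius-equivariant section across a codimension-one locus. This is genuinely false without the Frobenius structure (the underlying connection can have nontrivial monodromy), so the argument must make essential use of non-degeneracy; the technical content is exactly the Drinfeld–Kedlaya–de Jong rigidity input, i.e. that an $F$-crystal (not merely a flat bundle) cannot have ``unipotent-but-nontrivial'' or ramified monodromy that would obstruct extension. I would therefore spend the care in this step citing the precise form of \cite{deJong} and \cite{DK17} that gives full faithfulness of $j^*$ for $F$-crystals across a smooth divisor, and then make the (routine) reductions — devissage on $\mathcal{H}om$, Hartogs in codimension $\ge 2$, and Zariski-local reduction to a smooth divisor — around it.
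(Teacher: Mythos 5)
Your overall framing (devissage to extending horizontal Frobenius-equivariant sections, Hartogs in codimension $\ge 2$, and the divisor case as the crux) is reasonable, and faithfulness is indeed routine. But at the decisive step you have a genuine gap: you dispose of the divisorial case by asserting that "the precise form of \cite{deJong} and \cite{DK17}" gives full faithfulness of restriction for integral $F$-crystals across a smooth divisor. No such ready-made statement exists in those references. Drinfeld--Kedlaya (\cite[Thm.~2.2.3]{DK17}) gives full faithfulness only for $F$-\emph{iso}crystals, i.e., it extends a morphism $\varphi \colon M|_U \to N|_U$ only after multiplying by some power $p^r$; and de Jong's theorem (\cite[Cor.~1.2]{deJong}) is a one-dimensional result, valid over a curve or a discrete valuation ring, not over a higher-dimensional base. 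The whole content of the theorem is exactly the passage from the isocrystal statement to the integral one, and your proposal contains no mechanism for removing the $p$-power denominators; citing the two results "in combination" is circular, since their combination in the form you want is precisely what has to be proved.

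The paper's actual argument supplies that missing mechanism (adapting \cite[Lem.~6.15]{Maulik}): after using \cite{DK17} to extend $p^r\varphi$ to some $\psi \colon M \to N$, one works on a Frobenius-lifted affine formal model $\tilde{X}$, writes $\psi$ as a matrix $P$ over $\sO_{\tilde{X}}$, and shows $P \equiv 0 \pmod p$ (inducting on $r$) as follows: through every closed point $s$ of $X$ choose a smooth affine curve $C$ meeting $U$; de Jong's curve theorem extends $\varphi|_{K(C)}$ integrally over $\Spec(\sO_{C,s})$, forcing $\psi|_{\Spec(\sO_{C,s})} = p\varphi_{\Spec(\sO_{C,s})}$ and hence $\bar{P}(s)=0$; since $s$ is arbitrary and $X$ is reduced of finite type, the entries of $\bar{P}$ lie in the (zero) Jacobson radical, so $P$ is divisible by $p$. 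Note also that this route makes your smooth-divisor/codimension-$\ge 2$ dichotomy and the internal-Hom reduction unnecessary. To repair your proposal you would need to supply this denominator-killing step (or an equivalent), not merely cite the two references.
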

\begin{proof}
    We let $M$ and $N$ be two objects of $\FCrys(X)$. 
	For any thickening $(T,X) \in (X/W)_\crys$, denote the coherent $\sO_T$-module given by evaluating $M$ on $T$ by $M_T$, and similarly for $N$.
	As both $\FCrys(X)$ and $\FCrys(U)$ admit Zariski descent, we may assume both $U=\Spec(R[1/f])$ and $X=\Spec(R)$ are affine schemes, where $X$ admits an \'etale morphism to the affine space $\mathbb{A}^n_k$ and $U=X\backslash V(f)$ for a nonzerodivisor $f\in R$.
	By deformation theory, there is a smooth $p$-adic affine formal scheme $\tilde{X}=\mathrm{Spf}(\tilde{R})$ over $W$ together with a semilinear endomorphism $F_{\tilde{X}}:\tilde{X}\to \tilde{X}$ that lifts $X$ with its Frobenius morphism.
	Moreover, the object $(\tilde{X},X)$ forms a weakly final object in the crystalline site $(X/W)_\crys$.
	We let $\tilde{f}$ be any lift of $f$ in $\tilde{R}$, and let $\tilde{U}=\mathrm{Spf}((\tilde{R}[1/\tilde{f}])^\wedge_p)$ and the $p$-complete localization of $\tilde{R}$ at $\tilde{f}$.
	Then similarly the thickening $(\tilde{U},U)$ covers the crystalline site $(U/W)_\crys$.
	Here we notice that by construction, the map of rings $\sO(\tilde{X})\to \sO(\tilde{U})$ is injective.
	
	We first prove the faithfulness. By definition $\Hom_{\FCrys}(M,N)$ is the subgroup of $\Hom_{\mathrm{Crys}}(M,N)$ that preserves the Frobenius structures (and similarly for the restrictions on $U$), and it suffices to show that $\Hom_{\mathrm{Crys}}(M,N) \to \Hom_{\mathrm{Crys}}(M|_U,N|_U)$ is injective.
	Moreover, since the thickenings $(\tilde{X},X)$ and $(\tilde{U},U)$ are weakly final in the sites $(X/W)_\crys$ and $(U/W)_\crys$ respectively, 
	the maps $\Hom_{\mathrm{Crys}}(M,N) \to \Hom_{\sO_{\tilde{X}}} (M_{\tilde{X}}, N_{\tilde{X}})$ and $\Hom_{\mathrm{Crys}}(M|_U,N|_U) \to \Hom_{\sO_{\tilde{U}}} (M_{\tilde{U}}, N_{\tilde{U}})$  are injective.
	Thus it is left to show the injection of the map  $\Hom_{\sO_{\tilde{X}}} (M_{\tilde{X}}, N_{\tilde{X}}) \to \Hom_{\sO_{\tilde{U}}} (M_{\tilde{U}}, N_{\tilde{U}})$, which follows from the local freeness of $M$ and $N$ together with the injection of the rings $\tilde{R} \to (\tilde{R}[1/\tilde{f}])^\wedge_p$.
	Here we also notice that by the local freeness again, the group $\Hom_{\sO_{\tilde{X}}} (M_{\tilde{X}}, N_{\tilde{X}})$ is $p$-torsionfree, and thus so is the subgroup $\Hom_{\FCrys}(M,N)$.
	
	We then prove the fullness.
	Let $\varphi : M|_U \to N|_U$ be a morphism between their restrictions to $U$. 
	By \cite[Thm~2.2.3]{DK17}, there exists some $r \ge 0$ such that $p^r \varphi$ can be extended to a morphism $\psi : M \to N$. 
	Moreover,  continuing with the discussion in the last paragraph, an arrow in $\Hom_{\FCrys}(M,N)$ is equivalent to a map of $\sO_{\tilde{X}}$-modules $M_{\tilde{X}}\to N_{\tilde{X}}$ that is compatible with connections and Frobenius structures (\cite[Thm.~6.6]{BO}).
	Thus by the $p$-torsionfreeness of the hom groups, it suffices to show that $\psi \in p^r \Hom_{\sO_{\tilde{X}}}(M_{\tilde{X}}, N_{\tilde{X}})$.
	
	Now we may adapt the argument of \cite[Lem.~6.15]{Maulik}. By induction we may assume $r = 1$. 
	Up to possibly shrinking $X$ and choosing bases of $M$ and $N$, the map $\psi$ is given by a matrix $P$ with entries in $\sO_{\tilde{X}}$, and
	it suffices to show that the entries of the matrix are contained in $p\sO_{\tilde{X}}$. 
	Let $s \in X$ be any $k$-point. 
	We may choose an affine smooth irreducible curve $C \subseteq X$ which passes through $s$ and $C \cap U \neq \emptyset$. 
	Let $\sO_{C, s}$ be the localization of $\sO_C$ at $s$. 
	By \cite[Cor.~1.2]{deJong}, the restriction of $\varphi$ to $\Spec(K(C))$ extends uniquely to a map of $F$-crystals $\varphi_{\Spec(\sO_{C,s})}$ over $\Spec(\sO_{C, s})$. 
	In particular, similarly to the second paragraph above, the restriction along $\sO_{C,s} \to K(C)$ is injective on hom groups, and we have $\psi|_{\Spec(\sO_{C,s})} = p\varphi_{\Spec(\sO_{C,s})}$.
	Thus the reduction $\bar{P}$ of the matrix $P$ at the point $s\in \tilde{X}(k)$ is zero.
	
	Finally, we let $\fm$ be the maximal ideal of $\sO_{X}$ given by $s$.
	The above implies that the entries of the mod $p$-reduction $\bar{P}$ are contained in $\fm$. 
	Since the choice of $s$ is arbitrary, the entries of the reduction $\bar{P}$ lie in the Jacobson ideal of $\sO_X$, which is zero as $X$ is reduced and finite type over $k$. 
	Hence the matrix $P$ is divisible by $p$, which finishes the proof.
\end{proof}

For the later application, we also give a criterion on the local freeness of the integral crystalline cohomology.
\begin{lemma}
	\label{lem:local_free_of_crys_coh}
	Let $S$ be a smooth $k$-variety and $f : X \to S$ is a smooth proper morphism. 
	Suppose that for each $n$ and for each closed point $s \in S$, we have $\dim \H^n_\dR(X_s/k(s))$ is equal to the rank of the F-isocrystal $R^n f_{\crys *} \sO_{X/W}[1/p]$ on $\crys(S/W)$. 
	Then $R^n f_{\crys*} \sO_{X/W}$ is locally free for each $n$. 
\end{lemma}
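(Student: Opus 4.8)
The plan is to reduce everything, by Zariski localization on $S$, to a fibre‑dimension computation for a single perfect complex over a $p$‑adic lift of $S$. As in the proof of \cref{lem: rigidity of F-crystals}, I would first assume $S=\Spec R$ is affine and connected (hence irreducible, being smooth), with $R$ \'etale over some $\IA^{n}_{k}$, and fix a smooth $p$‑adic formal lift $\widetilde{S}=\mathrm{Spf}\,\widetilde{R}$ of $S$ over $W$; then $\widetilde{R}$ is a Noetherian, $p$‑torsionfree domain (being regular and connected), and $(\widetilde{S},S)$ is weakly final in $(S/W)_\crys$. Let $M^{\bullet}$ be the evaluation of $Rf_{\crys*}\sO_{X/W}$ on $(\widetilde{S},S)$. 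By the finiteness and base‑change theorems for crystalline cohomology of smooth proper morphisms (see \cite{BO}), $M^{\bullet}$ is a perfect complex of $\widetilde{R}$‑modules, and $Rf_{\crys*}\sO_{X/W}$ is, compatibly, a crystal of perfect complexes. It therefore suffices to show that each $H^{n}(M^{\bullet})$ is finite locally free over $\widetilde{R}$: once this is known, $M^{\bullet}$ splits as $\bigoplus_{n}H^{n}(M^{\bullet})[-n]$, and it follows formally that $R^{n}f_{\crys*}\sO_{X/W}$ is a crystal of finite locally free modules.

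For the local freeness I would invoke the standard criterion for perfect complexes over a Noetherian ring: it is enough that, for each $n$, the function $g_{n}\colon\Spec\widetilde{R}\to\IZ$, $\mathfrak{p}\mapsto\dim_{\kappa(\mathfrak{p})}H^{n}(M^{\bullet}\otimes^{L}_{\widetilde{R}}\kappa(\mathfrak{p}))$, be constant. (The mechanism: writing $M^{\bullet}$ as a bounded complex of finite free modules with differentials $d^{n}$, one has $g_{n}(\mathfrak{p})=\mathrm{rank}\,F^{n}-\mathrm{rank}_{\mathfrak{p}}(d^{n})-\mathrm{rank}_{\mathfrak{p}}(d^{n-1})$, where the matrix ranks are lower semicontinuous; constancy of all the $g_{n}$ forces, inductively starting from the lowest degree, each $d^{n}$ to attain its generic rank at every point, and over the reduced ring $\widetilde{R}$ this makes $d^{n}$ Zariski‑locally a partial identity matrix, which yields local freeness of all kernels, images and cokernels, and hence of the $H^{n}(M^{\bullet})$.)

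So the heart of the matter is to compute $g_{n}$ at enough points. At the generic point $\eta$, which lies in $\Spec\widetilde{R}[1/p]$, there are no higher Tor's against the field $\kappa(\eta)$, so $g_{n}(\eta)$ equals the rank at $\eta$ of the coherent $\widetilde{R}[1/p]$‑module $H^{n}(M^{\bullet})[1/p]$ underlying the F‑isocrystal $R^{n}f_{\crys*}\sO_{X/W}[1/p]$; since an F‑isocrystal on a smooth connected variety is locally free, this is its rank $r_{n}$, and upper semicontinuity gives $g_{n}\ge r_{n}$ on all of $\Spec\widetilde{R}$. At a closed point $s$ of $\Spec\widetilde{R}$ --- which must lie in $V(p)=S$, since $p$ is in the Jacobson radical of the $p$‑adically complete ring $\widetilde{R}$ --- the crystalline--de Rham comparison together with base change gives $M^{\bullet}\otimes^{L}_{\widetilde{R}}k(s)\simeq R\Gamma_{\dR}(X_{s}/k(s))$, so $g_{n}(s)=\dim\H^{n}_{\dR}(X_{s}/k(s))=r_{n}$ by hypothesis. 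Finally, $\{g_{n}>r_{n}\}$ is closed; were it nonempty it would contain a closed point of $\Spec\widetilde{R}$ (a nonempty closed subscheme $\Spec\widetilde{R}/I$ has a maximal ideal, which is then maximal in $\widetilde{R}$ as well), contradicting the previous sentence. Hence $g_{n}\equiv r_{n}$ for every $n$, which completes the proof.

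The step I expect to be the main obstacle is precisely this last propagation. The hypothesis only controls the de Rham dimensions of the \emph{closed}‑point fibres and the \emph{generic} rank of the isocrystal; promoting this to constancy of $g_{n}$ over the entire, mixed‑characteristic and non‑Jacobson, scheme $\Spec\widetilde{R}$ is the delicate part, and it rests on the two observations that $p$‑adic completeness forces every closed point of $\Spec\widetilde{R}$ into characteristic $p$, and that --- even though $\Spec\widetilde{R}$ is not Jacobson --- every nonempty closed subset still meets its set of closed points. A secondary point needing care is the compatible bookkeeping among $Rf_{\crys*}\sO_{X/W}$, its evaluation $M^{\bullet}$, its reduction modulo $p$ (relative de Rham cohomology) and its image in the isogeny category (the F‑isocrystal), all respecting base change --- this is where one uses the formalism of \cite{BO}.
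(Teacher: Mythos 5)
Your proposal is correct and follows essentially the same route as the paper's proof: pass to a formal lift $\wt{S}=\mathrm{Spf}(\wt{R})$, identify the fibre of the evaluated complex at a closed point (necessarily lying in $V(p)$, since $p$ is in the Jacobson radical) with $\H^n_\dR(X_s/k(s))$ via crystalline base change and the de Rham comparison, identify the characteristic-zero/generic rank with the rank of the F-isocrystal, and conclude by a constancy-of-fibre-dimension criterion over the reduced ring $\wt{R}$. The only (harmless) difference is bookkeeping: you work with derived fibres of the perfect complex and its formality, whereas the paper works with the underived coherent module, using the Tor spectral sequence injection $\H^n(C^\bullet)\otimes k(s)\into \H^n_\dR(X_s/k(s))$ together with semicontinuity and the fibrewise freeness criterion.
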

\begin{proof}
	If $S = \Spec(k)$ is a point, this is a well known consequence of the universal coefficient theorem. The general case is actually not much different. 
	Up to shrinking $S$ by an open subvariety, let us choose a formally smooth lifting $\wt{S}$ of $S$ over $W$. 
	We may assume that $\wt{S} = \mathrm{Spf}(R)$ for some $p$-completely smooth $W$-algebra $R$. 
	
	Then it suffices to argue that the evaluation $\sH$ of $R^n f_{\crys*} \sO_{X/W}$ on $\wt{S}$, which a priori is a coherent sheaf, is locally free over $\wt{S}$. 
	By the fiberwise criterion of the local freeness (\cite[\href{https://stacks.math.columbia.edu/tag/0FWG}{Tag 0FWG}]{stacks-project}) it suffices to show that $\dim \sH \tensor k(s) = m$ for all closed point $s$. 
	
	Consider $R f_{\crys *} \sO_{X/W}$, which is an object in the derived category of $\sO_{S/W}$-modules on $(S/W)_\crys$. 
	We let $C^\bullet$ be the $R$-linear complex given by the evaluation of $R f_{\crys *} \sO_{X/W}$ on $\wt{S}$.
	Then we have $\H^n(C^\bullet) = \sH$. 
	On the other hand, by the base change formula of crystalline cohomology \cite[Cor.~7.12]{BOBook} and the crystalline--de Rham comparison theorem, we have $\H^n(C^\bullet \tensor_{R}^\IL k(s)) \simeq \H^n_\dR(X_s/k(s))$. 
	Now we recall that there is a spectral sequence \cite[\href{https://stacks.math.columbia.edu/tag/061Y}{Tag 061Y}]{stacks-project}
	$$ \mathrm{Tor}^R_{j} (\H^i(C^\bullet), k(s)) \Rightarrow \H^{i - j}(C^\bullet \tensor^\IL_R k(s)),$$
	inducing an injection $\H^n(C^\bullet) \tensor k(s) \into \H^n_\dR(X_s/k(s))$. 
	Note that by the semicontinuity of fiber dimensions of a coherent sheaf, we know that $\dim \H^n(C^\bullet) \tensor k(s) \ge \dim \H^n(C^\bullet) \tensor K(\wt{s})$, where $\wt{s}$ is any lift of $s$ on the generic fiber $\wt{S}_\eta$. 
	In addition, since $K(\wt{s})$ is in characteristic zero, the dimension $\dim \H^n(C^\bullet) \tensor K(\wt{s})$ is equal to the rank of the $F$-isocrystal.
	Hence by the assumption, the injection must in fact be an isomorphism, and we are done. 
\end{proof}

\subsection{Resolving rational double points}
\label{sub:RDP}
We recall the Artin--Brieskorn resolution of the surface singularity, together with some basic notions and observations on the minimal resolution of surfaces.

\begin{definition}
\label{def: resolution}
    Let $S$ be a separated noetherian scheme (or algebraic space) and $f : X \to S$ be a relative surface whose geometric fibers are normal. 
    \begin{itemize}
        \item A (minimal) \textbf{simultaneous resolution} of $X/S$ is a smooth surface $Y/S$ equipped with a proper and birational morphism $\pi : Y \to X$ over $S$ such that for each geometric point $s \to S$, the map of fibers $\pi_s : Y_s \to X_s$ is the minimal resolution. 
        \footnote{We remind the reader to not confuse the notion with the minimal model of a smooth surface. In particular, a minimal resolution of a (singular) surface is not necessarily a minimal model.}
        \item We say that $X/S$ is Zariski-locally \textbf{resolvable} if $S$ admits a Zariski open cover $\{ U_\alpha \}$ such that the restriction of $X$ to $U_\alpha$ admits a simultaneous resolution.\footnote{Note that simultaneous resolutions are usually non-unique when they exist. } 
    \end{itemize}
\end{definition}

We also record a useful fact about compactification of surfaces.
\begin{lemma}
\label{lem: smooth compactification}
    Let $S$ be an irreducible smooth algebraic surface over a field $k$. Then there is always a smooth compactification $\overline{S}$ of $S$, i.e., a smooth proper surface $\overline{S}$ over $k$ which contains $S$ as an open dense subvariety. 
\end{lemma}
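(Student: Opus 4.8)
The plan is to take an arbitrary proper compactification of $S$ and then successively normalize it and resolve its singularities, using that $S$ is already regular to guarantee that none of these operations disturbs $S$. First I would apply Nagata's compactification theorem (valid in the category of algebraic spaces) to the separated finite-type morphism $S \to \Spec k$, obtaining a factorization $S \hookrightarrow \overline{S}_0 \to \Spec k$ into an open immersion followed by a proper morphism. Replacing $\overline{S}_0$ by the reduced closed subspace supported on the closure of $S$, we may assume $\overline{S}_0$ is integral and $S \hookrightarrow \overline{S}_0$ is a dense open immersion. Since $\overline{S}_0$ is of finite type over a field, it is excellent and Nagata, so its normalization $\nu\colon \overline{S}_0^{\nu} \to \overline{S}_0$ is finite and birational; as $\nu$ is an isomorphism over the normal locus of $\overline{S}_0$, which contains $S$ (being regular, $S$ is normal), we still have a dense open immersion $S \hookrightarrow \overline{S}_0^{\nu}$, with $\overline{S}_0^{\nu}$ now a normal proper surface over $k$.

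Next I would invoke resolution of singularities for excellent two-dimensional schemes (applied \'etale-locally should $\overline{S}_0^{\nu}$ fail to be a scheme): there is a proper birational morphism $\overline{S}_1 \to \overline{S}_0^{\nu}$ with $\overline{S}_1$ regular and the morphism an isomorphism over the regular locus of $\overline{S}_0^{\nu}$, hence over $S$. Then $\overline{S}_1$ is proper over $k$, contains $S$ as an open subspace, and $S$ is dense in the irreducible $\overline{S}_1$. It remains to upgrade ``regular'' to ``smooth over $k$'': over a perfect field this is automatic, and in general one appeals to the stronger form of surface resolution which outputs a scheme smooth over the base field (or, with some care about descent, one resolves $\overline{S}_0^{\nu}$ canonically after base change to $\overline{k}$ and then descends). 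Granting this, $\overline{S}_1$ is the required smooth proper surface containing $S$ densely, and is automatically a scheme by \cite[Thm~4.7]{Artin-AS}.

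The main obstacle is exactly this last point. Over an imperfect $k$, regularity and smoothness genuinely differ: a regular surface can be non-smooth along a curve whose function field is inseparably generated over $k$, the prototypical example being the exceptional divisor of the blow-up of $\IA^2_k$ at a closed point with inseparable residue field. Everything else is routine --- the compactification is Nagata's theorem and the normalization and resolution are classical, all available in the generality of algebraic spaces; and as a byproduct we learn a posteriori that $S$ itself was a scheme.
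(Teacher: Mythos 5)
Your argument is essentially the paper's: Nagata compactification followed by resolution of singularities for two-dimensional (quasi-excellent) schemes as in Lipman's theorem, which is an isomorphism over the regular locus and hence leaves $S$ as a dense open subvariety; the intermediate normalization step you insert is harmless but not needed. The regular-versus-smooth caveat over imperfect $k$ that you flag at the end is a genuine subtlety, but the paper's own proof does not address it either---it simply invokes Lipman, whose output is a regular proper surface, which coincides with a smooth one exactly when $k$ is perfect, the case relevant to the paper's applications.
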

\begin{proof}
    First, by Nagata's compactification theorem \cite[\href{https://stacks.math.columbia.edu/tag/0F41}{Tag 0F41}]{stacks-project}, we can find some integral proper algebraic surface $S'$ over $k$ which compactifies $S$. Then we obtain $\overline{S}$ by applying Lipman's theorem on resolution of surfaces \cite[\href{https://stacks.math.columbia.edu/tag/0BGP}{Tag 0BGP}]{stacks-project}. Note that condition (4) in \textit{loc. cit.} is satisfied simply because $S'$ is quasi-excellent (see the second paragraph of \cite[\href{https://stacks.math.columbia.edu/tag/0ADX}{Tag 0ADX}]{stacks-project}). 
\end{proof}

Now we recall an important consequence of Artin--Brieskorn resolution for a family of surfaces.
\begin{proposition}[Artin--Brieskorn resolution]
\label{lem: etale locally resolvable}
    Let $S$ be a connected smooth algebraic surface or curve over a field $k$, and let $f : X \to S$ be a generically smooth relative surface.
    Assume the geometric fibers of $f$ are normal and have at worst RDP singularities, and the singular locus of $f$ is finite over $S$.
    Then there exists a surjective and generically \'etale morphism $S' \to S$ between smooth connected algebraic surfaces over $k$, such that $X|_{S'}$ is Zariski-locally resolvable. 
\end{proposition}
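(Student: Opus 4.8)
The plan is to reduce the problem to the classical Artin--Brieskorn simultaneous resolution statement for versal deformations of rational double points, and to globalize it by taking a suitable cover of $S$ that base-changes the family to one admitting a simultaneous resolution Zariski-locally. First I would recall the classical picture. Fix a geometric point $s \to S$ where $X_s$ is singular, and a singular point $x \in X_s$ of type $\Gamma$ (an ADE Dynkin diagram); since RDPs are formally isomorphic to $A_n$, $D_n$, $E_n$ hypersurface singularities, Artin's theorem (\cite{Artin-Res}) gives a finite base change of the versal deformation of the singularity over which the simultaneous resolution exists, and the map on base spaces is the quotient by the Weyl group $W_\Gamma$ acting on the Cartan subalgebra (in characteristic $0$ or good characteristic; in bad characteristic one uses Artin's extension in \textit{loc. cit.}, which still produces a finite cover of the deformation space resolving the family — this is exactly the point emphasized in Remark~\ref{rmk: matching crystalline}/the discussion of $\sfD^-$, that we do not need $p$ coprime to $|W_\Gamma|$). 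Since the singular locus $\Sigma \subset X$ of $f$ is finite over $S$, étale-locally on $S$ near the image of $\Sigma$ we may assume $\Sigma \to S$ is a disjoint union of sections, so the deformation of $X$ near each singular point is classified by a map from (an étale neighborhood in) $S$ to the versal deformation space of the corresponding RDP.

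Next I would carry out the globalization. Pulling back the finite covers of the versal deformation spaces along these classifying maps, and taking the fiber product over all the finitely many singular points lying over a given point of $S$, we obtain a finite surjective morphism $S' \to S$ — a priori only over an étale neighborhood, but these étale neighborhoods cover $S$, and one can glue (or simply work Zariski-locally, which is all that is claimed, and then note that finite surjective maps and "generically étale" are Zariski-local properties that can be spread out). Over $S'$ the pulled-back family $X \times_S S'$ acquires a simultaneous resolution by the universal property of the Artin--Brieskorn resolution, at least after passing to the Zariski-local charts where the construction is defined; generically étale-ness of $S' \to S$ holds because over the smooth locus $S \smallsetminus f(\Sigma)$ (which is dense, as $f$ is generically smooth) no base change is needed, so $S' \to S$ is an isomorphism there, hence birational, hence generically étale. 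Finally, $S'$ so constructed may be singular, so I would apply Lipman's resolution of surface singularities (as in Lemma~\ref{lem: smooth compactification}; $S'$ is excellent being finite type over $k$) and then Nagata compactification is not needed here since we do not compactify — I would replace $S'$ by a resolution $\widetilde{S'} \to S'$, observe that pulling the simultaneous resolution further back along $\widetilde{S'} \to S'$ is harmless (smoothness of the total space of the resolution is preserved under smooth base change, and the fiberwise minimality of a resolution of an RDP is insensitive to such base change since minimal resolutions of RDPs commute with arbitrary field extensions — see the remark on $\breve X$ in the Notations), and take $\widetilde{S'}$ as the required surface; its connectedness can be arranged by passing to a connected component dominating $S$, and surjectivity onto $S$ is preserved.

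The main obstacle I expect is the behavior in bad characteristic: one must be careful that Artin's simultaneous resolution of a family of RDPs still exists after a \emph{finite} base change even when $p$ divides the order of the Weyl group (or when the singularities are not even classified by the generic ADE type, e.g.\ the subtler RDPs in small characteristic). The resolution in this regime is furnished by Artin's paper \cite{Artin-Res}, but the base change it produces need not be the naive $W_\Gamma$-cover and one has essentially no control over how ramified or singular $S'$ is over $S$ — this is precisely the difficulty flagged in \cref{sub:ideals_of_proof}, and it is what forces the later sections to avoid extending the period morphism across $\sfD^-$ naively. For the purposes of \emph{this} proposition, however, the existence (rather than any effective description) of such a finite generically étale cover suffices, so the proof goes through once one cites the general form of Artin's theorem and checks that "Zariski-locally resolvable" is a local-on-the-base condition that is stable under the resolution $\widetilde{S'} \to S'$ of the (excellent) surface $S'$.
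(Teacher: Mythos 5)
Your classical input coincides with the paper's---Artin's theorem that a family of RDPs acquires a simultaneous resolution after a finite base change, valid with no hypothesis on $p$ versus the Weyl group---but your globalization step has a genuine gap. You only ever construct covers over \'etale neighborhoods of points of $S$ (via classifying maps to versal deformation bases, which in itself needs Artin approximation to algebraize, a point you gloss over), and you then assert that ``one can glue (or simply work Zariski-locally, which is all that is claimed).'' Neither half of this works as stated. The local covers carry no descent datum: simultaneous resolutions, and hence the finite covers produced by \cite{Artin-Res}, are non-canonical (Weyl-group ambiguity, and worse in bad characteristic), so the covers over different \'etale neighborhoods need not be compatible on overlaps and cannot simply be glued into one morphism. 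And the Zariski-local qualifier in the statement applies only to the resolvability of $X|_{S'}$; the proposition demands a single globally defined, surjective, generically \'etale morphism $S' \to S$ between smooth \emph{connected} surfaces, so ``working Zariski-locally'' does not discharge that obligation.

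The paper closes exactly this gap. It invokes Artin's representability of the resolution functor by an algebraic space $R$ over $S$ and, via elementary \'etale neighborhoods, extracts an \'etale cover $\{S_\alpha\}$ of $S$ together with finite surjective, generically \'etale morphisms $R_\alpha \to S_\alpha$ factoring through $R \to S$; each $R_\alpha$ is resolved by Lipman's theorem to a smooth $U_\alpha$. The key move you are missing is the next one: take a connected component $U$ of the fiber product of all the $U_\alpha$ over $S$, a single smooth surface dominating every local cover simultaneously, so that $X$ pulled back to $U$ is resolvable on the preimage of each $S_\alpha$. Since $U \to S$ is only dominant, not surjective, the paper then compactifies $U$, the $U_\alpha$ and $S$ (\cref{lem: smooth compactification} together with the Stacks project result on extending morphisms to compactifications) and extends all maps; properness of $\overline{U} \to \overline{S}$ restores surjectivity, the needed commutativity is checked on the dense open $U$, and $S'$ is defined as the preimage of $S$ in $\overline{U}$. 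Your concluding Lipman-resolution step is in the right spirit, but without this fiber-product-plus-compactification device you never actually produce the global $S'$ the statement requires.
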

\begin{proof}
    We prove the statement when $S$ is a surface, the curve case follows from the similar argument, but is simpler. 

    By the Artin--Brieskorn resolution \cite[p.~332]{Artin-Res}, there is an algebraic space $R$ parametrizing simultaneous resolutions of $X/S$ with the following properties. 
    \begin{enumerate}[label=\upshape{(\roman*)}]
        \item The induced map on geometric points is a bijection.
        \item For every geometric point $s$ on $S$, the induced map between Henselizations $\wt{R}_s \to \wt{S}_s$ is surjective.
        \item The map $\wt{R}_s \to \wt{S}_s$ is an isomorphism when $X_s$ is smooth.
    \end{enumerate}
  By taking elementary \'etale neighborhoods \cite[\href{https://stacks.math.columbia.edu/tag/0BGW}{Tag 0BGW}]{stacks-project}, we can find an \'etale cover $\{ S_\alpha \}$ of $S$, together with a finite surjective and generically \'etale morphism $R_\alpha \to S_\alpha$ for each $\alpha$, such that that each composition $R_\alpha \to S_\alpha \to S$ factors through the map $R\to S$. 
  Here the quasi-finiteness and the surjectivity follows from (i) and (ii), and the generic \'etaleness follows from (iii) and our assumption that $X/S$ is generically smooth. Note that this step is quite general and does not need that $S$ is a surface. 

    Next we resolve each $R_\alpha$ and globalize them.
    Assume that each $R_\alpha$ or $S_\alpha$ is irreducible, and let $U_\alpha$ be a resolution of singularities of $R_\alpha$ (\cite[\href{https://stacks.math.columbia.edu/tag/0BGP}{Tag 0BGP}]{stacks-project}).
    Let $U$ be a connected component of the fiber product of $U_\alpha$'s over $S$. 
    Then $U$ dominates each $U_\alpha$.
    By \cref{lem: smooth compactification}, we can find smooth compactifications $\overline{S}$, $\overline{U}_\alpha$ and $\overline{U}$ of $S, U_\alpha$ and $U$ respectively. Moreover, by applying \cite[\href{https://stacks.math.columbia.edu/tag/0C5H}{Tag 0C5H}]{stacks-project} repeatedly, we may arrange that $U_\alpha \to S$ extends to $\overline{U}_\alpha \to \overline{S}$, and $U \to U_\alpha$ extends to $\overline{U} \to \overline{U}_\alpha$.
    Then for every $\alpha, \beta$, the diagram
    \begin{equation}
        \begin{tikzcd}
	\overline{U} & {\overline{U}_\alpha} \\
	{\overline{U}_\beta} & {\overline{S}}
	\arrow[from=1-1, to=1-2]
	\arrow[from=1-1, to=2-1]
	\arrow[from=1-2, to=2-2]
	\arrow[from=2-1, to=2-2]
        \end{tikzcd}
    \end{equation}
commutes, because this is true when restricted to the open dense subvariety $U \subseteq \overline{U}$. 
Finally, we take $S'$ to be preimage of $S$ in $\overline{U}$. 
\end{proof}

Finally, we check that rational double points are invariant under deformations, even in positive and mixed characteristic. This seems well known to experts on singularities. However, many classical references restrict to considering characteristic $0$, so here we provide a proof.
\begin{proposition}[Deformation Invariance of RDP]
\label{prop: def RDP}
    Let $R$ be a strictly Henselian discrete valuation ring, and let $t$ and $\bar{\eta}$ be the special point and a geometric generic point of $T \colonequals \Spec(R)$ respectively. 
    
    Let $f : X \to T$ be a proper flat morphism of schemes of relative dimension $2$. 
    If $X_s$ has only at worst RDP singularties, then so does $X_{\bar{\eta}}$. 
\end{proposition}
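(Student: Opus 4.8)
The plan is to reduce to a statement about the total space and its canonical singularities, then invoke inversion of adjunction / the openness of the canonical locus. First I would note the question is local on $X$, so I can assume $X = \Spec(A)$ with $A$ local, $f$ flat with $X_t$ having at worst rational double points, hence normal and Gorenstein with canonical singularities of dimension $2$. Since rational double points on surfaces are exactly the canonical (equivalently, Gorenstein canonical, equivalently Du Val) singularities, and these are lci, the total space $X$ is flat over the DVR $T$ with a Gorenstein fiber $X_t$; by deformation invariance of the Gorenstein property (and of being lci) along a flat morphism to a DVR, $X$ itself is Gorenstein, and likewise $X_{\bar\eta}$ is Gorenstein of dimension $2$. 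The next thing to check is normality of $X_{\bar\eta}$: the special fiber $X_t$ is normal (RDPs are normal), $X_t$ is $(S_2)$ and $(R_1)$; flatness gives $(S_2)$ for $X$ over the $(S_1)$ base, hence for nearby fibers, and the $(R_1)$ condition on $X_{\bar\eta}$ follows because the non-smooth locus of $f$ is quasi-finite over $T$ (the RDPs on $X_t$ are isolated, and the singular locus cannot jump to a divisor by upper semicontinuity of fiber dimension of the singular locus), so $X_{\bar\eta}$ is regular in codimension $\le 1$. Thus $X_{\bar\eta}$ is a normal Gorenstein surface, and it remains only to see its singularities are canonical, i.e.\ rational.

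For the key step I would argue as follows. Because $X_t$ has canonical (Du Val) singularities and $X$ is Gorenstein, the pair $(X, X_t)$ is canonical near $X_t$: this is a form of inversion of adjunction for a Cartier divisor (here the special fiber, cut out by a uniformizer of $R$) on a Gorenstein variety — if the restriction to the divisor has canonical singularities, the ambient scheme is canonical along that divisor, in the Gorenstein (even lci) case this is elementary and characteristic-free, following from the fact that a Du Val singularity admits a crepant resolution which, combined with a log resolution of $(X, X_t)$, forces nonnegative discrepancies. Therefore $X$ itself has at worst canonical singularities in a neighborhood of $X_t$, hence (openness of the canonical locus / since $X$ is local) $X$ has canonical singularities, in particular rational Gorenstein singularities of dimension $3$. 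Then I would apply again the converse direction of adjunction: a general — here the generic — fiber of a morphism from a variety with canonical (rational Gorenstein) singularities to a smooth curve has canonical singularities; concretely, $X_{\bar\eta}$ is a Cartier divisor in $X$, and one shows that cutting a canonical Gorenstein threefold by a general hyperplane-type divisor preserves canonicity, or simply that the generic fiber of $f$ is a normal surface whose singularities are rational (rationality of singularities is preserved under base change to the generic point and deformation of a rational Gorenstein point stays rational Gorenstein). Being normal, Gorenstein, $2$-dimensional, with rational singularities forces $X_{\bar\eta}$ to have at worst RDPs, which is the claim.

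The main obstacle I expect is making the inversion-of-adjunction step fully rigorous in mixed and positive characteristic: the usual references for inversion of adjunction (Kawamata, Koll\'ar–Shokurov) assume characteristic zero and use vanishing theorems. To avoid this I would instead give a direct, characteristic-free argument exploiting the very special structure of Du Val singularities: every RDP of $X_t$ is, formally locally, a hypersurface singularity $\{g=0\}\subset \IA^3$ of one of the ADE types (classified by Artin in all characteristics), so $X_t$ is formally locally $\Spec k[[x,y,z]]/(g)$, and since $X\to T$ is flat with this fiber, $X$ is formally locally $\Spec R[[x,y,z]]/(G)$ with $G$ lifting $g$; thus $X$ is a hypersurface, hence Gorenstein and lci, and one checks directly that such a deformation of an ADE hypersurface singularity has canonical singularities along the special fiber (e.g.\ by simultaneous/weighted-blowup resolution, or by the fact that the total space of a one-parameter deformation of a Du Val singularity is a $cDV$ singularity, which is canonical — and the $cDV$ argument is characteristic-free since it only uses that a general hyperplane section is Du Val). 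Once $X$ is known to be $cDV$ (hence canonical Gorenstein), the generic fiber, being another hyperplane-type section, is again Du Val by the same general-hyperplane-section characterization, completing the proof. The secondary technical point — that the singular locus of $f$ stays finite over $T$ so the generic fiber is $(R_1)$ — I would handle by the upper semicontinuity argument sketched above, using that the special fiber has only isolated singularities.
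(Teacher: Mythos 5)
Your reduction to the formal-local hypersurface structure $\widehat{\sO}_{X,x}\simeq \hat R[[x,y,z]]/(G)$ and the resulting Gorenstein property of $X$ and $X_{\bar\eta}$ is correct (the paper gets the same conclusion by citing the deformation invariance of the Gorenstein property along the flat morphism $f$), and your $(S_2)+(R_1)$ argument for normality of the generic fiber, using that the non-smooth locus of $f$ is closed and proper over $T$ with zero-dimensional special fiber, is fine. The genuine gap is in the key step. You rest the whole argument on ``cDV $\Rightarrow$ canonical'' and assert that this is ``elementary and characteristic-free''; it is not. Reid's theorem and the inversion-of-adjunction statements you invoke are proved over fields of characteristic zero, and even their positive-characteristic analogues for threefolds are nontrivial recent results. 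Worse, here the total space $X$ is a $3$-dimensional scheme over an arbitrary strictly Henselian DVR $R$, which may have \emph{mixed} characteristic (this case is actually used in the paper, e.g.\ for the family over $\IZ_{(p)}$ in the general-type construction); the notions of discrepancy, canonical singularity, and crepant resolution for such arithmetic threefolds are not off-the-shelf, so the cDV route cannot be taken for granted. In addition, your final deduction --- that $X_{\bar\eta}$ is Du Val ``being another hyperplane-type section'' --- conflates two different statements: the cDV condition concerns hyperplane sections \emph{through the singular point of} $X$, whereas the singular points of $X_{\bar\eta}$ are codimension-two points of $X$ lying off the special fiber. The correct version of that step (canonicity localizes, and a two-dimensional Gorenstein canonical local ring is an RDP) then has to be carried out over the non-closed and possibly imperfect field $k(\eta)$ and survive the possibly inseparable base change to $\overline{k(\eta)}$, none of which you address.

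The paper avoids all of this by working with \emph{rationality} rather than canonicity: since RDP $=$ Gorenstein $+$ rational for surface singularities, after the Gorenstein step it only remains to show $R^i\pi_*\sO_{\breve X_{\bar\eta}}=0$ for $i>0$. This is done by passing to a finite ramified cover of $T$ over which $f$ admits a simultaneous resolution (Artin--Brieskorn, using that the deformation space of an RDP is smooth and irreducible) and then applying semicontinuity of $R^i\pi_*\sO$ to compare the special and generic fibers of the resolved family. If you want to salvage your approach, you should either reproduce this resolution-plus-semicontinuity argument, or supply a genuine proof (valid over a mixed-characteristic DVR) that the two-dimensional localizations $\sO_{X,x}$ at the singular codimension-two points of $X_\eta$ are rational, rather than appealing to the cDV formalism.
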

\begin{proof}
    By \cite[\href{https://stacks.math.columbia.edu/tag/0BJJ}{Tag 0BJJ}]{stacks-project} and the assumption on $f$, the fact that $X_s$ is Gorenstein implies that the total space $X$ is also Gorenstein.
    In particular, the generic fiber $X_{\bar{\eta}}$ is Gorenstein. 
    Since a RDP singularity is equivalent to a Gorenstein rational singularity, it remains to show that the singularity on $X_{\bar{\eta}}$ is rational, i.e., the higher direct images of $\sO_{\breve{X}_{\bar{\eta}}}$ under the projection $\breve{X}_{\bar{\eta}} \to X_{\bar{\eta}}$ all vanish. To check this, we may replace $T$ by a finite ramified cover over which $f : X \to T$ admits a simultaneous resolution. This is allowed thanks to \cite[Thm.\ 3]{Artin-Res} (the deformation space of a RDP is smooth and irreducible). 
    Then the conclusion follows from the fact that the singularities on $X_s$ are rational, and the semi-continuity theorem applied to the resolution. 
\end{proof}

Finally we observe that there is a natural variation of Newton polygons for the minimal resolutions of a family of surfaces.

\begin{proposition}[Upper semi-continuity of Newton polygons]
	\label{prop:var_of_Netwon}
	Let $S$ be an integral algebraic variety over a field $k$ of characteristic $p$, and let $f : X \to S$ be a proper and generically smooth relative surface whose geometric fibers are normal with at worst rational double points singularities.
	For each integer $n\in \mathbb{N}$, the function
    \begin{equation}
        \psi : s \longmapsto \text{Newton polygon of}~\mathrm{H}^n_\crys(\breve{X}_{s^p} / W(k(s)^p))[1/p]
    \end{equation} 
    on $S$ is upper semi-continuous.
\end{proposition}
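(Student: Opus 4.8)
The plan is to reduce the statement to the classical upper semi-continuity of Newton polygons for the relative crystalline cohomology of a \emph{smooth} proper family (Grothendieck's specialization theorem, in the form developed via Katz's theory of $F$-isocrystals). The only obstruction is that the statement concerns the minimal resolutions $\breve X_{s^p}$ rather than the fibers $X_s$ themselves, so the work is to realize $\{\breve X_{s^p}\}$ as the fibers of a genuine smooth proper family --- which the Artin--Brieskorn simultaneous resolution does after a finite base change --- and to check that neither this base change nor the passage to perfect closures built into the statement changes the Newton polygon.

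First I would record the two base-change facts that make the last point work. (a) For an extension $L'/L$ of fields of characteristic $p$ which is either separable or purely inseparable, and a normal surface $Y$ over $L$ such that $Y\otimes_L L'$ is again normal (automatic here, since the geometric fibers of $f$ are normal), the minimal resolution commutes with $-\otimes_L L'$: in the separable case $\Spec L'\to\Spec L$ is a regular morphism and minimal resolutions commute with regular base change; in the purely inseparable case $\Spec L'\to\Spec L$ is a universal homeomorphism, so the exceptional curves of $\breve Y\otimes_L L'\to Y\otimes_L L'$ are the base changes of those of $\breve Y\to Y$, with the same self-intersections, arithmetic genera and intersections with the canonical class, hence still not $(-1)$-curves. (b) Relative crystalline cohomology is compatible with $-\otimes_{W(L)}W(L')$ for a perfect field extension $L'/L$ (Berthelot--Ogus), and the Newton polygon of an $F$-isocrystal is invariant under such a base change. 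Granting these, for any extension $K/k(s)$ one gets a canonical identification of $\breve{X}_{s^p}\otimes_{k(s)^p}K^p$ with the minimal resolution of $X_s\otimes_{k(s)}K$ base changed to $K^p$, and the Newton polygon of its $n$-th crystalline cohomology equals $\psi(s)$.

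Next I would invoke the Artin--Brieskorn resolution \cite{Artin-Res} (cf.\ the proof of \cref{lem: etale locally resolvable}; note that $\mathrm{Sing}(f)$ is automatically finite over $S$, being closed in $X$, which is proper over $S$, with finite fibers). Since upper semi-continuity is \'etale-local on the target, I may replace $S$ by the members of an \'etale cover so as to arrange a finite surjective generically \'etale morphism $\phi\colon R\to S$ together with a simultaneous resolution $g\colon \mathcal Y\to X\times_S R$ such that $\mathcal Y\to R$ is smooth, proper and of relative dimension $2$, and $\mathcal Y_r\to X_{\phi(r)}\otimes_{k(\phi(r))}k(r)$ is the minimal resolution for every point $r$ of $R$ (base changing along $R\to S$ preserves the standing hypotheses). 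By (a) and (b), the function $\psi_R\colon r\mapsto \text{Newton polygon of }\H^n_\crys(\mathcal Y_{r^p}/W(k(r)^p))[1/p]$ then satisfies $\psi_R=\psi\circ\phi$. On the other hand, $R^n g_{\crys*}\sO_{\mathcal Y/\IZ_p}$ becomes after inverting $p$ a locally free $F$-isocrystal on $R$ whose Newton polygon at $r$ is exactly $\psi_R(r)$, so Grothendieck's specialization theorem gives that $\psi_R$ is upper semi-continuous: for each polygon $P$, the locus $Z'_P\colonequals\{r\in R:\psi_R(r)\succeq P\}$ is closed.

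It remains to descend along $\phi$. From $\psi_R=\psi\circ\phi$ we get $Z'_P=\phi^{-1}(\{s\in S:\psi(s)\succeq P\})$, and since $\phi$ is surjective this forces $\{s\in S:\psi(s)\succeq P\}=\phi(Z'_P)$; as $\phi$ is finite it is a closed map, so $\phi(Z'_P)$ is closed in $S$. Hence $\psi$ is upper semi-continuous. (Alternatively one can avoid the Artin--Brieskorn space altogether by reducing to the case where $S$ is a curve, using that upper semi-continuity on a Noetherian scheme follows from constructibility plus stability under specialization of the jump loci, the latter being checked on spectra of discrete valuation rings; for $S$ a curve, \cref{lem: etale locally resolvable} applies directly.) I expect the only genuinely delicate point to be fact (a): once one knows the minimal resolution is unaffected by the separable and (normality being preserved) purely inseparable base changes that occur, so that $\psi$ really is constant along the fibers of $\phi$ and under passing to perfect closures, both the reduction to a smooth family and the descent are formal.
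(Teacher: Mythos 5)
Your overall strategy is the same as the paper's: realize the fiberwise minimal resolutions as an honest smooth proper family after a base change, check that the base change and the passage to perfections do not affect $\psi$, apply Grothendieck--Katz semicontinuity to the resulting $F$-isocrystal upstairs, and descend by noting that the relevant morphism is closed and surjective. The gap is at the step where you invoke the semicontinuity theorem: your cover $R\to S$ produced by Artin--Brieskorn is finite and generically \'etale but in general \emph{singular}, and you apply ``Grothendieck's specialization theorem'' to $R^n g_{\crys*}\sO_{\mathcal Y/\IZ_p}[1/p]$ on $R$ without comment. The classical Grothendieck--Katz theorem is a statement about locally free $F$-\emph{crystals}, and it is not known that the integral crystalline cohomology of the resolved family over the singular base $R$ is locally free or admits a locally free lattice (this is exactly the obstruction the paper highlights in \cref{rmk: purity of Newton}); the isocrystal version you would need, \cite[Thm.~3.12(a)]{Ked22}, is formulated for a smooth base, which is why the paper does not work on the Artin--Brieskorn cover directly. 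So as written, the sentence ``so Grothendieck's specialization theorem gives that $\psi_R$ is upper semi-continuous'' rests on a result that is not available in the singular setting you have set up.

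The paper's proof repairs precisely this point: it first replaces $S$ by a smooth alteration (de Jong), then composes the Artin--Brieskorn covers with further alterations so that the family is simultaneously resolved over a base $U_\alpha$ that is \emph{smooth} over $k$, and only then cites \cite[Thm.~3.12(a)]{Ked22}; the descent uses that the covers are proper surjective (so images of closed sets are closed), rather than finite. Your argument is fixed by inserting the same step: alter $R$ to a smooth $R'\to R$ (\cite{deJongAlt}), pull back the resolved family, and descend along the proper surjective composite $R'\to S$. Note that this forces you to upgrade your fact (a): the residue extensions occurring for an alteration are arbitrary finitely generated field extensions, not just separable or purely inseparable ones; this is still fine because the fibers are geometrically normal and the minimal resolution is characterized by relative nefness of the canonical class, which is preserved under any base field extension, and crystalline base change together with slope invariance holds for arbitrary perfect field extensions. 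Your parenthetical alternative (reduce to curves via constructibility plus specialization) would also work in principle, but constructibility of the jump loci is itself not free---it needs a generic-constancy argument on every integral closed subvariety, i.e.\ essentially the same machinery again---so as stated it is only an outline, not a substitute for the missing smoothness step.
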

\begin{proof}
        Let $\eta$ be the generic point of $S$. Our goal is to prove that for every $s \in S$, $\psi(s) \ge \psi(\eta)$ and the subset 
        \begin{equation}
            \label{eqn: jumping locus}
            S_{> \eta} := \{ s \in S \mid \psi(s) > \psi(\eta) \}
        \end{equation}
        is a closed in $S$. By de Jong's theory of alterations \cite{deJongAlt}, there exists a smooth variety $S'$ together with a proper surjective morphism $S' \to S$. If the proposition holds for the pullback family $X|_{S'}$ over $S'$, then it holds for the original family $X$ over $S$. Therefore, we may assume that $S$ is smooth.

	Similarly, by comibining Artin--Brieskorn resolution \cite{Artin-Res} and de Jong's alterations \cite{deJongAlt}, we can find an open cover $\{S_\alpha\to S\}$, together with proper surjective morphisms $U_\alpha \to S_\alpha$ for each $\alpha$ such that the family $X|_{R_\alpha}$ admits simultaneous resolution $Y_\alpha$, and each $U_\alpha$ is smooth over $k$ (cf. the proof of \cref{lem: etale locally resolvable}).
	In particular, we obtain the identifications between the fibers $(Y_\alpha)_t$ and the minimal resolution $\breve{X}_t$, for each point $t\in U_\alpha$.

	Now, notice that since the closedness of a subset can be checked on an open cover, we may replace $S$ by one of the $S_\alpha$ for a fixed $\alpha$. 
        By the first paragraph, we may further replace 
    $S_\alpha$ by $U_\alpha$. 
    Therefore, we reduce to the case when $S$ is smooth over $k$, and $X$ is smooth over $S$. 
    The conclusion then follows from \cite[Thm.\ 3.12]{Ked22}(a), which is due to Grothendieck and Katz, plus a few additional arguments by Kedlaya. 
\end{proof}
We also comment on the purity of the Newton polygon.
\begin{remark}[Purity of Newton polygons]
\label{rmk: purity of Newton}
	It is natural to ask whether the variation of Newton polygons in \Cref{prop:var_of_Netwon} satisfies purity, namely, whether $S_{> \eta}$ defined in (\ref{eqn: jumping locus}) is of codimension $1$ in $S$ if non-empty. 

        The closest result in this direction is the purity theorem by de Jong--Oort \cite[Thm.\ 4.1]{dJO20}, which is about locally free F-crystals (not merely F-isocrystals). 
        Kedlaya provided an argument to show that the purity result also holds for F-isocrystals, at the expense of assuming that the base is a smooth variety (\cite[Thm.~3.12(b)]{Ked22}). 

        However, in the setting of \cref{prop:var_of_Netwon}, even when $S$ is smooth we do not see a way of proving the purity of the crystalline cohomology of the minimal resolutions using the above results: 
        If we use \cite{Artin-Res} to resolve the family $X$ over a quasi-finite cover $S'$ of $S$, then the problem is that we do not know if the crystalline cohomology of $X_{S'}$ is locally-free, or has a locally-free lattice; 
        if we further resolve this quasi-finite cover by a smooth cover using alterations, then the problem is that we lose control on the fiberwise relative dimensions, as alterations are only generically finite. 
        However, the situation of \cref{prop:var_of_Netwon} naturally appears when we consider a coarse moduli of surfaces, so it will be interesting to figure out whether purity still holds despite these obstructions. In particular, it is potentially useful for the Tate conjecture (cf. \cref{rmk: purity and supersingular}).  
\end{remark}

\subsection{Specialization of line bundles and cohomology}
\label{sub:specialization_of_lb}
For later use, we discuss some generalities on specializations of line bundles and cohomology. 

\begin{definition}
\label{def: path}
    Let $S$ be a noetherian scheme with geometric points $t$ and $s$. 
    A \textbf{path} from $t$ to $s$ is a tuple $(\gamma, t' \to t, s' \to s)$, consisting of a morphism of schemes $\gamma : T \to S$ together with morphisms of geometric points $t' \to t$ and $s' \to s$, where $T = \Spec(R)$ for a strictly Henselian discrete valuation ring $R$, and $t', s'$ are geometric generic and special points on $T$ respectively, satisfying the condition that the following diagrams commute:
        \[\begin{tikzcd}
	{t'} & T & {s'} & T \\
	t & S, & s & S.
	\arrow[from=1-1, to=2-1]
	\arrow[from=1-1, to=1-2]
	\arrow[from=1-2, to=2-2]
	\arrow[from=2-1, to=2-2]
	\arrow[from=1-3, to=2-3]
	\arrow[from=1-3, to=1-4]
	\arrow[from=1-4, to=2-4]
	\arrow[from=2-3, to=2-4]
    \end{tikzcd}\]
    We abbreviate the tuple $(\gamma, t' \to t, s' \to s,  t \rightsquigarrow s)$ as $\gamma : t \rightsquigarrow s$. 
    Note that a path induces an \'etale path from $t$ to $s$ in the sense of \cite[\href{https://stacks.math.columbia.edu/tag/03VD}{Tag 03VD}]{stacks-project}, which we shall denote by the same letter. 
\end{definition}
We recall some basic facts about the N\'eron--Severi group which we may often use implicitly. 

\begin{theorem}
\label{thm: specialization of line bundle}
    Suppose that $X \to S$ is a smooth and proper morphism between algebraic spaces with irreducible geometric fibers. Then we have the following. 
    \begin{enumerate}[label=\upshape{(\alph*)}]
        \item The relative Picard functor $\Pic_{X/S}$ is representable by a separated algebraic space locally of finite type over $S$. 
        \item Each finite type closed subspace of $\Pic_{X/S}$ is proper over $S$. 
        \item If $S$ is the spectrum of a discrete valuation ring with the generic point $\eta$, then there is a natural isomorphism $\Pic(X_\eta) \simeq \Pic(X)$. 
        \item If $S = \Spec(k)$ and $k \subseteq k'$ is an extension of separably closed fields, then the base change map $\NS(X_k) \to \NS(X_{k'})$ is an isomorphism. 
        \item Every path $\gamma : t \rightsquigarrow s$ on $S$ naturally determines a specialization map 
        \[
        \mathrm{sp}_\gamma  : \NS(X_t) \to \NS(X_s). 
        \] 
        which is injective after tensoring with $\IQ$.
    \end{enumerate}
\end{theorem}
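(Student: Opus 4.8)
The plan is to reduce (a)--(d) to standard structure theory of Picard functors and the Néron--Severi group, and to concentrate the real work on (e). For (a), smoothness together with irreducibility of the geometric fibers forces them to be geometrically integral, so $f_*\sO_X = \sO_S$ holds universally; Artin's representability theorem then exhibits $\Pic_{X/S}$ as an algebraic space locally of finite type over $S$, and its separatedness follows from the valuative criterion of separatedness, which reduces to injectivity of $\Pic(X_R)\to\Pic(X_K)$ for a discrete valuation ring $R\to S$ with fraction field $K$ --- the easy half of (c). For (b), let $Z\subseteq\Pic_{X/S}$ be closed and of finite type over $S$; then $Z$ is separated over $S$ (closed in the separated $\Pic_{X/S}$) and of finite type, so by the valuative criterion of properness it suffices to extend any $\Spec K\to Z$ over a given $\Spec R\to S$ ($R$ a discrete valuation ring, after possibly enlarging it) to $\Spec R\to Z$. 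Uniqueness is separatedness, and existence follows by extending the corresponding line bundle from $X_K$ to $X_R$ using (c), noting that the resulting $R$-point lands in the closed $Z$.

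For (c), $X$ is smooth over the regular ring $R$, hence regular, and it is connected (a smooth morphism with connected base and connected geometric fibers has connected source), hence integral; the special fiber $X_s$ is the effective Cartier divisor cut out by a uniformizer $\pi$ of $R$, so $\sO_X(X_s)\cong\sO_X$, while $X_s$ is irreducible. Thus in the divisor exact sequence $\IZ\cdot[X_s]\to\Pic(X)\to\Pic(X_\eta)\to 0$ for the regular Noetherian algebraic space $X$ (which one may check after an étale localization reducing to schemes) the first map vanishes, giving $\Pic(X)\xrightarrow{\sim}\Pic(X_\eta)$. For (d), the Néron--Severi group scheme $\NS_{X/k} = \pi_0(\Pic_{X/k})$ is étale over $k$ and, as a Galois module, finitely generated over $\IZ$ by the theorem of the base, hence a constant group scheme on a finitely generated group $M$ over the separably closed field $k$; its formation commutes with extension of the base field since that of $\Pic_{X/k}$ does and $\mathrm{Pic}^0_{X/k}$ is geometrically connected. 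Since $X$ is smooth and non-empty over the separably closed $k$ it has a rational point, so $\Pic(X_k) = \Pic_{X/k}(k)$ surjects onto $\NS_{X/k}(k) = M$, identifying $\NS(X_k)$ with $M$; likewise $\NS(X_{k'}) = M$, and the base change map is the identity of $M$.

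For (e), write the path as $\gamma : T = \Spec R\to S$ with $R$ a strictly Henselian discrete valuation ring, equipped with $t'\to t$ and $s'\to s$, so that $X_{t'}$ and $X_{s'}$ are the geometric generic and special fibers of the smooth proper family $X_T\to T$. Applying (d) to $k(t)\hookrightarrow k(t')$ and $k(s)\hookrightarrow k(s')$ gives canonical isomorphisms $\NS(X_t)\cong\NS(X_{t'})$ and $\NS(X_s)\cong\NS(X_{s'})$, so it remains to build a specialization map $\NS(X_{t'})\to\NS(X_{s'})$. Any class in $\NS(X_{t'})$ is the image of an actual line bundle $L$ on $X_{T,K'}$ for some finite extension $K'$ of $\mathrm{Frac}(R)$ (by spreading out); the closure of the class $[L]$ in $\Pic_{X_T/T}$ is a finite-type closed subspace, hence proper over $T$ by (b), so $[L]$ spreads out over a discrete valuation ring $\Lambda$ dominating $R$ with fraction field $K'$, and its restriction to the closed fiber defines, after the evident base change, the sought class in $\NS(X_{s'})$. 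Independence of $K'$, of $\Lambda$, and of the chosen representative within its algebraic-equivalence class follows by running the same extension argument inside $\mathrm{Pic}^0_{X_T/T}$, which is finite-type, hence proper over $T$, so algebraically trivial bundles specialize to algebraically trivial ones; this produces a well-defined $\mathrm{sp}_\gamma$.

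For the rational injectivity, fix a prime $\ell$ invertible on $T$. The construction is compatible with first Chern classes, so after $\otimes\,\IQ_\ell$ the map $\mathrm{sp}_\gamma$ fits into a commutative square with the cycle class maps $\NS(X_t)\otimes\IQ_\ell\hookrightarrow\H^2_{\et}(X_t,\IQ_\ell(1))$ and $\NS(X_s)\otimes\IQ_\ell\hookrightarrow\H^2_{\et}(X_s,\IQ_\ell(1))$ --- injective by the Kummer sequence and the $\ell$-divisibility of $\mathrm{Pic}^0$ --- and with the cospecialization isomorphism $\H^2_{\et}(X_{t'},\IQ_\ell(1))\xrightarrow{\sim}\H^2_{\et}(X_{s'},\IQ_\ell(1))$ supplied by smooth and proper base change over the strictly Henselian $R$. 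Hence $\mathrm{sp}_\gamma\otimes\IQ_\ell$, and a fortiori $\mathrm{sp}_\gamma\otimes\IQ$, is injective. The hard part will be the bookkeeping in (e): checking that the $\NS$-level specialization map really is independent of all auxiliary choices and genuinely compatible with the $\ell$-adic cycle class maps and the cospecialization isomorphism, since this compatibility is exactly what supplies the rational injectivity; parts (a)--(d) are essentially invocations of standard facts, with the only mild care being to phrase everything for algebraic spaces.
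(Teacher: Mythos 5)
Your proposal is correct in outline and follows essentially the same route as the paper, which disposes of (a)--(b) by citing \cite[\S8.3, Thm.~3]{BLR}, sketches (c) exactly as you do (closures of divisors give surjectivity, triviality of $\sO(X_s)=\sO(\div(\pi))$ gives injectivity), and defers (d) and the construction and injectivity in (e) to Maulik--Poonen. The genuine differences are of mechanism rather than of strategy: you manufacture the specialization map in (e) from part (b) via the valuative criterion of properness, whereas the paper obtains it directly from (c) (extend the line bundle over the strictly Henselian DVR) together with (d); and you prove the rational injectivity by the $\ell$-adic cycle-class square and smooth--proper base change over the strictly Henselian base, rather than citing \cite[Prop.~3.6]{MPJumping}. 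Both versions are equivalent, and yours has the virtue of being self-contained.

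Two steps are under-justified, and they sit precisely where the paper leans on the citation. First, in (d), the surjectivity of $\NS(X_k)\to\NS(X_{k'})$ is not immediate from ``$\pi_0(\Pic_{X/k})$ is constant'': one must know that every connected component of $\Pic_{X/k}$ hit by a $k'$-point already has a $k$-point, and since components are torsors under a possibly non-smooth $\Pic^0$ over a separably closed but possibly imperfect field, your appeal to a rational point of $X$ alone does not settle this; this is the content of \cite[Prop.~3.1]{MPJumping}. Second, in (e), the claim that $\Pic^0_{X_T/T}$ is a finite-type (hence proper) closed subspace is not obvious --- the fiberwise identity components need not form a nice subspace; the standard repair is to use Grothendieck's $\Pic^\tau$ (open and of finite type) together with properness of the fiberwise $\Pic^0$, or, if one only wants the map on $\NS\otimes\IQ$, to note that specializations of algebraically trivial classes are numerically trivial and hence torsion in $\NS$. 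Neither point derails your approach; they are the same standard facts the paper imports from Maulik--Poonen, and with those references supplied your argument is complete.
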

\begin{proof}
    Part (a) and Part (b) are stated in \cite[\S8.3, Thm.~3]{BLR}, whose proof implies Part (c) (cf. \cite[Eqn.(3.4)]{MPJumping}):
    The idea is that a Weil divisor on $X_\eta$ gives rise to a Weil divisor on $X$ by taking the Zariski closure, and any line bundle on $X$ is trivial if its restriction on $X_\eta$ is trivial. 
    Part (d) follows from Part (a) because the N\'eron--Severi group $\NS(X_k)$ is identified with the set of connected components of $\Pic_{X/k}$, and hence does not increase under a separably closed extension (cf. \cite[Prop.~3.1]{MPJumping}). 
    Part (e) follows from Part (c) and Part (d). 
    Note that unlike Prop.~3.3 of \textit{loc. cit.}, our specialization map is canonically defined, because by \Cref{def: path} a path is defined over a strictly Henselian ring. 
    For the injectivity statement in (e), see \cite[Prop.~3.6]{MPJumping}.
\end{proof}

We remark that the specialization maps also exist for endomorphisms of abelian schemes. In this context, Part (c) of \Cref{thm: specialization of line bundle} is a consequence of the N\'eron extension property. 

Recall that simultaneous resolutions of a family of surfaces are often non-unique, even over a discrete valuation ring (cf. page 1 of \cite{Artin-Res}). However, the specializations of N\'eron--Severi classes or cohomology groups do not depend on the resolution, due to the following observation.
\begin{lemma}
\label{lem: specialization resolution}
    Let $S \colonequals \Spec(R)$ for a strictly Henselian discrete valuation ring $R$, with $s$ the special point, $\eta$ the generic point, and $\bar{\eta}$ a geometric point over $\eta$. 
    Let $X$ be a relative surface over $S$ with normal geometric fibers such that $X_\eta$ is smooth and $X_s$ is not ruled. Suppose that $\pi : Y \to X$ and $\pi' : Y' \to X$ are two simultaneous resolutions. Let $f_* : Y_* \sto Y'_*$ be the unique isomorphism such that $ \pi'_* \circ f_* = \pi_{*}$ for $* = s, \eta$. Then the following diagram commutes: 
    \begin{equation}
    \label{diag: specialization of graph}
        \begin{tikzcd}
	{\H^2_\et(Y_{\bar{\eta}}, \IQ_\ell)} & {\H^2_\et(Y_s, \IQ_\ell)} \\
	{\H^2_\et(Y'_{\bar{\eta}}, \IQ_\ell)} & {\H^2_\et(Y'_s, \IQ_\ell)},
	\arrow["\sim", from=1-1, to=1-2]
	\arrow["{f_{\eta}}"', from=1-1, to=2-1]
	\arrow["{f_s}", from=1-2, to=2-2]
	\arrow["\sim", from=2-1, to=2-2]
    \end{tikzcd}
    \end{equation}
    where the horizontal isomorphisms are given by the smooth and proper base change theorem. \footnote{Or in other words, the isomorphisms  are given by the \'etale path $\bar{\eta} \to s$ induced by $T$.} 
\end{lemma}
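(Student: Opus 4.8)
The key point is that both $Y \to X$ and $Y' \to X$ are isomorphisms away from the finitely many singular points of the fibers, and those singular points lie in the closed fiber $X_s$ (since $X_\eta$ is smooth). So over the generic point, $\pi_\eta : Y_\eta \to X_\eta$ and $\pi'_\eta : Y'_\eta \to X_\eta$ are both isomorphisms, hence $f_\eta : Y_\eta \xrightarrow{\sim} Y'_\eta$ is literally the composite $(\pi'_\eta)^{-1} \circ \pi_\eta$; this is an isomorphism of $S$-schemes over $X$, not merely a correspondence. The plan is therefore to argue that $f_\eta$ extends to an isomorphism over all of $S$, so that the whole diagram becomes induced by an actual morphism of smooth proper $S$-schemes $Y \to Y'$, at which point commutativity of (\ref{diag: specialization of graph}) is just functoriality of the cohomology with respect to morphisms, compatible with smooth-proper base change.

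First I would observe that $f_\eta$, being the graph of the birational map $Y \dashrightarrow Y'$ over $X$, has a well-defined closure; more cleanly, since $X_s$ is not ruled, $Y_s$ and $Y'_s$ are smooth projective surfaces of non-negative Kodaira dimension (being minimal resolutions of surfaces with at worst rational singularities, they carry no ruling introduced by the resolution beyond what the original surface has — and the hypothesis excludes that), and the minimal resolution of a normal non-ruled surface is unique up to unique isomorphism. Hence there is a unique isomorphism $f_s : Y_s \xrightarrow{\sim} Y'_s$ over $X_s$, which is exactly the one in the statement. Then I would invoke a valuative-criterion / Néron-type extension argument: a birational map between two smooth proper $S$-schemes $Y, Y'$ which restricts to an isomorphism on the generic fiber and such that the special fibers are non-ruled (hence contain no rational curves to obstruct extension) extends to an isomorphism $Y \xrightarrow{\sim} Y'$ over $X$ — one can see this fiber-by-fiber after base change, since a birational map of smooth surfaces defined at the generic point and with non-ruled target extends to an isomorphism, and these extensions glue over $S$ by uniqueness. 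The resulting isomorphism $F : Y \xrightarrow{\sim} Y'$ over $S$ restricts to $f_\eta$ and $f_s$ by construction and uniqueness.

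Granting the isomorphism $F : Y \xrightarrow{\sim} Y'$, the square (\ref{diag: specialization of graph}) is the outer square of the naturality square for $F^*$ on $\H^2_\et(-,\IQ_\ell)$ with respect to the smooth-and-proper base change isomorphisms along the étale path $\bar\eta \to s$: the base change isomorphisms are functorial in the $S$-scheme, so $F^*$ on the generic fibers and $F^*$ on the special fibers commute with them, and $F^*|_{Y_{\bar\eta}} = f_{\bar\eta}$, $F^*|_{Y_s} = f_s$ (again using uniqueness of the isomorphisms over $X$). This gives exactly the asserted commutativity. The main obstacle is the extension step: one must be careful that the minimal-resolution surfaces $Y_s, Y'_s$ are genuinely non-ruled — this is where the hypothesis ``$X_s$ is not ruled'' is used, together with the fact that a resolution of a surface with rational (in particular, rational double point) singularities does not create a ruling — and that the birational map really does extend to an isomorphism over the discrete valuation ring rather than only blowing up/down in the special fiber; the cleanest route is to reduce to the known statement that any birational map between minimal smooth projective surfaces of non-negative Kodaira dimension over a field is an isomorphism, applied to the special fiber, and then glue with the generic-fiber isomorphism using that both agree over the dense open $X \smallsetminus \{\text{singular points}\}$.
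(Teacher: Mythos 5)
Your construction of $f_s$ on the special fibers (uniqueness of the minimal resolution of a non-ruled normal surface) is fine and matches what the paper does, but the central step of your plan --- that $f_\eta$ extends to an isomorphism $F : Y \sto Y'$ over $S$ --- is false in general, and the whole argument rests on it. The standard counterexample is the Atiyah flop: let $X/S$ be (the base change of) a family whose special fiber acquires a single $A_1$ point, so that the total space $X$ has a threefold ordinary double point; the two small resolutions $Y, Y'$ of $X$ are both simultaneous resolutions in the sense of the paper, and the birational map $Y \dashrightarrow Y'$ over $X$ is an isomorphism away from the exceptional $(-2)$-curve $C \subset Y_s$ but is \emph{not} a morphism. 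Moreover there is no isomorphism $Y \simeq Y'$ over $X$ at all: any such map would agree with the flop on a dense open subset, hence coincide with it as a rational map, which is not a morphism. This is precisely the non-uniqueness of simultaneous resolutions over a DVR that the paper flags immediately before the lemma; if your extension argument were valid, simultaneous resolutions would always be unique up to isomorphism over $X$. The two specific justifications you offer also break down: a non-ruled surface (e.g.\ a K3) certainly contains rational curves --- the exceptional $(-2)$-curves of the resolution are exactly such curves, and they are the indeterminacy locus obstructing the extension --- and ``gluing the fiberwise isomorphisms by uniqueness'' is not an argument, since having compatible isomorphisms on $Y_\eta$ and on $Y_s$ separately does not produce a morphism of $S$-schemes; the failure happens along a curve inside the special fiber, which your fiber-by-fiber reasoning cannot see.

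Because the isomorphism genuinely fails to extend, the commutativity of the square cannot be obtained as naturality of a morphism of smooth proper $S$-schemes; that is exactly the nontrivial content of the lemma. The paper instead treats $f_\eta$ and $f_s$ as algebraic correspondences: it notes that the vertical maps are induced by the cycle classes of the graphs, specializes the closure of the graph of $f_\eta$ inside $Y \times_S Y'$, and invokes the Matsusaka--Mumford theorem (in its version for algebraic spaces) to compare the specialized cycle with the graph $\Gamma_{f_s}$, the non-ruledness hypothesis entering through that theorem rather than through any extension of $f_\eta$. So your approach needs to be replaced by an argument at the level of correspondences (or some other mechanism that tolerates the flop ambiguity), not merely patched.
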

We remark that the map $f_{\eta}$ exists simply because $\pi_{\eta}$ and $\pi'_{\eta}$ are isomorphisms.
The map $f_s$ exists because $\pi_s$ and $\pi'_s$ are birational, and $Y_s$ and $Y'_s$ are both the minimal  resolution of $X_s$, so that any birational map $Y_s \dashrightarrow Y'_s$ extends to an isomorphism. 
\begin{proof}
    We recall the basic fact of \'etale cohomology that both of the vertical arrows are induced from the cycle classes of the correspondences (cf. \cite[Lem.\ 25.3]{milneLEC}). 
    Let $\Gamma_* \subseteq Y_* \times_* Y'_*$ be the graph of $f_*$ for $* = s, \eta$, which we shall view as a correspondence between $Y_*$ and $Y'_*$. Let $\Gamma_{\eta, s}$ denote the specialization of $\Gamma_\eta$ (i.e., the special fiber of the Zariski closure of $\Gamma_\eta$ in $Y \times_S Y'$). 
    Then the diagram (\ref{diag: specialization of graph}) commutes if the map $\H^2_\et(Y_s, \IQ_\ell) \sto \H^2_\et(Y_s, \IQ_\ell)$ is induced by the correspondence $\Gamma_{\eta, s}$ because cycle class maps are compatible with specialization. 
    However, by \cite[Thm~1]{MM64} we know that $\Gamma_{\eta, s} = \Gamma_s + \Gamma_s'$, where the projections of $\Gamma_s'$ to $Y_s'$ and $Y_s$ are both $0$ (as an algebraic cycle). 
    Therefore, $\Gamma_{\eta, s}$ and $\Gamma_s$ induce the same map $\H^2_\et(\sY_s, \IQ_\ell) \sto \H^2_\et(\sY_s, \IQ_\ell)$. 
    Technically, \cite[Thm~1]{MM64} is phrased in terms of schemes, but it does hold for algebraic spaces in the generality which is applicable in the current context, see \cite[Thm~A.2]{FLTZ} (see also \cite[Thm~5.4]{Matsumoto}).
\end{proof}

At the end of the subsection, we record an extension result for the cohomology sheaves of a family of surfaces, which is often used together with the above observations.
\begin{lemma}
\label{lem: coh of Zariski-resolvable}
    Let $S$ be a normal integral Noetherian scheme, and $f : X \to S$ be a proper relative surface whose geometric fibers are normal and nonruled, such that the generic fiber $X_\eta$ is smooth. 
    Let $S^\circ \subseteq S$ be the maximal open subscheme such that $f^\circ \colonequals f|_{S^\circ}$ is smooth. 
    Assume that $X \to S$ is Zariski-locally resolvable.

    Then for each $i \in \IN$ and each prime $\ell\in \sO_S^\times$, the $\ell$-adic sheaf $R^i f^\circ_{\et *} \IZ_\ell$ over $S^\circ$ extends uniquely to an $\ell$-adic sheaf $\breve{R}^i f_{\et *} \IZ_\ell$ over $S$.
    Moreover, the extension has the following properties: 
    \begin{enumerate}[label=\upshape{(\alph*)}]
        \item For every open $U \subseteq S$ and a simultaneous resolution $g : Y \to U$ of $f|_U$, the identification of $\breve{R}^i f_{\et *} \IZ_\ell$ and $R^i g_{\et *} \IZ_\ell$ over $U \cap S^\circ$ extends to $U$. 
        \item For every geometric point $\bar{s}$ over a point $s \in S$, there is a canonical $\Gal_{k(s)}$-equivariant isomorphism $(\breve{R}^i f_{\et *} \IZ_\ell)_{\bar{s}} \simeq \H^i_\et(\breve{X}_{\bar{s}}, \IZ_\ell)$, where $\breve{X}_{\overline{s}}$ is the minimal resolution of $X_{\overline{s}}$. 
    \end{enumerate}
\end{lemma}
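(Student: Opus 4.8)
The plan is to reduce everything to the local case where a simultaneous resolution exists, and then glue. First I would address uniqueness of the extension: since $S$ is normal and integral and the sheaf $R^i f^\circ_{\et*}\IZ_\ell$ is lisse on the dense open $S^\circ$, any two extensions to $\ell$-adic sheaves on $S$ that agree on $S^\circ$ must agree, because a morphism between two such sheaves is determined on a dense open of a normal scheme (one checks this on the level of $\IZ/\ell^n$-sheaves, where it follows from the fact that $\varprojlim$ and Hom are left exact and that a section of a constructible sheaf on a normal scheme vanishing on a dense open vanishes). This reduces the problem to existence, which by uniqueness can be checked Zariski-locally on $S$.

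So I may assume $S = U$ admits a simultaneous resolution $g : Y \to X$ over $S$, with $Y/S$ smooth and proper of relative dimension $2$. Then I set $\breve{R}^i f_{\et*}\IZ_\ell \colonequals R^i g_{\et*}\IZ_\ell$, which is lisse on all of $S$ by smooth proper base change. The key point to verify is that over $S^\circ$ this agrees with $R^i f^\circ_{\et*}\IZ_\ell$: this holds because over $S^\circ$ the morphism $g$ restricts to an isomorphism $Y|_{S^\circ} \sto X|_{S^\circ}$ (a simultaneous resolution is an isomorphism precisely over the smooth locus of the fibers, by \cref{def: resolution}, and $S^\circ$ is exactly where all fibers are smooth). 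The global $\breve{R}^i f_{\et*}\IZ_\ell$ is then obtained by gluing the local pieces $R^i g_{\et*}\IZ_\ell$ along their common restriction to $S^\circ$-overlaps; the gluing data is automatically a cocycle because the identifications are forced to agree on the dense open $S^\circ \cap (\text{overlap})$ inside the normal scheme, hence everywhere, by the same rigidity used for uniqueness. This also proves property (a): for an arbitrary open $U$ with a simultaneous resolution $g : Y \to U$, both $R^i g_{\et*}\IZ_\ell$ and $\breve{R}^i f_{\et*}\IZ_\ell|_U$ restrict to $R^i f^\circ_{\et*}\IZ_\ell$ on $U \cap S^\circ$, so by uniqueness the identification over $U \cap S^\circ$ spreads out over $U$.

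For property (b), fix a geometric point $\bar{s}$ over $s \in S$. Choose a Zariski open $U \ni s$ with a simultaneous resolution $g : Y \to X|_U$; by definition $Y_{\bar s} \to X_{\bar s}$ is the minimal resolution $\breve{X}_{\bar s} \to X_{\bar s}$. Then the proper base change theorem gives a canonical $\Gal_{k(s)}$-equivariant isomorphism $(R^i g_{\et*}\IZ_\ell)_{\bar s} \simeq \H^i_\et(Y_{\bar s},\IZ_\ell) = \H^i_\et(\breve{X}_{\bar s},\IZ_\ell)$, and combining with property (a) gives the claim. I would also note that this isomorphism is independent of the chosen resolution $g$, which is precisely the content of \cref{lem: specialization resolution} (using that $X_s$ is nonruled, so that any two minimal resolutions of $X_{\bar s}$ are canonically identified and the identification is compatible with the specialization structure). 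The only mild subtlety — and the step I expect to need the most care — is the compatibility of the various local gluing identifications, i.e. checking that the cocycle condition genuinely holds; but this is exactly where the normality of $S$ and density of $S^\circ$ do the work, since any two morphisms of $\ell$-adic sheaves on a normal scheme agreeing on a dense open agree globally, so there is nothing to check beyond recording it.
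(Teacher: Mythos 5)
Your proposal follows the same route as the paper's proof: cover $S$ by opens admitting simultaneous resolutions, define the extension locally as $R^i g_{\et *}\IZ_\ell$ (lisse by smooth proper base change, and agreeing with $R^i f^\circ_{\et *}\IZ_\ell$ over $S^\circ$ because a simultaneous resolution is an isomorphism over the smooth locus), glue, and obtain (b) from proper base change together with \cref{lem: specialization resolution}. But there is a gap at precisely the point where normality has to do real work. The rigidity you prove and repeatedly invoke --- two morphisms of $\ell$-adic sheaves on a normal scheme that agree on a dense open agree everywhere --- is only the \emph{faithfulness} of the restriction functor to $S^\circ$. What your construction actually requires, both to produce gluing isomorphisms over the full overlaps $U_\alpha\cap U_\beta$ (one cannot glue sheaves along identifications given only over $U_\alpha\cap U_\beta\cap S^\circ$) and to make the identification in (a) ``spread out'' from $U\cap S^\circ$ to $U$, is \emph{fullness}: every isomorphism of lisse sheaves over the dense open $S^\circ$ extends (necessarily uniquely) over $S$. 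This is strictly stronger than what you justified; it is equivalent to the surjectivity of $\pi_1^\et(S^\circ,\bar{s})\to\pi_1^\et(S,\bar{s})$, which holds because $S$ is normal and integral and genuinely fails without normality (for the smooth locus of a nodal curve a connected \'etale cover can become disconnected upon restriction). This surjectivity/full-faithfulness of restriction to a dense open is exactly the Stacks project input the paper quotes, and without it your gluing data and the extension in (a) are never constructed.

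The repair is short: cite that for a normal integral Noetherian scheme the restriction functor on finite \'etale covers to a dense open is fully faithful and $\pi_1$ of the dense open surjects onto $\pi_1$ of the total space; applied to the $\IZ/\ell^n$-local systems this upgrades your uniqueness statement to ``there is a unique isomorphism extending the identity over $S^\circ$,'' after which the cocycle condition does follow from the faithfulness you argued, and (a), the gluing, and (b) go through as you wrote them (with independence of the chosen resolution supplied by \cref{lem: specialization resolution}, as you say). One further small correction: the assertion that a section of a \emph{constructible} sheaf on a normal scheme vanishing on a dense open vanishes is false in general (skyscraper sheaves); the statement you want is for locally constant (lisse) sheaves, where it follows from connectedness of $S$.
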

\begin{proof}
    Recall that for a normal integral Noetherian scheme $T$ with open dense $V \subseteq T$, the restriction functor bewteen the categories of finite \'etale covers $\textsf{F{\'E}t}(T) \to \textsf{F{\'E}t}(V)$ is fully faithful (\cite[\href{https://stacks.math.columbia.edu/tag/0BQI}{Tag 0BQI}, \href{https://stacks.math.columbia.edu/tag/0BQ3}{Tag 0BQ3}]{stacks-project}) and the natural map $\pi_1(V)\to \pi_1(T)$ is surjective (\cite[\href{https://stacks.math.columbia.edu/tag/0BN6}{Tag 0BN6}]{stacks-project}) for any choice of base points. 
    In particular, To construct $\breve{R}^i f_{\et *} \IZ_\ell$, we find an open cover $\{ U_\alpha \}$ of $S$ such that on each $U_\alpha$, $X$ admits a simultaneous resolution $g_\alpha : Y_\alpha \to U_\alpha$. 
    As $Y_\alpha |_{U_\alpha \cap S^\circ} = X|_{U_\alpha \cap S^\circ}$, we have identifications bewteen $R^i g_{\alpha, \et *} \IZ_\ell$ and $R^i g_{\beta, \et*} \IZ_\ell$ over $U_\alpha \cap U_\beta \cap S$, and hence over $U_\alpha \cap U_\beta$ by the full-faithfulness above. 
    Therefore, we can glue these sheaves. 
    This proves (a), and (b) additionally follows from \cref{lem: specialization resolution}. 
\end{proof}
  
\begin{construction}
\label{const: VHS and crystals resolved}
    The full-faithfulness used above is also true for $\IZ$-VHS (resp. F-isoscrystals) when $S$ is a smooth variety over $\IC$ (resp. over a perfect field $k$ of characteristic $p > 0$), as in \cite[Cor.~12]{Peters} (resp.  \cite[Thm.~2.2.3]{DK17}). 
    Therefore, we likewise define $\IZ$-VHS $(\breve{R}^i f_{*} \IZ, \breve{R}^i f_* \Omega^\bullet_{X/S})$ (resp. F-isocrystal $\breve{R}^i f_{\crys *} \sO_{X/W}[1/p]$) to be the one satisfying the analogous properties of \cref{lem: coh of Zariski-resolvable}. 
    Moreover, in the latter situation, if $\breve{R}^i f_{\crys *} \sO_{X/W}$ is locally free, then we apply \cref{lem: rigidity of F-crystals} to define an F-crystal $\breve{R}^i f_{\crys *} \sO_{X/W}$ over $S$. 
\end{construction}

\subsection{A Beauville--Laszlo gluing lemma for projective schemes}

In the following, for a given scheme $S$, we let $\sC_S$ be the category of pairs $(Z,\sL)$, where $Z$ is a projective scheme over $S$ and $\sL$ is a relative ample line bundle for $Z/S$.
We also let $\Proj_S$ be the category of projective $S$-schemes.
Then we have a variant of the Beauville--Laszlo gluing but for projective schemes.
\begin{proposition}
	\label{BL gluing}
	Let $S$ be a smooth variety over $k$, let $s\in S$ be a closed point, and let $U$ be its complement open subvariety $S\backslash \{s\}$,
	We temporarily denote $S_s$ to be the affine scheme $\Spec(\sO^\wedge_{S,s})$, and let $W\colonequals U\times_S S_s$ be the fiber product.
	\begin{enumerate}[label=\upshape{(\roman*)},leftmargin=*]
		\item There is a natural functor of categories 
		\[
		\sC_U \times_{\sC_W} \sC_{S_s} \longrightarrow  \Proj_S,
		\]
		whose composition with the pullback functor $\Proj_S \to \Proj_U\times_{\Proj_W} \Proj_{S_s}$ is identified with the forgetful functor.
		\item Assume $X\to S$ is a qcqs algebraic space such that there are ample line bundles $\sL_U$ and $\sL_s$ over $X_U$ and $X_s$ respectively that coincide after base changing to $X_W$.
		Then the map $X\to S$ is projective, and in particular $X$ is an algebraic scheme.
	\end{enumerate}
\end{proposition}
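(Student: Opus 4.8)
The plan is to recognize the square formed by $W$, $U$, $S_s$ and $S$ as a \emph{Beauville--Laszlo square}, and to reduce the gluing of projective $S$-schemes to a gluing of finitely generated graded $\sO$-algebras by means of relative ampleness. The first step is to check that $\{U\hookrightarrow S,\ S_s\to S\}$ is a jointly surjective flat cover of $S$ with $U\times_S S_s=W$: the map $S_s\to S$ is flat because it is the completion of a Noetherian local ring, and $U\to S$ is an open immersion. When $\dim S=1$ the maximal ideal $\fm_s$ is principal, so $S_s\to S$ is an $\fm_s$-adic completion and the classical Beauville--Laszlo theorem applies verbatim; for arbitrary $\dim S$ one invokes instead the formal gluing theorem along a closed subscheme (due to Moret-Bailly; cf.\ also Bhatt's algebraization results), which is available since $S$ is a smooth --- hence regular and excellent --- variety. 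In every case, pullback induces an equivalence $\mathrm{Coh}(S)\xrightarrow{\sim}\mathrm{Coh}(U)\times_{\mathrm{Coh}(W)}\mathrm{Coh}(S_s)$, compatible with tensor products, and therefore an equivalence on the categories of finite-type quasi-coherent graded $\sO$-algebras with coherent graded pieces. This is the one genuinely external input; the rest of the argument is formal.

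Granting this, I would prove part~(i) as follows. For an object $(Z,\sL)$ of $\sC_S$ with structure morphism $f$, replace $\sL$ by a fixed large power $\sL^{\otimes N}$ --- this does not change $Z$, and $N$ can be chosen uniformly over the Noetherian base --- so that $\mathcal{R}(Z,\sL)\colonequals\bigoplus_{n\ge0}f_*\sL^{\otimes Nn}$ is a finite-type graded $\sO_S$-algebra, generated in degree $1$ with coherent graded pieces, whose relative $\mathrm{Proj}$ recovers $(Z,\sL)$ with its $\sO(1)$. Given a triple $((Z_U,\sL_U),(Z_s,\sL_s),\psi)$ in $\sC_U\times_{\sC_W}\sC_{S_s}$, form $\mathcal{R}_U\colonequals\mathcal{R}(Z_U,\sL_U)$ and $\mathcal{R}_s\colonequals\mathcal{R}(Z_s,\sL_s)$ (for a common $N$); since pushforward commutes with the flat base changes $W\to U$ and $W\to S_s$, the isomorphism $\psi$ yields an isomorphism $\mathcal{R}_U|_W\cong\mathcal{R}_s|_W$ of graded algebras, and I glue these to a finite-type graded $\sO_S$-algebra $\mathcal{R}$ using the equivalence above. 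One checks $\mathcal{R}$ is again generated in degree $1$ with coherent degree-$1$ part (surjectivity of $\mathrm{Sym}(\mathcal{R}_1)\to\mathcal{R}$ may be verified after pulling back to the cover $\{U,S_s\}$), so that Zariski-locally on $S$ the sheaf $\mathcal{R}_1$ is a quotient of a free module and the relative $\mathrm{Proj}$ of $\mathcal{R}$ embeds as a closed subscheme of a projective space over $S$; hence $\underline{\mathrm{Proj}}_S\mathcal{R}\in\Proj_S$, and the functor of~(i) sends the triple to this scheme. Functoriality is immediate. Finally $\underline{\mathrm{Proj}}_S\mathcal{R}|_U\cong\underline{\mathrm{Proj}}_U\mathcal{R}_U\cong Z_U$ and likewise over $S_s$, and by construction the induced identification of the two restrictions over $W$ is the underlying scheme isomorphism of $\psi$; this is exactly the statement that the composite with $\Proj_S\to\Proj_U\times_{\Proj_W}\Proj_{S_s}$ is the forgetful functor.

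For part~(ii), the hypotheses ensure that $X_U\to U$ and $X_s\to S_s$ are projective (proper morphisms with a relatively ample line bundle). I would apply the functor of~(i) to $(X_U,\sL_U)$, $(X_s,\sL_s)$ and the tautological gluing datum --- the canonical identification $X_U\times_U W=X\times_S W=X_s\times_{S_s}W$, under which $\sL_U$ and $\sL_s$ agree by assumption --- to obtain a projective $S$-scheme $Z$ with identifications $Z|_U\cong X_U$, $Z|_{S_s}\cong X_s$ compatible over $W$. It then remains to identify $X$ with $Z$: the presheaf $T\mapsto\underline{\Isom}_S(X,Z)(T)$ is an fpqc sheaf, and by the Beauville--Laszlo property of the square its value on $S$ is the fibre product of its values on $U$ and on $S_s$ over its value on $W$; the identifications just produced give a matching pair, hence an element over $S$, i.e.\ an isomorphism $X\cong Z$ over $S$. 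Thus $X\to S$ is projective and $X$ is a scheme. (Alternatively, (ii) can be deduced without (i): glue $\sL_U$ and $\sL_s$ to a line bundle $\sL$ on $X$ --- line bundles satisfy fpqc descent and, by the Beauville--Laszlo property, the gluing datum over $W$ is enough --- then observe that $\sL$ restricts to an ample bundle on every fibre of $X\to S$ and conclude that $\sL$ is $f$-ample by the openness of relative ampleness over a Noetherian base; this route uses only the line-bundle case of the gluing.)

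The step I expect to be the real obstacle is the gluing equivalence itself, and specifically its validity when $\dim S\ge2$: there the one-variable Beauville--Laszlo theorem does not apply directly and one must appeal to formal gluing along a closed subscheme, being careful that it is available at the level of coherent sheaves (equivalently, of finite-type graded algebras, or of projective schemes) in the generality needed here. Once that input is secured, both parts reduce to routine manipulations with relative $\mathrm{Proj}$ and standard descent.
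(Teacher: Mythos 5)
Your outline follows essentially the same route as the paper: reduce everything to Bhatt's formal-gluing (Beauville--Laszlo type) equivalence for (quasi-)coherent sheaves along the square $(U, S_s, W)$, glue the graded section algebras of the ample bundles into a finite-type graded $\sO_S$-algebra, take relative $\Proj$ to get the glued projective scheme, and then identify $X$ with this glued object by gluing morphisms along the square. The paper does exactly this, citing Bhatt's Prop.~5.6 for the $\mathrm{Qcoh}$ equivalence (coherence being preserved via regularity of $S$), and differing only cosmetically in part (i): it first glues the section algebras to build an ambient $\Proj(\sA)$ and then glues the closed subschemes affine-locally inside it, rather than normalizing to degree-one generation.

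The one step you should repair is the justification of the identification $X \simeq Z$ in (ii). The fact that $T \mapsto \Isom_S(X,Z)(T)$ is an fpqc sheaf does \emph{not} give $F(S) = F(U) \times_{F(W)} F(S_s)$: the \v{C}ech nerve of the cover $\{U, S_s\}$ involves $S_s \times_S S_s$, which is much larger than $S_s$ (the completion map is not an epimorphism of rings), so sheafyness alone yields an equalizer over the wrong diagram. What you actually need is the statement that morphisms into a qcqs algebraic space glue along the patching square --- a Tannaka-duality-type result, not a consequence of the coherent-sheaf equivalence you established; this is precisely Bhatt's Prop.~5.6(6), and it is how the paper argues: both $X$ and the glued projective scheme $X'$ are identified with the pushout of $X_U \leftarrow X_W \to X_{S_s}$ in the category of qcqs algebraic spaces, hence are isomorphic. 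Your alternative route for (ii) (glue $\sL_U$ and $\sL_s$ directly to a line bundle on $X$) has the same feature: one must first know that the square base-changed along $X \to S$ is again a patching square so that $\mathrm{Qcoh}(X)$ glues (again Bhatt applied to $X$ itself, as in the paper), and one needs the implicit properness/finite-type hypotheses on $X \to S$ before fiberwise ampleness and openness of the ample locus upgrade $\sL$ to a relatively ample bundle and force $X$ to be a projective scheme. With those citations put in place, both parts of your argument go through.
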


\begin{proof}
	As the statement is Zariski local with respect to $S$, we may and do assume that $S$ is an affine scheme.
	For (i), let $\bigl( (X_U, \sL_U), (X_s,\sL_s), \alpha_W: (X_U,\sL_U)|_W \simeq (X_s, \sL_s)|_W)$ be an object in the fiber product category $\sC_U \times_{\sC_W} \sC_{S_s}$. 
	We first show that there is a projective scheme $\mathbb{P}$ over $S$ whose base changes to $U$, $S_s$ and $W$ admit compatible closed immersions from $X_U, X_s$ and $X_W$ respectively. 
	We let $\sA_U$ and $\sA_s$ be the graded rings $\oplus_{i\in \mathbb{N}} \H^0(X_U, \sL_U^i)$ and $\oplus_{i\in \mathbb{N}} \H^0(X_s, \sL_s^i)$ respectively.
	Then we get the following closed immersions into projective spaces
	\[
	X_U \to \Proj( \sA_U), \quad X_s \to \Proj( \sA_s).
	\]
	On the other hand, by assumption the base changes at $X_W$ satisfy $\alpha_W: (\sL_U)|_W \simeq (\sL_s)|_W$.
	Moreover, since both $S_s$ and $U$ are flat over $S$, by flat base change for coherent cohomology (\cite[\href{https://stacks.math.columbia.edu/tag/02KH}{Tag 02KH}]{stacks-project}), we get
	\[
	\H^0(X_U, \sL_U^i) \otimes_{\sO_U} \sO_{W} \simeq \H^0((X_U)|_W, (\sL_U)|_W^i), \quad \H^0(X_s, \sL_s^i) \otimes_{\sO_{S_s}} \sO_{W} \simeq \H^0((X_s)|_W, (\sL_s)|_W^i).
	\]
	As a consequence, the closed immersions $X_U \to \Proj( \sA_U)$ and  $X_s \to \Proj( \sA_s)$ are identified via $\alpha$ after base changing to $W$, and we get a commutative diagram of closed immersions
	\[
	\begin{tikzcd}
		\Proj(\sA_U) && \Proj(\sA_U\otimes_{\sO_U} \sO_W) \simeq \Proj(\sA_s\otimes_{\sO_{S_s}} \sO_W) \ar[ll] \ar[dd]\\
		X_U \ar[u] & X_W \ar[l] \ar[d] \ar[ru] &\\
		& X_s \ar[r] & \Proj(\sA_s),
	\end{tikzcd}
	\]
	where both squares are cartesian.
	
	Then we recall from \cite[Prop.~5.6.(4)]{Bha16} that the base change functors induce a symmetric monoidal equivalence of the categories
	\begin{equation}\label{BL gluing formula}
			\mathrm{Qcoh}_S \simeq \mathrm{Qcoh}_U \times_{\mathrm{Qcoh}_W} \mathrm{Qcoh}_{S_s}, 
	\end{equation}
	which restricts to an equivalence of coherent sheaves by \cite[Prop.~5.6.(2)]{Bha16} and the regularity of $S$.
	Apply this at the graded quasi-coherent algebras $\bigl(\sA_U, \sA_s, \alpha :(\sA_U)\otimes_{\sO_U} \sO_W \simeq (\sA_s)\otimes_{\sO_{S_s}} \sO_W \bigr)$,
	we see there is a graded quasi-coherent algebra $\sA$ over $S$ that specializes to $\sA_U$ and $\sA_s$ and is compatible with transition isomorphism $\alpha$.
	Moreover, as both $\sA_U$ and $\sA_s$ is generated by first finite amount of graded components, so is $\sA$.
	Furthermore, notice that since each $\sA_{U,i}$ and $\sA_{s,i}$ are coherent, we see each $\sA_i$ is coherent over $\sO_S$ as well.
	As a consequence, $\sA$ is a finitely generated graded $\sO_S$-algebra,
	and we can take $\mathbb{P}$ to be the projective scheme $\Proj(\sA)$ over $S$.
	Then the base changes of $\mathbb{P}$ to $U$ and $S_s$ coincide with projective spaces $\Proj(\sA_U)$ and $\Proj(\sA_s)$, which are compatible with the transition isomorphism $\alpha$.
	For simplicity, we will use $\mathbb{P}_U$, $\mathbb{P}_s$ and $\mathbb{P}_W$ to denote $\Proj(\sA_U)$, $\Proj(\sA_s)$ and $\Proj(\sA_U\otimes_{\sO_U} \sO_W) \simeq \Proj(\sA_s\otimes_{\sO_{S_s}} \sO_W)$ from now.
	
	Now we proceed to produce a projective scheme $X$ over $S$ as a closed subscheme of $\mathbb{P}$, which specializes to the given data at the beginning.
	Let $V\subset \mathbb{P}$ be an affine open subscheme, and let $\mathbb{V}_\ast$ for $\ast\in \{U,s, W\}$ be its base changes,
	which are affine open inside $\mathbb{P}_\ast$ by the affineness of the morphisms $U\to S$, $S_s\to S$ and $W\to S$.
	Moreover, the fiber products $V_\ast\times_{\mathbb{P}_\ast} X_\ast$ are affine open subspaces of $X_\ast$ for $\ast\in \{U,s, W\}$.
	So by Beauville--Laszlo gluing (\ref{BL gluing formula}) above, we can produce an affine closed scheme $T_V\subset V$ that specializes to $V_\ast\times_{\mathbb{P}_\ast} X_\ast$ for $\ast\in \{U,s, W\}$ under natural base changes.
	Finally, by \cite[Prop.~5.6.(6)]{Bha16}, we know the equivalence (\ref{BL gluing formula}) is compatible with flat base changes of $V$ (in particular open immersions).
	So by ranging over all affine open $V\subset \mathbb{P}$, we can glue $T_V$ to a closed scheme $X$ inside $\mathbb{P}$, which specializes to $X_U$, $X_s$ and $X_W$ under base changes.
	
	To prove (ii), we first notice that by (i), we can find a projective scheme $X'$ over $S$ such that its image under the functor
	\[
	\Proj_S \longrightarrow \Proj_U \times_{\Proj_W} \Proj_{S_s}
	\]
	is $(X_U, X_s, \alpha:(X_U)|_W \simeq (X_s)|_W)$.
	So it is left to show that as qcqs algebraic spaces, $X$ and $X'$ are isomorphic.
	By construction of $X'$, we know it satisfies the formula that $X'\times_S \ast \simeq X_\ast$, which also holds true for $X$ by assumption.
	In particular, the assumption of \cite[Prop.~5.6]{Bha16} applies to both $X$ and $X'$.
	As a consequence, by \cite[Prop.~5.6.(6)]{Bha16}, both $X$ and $X'$ are isomorphic to the pushout of the diagram below in the category of qcqs algebraic spaces
	\[
	\begin{tikzcd}
		X_U & X_W \ar[l] \ar[d] \\
		& X_s.
	\end{tikzcd}
    \]
    Hence by the uniqueness of the pushout, we see $X\simeq X'$.
\end{proof}

\begin{remark}
    Recently, Achinger and Youcis proved that a Beauville-Laszlo gluing datum is always effective in the category of algebraic spaces (\cite{AY24}). As illustrated by Matsumoto, such a statement is false for projective schemes because there may not be a pair of ample line bundles that glue (\cite[Ex.~5.2]{Matsumoto}, cf. \cite[Ex.~3.4]{AY24}).
    Our \cref{BL gluing}.(ii) above is saying that this is the only obstruction for the statement to be true for projective schemes. 
\end{remark}

\subsection{An application of MMP for threefolds}
\label{sub:MMP}

\begin{theorem}
\label{prop: connected by flops}
    Let $k$ be an algebraically closed field of characteristic $0$ or $p \ge 5$.
    Suppose that $X$ and $X'$ are smooth $k$-threefolds and $f : X \to S$ and $f' : X' \to S$ are projective morphisms between smooth quasi-projective varieties such that $K_X$ and $K_{X'}$ are both relatively nef over $S$. Then every birational map $\alpha : X \dashrightarrow X'$ can be decomposed into a sequence of flops. 
\end{theorem}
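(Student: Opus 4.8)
The plan is to follow Kawamata's argument that any two birational minimal models are connected by a sequence of flops, relativized over $S$, with the three-dimensional minimal model program in characteristic $p \ge 5$ (due to Hacon--Xu, Birkar, Cascini--Tanaka--Xu, and others) replacing the characteristic zero MMP. First I would show that $\alpha$ is an isomorphism in codimension one. Choose a common resolution $W$ with projective birational morphisms $p \colon W \to X$ and $q \colon W \to X'$. Since $X$ and $X'$ are smooth, hence terminal, one may write $K_W = p^* K_X + E_p = q^* K_{X'} + E_q$ with $E_p, E_q \ge 0$ supported on $\mathrm{Exc}(p)$ and $\mathrm{Exc}(q)$ respectively. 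A curve contracted by $q$ is contracted over $S$, so $p^* K_X$ is nef over $X'$ because $K_X$ is nef over $S$; symmetrically $q^* K_{X'}$ is nef over $X$. Applying the negativity lemma to the equality $p^* K_X - q^* K_{X'} = E_q - E_p$, over $X$ and over $X'$ in turn, forces $p^* K_X = q^* K_{X'}$ and $E_p = E_q$; in particular $p$ and $q$ have the same exceptional divisors, so $\alpha$ neither contracts nor extracts a divisor. This step is characteristic free.

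\textbf{An MMP producing $X'$.} Fix a $\mathbb{Q}$-divisor $A'$ on $X'$ ample over $S$ and let $A$ be its strict transform on $X$, which is $\mathbb{Q}$-Cartier as $X$ is smooth; for $0 < \varepsilon \ll 1$ the pair $(X, \varepsilon A)$ is terminal. I would run the $(K_X + \varepsilon A)$-MMP over $S$ with scaling of a relatively ample divisor; by the threefold MMP (existence and termination of flips with scaling, in the relative setting) it terminates with a minimal model $X_n$ of $(X, \varepsilon A)$ over $S$. Since $\alpha$ is an isomorphism in codimension one, no step is a divisorial contraction (a divisorial contraction could not be undone by the subsequent flips), so all steps are flips; in particular $X_n$ is isomorphic to $X$ in codimension one. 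Now $X_n$ and $X'$ are two minimal models of $(X, \varepsilon A)$ over $S$, hence isomorphic in codimension one by the negativity lemma as above; and as $K_{X'} + \varepsilon A'$ is ample over $S$, $X'$ is the log canonical model of $(X, \varepsilon A)$ over $S$, so the induced small birational morphism $X_n \to X'$ is an isomorphism because $X'$ is smooth. Thus $X_n = X'$, and $\alpha$ is realized by the chain of flips $X = X_0 \dashrightarrow X_1 \dashrightarrow \cdots \dashrightarrow X_n = X'$.

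\textbf{Each flip is a $K$-trivial flop.} I would prove by induction on $i$ that $K_{X_i}$ is nef over $S$ and that the $i$-th step is a flop. Granting $K_{X_i}$ nef over $S$, for the flipping contraction $c_i \colon X_i \to Z_i$ of the contracted ray $R_i$ one has $K_{X_i} \cdot R_i \ge 0$ and $(K_{X_i} + \varepsilon A_i) \cdot R_i < 0$, so $A_i \cdot R_i < 0$; the issue is to exclude $K_{X_i} \cdot R_i > 0$. For this I would again compare with $X'$: $X_i$ and $X'$ are minimal models over $S$ isomorphic in codimension one, so $g^* K_{X_i} = (g')^* K_{X'}$ on a common resolution, while $K_{X'} + \varepsilon A'$ is ample over $S$; combined with the length bound for $(K_{X_i}+\varepsilon A_i)$-negative extremal rays coming from the relative cone theorem, this shows that a ray with $K_{X_i} \cdot R_i$ bounded away from $0$ cannot be $(K_{X_i}+\varepsilon A_i)$-negative once $\varepsilon$ is sufficiently small. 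Hence $K_{X_i} \cdot R_i = 0$, the step is a $K_{X_i}$-trivial flip, i.e.\ a flop, and since a flop over $S$ preserves nefness of the canonical class over $S$, $K_{X_{i+1}}$ is again nef over $S$. This closes the induction and exhibits $\alpha$ as a composition of flops.

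\textbf{Expected main obstacle.} The delicate point is the last step: making a single small $\varepsilon$ work uniformly along the entire (a priori unbounded) MMP, so that every contracted ray is $K$-trivial. This is the technical heart of Kawamata's proof and needs the length bound for extremal rays together with a finiteness-of-models type input, handled carefully. A secondary point is to confirm that the threefold MMP inputs used above---the relative cone theorem, and existence and termination of flips with scaling---are available in characteristic $p \ge 5$; this is precisely why $p = 2, 3$ are excluded from the statement.
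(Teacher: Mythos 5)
Your proposal follows essentially the same route as the paper, which adapts Kawamata's flop-connectedness argument to characteristic $p\ge 5$ using the threefold MMP of Hacon--Xu and Hacon--Witaszek: isomorphism in codimension one via the negativity lemma, an MMP for $(X,\varepsilon A)$ with $A$ the strict transform of an $f'$-ample divisor terminating at $X'$, and $K$-triviality of each step via the length bound for extremal rays. The one place you stop short is exactly the point you flag as the ``main obstacle,'' and it is worth noting that the resolution is not a finiteness-of-models input: Kawamata (and the paper) rescale the coefficient by a fixed ratio $e=1/(2m\dim X+1)$ at each step, where $m$ is the Cartier index of $K_X$, and the uniformity of $\varepsilon$ is then guaranteed by checking that $m K_{X_i}$ \emph{remains Cartier after each flop} --- which the paper verifies by applying the base point free theorem (in char $p$, the result of Arvidsson--Bernasconi--Lacini) to lift $K_{X_i}$ to the flip base $Z_i$. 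Without that Cartier-index control your phrase ``bounded away from $0$'' has no uniform meaning along an a priori unbounded chain of flips, so this step should be made explicit rather than deferred.
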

\begin{proof}
	When $k$ is of characteristic $0$, this is a special case of \cite[Thm.~1]{Kawamata} when we are simply taking the $\IQ$-divisors $B$ and $B'$ therein to be $0$. 
	The proof only relies on the existence of minimal model program (MMP), including cone theorem, the based point free theorem, and termination of MMP with scaling, which all hold true for threefolds in characteristic $p \ge 5$  by \cite{HX-MMP, HW-MMP}.
	For the reader's convenience, we briefly sketch the proof of \cite{Kawamata} below for general $B$ and $B'$, explaining the necessary modifications in characteristic $p$.

	Without any change, the first two paragraphs of the proof of \cite[Thm.~1]{Kawamata} shows that the map $\alpha$ is an isomorphism in codimension one, and there are only flips in this MMP, since the other operation (which is a divisor contraction) changes the picard group (\cite[\S 3.31]{KM98}).
	In particular, for a Cartier divisor $L$ on $X$ with its strict transform $L'$ on $X'$, we have $\mathrm{H}^i(X,\sO(L))\simeq \mathrm{H}^i(X',\sO(L'))$, and $L$ is big if and only if $L'$ is so (\cite[Def.~2.59]{KM98}).
	We then choose $L'$ an effective $f'$-ample divisor on $X'$, with $L$ its strict transform on $X$ and $l\ll 1$ a positive rational number such that $(X,B+lL)$ is klt (which exists since being klt is an open condition).
	In this case, the divisor $L$ is big and can be written as the sum $A+E$, where $A$ is $f$-ample and $E$ is $f$-nef.
	Moreover, for a small enough $\epsilon >0$, by the openness of the klt condition again, the pair $(X, B+ (l-\epsilon) L +\epsilon E)$ is klt as well.
	In particular, the base point free theorem in characteristic $p$ \cite[Cor.~1.5]{ABL22} (applied at the log pair $(X, B+(l-\epsilon)L+\epsilon E)$) implies that the sum $m(K_X+B+lL)$ is $f$-free for $m\in \mathbb{N}$ large enough.
	As a consequence, the global sections of a positive multiple of the $\mathbb{Q}$-divisor $K_X+B+lL$ defines a projective $S$-morphism and coincides with that of the $f'$-ample $\mathbb{Q}$-divisor $K_{X'}+B'+lL'$.
 The latter implies that the map $\alpha$ is an isomorphism.
	Hence we may assume $K_X+B+l'L$ is not $f$-nef over $S$ for $0<l' \leq l$.
  Furthermore, since $X$ is $\mathbb{Q}$-factorial and $B$ is a $\mathbb{Q}$-Cartier divisor, by \cite[Def.~6.10]{KM98} we may assume $l$ is small enough so that a flop for $(X,B)$ is the same as a flip for $(X,B+l'L)$.

	To proceed, we let $H$ be an effective divisor on $X$ such that $(X,B+lL+tH)$ is klt and $K_X+B+lL+tH$ is $f$-nef for some $t>0$.

	our goal is to run the minimal model program for the pair $(X,B+l'L)$ with the scaling $H$, so that the index $l'$ is decreasing each time by a ratio $e$ that only depends on $X$ and $B$.
	To this end, we let $$e=\frac{1}{2m \dim(X)+1}$$ for a positive integer $m$ with $m(K_X+B)$ being a Cartier divisor.
	Then \cite[Lem.~2]{Kawamata} shows that there exists an extremal ray $R$ for $(X,B+elL)$ over $S$ such that $K_X+B+elL+et_0H$ is $f$-nef, and
	\[
	((K_X+B+elL+et_0H)\cdot R) = ((K_X+B)\cdot R) =0,
	\]
	where 
	\begin{align*}
	     t_0=\min\{t\in \mathbb{R} \mid &((K_X+B+lL+tH)\cdot R)\geq 0 \text{ for all extremal rays } R \text{ for } \\ &(X,B+lL) \text{ over } S \text{ such that } ((K_X+B)\cdot R)=0\}.
	\end{align*}
    
	Here we note that the only input of the above existence in \textit{loc.\ cit.} is the inequality $0>((K_X+B+lL)\cdot C) \geq -6$, which is proved in \cite[Thm.~1.4]{HW-MMP}.

	Finally, we may run MMP for the pair $(X,B+elL)$ with scaling of $H$.
	By \cite[Thm.~1.1]{HW-MMP}, the flip for the contraction morphism $\pi: X\to Z$ with respect to the extremal ray $R$ exists.
	Moreover, by the equalities above, since  $(K_X+B+lL+t_0H)\cdot R=0$, the $\mathbb{Q}$-Cartier divisor $(K_X+B+lL+t_0H)$ and its strict transform on the flip are relatively numerically trivial with respect $Z$ (and thus are anti-nef and nef with respect to $Z$ separately). 
	So thanks to the negativity lemma (\cite[Lem.~3.38]{KM98}), we know  the pair $(X,B+lL+t_0H)$ remains klt after the flip.
	Furthermore, the equalities above imply that the $\mathbb{Q}$-Cartier divisor $k(K_X+B)-k(K_X+B+elL)=(-elkL)$ is $\pi$-big.
	We can thus apply the base point free theorem  \cite[Cor.~1.5]{ABL22} at the $\pi$-nef Cartier divisor $K_X+B$ and the log pair $(X, B+elL)$, to see that there exists a large $m\in \mathbb{N}$ such that both $mk(K_X+B)$ and $(m+1)k(K_X+B)$ (and so is their difference $k(K_X+B)$) lift to ample Cartier divisors of some projective morphism $Y\to Z$.
	But notice that since the map $X\to Z$ is defined by contracting the extremal ray $R$, the equation $k(K_X+B)\cdot R=0$ enforces that $Y=Z$ and $k(K_X+B)$ is defined over $Z$.
	Hence the divisor $k(K_X+B)$ remains a Cartier divisor after the flip.
	In this way, we can continue the process for $(X,B+elL)$ with the scaling $H$, which terminates after finite amount of steps by \cite[Thm.~1.2]{HW-MMP}.
	So we are done.\end{proof}

\begin{corollary}[Resolution of a constant family is constant]
	\label{cor: res of constant family}
	Let $k$ be an algebraically closed field of characteristic $p \ge 5$. 
	Let $X_0$ be a projective surface over $k$ with only RDP singularities and $X = X_0 \times C$ for some smooth curve $C$ over $k$. 
	Let $Y_0$ be the minimal resolution of $X_0$, and let $(Y/C, \pi : Y \to X)$ be any minimal simultaneous resolution of $X/C$. 
	Then $Y/C$ is isomorphic to the constant family $Y_0 \times C$.
\end{corollary}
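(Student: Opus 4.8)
The plan is to realize $Y_0\times C$ as one minimal simultaneous resolution of $X\colonequals X_0\times C$, compare it with the given resolution $Y$ via \cref{prop: connected by flops}, and then show that no flop between them is possible. (Working one connected component of $X_0$ at a time, we may assume $X_0$, hence $X$, is irreducible.) First I would verify that $\pi_0\times\id_C\colon Y_0\times C\to X$ is a minimal simultaneous resolution: it is proper birational and smooth over $C$, and over each geometric point of $C$ it restricts to a base change of the minimal resolution $\pi_0\colon Y_0\to X_0$, which is again minimal since every exceptional curve stays a $(-2)$-curve. Since the minimal resolution of an RDP surface is crepant and crepancy can be read off from the discrepancies along a general fiber, both $\pi\colon Y\to X$ and $\pi'\colon Y_0\times C\to X$ satisfy $K_Y\equiv_X 0\equiv_X K_{Y_0\times C}$; and since both are isomorphisms over the dense open $X^{\mathrm{sm}}=X\smallsetminus(\mathrm{Sing}(X_0)\times C)$, the induced birational map $\beta\colon Y\dashrightarrow Y_0\times C$ is defined over $X$.

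Next I would apply \cref{prop: connected by flops} with $S\colonequals X$ --- its proof uses only the relative minimal model program for threefolds over a quasi-projective base, which remains valid over the normal quasi-projective base $X$ --- to decompose $\beta$ into a sequence of flops over $X$. The heart of the matter is to show this sequence is empty, equivalently that $\overline{NE}(Y/X)$ carries no small extremal contraction. The cone $\overline{NE}(Y/X)$ is generated by classes of $\pi$-contracted curves, and each such curve lies in a fiber $\pi^{-1}(q,c)$ with $q\in\mathrm{Sing}(X_0)$ and $c\in C$; by uniqueness of the fiberwise minimal resolution this fiber is the exceptional $(-2)$-configuration of $Y_0$ over $q$, whose components $\Gamma_{q,1},\dots,\Gamma_{q,n_q}$ have classes in $Y$ that do not depend on $c$. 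Crucially, $\mathrm{Sing}(X_0)\times C$ is the entire non-smooth locus of $X$ and is constant along $C$, so $\pi^{-1}(\{q\}\times C)$ is a $\pi$-exceptional divisor whose irreducible components $D_{q,i}$ each dominate $C$ with the $\Gamma_{q,i}$ as fibers. Hence any extremal ray $R=\IR_{\ge 0}[\Gamma_{q,i}]$ has a contraction over $X$ contracting all of $D_{q,i}$, so it is divisorial, not small. Therefore $\beta$ is an isomorphism over $X$, and in particular $Y\cong Y_0\times C$ as $C$-schemes.

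I expect the main obstacle to be the argument just given: one must carefully establish that every $\pi$-contracted curve deforms in a one-parameter family over $C$ (this is precisely where the constancy of $X$ along $C$ enters), so that the associated extremal contraction sweeps out a divisor, and one must be willing to invoke \cref{prop: connected by flops} over the singular base $X$ rather than a smooth base. The remaining ingredients --- identifying $Y_0\times C$ as a simultaneous resolution, the crepancy statement, and the passage from ``no flops'' to ``isomorphism'' --- are formal.
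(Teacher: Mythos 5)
Your overall strategy coincides with the first half of the paper's proof: exhibit $Y_0\times C$ as a minimal simultaneous resolution, invoke \cref{prop: connected by flops} to factor the comparison map into flops, and rule out flops by showing that the relevant extremal contractions are divisorial rather than small. However, there are two genuine gaps.

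First, and most seriously, you tacitly assume that $Y$ is a quasi-projective scheme. By \cref{def: resolution} (and the Artin--Brieskorn construction that produces such resolutions), a simultaneous resolution is a priori only an algebraic space, and it is a well-known phenomenon that such total spaces can fail to be schemes or to be projective over the base. \cref{prop: connected by flops} and the entire MMP apparatus require projective morphisms of quasi-projective varieties, so your argument only treats the case where $Y$ happens to be (quasi-)projective. The paper spends the whole second half of its proof on exactly this point: it uses \cref{lem: spreading out} to get $Y|_U\simeq Y_0\times U$ over a dense open $U\subseteq C$, then specialization of N\'eron--Severi groups, the Nakai--Moishezon criterion, and the Beauville--Laszlo gluing statement for projective schemes (\cref{BL gluing}) to produce a relatively ample line bundle and conclude that $Y\to C$ is projective, before reducing to the MMP argument. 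Your proposal is silent on this, so it does not prove the statement as formulated.

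Second, your no-flop argument is run on the arbitrary resolution $Y$, and the key claim there --- that every $\pi$-contracted curve $\Gamma\subset Y_c$ has class independent of $c$, so that its extremal contraction sweeps out a divisor --- is not justified by ``uniqueness of the fiberwise minimal resolution.'' That uniqueness identifies each fiber with $Y_0$ abstractly, but it does not control how the $(-2)$-curves sit in the threefold $Y$: a priori $Y$ could contain a rigid contracted curve with normal bundle $\sO(-1)\oplus\sO(-1)$ (this is precisely what a flopped model would contain), and excluding this is essentially the content of the corollary, so your justification is close to circular. The paper sidesteps this by arguing on the constant family $Y_0\times C$ instead, where every exceptional curve $\Gamma\times\{c\}$ literally lies in the divisor $\Gamma\times C$, so the contraction of its class visibly contracts a codimension-one locus and cannot be a flop. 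Your argument can be repaired either by switching to that side of the birational map, or by a more careful algebraic-equivalence analysis of the components of the exceptional divisors of $\pi$ over $\mathrm{Sing}(X_0)\times C$; as written, the step is a gap. (Your concern about applying \cref{prop: connected by flops} over the singular base $X$ rather than over $C$ is comparatively minor and mirrors a looseness already present in the paper.)
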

\begin{proof}
    Let us first prove the statement assuming that the total space $Y$ is a quasi-projective over $k$. 
	We first notice that the assumption that $X_0$ only has RDP singularities implies that $K_Y$ is $\pi$-nef. 
	To see this, it suffices to show that for any $\pi$-exceptional curve $D$, the intersection $K_Y\cdot D$ is non-negative. 
	As $\pi$ is fiberwise the minimal resolution, $D$ lies in a single fiber, i.e., $D \subseteq Y_c$ for some $c \in C(k)$. 
	Moreover, by the RDP assumption, the curve $D$ is a rational $(-2)$-curve on $Y_c$ and in particular we have $D^2=-2$.  
	So by adjunction formula of surfaces, we get $(K_Y+D)\cdot D=\deg (K_D) = -2$, and thus $K_Y\cdot D=0$.
	
	Note that as both $Y$ and $Y_0 \times C$ are minimal resolutions of $X$, they are birational equivalent and are relative nef over $C$. 
	By the algebraicity assumption of $Y$ and by \Cref{prop: connected by flops}, any birational morphism of varieties $Y_0 \times C \dashrightarrow Y$ decomposes into a sequence of flops. 
	However, there are no flops that can be defined on $Y_0 \times C$. 
	Indeed, in order to do so one must choose a numerical class of relative exceptional curves (namely a class of those curves $D\subset Y_0\times C$ that are mapped onto single points in $C$. 
	But since $Y_0 \times C$ is a constant family, every exceptional curve $D_c$ is contained in a single fiber $Y_0\times c$ and in particular is algebraic equivalent to a curve $D_{c'}\subset Y_0\times c'$, where $c'\subset C$ is any point that is in the same connected component of $c$.
	As the consequence, the contraction of the relative numerical class $[D_c]$ will simultaneously contracting a codimension $1$ locus in $Y_0\times C$, which in particular would not be a flop (\cite[Def.6.10]{KM98}).

    Now we consider the general case that $Y$ is an algebraic space, and we want to prove that $Y$ is in fact a scheme.
    By \cref{lem: spreading out} below, there exists an open dense subscheme $U \subseteq C$ such that $Y|_U$ is a scheme, so that by the previous paragraphs we know $Y|_U \simeq Y_0 \times U$. 
    By induction we may assume that $C \smallsetminus U$ is a single point $c$. 
    Let $C_c$ (resp. $\what{C}_c$) be the spectrum of the local ring $\sO_{C, c}$ of $C$ at $c$ (resp. formal completion of $C$ at $c$), and similarly let  $Y|_{C_c}$ (resp. $Y_{\what{C}_c}$ be the base change of $Y$ at $C_c$ (resp. formal completion of $Y$ at $c$).
    Then $Y|_{C_c}$ (resp. $Y|_{\what{C}_c}$) is a smooth proper algebraic space over $C_c$ (resp. formal scheme over $\what{C}_c$), and there is a natural map $\Pic(Y_{C_c}) \to \Pic(Y_{\what{C}_c})$. 
    Let $\eta$ be the generic point of $C$ and thus for $C_c$. 
    There is then a specialization map $\Pic(Y_\eta) \to \Pic(Y|_{C_c}) \to \Pic(Y_c)$ (\cref{thm: specialization of line bundle}). 
    Moreover, by the assumption that $Y|_U = Y_0 \times U$, we know $Y_\eta = Y_0 \tensor_k k(\eta)$, and we can form a diagram
    \begin{equation}
        \label{eqn: triangle of specialization}
        \begin{tikzcd}
	{\Pic(Y_0) } \\
	{\Pic(Y_\eta)} & {\Pic(Y_c),}
	\arrow["{\tensor_k k(\eta)}"', from=1-1, to=2-1]
	\arrow["{\textrm{specialize}}", from=2-1, to=2-2]
	\arrow["{\textrm{identify }Y_0 \simeq Y_c}"', "\simeq", from=2-2, to=1-1]
    \end{tikzcd}
    \end{equation}
    which a priori may not commute.
    However, we can tell that the injectivity the morphism $\NS(Y_\eta)_\IQ \to \NS(Y_c)_\IQ$ (which is true thanks to \Cref{thm: specialization of line bundle}.(e)) implies its surjectivity, as the source and the target of the map are at least abstractly isomorphic, thanks to \Cref{thm: specialization of line bundle}.(d).
    So as the ampleness is a numerical condition, the latter implies that there exists some $\zeta \in \Pic(Y_\eta)$ which specializes to an ample line bundle $\zeta \in Y_c$. 
    Furthermore, since the effectivity and the intersection numbers of line bundles on $Y_\eta$ are preserved under specialization, by the Nakai--Moishezon criterion, we know $\zeta$ must also be ample.
    Therefore, we may now apply \cref{BL gluing} to conclude that $Y$ is projective over $C$ and hence is trivial by the first two paragraphs above. 
\end{proof} 

\begin{lemma}
\label{lem: spreading out}
    Let $S$ be an integral scheme with generic point $\eta$ and let $f : X \to S$ be a smooth and proper morphism in the category of algebraic spaces. 
    If the generic fiber $X_\eta$ is a (projective) scheme, then up to shrinking $S$ (i.e., replacing $S$ by an open dense subscheme $U \subseteq S$) such that $X$ is also a (projective) scheme. 
\end{lemma}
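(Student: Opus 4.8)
The plan is to reduce to the affine case and then spread out the structure of the generic fiber over a neighborhood of $\eta$. Since it is enough to produce a single open $U\ni\eta$ with $X_U$ a (projective) scheme, and $\eta$ lies in every nonempty open of the integral scheme $S$, we may replace $S$ by an affine open neighborhood of $\eta$ and assume $S=\Spec(A)$ is affine; a standard approximation argument (automatic in all our applications, where $S$ is a variety or a localization of one) further reduces us to $A$ Noetherian, so that $f$ is of finite presentation. Then $\Spec K(S)$ is the inverse limit of the affine opens $U\subseteq S$ containing $\eta$, and the usual spreading-out (limit) formalism for algebraic spaces \cite{stacks-project} applies. We treat the two assertions --- ``$X$ is a scheme'' and ``$X$ is projective over $S$'' --- separately.

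\textbf{The scheme case.} As $X_\eta$ is proper over $K(S)$ it is quasi-compact, hence covered by finitely many affine opens $V_1,\dots,V_n\subseteq X_\eta$. By spreading out, after shrinking $S$ we may find open subspaces $W_1,\dots,W_n\subseteq X$, each affine over $S$ --- and therefore a scheme --- with $W_i\cap X_\eta=V_i$. The open subspace $W\colonequals\bigcup_i W_i$ then contains $X_\eta$, so its closed complement $Z\colonequals X\smallsetminus W$ is disjoint from $X_\eta$. Since $f$ is proper it is a closed map, so $f(Z)$ is closed in $S$; moreover $\eta\notin f(Z)$, because a point of $Z$ lying over $\eta$ would lie in $X_\eta\cap Z=\emptyset$. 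Replacing $S$ by $S\smallsetminus f(Z)$ we achieve $X=\bigcup_i W_i$, i.e.\ a Zariski-open covering of $X$ by schemes; hence $X$ is a scheme, since any algebraic space admitting such a covering is one.

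\textbf{The projective case.} Now suppose $X_\eta$ is projective, with ample line bundle $L_\eta$. Viewing $L_\eta$ as a coherent sheaf, it extends (after shrinking $S$) to a coherent $\sO_X$-module $\sF$; the locus where $\sF$ fails to be invertible is closed and disjoint from $X_\eta$, so applying the closedness of $f$ and shrinking $S$ once more we may assume $\sF$ is a line bundle $L$ on $X$ with $L|_{X_\eta}\cong L_\eta$. The locus of $s\in S$ over which $L$ restricts to an ample bundle is open (this holds for proper morphisms of algebraic spaces, just as for schemes) and contains $\eta$; shrinking $S$ a final time, $L$ becomes $f$-ample, so $f$ factors through a closed immersion $X\hookrightarrow\IP^N_S$ and $X$ is a projective $S$-scheme. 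In particular the projective assertion follows from the line-bundle argument alone, independently of the covering step above.

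The only genuinely nontrivial ingredient is the spreading-out of the affine cover (resp.\ of the ample line bundle) of the generic fiber: this is where one uses that the approximation and limit machinery of \cite{stacks-project} is available for algebraic spaces, and not merely for schemes. Everything else --- the bookkeeping with closed images under the proper map $f$, and the passage from a Zariski-open cover by schemes to the scheme property --- is formal.
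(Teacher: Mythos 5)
Your argument is correct, but it takes a genuinely different route from the paper. You spread out structure on $X$ directly: writing $X_\eta=\lim_f X_{D(f)}$, you descend a finite affine open cover (resp.\ the ample line bundle $L_\eta$) of the generic fiber to a neighborhood, use properness of $f$ to discard the closed image of the locus not covered (resp.\ where the extended sheaf fails to be invertible), and conclude by Zariski gluing (resp.\ relative ampleness). The paper instead spreads out the \emph{scheme} $X_\eta$ abstractly to a smooth proper (projective) scheme $X'$ over a shrunken $S$, and then extends the generic isomorphism $\psi_\eta\colon X'_\eta\simeq X_\eta$ over an open by invoking Olsson's theorem that $\underline{\Hom}_S(X',X)$, $\underline{\Hom}_S(X,X')$, $\underline{\Hom}_S(X,X)$ are algebraic spaces locally of finite presentation, together with the limit lemma \cite[\href{https://stacks.math.columbia.edu/tag/05N0}{Tag 05N0}]{stacks-project}, so that $X$ inherits the (projective) scheme structure from $X'$. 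Your route avoids Olsson's representability theorem entirely, at the price of leaning on the limit formalism for opens and finitely presented modules on algebraic spaces, plus openness of the ample locus for proper morphisms of algebraic spaces; that last fact is true (it is part of the Stacks Project's treatment of polarized proper schemes), but if you prefer not to cite it, your scheme-case step already lets you reduce to the classical EGA III 4.7.1 statement for schemes, so the two halves of your write-up are not quite as independent as your final sentence suggests. The paper's argument is shorter and uniform (any structure of $X_\eta$ that spreads out is transported at once), while yours is more hands-on and produces the relatively ample bundle on $X$ explicitly.
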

\begin{proof}
    By spreading out the scheme $X_\eta$, and by shrinking $S$ if necessary, we can find a smooth and proper scheme $X'$ over $S$ such that there is an isomorphism $\psi_\eta : X'_\eta \sto X_\eta$. 
    It then suffices to show that up to shrinking $S$ again, the isomorphism $\psi_\eta$ can be extended to an isomorphism $\psi : X' \sto X$ in the category of algebraic spaces over $S$. Then by \cite[Thm~1.1]{Olsson} the fibered categories $\underline{\Hom}_S(X', X)$, $\underline{\Hom}_S(X, X')$ and $\underline{\Hom}_S(X, X)$ are in fact algebraic spaces locally of finite presentation over $S$. Now consider the fiber $Z$ of the morphism given by composition 
    $$ \underline{\Hom}_S(X', X) \times \underline{\Hom}_S(X, X') \to \underline{\Hom}_S(X, X) $$
    over $\mathrm{id}_X \in \underline{\Hom}_S(X, X)$. Then $Z$ is also an algebraic space locally of finite presentation and the pair $(\psi_\eta, \psi^{-1}_\eta)$ defines a point in $Z(k(\eta))$. Now by \cite[\href{https://stacks.math.columbia.edu/tag/05N0}{Tag 05N0}]{stacks-project}, up to shrinking $S$, we may extend this $k(\eta)$-point to an $S$-point, which justifies that we can spread out $\psi_\eta$ to an isomorphism $\psi : X' \sto X$. 
\end{proof}

\begin{remark}
\label{rmk:short_proof_on_constancy}
In \cref{cor: res of constant family}, as the minimal resolutions of the geometric fibers are unique, it is easy to see that $Y/C$ is isotrivial (i.e., has isomorphic geometric fibers over the same residue field). 
The content here is that $Y/C$ must in fact be trivial on the nose.
Below we sketch another proof of \cref{cor: res of constant family} using Artin's representability statement \cite[Thm.~1]{Artin-Res}, where assuming $p \ge 5$ is not needed.

Consider the resolution functor $R \colonequals \mathrm{Res}_{X_0/k}$ defined in page $1$ of \textit{loc. cit.}. 
Any simultaneous resolution of $X_0 \times C$ would define a morphism $C \to R$. 
Now, by \cite[Thm.~1]{Artin-Res}, we know $R$ is represented by a \textit{quasi-separated} algebraic space over $k$ and has a single geometric point.
On the other hand, a theorem of Rydh implies that algebraic space with a single point is a scheme if and only if it is decent \cite[\href{https://stacks.math.columbia.edu/tag/047Z}{Tag 047Z}, \href{https://stacks.math.columbia.edu/tag/086U}{086U}]{stacks-project}. 
As a consequence, since quasi-separated algebraic spaces are indeed decent \cite[\href{https://stacks.math.columbia.edu/tag/03I7}{Tag 03I7}]{stacks-project}, we know $R$ is a scheme such that $R_{\mathrm{red}} = \Spec(k)$. 
As $C$ is reduced, $C \to R$ factors through $R_{\mathrm{red}}$ and the result follows. 
\end{remark}

\subsection{Hodge diamond of elliptic surfaces in positive characteristic}

Let $C$ be a smooth projective curve over an algebraically closed field $k$. Let $X$ be a smooth surface over $k$ equipped with an elliptic fibration $\pi : X \to C$. 
Assume that there exists a zero section $\sigma : C \to X$. 
In particular, $X$ has no multiple fibers. 
Let $L := (R^1 \pi_* \sO_X)^\vee$ be the fundamental line bundle (cf. \cite[II.4]{Miranda}). 
Set $h := \mathrm{deg\,}L$.

\begin{proposition} \label{prop: Liu's lemma} If the $j$-invariant map $C \to \IP^1$ is finite and separable, then $\Ohm_{C/k} \iso \pi_* \Ohm_{X/ k}$. 
\end{proposition}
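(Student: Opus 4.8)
The plan is to read off $\pi_*\Omega^1_{X/k}$ from the relative cotangent sequence of $\pi$, with the separability of $j$ controlling the Kodaira--Spencer term while the singular fibres contribute nothing. Since $\pi$ is smooth over the generic point $\eta$ of $C$, the canonical map $\pi^*\Omega_{C/k}\to\Omega_{X/k}$ is generically injective, hence injective (its kernel is a torsion subsheaf of a line bundle on the integral scheme $X$); write $\Omega_{X/C}$ for the cokernel. Applying $R\pi_*$ to $0\to\pi^*\Omega_{C/k}\to\Omega_{X/k}\to\Omega_{X/C}\to0$ and using the projection formula, $\pi_*\sO_X=\sO_C$, and the fact that $R^1\pi_*\sO_X=L^{-1}$ is invertible (there are no multiple fibres, as $\pi$ has a section; see \cite{Miranda}), one obtains an exact sequence of coherent sheaves on the smooth curve $C$:
\[
0\longrightarrow\Omega_{C/k}\longrightarrow\pi_*\Omega_{X/k}\xrightarrow{\ \rho\ }\pi_*\Omega_{X/C}\xrightarrow{\ \delta\ }\Omega_{C/k}\otimes L^{-1}.
\]
It suffices to prove that $\delta$ is injective: then $\rho=0$ and the left-hand map is the desired isomorphism. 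As all the sheaves involved are coherent on $C$, injectivity of $\delta$ reduces to two claims --- (a) $\delta$ is injective at $\eta$, which makes $\ker\delta$ a torsion sheaf, and (b) $\pi_*\Omega_{X/C}$ is torsion free, which then kills $\ker\delta$.

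For (a), localize at $\eta$: the sequence becomes the analogous one for the generic fibre $E$ over $K\colonequals k(C)$, so $\delta_\eta$ is the Kodaira--Spencer map $\H^0(E,\Omega_{E/K})\to\H^1(E,\sO_E)\otimes_K\Omega_{K/k}$ between one-dimensional $K$-vector spaces. A standard computation with a Weierstrass equation for $E$ (valid in any characteristic) shows it vanishes if and only if $dj=0$ in $\Omega_{K/k}$, where $j\in K$ is the $j$-invariant of $E$. But ``the $j$-invariant map $C\to\IP^1$ is finite and separable'' says exactly that $K$ is finite separable over $k(j)$, whence $\Omega_{K/k}=K\cdot dj$ and $dj\neq0$; so $\delta_\eta\neq0$, i.e.\ injective.

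For (b), let $M\subseteq\Omega_{X/k}$ be the saturation of the image of $\pi^*\Omega_{C/k}$, a line bundle; since that image is the isomorphic image of a line bundle it is of the form $M(-Z)$ for an effective divisor $Z$, and the torsion subsheaf $\sT$ of $\Omega_{X/C}=\Omega_{X/k}/M(-Z)$ is precisely $M/M(-Z)\cong M|_Z$. Working near a point $c\in C$ with coordinate $t$ and $s\colonequals\pi^*t$, the divisor $Z$ is the divisorial part of the zero locus of the section $ds$ of $\Omega_{X/k}$; as the generic fibre of $\pi$ is smooth and reduced fibre components are generically smooth, whereas $\pi^*t$ vanishes to order $\geq2$ along any fibre component of multiplicity $\geq2$, the divisor $Z$ is $\pi$-vertical, supported on the multiple components of the singular fibres, and is a proper subdivisor of each fibre it meets (it omits the multiplicity-one component through which the zero section passes). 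Hence $\pi_*\sT$ is a skyscraper supported at the finitely many $c$ below singular fibres; shrinking $C$ around such a $c$ so that $\Omega_{C/k}$, $L$ and $\sO_C(c)$ become trivial, one gets $M(-Z)\cong\sO_X$, hence $M\cong\sO_X(Z)$, and $\pi_*\sO_X(Z)=\pi_*\sO_X=\sO_C$ because $Z\subsetneq\pi^*(c)$. Then in the long exact sequence from $0\to M(-Z)\to M\to\sT\to0$ the map $\pi_*M(-Z)\to\pi_*M$ identifies $\sO_C$ with $\sO_C$ and so is an isomorphism near $c$; therefore $\pi_*\sT$ injects into $R^1\pi_*M(-Z)\cong R^1\pi_*\sO_X=L^{-1}$, and being torsion and embedded in a line bundle it vanishes. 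Thus $\pi_*\Omega_{X/C}$ is torsion free, which completes the argument. I expect the one genuinely delicate point to be exactly this local analysis of $\sT$ --- identifying it with $M|_Z$ and verifying that $Z$ is a proper $\pi$-vertical subdivisor of the relevant fibre --- while everything else is a formal consequence of the cotangent sequence, the projection formula, and the classical behaviour of the Kodaira--Spencer class of an elliptic curve.
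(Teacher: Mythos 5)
Your proof is correct and follows the same skeleton as the paper's: push forward the cotangent sequence $0\to\pi^*\Omega_{C/k}\to\Omega_{X/k}\to\Omega_{X/C}\to 0$, show the connecting map into $(R^1\pi_*\sO_X)\otimes\Omega_{C/k}$ is generically injective using separability of $j$ (you check this at the generic fibre via the classical criterion that the Kodaira--Spencer class vanishes iff $dj=0$; the paper checks it at a general closed fibre --- an immaterial difference), and then show $\pi_*\Omega_{X/C}$ is torsion-free so that the kernel, being a torsion subsheaf of a torsion-free sheaf, vanishes. Where you genuinely diverge is the torsion-freeness step: the paper quotes Liu--Saito's result that the torsion of $\Omega_{X/C}$ has no global sections once every fibre has a multiplicity-one component, and then embeds the torsion-free quotient into the relative dualizing sheaf; you instead identify the torsion explicitly as $M|_Z$, with $M$ the saturation of $\pi^*\Omega_{C/k}$ in $\Omega_{X/k}$ and $Z$ the divisorial vanishing locus, observe that $Z$ is vertical and omits the multiplicity-one component met by the zero section, deduce $\pi_*\sO_X(Z)=\sO_C$, and hence embed $\pi_*\sT$ into $R^1\pi_*\sO_X=L^{-1}$, which kills it. Your route is self-contained where the paper outsources the delicate local analysis to the literature. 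Two small points to tidy: in characteristic $p$ dividing the multiplicity $m_i$ of a fibre component, $ds$ vanishes along it to order $\ge m_i$ rather than $m_i-1$, so $Z$ need not literally be a subdivisor of $\pi^*(c)$ --- but the justification you actually give (the coefficient of $Z$ along the section component is zero, so no positive multiple of the full fibre lies below $Z$) is exactly what the computation $\pi_*\sO_X(Z)=\sO_C$ requires, so nothing breaks; and at the very end you should add the one-line remark that, since $\sT$ is supported on vertical divisors, the torsion subsheaf of $\pi_*\Omega_{X/C}$ is precisely $\pi_*\sT$, so its vanishing does yield torsion-freeness of the pushforward.
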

In fact, here it is unnecessary for $C$ to be projective---it works for an affine curve as well. The argument below is essentially an expansion of Liu's answer to a MathOverflow question (\cite{LiuAnswer}), which works verbatim in positive characteristics as well as long as the $j$-invariant map is separable.
\begin{proof} Consider the canonical short exact sequence 
    $$ 0 \to \pi^* \Omega_{C/k} \to \Omega_{X/k} \to \Omega_{X/C} \to 0,  $$
    which induces a long exact sequence 
    $$ 0 \to \Omega_{C/k} \to \pi_* \Ohm_{X/k} \to \pi_* \Ohm_{X/C} \stackrel{\theta}{\to} (R^1 \pi_* \sO_X) \tensor \Ohm_{C/k} \to \cdots. $$
    To see that $\pi^* \Omega_{C/k} \to \Omega_{X/k}$ is injective, note that over some open dense $U \subseteq C$, the map $\pi|_U$ is smooth, so that $(\pi|_U)^* \Omega_{U/k} = \pi^* \Omega_{C/k} |_{X_U} \to \Omega_{X_U/k}$ is smooth.     
    Let $\iota : U \into C$ and $\wt{\iota} : X|_U \into X$ be the natural inclusions. Then there is a commutative diagram
    \[\begin{tikzcd}
	{\pi^* \Omega_{C/k}} & {\Omega_{X/k}} \\
	{\pi^*(\iota_* \Omega_{U/k})} & {\wt{\iota}_* \Omega_{X_U/k}}
	\arrow[from=1-1, to=1-2]
	\arrow[hook, from=1-1, to=2-1]
	\arrow[hook, from=1-2, to=2-2]
	\arrow[hook, from=2-1, to=2-2]
    \end{tikzcd}.\]

    The morphism $\theta$ is generically injective, as the $j$-invariant map $C \to \IP^1$ is a finite and separable. Indeed, let $s \in C(k)$ be a general point, the fiber $\theta_s$ is nothing but the Kodaira--Spencer map 
    $$ \H^0(\Ohm_{X_s}) \to \H^1(\sO_{X_s}) \tensor (T_s C)^\vee.  $$
    
    To finish the proof, it suffices to show that $\pi_* \Ohm_{X/C}$ is torsion-free (or equivalently a vector bundle, since $C$ is a curve), because then $\ker(\theta)$, being a rank $0$ subsheaf of a torsion-free, has to vanish. 
    This then implies that $\Ohm_{C/k} \to \pi_* \Ohm_{X/k}$ is an isomorphism.   

    Write $\Omega$ for $\Omega_{X/C}$ and $\w$ for the relative dualizing sheaf $\w_{X/C}$. 
    Now consider the decomposition $0 \to \Omega_{\mathrm{tor}} \to \Omega \to \sF \to 0$ where $\sF$ is the torsion-free quotient. By the flat base change theorem and \cite[Prop.~1]{LiuSaito}, $\H^0(\Omega_{\mathrm{tor}}) = 0$. 
    Here we note that the assumption of \textit{loc. cit.} is satisfied thanks to the existence of the zero section: For any $s \in C(k)$, the fiber $X_s$ has at least an irreducible component of multiplicity $1$, i.e., the one which meets the zero section. 
    It remains to show that $\H^0(\sF)$ is torsion-free. Since $\Omega_{\mathrm{tor}} = \ker(\Omega \to \w)$ (see the second paragraph of \cite[\S 1]{LiuSaito}), $\sF$ is a subsheaf of $\w$, so that it suffices to observe that $\H^0(\w)$ is torsion-free: This follows easily from the facts that the formation of the relative dualizing sheaf commutes with arbitrary base change \cite[\href{https://stacks.math.columbia.edu/tag/0E6R}{Tag 0E6R}]{stacks-project}, and that for any point $s \in C$, $\dim \H^0(\w|_{X_s})$ is the arithmetic genus of $X_s$ \cite[\href{https://stacks.math.columbia.edu/tag/0BS2}{Tag 0BS2(5)}]{stacks-project}, which is constant by flatness.
\end{proof}
\begin{theorem}\label{thm: Hodge diamond of elliptic surface} If $h > 0$ and the $j$-invariant map $C \to \IP^1$ is finite and separable, then the following holds: 
    \begin{enumerate}[label=\upshape{(\alph*)}]
        \item The Hodge--de Rham spectral sequence of $X$ degenerates at $E_1$-page. 
        \item The Hodge diamond of $X$ is independent of $k$: $h^{0, 1} = h^{1, 0} = h^{1, 2} = h^{2, 1} = g$, $h^{0, 2} = h^{2, 0} = h - 1 + g$, and $h^{1,1} = 10h + 2g$ (cf. the case over $\IC$ \cite[Lem.~IV.1.1]{Miranda}). 
        \item If $\mathrm{char\,} k = p > 0$, then $\H^k_\crys(X/W(k))$ is torsion-free for every $k$. 
    \end{enumerate} 
\end{theorem}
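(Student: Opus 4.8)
In characteristic $0$, assertion (a) is classical and (b) is \cite[Lem.~IV.1.1]{Miranda}, so the plan is to treat the case $\mathrm{char}\,k = p > 0$, first pinning down every Hodge number and then deducing all three assertions from a single dimension count.

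\textbf{Computing the Hodge diamond.} Since $\pi$ admits a section, $\pi_*\sO_X = \sO_C$ and $R^1\pi_*\sO_X = L^\vee$ is a line bundle of degree $-h < 0$, so $\H^0(C,L^\vee) = 0$; because $C$ is a curve the Leray spectral sequence for $\pi$ degenerates and gives $\H^1(X,\sO_X)\cong\H^1(C,\sO_C)$, $\H^2(X,\sO_X)\cong\H^1(C,L^\vee)$, and $\chi(X,\sO_X) = \chi(C,\sO_C) - \chi(C,L^\vee) = h$, whence by Riemann--Roch on $C$ we get $h^{0,1} = g$ and $h^{0,2} = h - 1 + g$. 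From \cref{prop: Liu's lemma} together with $\pi_*\sO_X = \sO_C$ we obtain $\H^0(X,\Omega^1_X) = \H^0(C,\Omega_C)$, so $h^{1,0} = g$, and Serre duality on the surface $X$ then supplies $h^{2,0} = h^{0,2}$, $h^{1,2} = h^{1,0}$, $h^{2,1} = h^{0,1}$ and $h^{2,2} = 1$, leaving only $h^{1,1}$. For the latter I would invoke the canonical bundle formula $\omega_X \cong \pi^*(\omega_C\otimes L)$ for relatively minimal elliptic surfaces with a section (valid in positive characteristic) to conclude $K_X^2 = 0$; Noether's formula then gives $e(X) = 12\chi(\sO_X) - K_X^2 = 12h$ for the $\ell$-adic (equivalently crystalline) Euler number $e(X) = \sum_i(-1)^i b_i$; and Riemann--Roch for the rank-two bundle $\Omega^1_X$ gives $\chi(X,\Omega^1_X) = 2\chi(\sO_X) - e(X) = -10h$, so from $\chi(X,\Omega^1_X) = h^{1,0} - h^{1,1} + h^{2,1} = 2g - h^{1,1}$ we get $h^{1,1} = 10h + 2g$. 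These values depend only on $g$ and $h$, which proves (b).

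\textbf{Degeneration and torsion-freeness.} For (a) and (c) I would compare three quantities. From the crystalline--de Rham comparison $R\Gamma_\crys(X/W)\otimes^{\mathbf{L}}_W k \simeq R\Gamma_\dR(X/k)$ one obtains short exact sequences $0 \to \H^n_\crys(X/W)/p \to \H^n_\dR(X/k) \to \H^{n+1}_\crys(X/W)[p] \to 0$; with $\mathrm{rank}_W\H^n_\crys(X/W) = b_n$ this gives $\dim_k\H^n_\dR(X/k) = b_n + t_n + t_{n+1}$ where $t_n \colonequals \dim_k\H^n_\crys(X/W)[p]$, hence $\sum_n\dim_k\H^n_\dR(X/k) = \sum_n b_n + 2\sum_n t_n \ge \sum_n b_n$. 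Secondly, the Hodge--de Rham spectral sequence always gives $\sum_n\dim_k\H^n_\dR(X/k) \le \sum_{p,q}h^{p,q}$. Thirdly, since $\pi$ has a section, $\pi^*$ is split injective on $\H^1_\et(-,\IQ_\ell)$, so $b_1 \ge 2g$; since also $b_1 \le \dim_k\H^1_\dR(X/k) \le h^{1,0} + h^{0,1} = 2g$, we get $b_1 = 2g$, and then $b_0 = b_4 = 1$, $b_3 = b_1$ (Poincar\'e duality) and $e(X) = 12h$ yield $\sum_n b_n = 8g + 12h$, which by the previous paragraph equals $\sum_{p,q}h^{p,q}$. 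Chaining the three displays forces all inequalities to be equalities: $\sum_n\dim_k\H^n_\dR(X/k) = \sum_{p,q}h^{p,q}$ is equivalent to $E_1$-degeneration of the Hodge--de Rham spectral sequence, proving (a), and $\sum_n t_n = 0$ means every $\H^n_\crys(X/W)$ has no $p$-torsion, hence is torsion free over the discrete valuation ring $W$, proving (c).

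\textbf{Main obstacle.} There is no essentially new difficulty here; the work lies in importing the classical ingredients into characteristic $p$ correctly. The two points I would be most careful about are the vanishing $K_X^2 = 0$ --- i.e.\ the canonical bundle formula for relatively minimal elliptic surfaces in positive characteristic, which in particular uses that the presence of a section rules out multiple fibers --- and the identification of crystalline with $\ell$-adic Betti numbers, so that Noether's formula and the crystalline--de Rham comparison can both be poured into one dimension budget; everything else is routine bookkeeping with Leray, Serre duality, and Riemann--Roch.
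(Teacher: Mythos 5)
Your proposal is correct, but it reaches (a), (c) and the value of $h^{1,1}$ by a genuinely different route than the paper. The paper bootstraps degree by degree on the crystalline side: $\H^1_\crys$ is torsion-free for any smooth proper surface (Albanese), smoothness of the Picard scheme (Liedtke) gives $\mathrm{rank}\,\H^1_\crys = 2h^{0,1}$, which forces $h^1_\dR = 2g$ and hence $\H^2_\crys$ torsion-free; $\H^3_\crys$ is then handled via Poincar\'e and Serre duality; degeneration is extracted from these equalities, and only afterwards is $h^{1,1}$ read off from the Euler number $12h$. You instead compute the entire Hodge diamond up front by purely coherent means --- same Leray and \cref{prop: Liu's lemma} inputs for $h^{0,1}, h^{0,2}, h^{1,0}$, but $h^{1,1}$ a priori from the canonical bundle formula ($K_X^2 = 0$), Noether's formula and Riemann--Roch for $\Omega^1_X$ --- get $b_1 = 2g$ from the section splitting $\H^1$, and then close everything with one global dimension count $\sum_n b_n + 2\sum_n t_n = \sum_n h^n_\dR \le \sum_{i,j} h^{i,j} = \sum_n b_n$, which simultaneously kills all torsion and forces $E_1$-degeneration. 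This buys you independence from Liedtke's Picard-smoothness result and the Albanese argument, and the squeeze is arguably cleaner than the paper's term-by-term analysis of the spectral sequence; the price is reliance on the Bombieri--Mumford canonical bundle formula in characteristic $p$ (hence implicitly on relative minimality of $\pi$, which the paper also uses implicitly when asserting $e(X) = 12h$) and on the identifications $c_2(X) = e_\ell(X)$ and crystalline = $\ell$-adic Betti numbers, both standard and likewise invoked by the paper. Two cosmetic points: in $\chi(\Omega^1_X)$ the middle term is $h^2(\Omega^1_X) = h^{1,2}$, not $h^{2,1}$ (numerically harmless since both equal $g$), and in fact the identity $\chi(\Omega^1_X) = 2\chi(\sO_X) - c_2(X)$ holds without $K_X^2 = 0$; the vanishing of $K_X^2$ is only needed for $e(X) = 12h$.
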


\begin{proof} 
First of all, we have $h^{0, 1} = g(C)$. 
Indeed, the Leray spectral sequence gives us an exact sequence 
$$ 0 \to \H^1(C, \pi_* \sO_X) \to \H^1(X, \sO_X) \to \H^0(C, R^1 \pi_* \sO_X) \to 0. $$
As $h > 0$ (so that $R^1 \pi_* \sO_X = L^\vee$ has no global sections), and $\pi_* \sO_X \iso \sO_C$, we have $\H^1(X, \sO_X) \iso \H^1(C, \sO_C)$. 
The equality $h^{1,0} = g(C)$ follows from Prop.~\ref{prop: Liu's lemma}. 

By the universal coefficient theorem, for any $i$ we have 
$$ 0 \to \H^i_\crys(X/W) \tensor k \to \H^i_\dR(X/k) \to \mathrm{Tor}_1^W(\H^{i + 1}_\crys(X/W), k) \to 0. $$
And by the Hodge--de Rham spectral sequence we always have $h_\dR^i \le \sum_j h^{j, i - j}$, with equality taken only when all arrows in or out of the terms $\H^{i - j}(\Ohm_X^i)$ on the $E_1$-page vanish. 

Recall that $\H^1_\crys(X/W)$ is always torsion-free. In fact, this is true for any smooth proper variety over $k$, as the Albanese morphism $X \to \mathrm{Alb}(X)$ induces an isomorphism between their $\H^1_\crys(-/W)$ (cf. \cite[II, Remarque~3.11.2]{Illusie}), and $\H^1_\crys$ of an abelian variety is always torsion-free. 
By \cite[Prop.~2.1(1)]{Liedtke}, the Picard scheme of $X$ is smooth, so we have $\mathrm{rank\,} \H^1_\crys(X/W) = 2 h^{0, 1}$. This forces $h^1_\dR = h^{0, 1} + h^{1, 0}$, and hence $\H^2_\crys(X/W)$ is torsion-free. To see that $\H^3_\crys(X/W)$ is torsion-free, we first note that by Poincar\'e duality $\H^1_\crys(X/W)$ and $\H^3_\crys(X/W)$ have the same rank, and by Serre duality we have $h^{2, 1} = h^{0, 1}$ and $h^{1, 2} = h^{1, 0}$. Then we have $$2 h^{0, 1} = \mathrm{rank\,} \H^3_\crys(X/W) \le h^3_\dR \le h^{2, 1} + h^{1, 2}.$$
Hence the equality must be taken, which forces $\H^3_\crys(X/W)$ to be torsion-free. 
In particular, by the universal coefficient theorem, this forces the natural map $\H^2_\crys(X/W) \tensor k \to \H^2_\dR(X/k)$ to be an isomorphism.

At this point we can easily tell the Hodge--de Rham spectral sequence $\{E_n\}$ degenerates at $E_1$-page.
Note that we have argued that $h^{k}_\dR = \sum_{i + j = k} h^{i, j}$ for $k \neq 1$, and hence the terms $\H^{j}(\Omega_X^i) = E_\infty^{i, j}$ for $i + j \neq 1$. 
This forces the differentials involving the $E_n^{i, j}$ terms with $n \ge 1$ and $i + j = 1$ to vanish.
That $h^{0, 2} = h - 1 + g$ now follows from the definition of $\chi(\sO_X)$ and the isomorphism $\H^1(X,\sO_X)\simeq \H^1(C,\sO_C)$, and $h^{2, 0} = h^{0, 2}$ follows by Serre duality. 
The only undetermined Hodge number is $h^{1, 1}$. 
However, the crystalline Euler number (defined as the alternating sum of ranks) is the same as the $\ell$-adic Euler number, which is $12 h$.
Hence $h^{1,1}$ is $10h + 2g$. 
\end{proof}

\subsection{Inseparable $j$-invariant maps}
We analyze the $j$-invariant map of an elliptic surface in terms of its Weierstrass normal form, assuming that the base field is algebraically closed and has characteristic $p \ge 5$. 
We follow the notations as in last subsection.
Recall that the Weierstrass normal form of $X$ is given by the equation (cf. \cite[Thm.~1]{Kas})
\begin{equation}
    \label{eqn: Weierstrass normal form}
    Y^2 Z = X^3 - a_4 XZ^2 - a_6 Z^3, 
\end{equation}
where $[X: Y : Z]$ is the relative coordinate of the $\IP^2$-bundle $\IP(L^2 \oplus L^3 \oplus \sO_C)$ over the curve $C$, and $a_4, a_6$ are elements of $\H^0(C, L^4), \H^0(C, L^6)$ respectively. 
In terms of the elements $a_4$ and $a_6$, the $j$-invariant of the generic fiber of $\pi : X \to C$, as an element in the fraction field $k(C)$, is given by $1728 \cdot (4a_4^3 / \Delta)$, where $\Delta = 4a_4^3 - 27 a_6^2 \in \H^0(C, L^{12})$ is the discriminant. 
Note that in order for $\pi : X \to C$ to have a smooth generic fiber, $\Delta$ does not vanish identically on $C$; in particular, the element $a_4 \neq 0 \in \H^0(C, L^4)$. 
Notice that being an element of $k(C)$, the $j$-invariant can be simultaneously viewed as a map $C \to \IP^1$, which we referred to as the \emph{$j$-invariant map}. 
Denote the degree of this map by $\deg j$.

For \cref{prop: inspearable j} and \cref{cor: bound inseparable} below, assume that $p \ge 5$, and $g = h = 1$, where $g$ is the geometric genus and $h$ is the degree of the fundamental line bundle $L$. 
Let $e \in C$ be the unique point such that $L \simeq \sO_C(e)$. 
We shall view $C$ as an elliptic curve with origin at $e$.
Below for any nonzero global section of a line bundle on $C$, we view its divisor simultaneously as a multiset of closed points on $C$. 
Here, to avoid confusing the addition of points on the elliptic curve $C$ with the addition of divisors, we denote the former by the symbol $\oplus$ and the latter by the symbol $+$.
Note that a line bundle on $C$ of degree $0$ admits a nonzero global section if and only if it is trivial (i.e., $\simeq \sO_C$). 
Therefore, if $s \in \H^0(C, L^r)$ is any nonzero section, then $s$ can be viewed as as unique (up to scalar) section in $\H^0(C, L^r(-\mathrm{div}(s)))$, and we have the following induced formula for the summation of the points under the group law of the elliptic curve $C$:
\begin{equation}
    \label{eqn: sum to zero}
    \oplus_{x \in \supp(\mathrm{div}(s))} m_x(\div(s)) \cdot x = e.
\end{equation}
Here for a multiset $A$ in $C$, we use $m_x(A)$ to denote the multiplicity of a given point $x \in \supp(A)$, and in particular we have $\val_x(s) = m_x(\div(s))$. 

\begin{proposition}
\label{prop: inspearable j}
    If the $j$-invariant map is nonconstant and inseparable, then $p = 5$ or $7$. When $p = 5$, there are two possibilities: 
    \begin{enumerate}[label=\upshape{(\roman*)}]
        \item $\deg j = 10$, and there are points $y, z \neq x$ in $C$, such that $\mathrm{div}(a_4) = \{4x \}$, $\mathrm{val}_x(a_6) = 1$, and $\mathrm{div}(\Delta) = \{2x, 5y,  5z\}$. 
        \item $\deg j = 5$, and there are points $x \neq y$ in $C$, such that $\div(a_4) = \{ 3x, y \}$, and $\div(a_6) = \{ 2x, 4y \}$.  
    \end{enumerate}
    When $p = 7$, there is one possibility: 
    \begin{enumerate}
        \item[\upshape{(iii)}] $\deg j = 7$, and there are points $x \neq y$ in $C$, such that $\div(a_4) = \{ 3x, y \}$, and $\div(a_6) = \{ x, 5y \}$. 
    \end{enumerate}
\end{proposition}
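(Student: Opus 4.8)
The plan is to reduce inseparability of the $j$-map to a condition on the divisors $\div(a_4)$ and $\div(a_6)$, and then to run a short combinatorial analysis forced by the smallness of $h$.

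\emph{Step 1: an inseparability criterion.} Since $j$ is assumed nonconstant, the induced finite morphism $j\colon C\to \IP^1$ is inseparable if and only if $dj=0$ in $\Omega^1_{k(C)/k}$, which (as $k$ is perfect) holds if and only if $j\in k(C)^p$. Using the identity $1-1728\,j^{-1}=\tfrac{27}{4}\cdot a_6^2/a_4^3$ and the fact that $k(C)^p$ is a subfield containing $k$, the condition $j\in k(C)^p$ is equivalent to $a_6^2/a_4^3\in k(C)^p$; here $a_4\neq 0$ and $a_6\neq 0$ because $j$ is nonconstant. In turn, $a_6^2/a_4^3\in k(C)^p$ means precisely that the degree-$0$ divisor
\begin{equation*}
  D\colonequals 2\,\div(a_6)-3\,\div(a_4)
\end{equation*}
equals $p\,E$ for a divisor $E$ with $E\sim 0$; in particular $p\mid 2\,\val_x(a_6)-3\,\val_x(a_4)$ for every closed point $x\in C$.

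\emph{Step 2: bounding $p$.} Since $h=1$ we have $\deg\div(a_4)=4$ and $\deg\div(a_6)=6$. Writing $D=D_+-D_-$ with $D_\pm\geq 0$ of disjoint support, we get $\deg D_+\leq 2\deg\div(a_6)=12$ and $\deg D_-\leq 3\deg\div(a_4)=12$, and $\deg D_+=\deg D_-$ as $\deg D=0$. From $D=pE$ we obtain $\deg E_\pm\leq 12/p$. For $p\geq 13$ this forces $E=0$, hence $2\div(a_6)=3\div(a_4)$, hence $a_6^2/a_4^3$ is a global unit on the proper curve $C$ and therefore constant, contradicting that $j$ is nonconstant. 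For $p=11$ one has $\deg E_\pm\leq 1$, and the equation $2\,\val_x(a_6)-3\,\val_x(a_4)=\pm 11$ with $0\le\val_x(a_4)\le 4$ and $0\le\val_x(a_6)\le 6$ has no solution, so again $E=0$ and $j$ is constant. Hence $p\in\{5,7\}$.

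\emph{Step 3: the cases $p\in\{5,7\}$.} Now $\deg E_\pm\leq 12/p<2$, so $E=[y^+]-[y^-]$ for two closed points, or $E=0$ (which, as above, forces $j$ constant and is discarded). The condition $D=pE$ then pins down the ``local type'' $(\val_x(a_4),\val_x(a_6))$ at each point of the support of $a_4 a_6$; together with $\sum_x\val_x(a_4)=4$ and $\sum_x\val_x(a_6)=6$ this leaves only finitely many possibilities for the pair $(\div(a_4),\div(a_6))$. For each survivor I would then: (a) compute $\div(\Delta)$ from $\Delta=4a_4^3-27a_6^2$, taking care that at a point $x$ with $3\val_x(a_4)=2\val_x(a_6)$ --- in particular at a point where $a_4$ and $a_6$ are both nonvanishing but $4a_4^3-27a_6^2$ vanishes --- the order $\val_x(\Delta)$ can exceed $\min(3\val_x(a_4),2\val_x(a_6))$; (b) read off $\deg j$ from $\div(j)=3\,\div(a_4)-\div(\Delta)$ (note $1728\cdot 4$ is a unit mod $5$ and mod $7$); and (c) translate the remaining linear-equivalence constraints --- $\div(a_4)\sim 4e$, $\div(a_6)\sim 6e$, $\div(\Delta)\sim 12e$, and $E\sim 0$ --- into the explicit relations among the support points recorded in (i), (ii), (iii), using the summation identity \eqref{eqn: sum to zero} for the elliptic curve $C$ with origin $e$ (recall $L\simeq\sO_C(e)$), and discarding every configuration that forces $j$ to be constant.

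\emph{Main obstacle.} The field-theoretic reduction of Step 1 and the numerical bound of Step 2 are routine; the real work is in Step 3(a)--(c). The delicate points are the bookkeeping of $\div(\Delta)$ --- especially the extra cancellation that occurs precisely when $3\val_x(a_4)=2\val_x(a_6)$ --- and the simultaneous check that a given numerical configuration (divisors summing to the prescribed multiples of $e$, with $E$ actually \emph{principal} and not merely of degree $0$) is realized by honest sections $a_4,a_6$. It is there that the explicit shapes of $\div(a_4),\div(a_6),\div(\Delta)$ and the value of $\deg j$ in each of (i), (ii), (iii) are extracted.
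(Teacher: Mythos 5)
Your Steps 1 and 2 are sound, and the divisor-theoretic criterion ($j$ inseparable iff $a_6^2/a_4^3\in k(C)^p$ iff $2\,\div(a_6)-3\,\div(a_4)=pE$ with $E$ principal) is a legitimate alternative to the paper's argument, which instead analyzes the multiplicities of the zeros and poles of $j$ directly and, in case (ii), invokes Kodaira's table to pin down $\val_y(a_6)$. But Step 3 contains a genuine error that breaks the enumeration at $p=5$: you assert $\deg E_\pm\le 12/p<2$, whereas $12/5=2.4$, so for $p=5$ the correct bound is only $\deg E_\pm\le 2$. Consequently your reduction to $E=[y^+]-[y^-]$ is unjustified, and it silently discards exactly the configuration that produces case (i) of the proposition: taking $E=2[y]-2[x]$, the local equations $2\val(a_6)-3\val(a_4)=\pm 10$ have the solutions $(\val(a_4),\val(a_6))=(0,5)$ at $y$ and $(4,1)$ at $x$, which together with $\deg\div(a_4)=4$, $\deg\div(a_6)=6$ give $\div(a_4)=4x$, $\div(a_6)=x+5y$ — i.e. the $\deg j=10$ case with $E$ of degree $2$ on each side. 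As written, your proof would "show" that this case cannot occur, so the case list you would extract is wrong, not merely incomplete in presentation.

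The repair is not hard: for $p=5$ you must also enumerate the possibilities with $\deg E_\pm=2$. The mixed configurations ($E_+=[y_1]+[y_2]$ or $E_-=[x_1]+[x_2]$ with distinct points) are killed by the constraint $\sum_x\val_x(a_4)=4$, since each point of $E_+$ (resp. $E_-$) of multiplicity $1$ forces $\val(a_4)=1$ (resp. $3$) there, and only $E_+=2[y]$, $E_-=2[x]$ survives, recovering case (i); the shape of $\div(\Delta)=2x+5y'+5z'$ then follows, e.g., by noting that in characteristic $5$ one has $4-27g^5=(c+dg)^5$, so $\Delta/a_4^3$ is again a fifth power and its divisor is $5$-divisible away from $x$. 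Separately, be aware that Step 3 as submitted is still a plan rather than a proof — parts (a)–(c), including the $\div(\Delta)$ bookkeeping and the translation of principality of $E$ into the relations among $x,y,z$ via the group law, are exactly where the content of (i)–(iii) lives and must actually be carried out.
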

\begin{proof}
    Note that the multiset of zeroes of $j$, which is a meromorphic function on $C$, is given by $3 \div(a_4) \smallsetminus 2\div(a_6)$. 
    Therefore, the order of this multiset is $\deg j$. 
    It is easy to see that under the assumption, we have $\deg j \le 12$, so $p = 5, 7$ or $11$. 

    For simplicity, let us write $A, B$ for the multisets $\div(a_4)$ and $\div(a_6)$ respectively. 
    In order for the $j$-invariant map to be inseparable, the following conditions needs to be satisfied:
    \begin{itemize}
        \item[(P)] The multiplicity of $j$ (as a meromorphic function) at any point on $C$ is divisible by $p$.
        \item[(P')] $p \mid m_x(3A \smallsetminus 2B)$ for any $x \in 3A \smallsetminus 2B$.
        \item[(P'')] $p \mid \deg j = \#(3A \smallsetminus 2 B) = 12 - \#(3A \cap 2B)$.  
    \end{itemize}
    Clearly, we have the implications $P \Rightarrow P' \Rightarrow P''$. 
    The case $p = 11$ cannot happen, because then by (P') we must have $3 A \smallsetminus 2 B = \{ 11 x \}$, but either $x \in B$ or $x \not\in B$ would lead to a contradiction. 
    Hence we are left with three possibilities: (i) $p = 5$ and $\deg j = 10$, (ii) $p = \deg j = 5$ and (iii) $ p = \deg j = 7$. 
    
    (i) As $\deg j = 10$, $\#(3A \cap 2B) = 2$. 
    Hence we must have $3A \cap 2B = \{ 2 x \}$ for some $x \in B$. 
    By (P) we know that $(3A \smallsetminus 2B) = \{ 5x', 5x''\}$ for some $x', x'' \in C$. It is clear then that $A = \{ 2x', 2x'' \}$. If $x' \neq x''$, then $3A \smallsetminus 2B = \{ x', x'' \}$ cannot be of the form $\{ 2 x \}$. Hence we must have that $A = \{ 4x \}$. As $2B$ cancels out exactly two copies of $x$, $x$ has multiplicity exactly $1$ in $B$. The statement that $\div(\Delta)= 2x + 5y + 5z$ for some $y, z \neq x$ then follows by applying (P) to the poles of $j$. 
    
    (ii) By (P') we must have $3A \smallsetminus 2B = \{ 5x \}$ for some $x$.  The only integer solution to the equation $3 m_x(A) - 2 m_x(B) = 5$ in the range $1 \le m_x(A) \le 4$ is $(m_x(A), m_x(B)) = (3, 2)$. 
        Therefore, $ A= \{ 3x , y \}$ for some $y \neq x$. As $y \not\in 3A \smallsetminus 2B$, we must have $m_y(B) \ge 2$.
        A quick inspection of Kodaira's table \cite[p.41, IV.3.1]{Miranda}\footnote{It is well known that Kodaira's classification works verbatim in characteristic $p \ge 5$ as the case over $\IC$. 
        The $J$ used in \textit{loc. cit.} is the reduced $j$-invariant, i.e., $j / 1728 = 4a_4^3/ \Delta.$.  } tells us that the Weirstrass normal form of $X$ has to have a type III fiber at $y$.
        But then the multiplicity of $j$ at $y$ is $2 m_y(B) - 3$ is divisible by $5$, which forces $m_y(B) = 4$. 
        
        (iii) The analysis is similar to that of (ii), so we omit the details. 
\end{proof}

\begin{corollary}
\label{cor: bound inseparable}
    View the vector space $\H^0(C, L^4) \times \H^0(C, L^6)$ as the affine space $\IA^4 \times \IA^6 \simeq \IA^{10}$ over $k$. The set $I$ of $k$-points $(a_4, a_6)$ such that the corresponding elliptic surface $X$ has finite inseparable $j$-invariant map is contained in a subscheme of dimension at most $2$. 
\end{corollary}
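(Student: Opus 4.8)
The plan is to deduce this directly from \cref{prop: inspearable j}. Since a finite $j$-invariant map is the same thing as a nonconstant one, every point of $I$ realizes one of the three configurations (i)--(iii) of \cref{prop: inspearable j} (and in particular $p \in \{5,7\}$, so $I = \emptyset$ otherwise). It therefore suffices to show that the locus of $(a_4,a_6)$ realizing each fixed configuration is contained in a closed subscheme of $\IA^{10} = \H^0(C,L^4)\times \H^0(C,L^6)$ of dimension $\le 2$; the union of the three then contains $I$ and is still of dimension $\le 2$.

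The common mechanism is this: once the zero divisors $\div(a_4)$ and $\div(a_6)$ on $C$ are pinned to a \emph{finite} set of effective divisors, each of $a_4,a_6$ is determined up to a nonzero scalar (a nonzero section of a line bundle with prescribed zero divisor being unique up to scalar), so the configuration locus lies in a finite union of $2$-dimensional linear subspaces of $\IA^{10}$. For configurations (ii) and (iii), \cref{prop: inspearable j} already prescribes both divisors up to the choice of the two points $x$ and $y$, and the constraints $\sO_C(\div(a_4)) \simeq L^4 = \sO_C(4e)$ and $\sO_C(\div(a_6)) \simeq L^6 = \sO_C(6e)$, read through Abel--Jacobi as relations in the group $(C,\oplus)$, become (in case (ii)) $3x \oplus y = e$ and $2x \oplus 4y = e$. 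These force $y = \ominus 3x$ and $10x = e$, i.e. $x$ lies in the finite group $C[10]$ (in characteristic $5$, $C[5]$ has order $1$ or $5$); hence the configuration (ii) locus lies in a finite union of $2$-planes, and case (iii) is identical with $x \in C[14]$.

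The substantive case is (i), where $p=5$: here \cref{prop: inspearable j} gives $\div(a_4) = 4x$ (forcing $x \in C[4]$, finite, and $a_4$ determined up to scalar), $\val_x(a_6)=1$, and $\div(\Delta) = 2x + 5y + 5z$, but it does \emph{not} directly pin down $\div(a_6)$. To recover it, I would argue as follows. Inseparability means $j \in k(C)^5$; since $1728 \in k^5$ this gives $4a_4^3/\Delta \in k(C)^5$, and by the Frobenius identity $1 - t^5 = (1-t)^5$ in characteristic $5$ it follows that $1 - 4a_4^3/\Delta = -27a_6^2/\Delta$ is also a fifth power, hence so is $a_6^2/\Delta$. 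Writing $a_6^2/\Delta = h^5$ with $h \in k(C)$ gives $2\div(a_6) = \div(\Delta) + 5\div(h) = 2x + 5\bigl(y + z + \div(h)\bigr)$. Since $\val_x(a_6) = 1$, the divisor $D \colonequals y + z + \div(h)$ has trivial coefficient at $x$; and since $2(\div(a_6) - x) = 5D$ is effective with all coefficients even, $D$ is effective of degree $2$ with all coefficients even, so $D = 2Q$ for a single point $Q \ne x$. Hence $\div(a_6) = x + 5Q$, and then $\sO_C(x + 5Q) \simeq \sO_C(6e)$ forces $5Q = \ominus x$ in $C$, which has only finitely many solutions $Q$ (a coset of $C[5]$). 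Thus for each $x \in C[4]$ there are finitely many possibilities for $\div(a_6)$, $a_6$ is then determined up to scalar, and the configuration (i) locus again lies in a finite union of $2$-planes.

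I expect this last point to be the only real obstacle. A crude count in case (i) is insufficient: after fixing $x$ and $a_4$, the sections $a_6$ with $\val_x(a_6)=1$ sweep out a $5$-dimensional family, and even imposing the expected $1$-parameter family of shapes $2x+5y+5z$ on $\div(\Delta)$ and recovering $a_6$ from $\Delta$ up to sign only bounds the admissible $a_6$ by a $2$-dimensional family, giving total dimension $3$. The ingredient that rescues the bound is the interplay between the square-section relation $27a_6^2 = 4a_4^3 - \Delta$ and the fifth-power relation $\Delta \in a_4^3\cdot k(C)^5$: together they force $\div(a_6)$ to be $5$-divisible away from $x$, reducing the admissible $a_6$ to a $1$-dimensional family as above.
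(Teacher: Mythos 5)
Your proof is correct. For cases (ii) and (iii) of \cref{prop: inspearable j} it coincides with the paper's argument: the group-law relations coming from (\ref{eqn: sum to zero}) force $x$ to be torsion, and sections of $L^4, L^6$ with prescribed divisors are unique up to scalar, so the locus sits in finitely many $2$-planes. In case (i), however, you take a genuinely different and in fact sharper route. The paper leaves $\div(a_6)$ unconstrained and instead fibers the locus over the symmetric power $C^{[12]}$ via $(a_4,a_6)\mapsto \div(\Delta)$: the admissible divisors $2x+5y+5z$ with $2x\oplus 5y\oplus 5z = e$ form a one-dimensional family $C'$, the fibers of the map to $C^{[12]}$ are bounded by one dimension (a section with given divisor being unique up to scalar), and the bound is $1+1=2$. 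You instead exploit inseparability a second time through the characteristic-$5$ identity $(1-t)^5=1-t^5$: from $4a_4^3/\Delta\in k(C)^5$ you deduce $a_6^2/\Delta\in k(C)^5$, and the resulting divisor identity $2(\div(a_6)-x)=5D$, combined with $\val_x(a_6)=1$, forces $\div(a_6)=x+5Q$ with $5Q=\ominus x$, a finite set of possibilities; so case (i) collapses to the same pattern as (ii) and (iii), namely finitely many admissible divisor pairs and hence a finite union of $2$-planes. What your version buys is a completely explicit description of the case-(i) locus, avoiding the fibration over $C^{[12]}$ and the fiber-dimension estimate there (which is the tersest step of the paper's proof); what the paper's version buys is that it needs no further manipulation beyond the shape of $\div(\Delta)$ already recorded in \cref{prop: inspearable j}(i). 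Your closing diagnosis—that a naive count after fixing $x$ and $a_4$ only yields dimension $3$, and that the forced $5$-divisibility of $\div(a_6)$ away from $x$ is what rescues the bound—is accurate.
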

\begin{proof}
     
    Let $\{ \Delta = 0 \}$ be the subscheme of $\IA^{12}$ such that $\Delta(a_4, a_6)$ vanishes identically, and set $\IA' := \IA^{12} \smallsetminus \{ \Delta = 0 \}$. 
    For every $r \in \IN$, let $C^{[r]}$ denote the Hilbert scheme of $r$-points on $C$, which is well known to be the same as the $r$th symmetric power as $C$ is a smooth curve. 
    Then the association $(a_4, a_6) \mapsto \mathrm{div}(\Delta)$ defines a morphism $\IA' \to C^{[12]}$. 
    Note that a point on $C^{[r]}$ can also be viewed as a degree $r$ effective divisor. 

    Let us start with case (i) in \cref{prop: inspearable j}. As $a_4 \neq 0 \in L^4(-4 x)$, $x$ is a $4$-torsion point under the group law by (\ref{eqn: sum to zero}). Hence there are at most $16$ different choices for $x$. Let us fix one. Since $h^0(L(-4x)) = 1$, there is a line in $\IA^4$ parametrizing those $a_4$ such that $\mathrm{div}(a_4) = 4x$. 
    Let $\IA''$ be the fiber of $\IA'$ over this line.  
    Let $C' \in C^{[2]}$ be the subscheme parametrizing those pairs $y + z$ such that $2x \oplus 5y \oplus 5z = e$.
    Let $Z \subseteq C^{[12]}$ be the image of the morphism $C' \to C^{[12]}$ given by $y + z \mapsto 2 x + 5y + 5z$. 
    Since two sections $s, s' \in \H^0(C, L^{12})$ define the same divisor if and only if $s \in k^\times s'$, every fiber of the morphism $\IA' \to C^{[12]}$ (and hence the composition $\IA'' \into \IA' \to C^{[12]}$) has dimension at most $1$.
    Now, $I$ is contained the set of $k$-points of $\IA'' \times_{C^{[12]}} C'$. 
    As $\dim C' = 1$, $\dim \IA'' \times_{C^{[12]}} C' \le 1 + 1 = 2$. 

    Now consider case (ii). The equations $3 x \oplus y = e$ and $2x \oplus 4y = e$ imply that $x$ is a $10$-torsion point. Hence there are finitely many of them. Fixing each $x$, each $a_4$ and $a_6$ are well defined up to scalars, so we obtain the dimension bound. Case (iii) is entirely similar. 
\end{proof}

\subsection{Ramified covers of curves over discrete valuation rings}
\label{sec: ramified covers}
We start by recalling the relative Abhyankar's lemma from \cite{SGA1}.
\begin{proposition}\cite[Expos\'e XIII, Prop.\ 5.5]{SGA1}
	\label{prop:relative_Abhy}
	Let $X\to S$ be a smooth map of schemes, and let $D$ be a reduced relative normal crossing divisor relative to $S$.
	Assume $U'\to U=X \smallsetminus D$ is a finite \'etale cover such that its fiber over each geometric generic point of $S$ is tamely ramified along $D$.
	Then \'etale locally around any point of $U$, the map $U'\to U$ can be extended to a finite disjoint union of standard tamely ramified covers of $X$.
	In particular, $U'\to U$ is tamely ramified (along $D$) relative to $S$.
\end{proposition}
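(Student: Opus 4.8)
This is \cite[Exp.~XIII, Prop.~5.5]{SGA1}, and the plan is to recall how its proof goes. Both the hypothesis and the conclusion are étale-local on $X$, and everything is vacuous away from $D$, so one may assume the point in question lies on $D$. Since $f\colon X\to S$ is smooth of some relative dimension $n$ and $D$ is a relative normal crossings divisor, after passing to an étale neighborhood of the point and shrinking $S$ to an affine open one reduces to the model situation $X=\IA^n_S$ with coordinates $t_1,\dots,t_n$ and $D=V(t_1\cdots t_r)$ for some $r\le n$; then $U=X\smallsetminus D$, and the ``standard tamely ramified covers'' of $X$ are the Kummer covers $V(s_1^{m_1}-t_1,\dots,s_r^{m_r}-t_r)$ with each $m_i\in\sO_S^\times$. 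It thus suffices to prove that, after a further étale localization on $S$, the cover $U'\to U$ is dominated by a finite disjoint union of such Kummer covers.

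The core is a computation of the relative tame fundamental group. First I would treat the situation over a separably closed field $k$ of characteristic exponent $p$: by Kummer theory one has $\pi_1^{\mathrm{t}}(\IG_{m,k})\cong\widehat{\IZ}^{(p')}(1)$, hence, working one branch at a time, $\pi_1^{\mathrm{t}}$ of the complement of $D=V(t_1\cdots t_r)$ in the strict henselization of $\IA^n_k$ at a point lying on the branches $V(t_1),\dots,V(t_r)$ is the product $\widehat{\IZ}^{(p')}(1)^r$ of the tame inertia subgroups around those branches, the extra affine directions contributing nothing to the prime-to-$p$ quotient. This says that over a geometric generic fibre of $S$ the cover $U'\to U$, being tame along $D$ by hypothesis, is étale-locally pulled back from a Kummer cover, which is the classical Abhyankar lemma. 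Next I would promote this to an arbitrary base via Grothendieck's specialization theorem for tame fundamental groups of a smooth morphism along a relative normal crossings divisor (also \cite[Exp.~XIII]{SGA1}): after replacing $S$ by its strict henselization at the image point, the natural map from $\pi_1^{\mathrm{t}}$ of a geometric generic fibre to the relative tame $\pi_1$ of $U$ is surjective, and an isomorphism on prime-to-$p$ tame quotients.

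Combining these two inputs: given the Kummer structure of $U'\to U$ over a geometric generic fibre, the specialization isomorphism forces the same Kummer structure on $U'\to U$ itself over the strictly henselian base, so $U'$ extends étale-locally to a finite disjoint union of standard tame covers of $X$; relative tameness along $D$ is then immediate, being by definition the property of being étale-locally dominated by such covers. The main obstacle is ruling out \emph{wild} ramification along the non-generic fibres of $f$: tameness over the generic points does not a priori prevent wild inertia from appearing over special fibres, and excluding this is exactly what the specialization theorem accomplishes---it is here that the smoothness of $f$ and the relative normal crossings hypothesis on $D$ are genuinely used. A secondary bookkeeping point is that the ramification indices $m_i$ must be chosen consistently across the several branches of $D$; this is handled branch by branch, using purity of the branch locus to reduce to the case of a single smooth branch.
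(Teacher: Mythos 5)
The paper offers no proof of this proposition---it is quoted directly from SGA1, Expos\'e XIII, Prop.~5.5, with only a reminder (in the paragraph following the statement) of what a standard tamely ramified cover is. Your reconstruction of the SGA1 argument---reduction to the model case $\IA^n_S$ with $D=V(t_1\cdots t_r)$, the absolute Abhyankar lemma on the geometric generic fibre, and the specialization/local-acyclicity input from Expos\'e XIII to rule out wild ramification appearing over special fibres---is the standard one and is correct in outline, so there is nothing in the paper to contrast it with.
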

Here we recall from \cite[Expos\'e XIII, 5.3.0]{SGA1} (see also \cite[\href{https://stacks.math.columbia.edu/tag/0EYF}{Tag 0EYF}]{stacks-project}) that a standard tamely ramified cover of $X=\Spec(R)$ that is ramified at $D=\Spec(R/f))$ is of the form $\Spec(R[t]/t^n-f)$, where the integer $n$ is invertible in $R$.
In particular, the above result is stable under any base change of $S$.

\begin{proposition}
\label{prop:etaleness_of_pullback}
	Let $g : C' \to C$ be a finite flat morphism between smooth proper curves over $\sO_K$ such that the special fiber $C'_k \to C_k$ is separable. 
	Suppose that $g$ is \'etale over $\sC \smallsetminus D$ for a relative Cartier divisor $D$ that is \'etale over $\sO_K$. 
	Then the reduced subscheme of $g^{-1}(D)$ is also \'etale over $\sO_K$. 
\end{proposition}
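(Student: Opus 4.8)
The plan is to strip off the divisor $D$, apply the relative Abhyankar lemma (\cref{prop:relative_Abhy}) to the resulting finite \'etale cover, and then read off the structure of $g^{-1}(D)$ from the explicit shape of standard tamely ramified covers. The question is Zariski-, indeed \'etale-, local on $C$, so I would first pass to an \'etale neighborhood of a point of $D$. Since $C$ is smooth of relative dimension $1$ over $\sO_K$ and $D$ is a relative Cartier divisor \'etale over $\sO_K$, in such a neighborhood we may write $C=\Spec R$ with $R$ smooth over $\sO_K$ and $D=\{u=0\}$ for some $u\in R$ whose image in $R\otimes_{\sO_K}k$ generates the ideal of the (reduced) divisor $D_k\subseteq C_k$; in particular $g$ is finite and \'etale over $\Spec R[1/u]$. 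Because $C'$ is smooth, hence normal, over $\sO_K$, over this neighborhood $C'$ is forced to be the normalization of $C$ in the finite \'etale cover $C'|_{C\smallsetminus D}\to C\smallsetminus D$.

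To invoke \cref{prop:relative_Abhy} I must check that this cover is tamely ramified along $D$ over the geometric generic point $\bar\eta$ of $\Spec\sO_K$. Note that the branch locus of $g$ is contained in $D$, and $g$ is \'etale along the special fiber $C_k$ away from $D_k$, so $g_k\colon C'_k\to C_k$ is generically \'etale, in agreement with the assumed separability of $g_k$. When the fraction field $K$ has characteristic zero --- the case relevant to the applications of this paper --- $C'_{\bar\eta}\to C_{\bar\eta}$ is a morphism of smooth curves over a field of characteristic $0$ and is therefore automatically tamely ramified along $D_{\bar\eta}$, so the hypothesis of \cref{prop:relative_Abhy} holds. (In the equicharacteristic case one uses instead the separability of $g_k$ together with the fact that the ramification index of $g$ along the horizontal divisor $D$ is visible on the special fiber, to conclude that this index is prime to $p$ and hence the cover is tame; verifying tameness is precisely the place where the separability hypothesis is genuinely needed, and I expect it to be the main obstacle of the argument.)

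Granting tameness, \cref{prop:relative_Abhy} shows that, after possibly shrinking the \'etale neighborhood, $g$ becomes isomorphic to a finite disjoint union of standard tamely ramified covers, i.e.
\[
  C'|_{\Spec R}\;\cong\;\coprod_{j}\Spec R[t_j]/(t_j^{n_j}-u),\qquad n_j\in R^{\times},
\]
so each $n_j$ is coprime to $p$. Pulling back $D=\{u=0\}$, on the $j$-th summand we get $g^{-1}(D)=\Spec (R/u)[t_j]/(t_j^{n_j})$, whose underlying reduced scheme is $\Spec (R/u)[t_j]/(t_j)=\Spec(R/u)$; this is exactly $D$ over this neighborhood. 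Hence, locally on $C$, the morphism $(g^{-1}(D))_{\mathrm{red}}\to D$ is a disjoint union of copies of $D$, and gluing over $C$ shows that $(g^{-1}(D))_{\mathrm{red}}\to D$ is finite \'etale. Since $D\to\Spec\sO_K$ is \'etale by hypothesis, the composite $(g^{-1}(D))_{\mathrm{red}}\to\Spec\sO_K$ is \'etale, which is what we wanted.
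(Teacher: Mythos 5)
Your argument is correct in the setting the paper actually works in (a mixed characteristic DVR, so that tameness over the geometric generic fiber of $\Spec(\sO_K)$ is automatic), but it takes a genuinely different route from the paper's proof. The paper argues globally: after passing to the maximal unramified extension, it applies the Riemann--Hurwitz formula on both the special and the generic fiber (tameness of $g_k$ along $D_k$ being supplied by \cref{prop:relative_Abhy}), equates the two Euler-characteristic sides by flatness and flat base change, and combines this with the equality $\dim_k g^{-1}(D)_k=\dim_K g^{-1}(D)_K$ coming from flatness of $g^{-1}(D)$ over $\sO_K$; a counting argument then forces each point of $D_k$ to have exactly one generic-fiber point above it, with trivial residue extension, and \cref{lem:finite_flat_O_K_scheme} concludes. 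You instead argue \'etale-locally, identifying $C'$ near $D$ with a disjoint union of standard Kummer covers $t^{n}=u$ via \cref{prop:relative_Abhy} and uniqueness of normalization, from which the conclusion (indeed the stronger statement that $(g^{-1}(D))_{\mathrm{red}}\to D$ is \'etale, and that the multiplicities are constant along each horizontal component, as noted after \cref{lem:finite_flat_O_K_scheme}) is immediate. Your route is shorter, does not use properness of the curves, and does not really use the separability hypothesis (which in any case follows from \'etaleness over $C\smallsetminus D$ together with finiteness of $D_k$); the paper's route extracts from \cref{prop:relative_Abhy} only the softer fact that $g_k$ is tame along $D_k$, at the cost of the global counting.

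Two points you should make explicit. First, to identify $C'$ \'etale-locally with the extension produced by \cref{prop:relative_Abhy}, observe that a standard cover $\Spec\bigl(R[t]/(t^{n}-u)\bigr)$ is regular, hence normal, because $D=\{u=0\}$ is regular (being \'etale over $\sO_K$); then both it and $C'$ are the normalization of $C$ in $C'|_{C\smallsetminus D}$, so they agree. Second, your equicharacteristic aside is not correct as stated: separability of $g_k$ does not imply tameness of the cover along $D$. This does not affect the argument here, since the relevant case has $\mathrm{char}\,K=0$ (compare the use of $R[1/p]$ in \cref{lem:finite_flat_O_K_scheme} and the application with $\sO_K=W$), which is exactly the hypothesis that the paper's own appeal to \cref{prop:relative_Abhy} also requires.
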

\begin{proof}
	As the claim is \'etale local with respect to $\sO_K$, we replace $\sO_K$ by its maximal unramified extension and assume that $D$ is a disjoint union of $\sO_K$-points $D_1, \cdots, D_n$. 
	Suppose that the reduction $g_k^{-1}(D_i)_k$ is supported on $k$-points $P_{i, 1}, \cdots, P_{i, n_i}$ and has multiplicity $m_{i,j}$ at $P_{i, j}$.
	Then by the separatedness of the map $g_k$ and Riemann--Hurwitz formula (\cite[\href{https://stacks.math.columbia.edu/tag/0C1F}{Tag 0C1F}]{stacks-project}), we have 
	\begin{equation}
		\label{eq:Riemann-Hurwitz_k}
		\chi(C'_k) - (\deg \, g) \cdot \chi(C_k) = \sum_{i = 1}^n \sum_{j = 1}^{n_i} (m_{i, j} - 1). 
	\end{equation}
    Here we implicitly use the fact that the map $g_k$ is tamely ramified over each $D_k$, thanks to the relative Abhyankar's Lemma \Cref{prop:relative_Abhy}.
    
    On the other hand, for each $i, j$, we set $\mathcal{Q}_{i, j}$ to be the set of closed points $Q$ on $C'_K$ such that $Q$ specializes to $P_{i, j}$. 
    Let $m_Q$ be the multiplicity of the map $g_K$ at $Q$, and let $e_Q$ be the ramification index of the field extension $K(Q)/K$ (which by assumption is also the degree of $K(Q)/K$).
    Then by Riemann--Hurwitz formula again, we get
	\begin{equation}
		\label{eq:Riemann-Hurwitz_K}
		\chi(C'_K) - (\deg \, g) \cdot \chi(C_K) = \sum_{i = 1}^n \sum_{j = 1}^{n_i} \sum_{Q \in \mathcal{Q}_{i, j}}(m_Q - 1)\cdot e_Q. 
	\end{equation}
    Notice that by the flatness of the map $g$ and the flat base change theorem of the coherent cohomology (\cite[\href{https://stacks.math.columbia.edu/tag/02KH}{Tag 02KH}]{stacks-project}), the left hand side of (\ref{eq:Riemann-Hurwitz_k}) is equal to that of (\ref{eq:Riemann-Hurwitz_K}).
    Hence we get the equality
    \begin{equation}
    	\label{eq:ram_deg_k_and_K}
    	\sum_{i = 1}^n \sum_{j = 1}^{n_i} (m_{i, j} - 1) = \sum_{i = 1}^n \sum_{j = 1}^{n_i} \sum_{Q \in \mathcal{Q}_{i, j}}(m_Q - 1)\cdot e_Q. 
    \end{equation}

    Moreover, by the flatness of the map $g$, the preimage $g^{-1}(D)=D\times_{C} C'$ is finite flat over $\sO_K$.
    In particular, we get
    \begin{equation}
    	\label{eq:equality_of_dim_of_divisors}
    	\dim_k g^{-1}(D)_k = \dim_K g^{-1}(D)_K.
    \end{equation}
    Here we note that by summing over the connected components of $g^{-1}(D)_K$ and $g^{-1}(D)_k$ individually, we have
    \begin{align}
    	\label{eq:expansion_of_g^{-1}D}
    	&\dim_k g^{-1}(D)_k  = \sum_{i = 1}^n \sum_{j = 1}^{n_i} m_{i, j};\\
    	&\dim_K g^{-1}(D)_K = \sum_{i = 1}^n \sum_{j = 1}^{n_i} \sum_{Q \in \mathcal{Q}_{i, j}}m_Q \cdot e_Q. 
    \end{align}
    Hence we get another equality on the multiplicities and the ramification indexes
    \begin{equation}
    	\label{eq:ram_deg_k_and_K_2}
    	\sum_{i = 1}^n \sum_{j = 1}^{n_i} m_{i, j} = \sum_{i = 1}^n \sum_{j = 1}^{n_i} \sum_{Q \in \mathcal{Q}_{i, j}} m_Q \cdot e_Q. 
    \end{equation}

    Now, by substituting (\ref{eq:ram_deg_k_and_K_2}) into (\ref{eq:ram_deg_k_and_K}) and cancel the terms with multiplication indexes, and by the positivity of each $e_Q$, we get
    \begin{align*}
    	    \sum_{i = 1}^n n_i = \sum_{i = 1}^n \sum_{j = 1}^{n_i} \sum_{Q \in \mathcal{Q}_{i, j}} e_Q  \geq \sum_{i = 1}^n \sum_{j = 1}^{n_i} \sum_{Q \in \mathcal{Q}_{i, j}} 1  = \sum_{i = 1}^n \sum_{j = 1}^{n_i} |Q_{i,j}| \geq \sum_{i = 1}^n \sum_{j = 1}^{n_i} 1 = \sum_{i = 1}^n n_i,
    \end{align*}
    where the last equality follows from the surjectivity of the specialization map.
    Hence each set $Q_{i,j}$ has only one point (which we shall also denote by $Q_{i,j}$), and the residue field extension $K(Q_{i,j})/K$ is trivial and hence unramified.
    Thus the claim follows from \Cref{lem:finite_flat_O_K_scheme} below.
\end{proof}

\begin{lemma}
	\label{lem:finite_flat_O_K_scheme}
	Let $D=\Spec(R)$ be a connected finite flat $\sO_K$-scheme.
	Assume $(R[1/p])_\mathrm{red}$ is equal to $\Spec(K)$.
	Then $R_\mathrm{red}$ is isomorphic to (and hence \'etale over) $\sO_K$.
\end{lemma}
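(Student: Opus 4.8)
The plan is to leverage the fact that over the DVR $\sO_K$ ``finite flat'' means ``finite free'': fixing a uniformizer $\pi$, the ring $R$ is then $\pi$-torsion free, so the localization map $R \hookrightarrow R[1/p] = R \otimes_{\sO_K} K$ is injective. Set $N \colonequals \mathrm{nil}(R)$, so that $R_\mathrm{red} = R/N$.

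The first step is to check that forming the reduced ring commutes with inverting $p$, i.e. that $\mathrm{nil}(R[1/p]) = N\cdot R[1/p]$. One inclusion is trivial; conversely, if $x/\pi^k \in R[1/p]$ is nilpotent then $\pi^\ell x^m = 0$ in $R$ for some $\ell, m$, and $\pi$-torsion freeness of $R$ forces $x^m = 0$, hence $x \in N$. This gives $(R/N)[1/p] \cong (R[1/p])_\mathrm{red} \cong K$ by hypothesis, so in particular $\dim_K (R/N)[1/p] = 1$. The second step is to observe that $R/N$ is again finite flat over $\sO_K$: if $\pi x \in N$ then $\pi^m x^m = 0$ in $R$, so $x^m = 0$ by torsion-freeness, so $x \in N$; thus $R/N$ is a finite $\pi$-torsion free $\sO_K$-module, hence free, hence flat.

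With flatness of $R/N$ in hand, $\pi$ is a non-zero-divisor in $R/N$, so no minimal prime of $R/N$ contains $\pi$; each such minimal prime therefore localizes to a prime of $(R/N)[1/p] \cong K$, a field, and by torsion-freeness that prime must already be $(0)$ in $R/N$. Since $R/N$ is reduced this shows $R/N$ is a domain, and the injection $R/N \hookrightarrow (R/N)[1/p] = K$ exhibits $R/N$ as an $\sO_K$-subalgebra of $K$ that is finite, hence integral, over $\sO_K$. As $\sO_K$ is integrally closed in $K = \mathrm{Frac}(\sO_K)$, we conclude $R/N = \sO_K$, and an isomorphism is in particular \'etale. (Connectedness of $D$ is not actually needed: the hypothesis $(R[1/p])_\mathrm{red} = \Spec(K)$ already pins down the generic fibre.)

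I do not expect a genuine obstacle in this argument; the only point deserving a little care is the compatibility $(R/N)[1/p] = (R[1/p])_\mathrm{red}$ of the first step together with the flatness of $R/N$ in the second — both of which ultimately rest on the same elementary fact that $R$ is $\sO_K$-torsion free because it is finite flat over a DVR.
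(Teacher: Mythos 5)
Your proof is correct and follows essentially the same route as the paper: both arguments use flatness (hence $\pi$-torsion-freeness) of $R$ to identify $R_{\mathrm{red}}$ with a finite $\sO_K$-subalgebra of $K=(R[1/p])_{\mathrm{red}}$, and then conclude that such a subalgebra must be $\sO_K$ itself. The only cosmetic difference is that you invoke integral closedness of the discrete valuation ring $\sO_K$ in $K$, whereas the paper proves this point by hand (an element of negative valuation would make $\sO_K[x]$ non-finitely generated); your side remark that connectedness of $D$ is not needed is also correct, since flatness rules out components supported on the special fiber.
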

Here we note that by the flatness of $R$ over $\sO_K$, \Cref{lem:finite_flat_O_K_scheme} in particular implies that in the setting of \Cref{prop:etaleness_of_pullback}, the multiplicity of $P_{i,j}\in C_k'$ coincides with the multiplicity of $Q_{i,j}\in C_K'$, i.e. $m_{i,j}=m_{Q_{i,j}}$.
\begin{proof}
We let $\overline{R}$ be the image of the composition $R\to R[1/p] \to (R[1/p])_\mathrm{red}=K$, which is a finite $\mathcal{O}_K$-subalgebra in $K$.
If $\overline{R}\neq \sO_K$, the $p$-adic valuation of any element $x\in \overline{R}\smallsetminus \sO_K$ is negative.
However, the $\sO_K$-submodule $\sO_K[x]$ is not finitely generated over $\sO_K$, contradicting to the assumption that $\overline{R}$ is finite over $\sO_K$.
Hence $\overline{R}=\sO_K$.
Finally, since $\ker(R\to \overline{R})$ is contained in $\ker(R[1/p]\to (R[1/p])_\mathrm{red}=K)$, we see $\ker(R\to \overline{R})$ is nilpotent, and hence $R_{\mathrm{red}}=\sO_K$.
\end{proof}

\begin{proposition}
	\label{prop:lift_closed_immersion}
	Let $X$ be a smooth projective scheme over $\sO_K$, and let $D \subseteq X$ be a relative Cartier divisor whose special fiber $D_k$ is generically reduced on each connected component. 
	Let $C \subseteq X$ be a closed subscheme which is smooth and proper of relative dimension one over $\sO_K$, such that $C \cap D$ is \'etale over $\sO_K$. 
	
	Assume $X_1^\circ$ (resp. $C_1^\circ$) is a connected finite \'etale cover of $X^\circ \colonequals X \smallsetminus D$ (resp. $C^\circ \colonequals C \smallsetminus C \cap D$). 
	Then the closed immersion $C^\circ \into X^\circ$ lifts to a map $C_1^\circ \to X_1^\circ$ if and only if this is true on the special fiber.  
\end{proposition}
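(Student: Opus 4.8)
The plan is to translate the lifting problem into the existence of a section of a finite \'etale morphism of curves over $\mathcal{O}_K$, pass to smooth proper models, and then exploit that over the henselian base $\mathcal{O}_K$ the relevant (dis)connectedness is already detected on the special fibre. We use throughout that $\mathcal{O}_K$ is henselian (this is genuinely needed) and that $\operatorname{char}K=0$, which forces every finite \'etale cover below to be tamely ramified relative to $\mathcal{O}_K$ along the horizontal boundaries. Replacing $C$ by the connected component over which $C_1^\circ$ lies, we may assume $C$, hence $C^\circ$, hence $C_1^\circ$, is integral (a connected finite \'etale cover of a normal connected scheme is integral). The implication ``lifts over $\mathcal{O}_K\Rightarrow$ lifts over $k$'' is immediate by base change along $-\otimes_{\mathcal{O}_K}k$, so only the converse requires work.

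Set $V\colonequals X_1^\circ\times_{X^\circ}C_1^\circ$, a finite \'etale $C_1^\circ$-scheme. Since $\Hom_{X^\circ}(C_1^\circ,X_1^\circ)=\Hom_{C_1^\circ}\!\big(C_1^\circ,\,V\big)$, a lift of $C^\circ\hookrightarrow X^\circ$ to $C_1^\circ\to X_1^\circ$ is exactly a section of $V\to C_1^\circ$, and the same identification holds after $-\otimes k$; so it suffices to prove that $V\to C_1^\circ$ has a section iff $V_k\to(C_1^\circ)_k$ does. Let $\bar C_1$ be the normalization of $C$ in the function field of $C_1^\circ$: it is normal, proper, flat and connected over $\mathcal{O}_K$, and $\bar C_1\times_C C^\circ=C_1^\circ$. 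By \cref{prop:relative_Abhy} (applicable since the geometric generic fibre of $\mathcal{O}_K$ has characteristic $0$), $\bar C_1\to C$ is tamely ramified relative to $\mathcal{O}_K$ along $C\cap D$; since $C\cap D$ is \'etale over $\mathcal{O}_K$ and $C/\mathcal{O}_K$ is smooth of relative dimension $1$, the standard local form $R[t]/(t^n-f)$ of a tame cover (with $\mathrm{d}f$ generating $\Omega^1_{C/\mathcal{O}_K}$ near $C\cap D$) shows that $\bar C_1$ is in fact \emph{smooth} and proper over $\mathcal{O}_K$ of relative dimension $1$, and \cref{prop:etaleness_of_pullback} shows that $Z_1\colonequals(\bar C_1\smallsetminus C_1^\circ)_{\mathrm{red}}$ is \'etale (in particular finite) over $\mathcal{O}_K$. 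Running the same reasoning with the normalization $\bar V$ of $\bar C_1$ in $V$ gives a finite flat morphism $\bar V\to\bar C_1$ of smooth proper $\mathcal{O}_K$-curves with $\bar V\times_{\bar C_1}C_1^\circ=V$ and $\bar V\smallsetminus V$ \'etale over $\mathcal{O}_K$.

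To conclude, observe that since $\bar C_1$, $\bar V$ are normal and $\bar V\to\bar C_1$ is finite dominant, a section of $V\to C_1^\circ$ extends uniquely to a section of $\bar V\to\bar C_1$ (the closure of its image is a component of $\bar V$, finite and birational over the normal $\bar C_1$, hence isomorphic to it), and conversely; likewise over $k$, where $(\bar C_1)_k$ and $(\bar V)_k$ are again smooth, hence normal, curves. Thus the number of sections of $V\to C_1^\circ$ equals the number of connected components of $\bar V$ of degree $1$ over $\bar C_1$, and similarly over $k$. But each connected component $\bar V_i$ of $\bar V$ is smooth, proper and connected over the henselian local ring $\mathcal{O}_K$, so its special fibre $(\bar V_i)_k$ is connected (standard: the theorem on formal functions, or Hensel's lemma applied to idempotents of $\H^0(\bar V_i,\mathcal{O})$), and has the same degree over $(\bar C_1)_k$ that $\bar V_i$ has over $\bar C_1$; moreover $(\bar C_1)_k$, being smooth and connected, is integral, so a section of $\bar V_k\to(\bar C_1)_k$ lands in a single (connected, hence irreducible) component, forcing that component to have degree $1$. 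Hence $\bar V\to\bar C_1$ has a section $\iff$ some $\bar V_i$ has degree $1$ $\iff$ some $(\bar V_i)_k$ has degree $1$ $\iff$ $\bar V_k\to(\bar C_1)_k$ has a section, and unwinding the reformulation of the previous paragraph gives the asserted equivalence.

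The main obstacle — and what the hypotheses are for — is producing in the second step genuinely \emph{smooth} proper models with \emph{horizontal \'etale} boundary, not merely normal proper ones: if $(\bar C_1)_k$ or $(\bar V_i)_k$ were allowed to be singular, connectedness of the proper model need not survive removal of the boundary, and the count of sections could jump between $\mathcal{O}_K$ and $k$. Tameness (\cref{prop:relative_Abhy}, using $\operatorname{char}K=0$) together with the assumption that $C\cap D$ is \'etale over $\mathcal{O}_K$ — fed into \cref{prop:etaleness_of_pullback} — is precisely what rules this out, while henselianness of $\mathcal{O}_K$ is what makes connectedness persist on the special fibre, so that ``lifts over $k$'' becomes sufficient.
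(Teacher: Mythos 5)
Your proof is correct, but it follows a genuinely different route from the paper's. The paper argues cohomologically: it reformulates a lift as a $C^\circ$-morphism $C_1^\circ\to C_2\colonequals C^\circ\times_{X^\circ}X_1^\circ$, invokes relative Abhyankar's lemma for tameness, and then deduces from [SGA1, Exp.\ XIII, Cor.\ 2.8] together with the $1$-acyclicity of $C\to\Spec(\sO_K)$ [SGA4, Exp.\ XV, Thm.\ 2.1] that specialization induces an isomorphism of tame fundamental groups $\pi_1^{\mathrm{t}}(C^\circ)\simeq\pi_1^{\mathrm{t}}(C^\circ_k)$, whence full faithfulness of the reduction functor on tame covers and in fact a bijection between lifts over $\sO_K$ and over $k$. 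You instead reformulate a lift as a section of the finite \'etale cover $V=X_1^\circ\times_{X^\circ}C_1^\circ\to C_1^\circ$, build smooth proper models $\bar V\to\bar C_1$ over $\sO_K$ with horizontal \'etale boundary (first Abhyankar applied to $(C,C\cap D)$ to get $\bar C_1$ smooth, then \cref{prop:etaleness_of_pullback} to get the boundary \'etale, then Abhyankar again for $\bar V$), and transfer existence of a section between the special fibre and the total space using miracle flatness, constancy of the degree of each connected component in a flat family over a connected base, and connectedness of special fibres of proper connected schemes over the henselian $\sO_K$; the key point that a section over $k$ forces a degree-one component is exactly where smoothness (hence irreducibility and reducedness) of $(\bar V_i)_k$ is needed, and your construction supplies it. Your argument is more elementary and geometric, avoiding the tame-$\pi_1$ specialization machinery, at the cost of being longer and yielding only the existence equivalence (the statement as asserted) rather than the canonical bijection of lifts that the paper's Galois-category argument produces; it also leans explicitly on henselianness of $\sO_K$ and $\operatorname{char}K=0$, which is consistent with the paper's setting ($\sO_K=W(k)$), and the paper's proof uses the same hypotheses implicitly. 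Two steps you state briefly but should spell out if written up are the $\sO_K$-smoothness of the Kummer models (via the conormal sequence, a local equation of the \'etale boundary divisor is a relative coordinate, so $R[t]/(t^n-f)$ is smooth over $\sO_K$ and agrees with the relative normalization) and the connectedness of $(\bar V_i)_k$ (Stein factorization over the henselian base); neither is a gap.
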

In fact, we show below that the reduction map induces a natural bijection between the lifts $C_1^\circ \to X_1^\circ$ and the lifts $C_{1,k}^\circ \to X_{1, k}^\circ$.
\begin{proof}
	By assumption, we have the following diagram of schemes
		\[\begin{tikzcd}
		{C_1^\circ} & {X_1^\circ} \\
		{C^\circ} & {X^\circ},
		\arrow[from=1-1, to=2-1]
		\arrow[from=1-2, to=2-2]
		\arrow["i", hook, from=2-1, to=2-2]
	\end{tikzcd}\]
    where the both vertical arrows are finite \'etale covers.
    In particular, the fiber product $C_2\colonequals C^\circ\times_{X^\circ} X_1^\circ$ is a finite \'etale cover of $C^\circ$.
    So to show the statement, it suffices to show that the reduction map induces an isomorphism of hom groups, namely
    \[
\Hom_{C^\circ_{\et}}(C_1^\circ , C_2) \simeq \Hom_{C^\circ_{k, \et}}(C_{1,k}^\circ , C_{2,k}^\circ),
    \]
    where $(-)_\et$ is the \'etale site of the given scheme.
    
    Moreover, we notice that since the complement divisor $D=C\smallsetminus C^\circ$ is \'etale over $\sO_K$ by assumption, it is in particular a reduced and normal crossing divisor in $C$ that is relative to $\sO_K$.
    So by the relative Abhyankar's lemma in \Cref{prop:relative_Abhy}, we see both of the \'etale covers $C_i^\circ$ for $i=1,2$ and their reductions are tamely ramified along the complements.
    To proceed, we apply \cite[Expos\'e XIII, Cor.\ 2.8]{SGA1} at the diagram
    \[
    \begin{tikzcd}
    	C^\circ \ar[r] \ar[d] & C \arrow[dl] \\
    	\Spec(\sO_K),&
    \end{tikzcd}
    \]
    where the $1$-acyclicity of the map $C\to \Spec(\sO_K)$ with respect to the set $\mathbb{L}=\{\text{prime numbers\,} \ell~|~\ell\neq p\}$ follows from \cite[Expos\'e XV, Thm.\ 2.1]{SGA4}.
    The loc.\ cit. implies that for each finite group $G$ of order prime to $p$, the reduction map induces a bijection of groups
    \[
    \mathrm{H}^1_\mathrm{t}(C^\circ, G) \simeq \mathrm{H}^1_\mathrm{t}(C^\circ_k, G),
    \]
    where $\mathrm{H}^1_\mathrm{t}(-,G)$ is the equivalent classes of $G$-torsors that are tamely ramified along the normal crossing complement above.
    In particular, we obtain the isomorphism of tame fundamental groups
    \footnote{The original definition of the tame fundamental group in \cite[Expor\'e XV, 2.1.3]{SGA1} is for the prescribed compactification with normal crossing divisors. 
    	However, it is shown in \cite[Thm.\ 1.1]{KS10} the notion is independent of such choices when the compactifications are regular and proper with normal crossing divisors.}
    \[
    \pi_1^\mathrm{t}(C^\circ, \overline{x}) \simeq \pi_1^\mathrm{t}(C^\circ_k, \overline{x}_k),
    \]
    where $\overline{x}$ is a geometric point of $C^\circ$.
    We let $(-)_{\et}^\mathrm{t}$ be the full subcategory of the \'etale site $(-)_\et$ consists of the \'etale objects that are tame along the normal crossing complement.
    Then by the general result on the Galois category \cite[Expor\'e V, Prop.\ 6.9]{SGA1}, the isomorphism above implies that the reduction map induces a fully faithful functor of Galois categories
    \[
    (C^\circ)_\et^\mathrm{t} \longrightarrow (C^\circ_k)_\et^\mathrm{t}, 
    \]
    which concludes the proof.
\end{proof}

\section{Period morphisms}
\label{sec:period_mor}
In this section, we discuss the period morphism for a \textit{smooth} family of projective varieties with $h^{2, 0} = 1$. 
The key property of the period morphism is that there are matching isomorphisms between the pullback of the automorphic sheaves on Shimura varieties and the (primitive) cohomology sheaves of the varieties in question. 
For Betti and $\ell$-adic cohomology, this has been done in \cite{HYZ} by extending the construction in \cite{MPTate} using the inputs from \cite{Moonen}. 
Here we fill in the treatment of crystalline and de Rham cohomology.

The main difficulty to overcome is that we cannot assume that the motives of the varieties in question are abelian, so that the matching isomorphism of de Rham cohomology over $\IC$ does not automatically descent over $\IQ$ as in \cite[Prop.~5.6(2)]{MPTate}; moreover, its compatibility with the $p$-adic matching isomorphism does not follow from Blasius' result \cite{Blasius}.
Instead, we shall achieve this by making use of the $p$-adic Riemann-Hilbert functor in \cite{DLLZ} and the analytic density of Noether--Lefschetz loci (cf. proof of \cref{lem: Voisin}) to conclude that there is a natural descent over $\IQ_p$, which is enough for our applications. 

\subsection{General set-up}
\label{sub: period_setup}
Let $p > 2$ be a prime and $\sS$ be a smooth connected scheme over $\IZ_{(p)}$ of finite type with generic point $\eta$ and $S \colonequals \sS_\IQ$ be its generic fiber. 
Let $f : \sX \to \sS$ be a smooth projective morphism with geometrically connected fibers of relative dimension $d \colonequals \dim \sX / \sS \geq 2$. 
Let $\Theta$ be a relatively ample line bundle on $\sX/\sS$ and $\Lambda \subseteq \NS(\sX_\eta)_{\mathrm{tf}}$ be a sublattice which contains the class of $\Theta_\eta$. 
Here we are taking an ad hoc definition for the N\'eron--Severi group when the base field is not alegebraically closed: $\NS(\sX_\eta) \colonequals \mathrm{im}(\Pic(\sX_\eta) \to \NS(\sX_{\bar{\eta}}))$. 
Note that $\Theta$ defines symmetric bilinear pairings on $(R^2 f_{\IC *} \IZ_{(p)})_\tf$ over $S_\IC$, $R^2 f_{\IQ, \et *} \IQ_\ell$ over $\sS$, $(R^2 f_{\et *} \IZ_p)_\tf$ over $S$, and $R^2 f_{*} \Ohm^\bullet_{\sX/\sS}$ individually, via the formula $\< x, y \> = x \cup y \cup c_1(\Theta)^{d - 2}$, as each element in $\NS(\sX_\eta)_{\mathrm{tf}}$ gives rise to a global section $c_1(\Theta)$ of these sheaves. 
Let $\bP_{B}, \bP_\ell, \bP_p$ and $\bP_\dR$ be the orthogonal complement of $\Lambda$ in these sheaves. Over $\IF_p$, we similarly define the F-isocrystal $\bP_\crys[1/p]$ on $\crys(\sS_{\IF_p}/\IZ_p)$. When the second crystalline cohomology of $\sX_{\IF_p}$ is locally free (e.g., when \cref{lem:local_free_of_crys_coh} is applicable), we also define $\bP_\crys$. For each fiber, write $\PNS(-)$ for the orthogonal complement of $\Lambda$ in $\NS(-)$.

\begin{definition}
    \emph{(Algebraic Monodromy)} Let $T$ be a normal integral notherian scheme and $\sfW_\ell$ be an \'etale $\IQ_\ell$-local system on $T$ ($\ell \in \sO_T^\times$). For any geometric point $t \to T$, we define the \textit{algebraic monodromy group} $\mathrm{Mon}(\sfW_\ell, t)$ to be the Zariski closure of the image of the monodromy representation $\pi_1^\et(T, t) \to \mathrm{GL}(\sfW_{\ell, t})$. Denote by $\mathrm{Mon}^\circ(\sfW_\ell, t)$ its identity component. 
\end{definition}

\begin{assumption}\label{assump: period} We shall consider families $f : \sX \to \sS$ under the following conditions:

\begin{enumerate}[label=\upshape{(\roman*)}]
    \item For some (and hence every) $s \in S(\IC)$, $h^{2,0}(\sX_s) = 1$. 
    \item For some $s \in S(\IC)$, the Kodaira--Spencer map 
    $$ \mathrm{KS}_s : T_s S_\IC \to \Hom(\H^1(\Ohm^1_{\sX_s}), \H^2(\sO_{\sX_s})) $$
    is nonzero. 
    \item For some (and hence every) geometric point $s \to \sS$, $\bP_{p, s} = \PH^2_\et(\sX_s, \IZ_p)_{\mathrm{tf}}$ is self-dual. 
    \item For some (and hence every) geometric point $s \to \sS$, $\mathrm{Mon}(\bP_2, s)$ is connected. 
\end{enumerate}
Additionally, assume one of the following: 
\begin{enumerate}
    \item[(v)] For some $s \in \sS(\IC)$, the map $\mathrm{KS}_s$ in (ii) has rank $\ge 2$. 
    \item[(vi)] The motive $\fh^2(\sX_s) \in \mathsf{Mot}_{\mathrm{AH}}(\IC)$ determined by $\H^2(\sX_s, \IQ)$ is abelian for every $s \in \sS(\IC)$, where $\mathsf{Mot}_{\mathrm{AH}}(\IC)$ is the category of motives with absolute Hodge cycles over $\IC$ (cf. \cite[\S2]{Pan94}\footnote{Here we are following the notations of \cite[\S2]{MPTate}. The notation $\fh^i(X)$ means the submotive of $\fh(X) \colonequals (X, 0, \mathrm{id})$ whose cohomological realizations give $\H^2$.}). 
\end{enumerate}
\end{assumption}

\begin{construction}
\label{const: sheaves on Sh}
Choose a base point $b \in S(\IC)$ which lies above the generic point $\eta$ of $\sS$ (or $S$). 
Let $S^\dagger$ be the connected component of $S_\IC$ which contains $b$. 
Let $L$ be a quadratic form over $\IZ_{(p)}$ such that $L \iso \PH^2(\sX_b, \IZ_{(p)})_{\mathrm{tf}}$ and fix such an isomorphism. Let $V \colonequals L_\IQ$ and $G$ (resp. $\wt{G}$) be the reductive $\IQ$-group $\SO(V)$ (resp. $\CSpin(V)$). 
By a slight abuse of notation we also write $G$ (resp. $\wt{G}$) for its $\IZ_{(p)}$-model $\SO(L)$ (resp. $\CSpin(L)$). 
Denote by $\Omega$ the Hermitian symmetric space $\{ \w \in \IP(V \tensor \IC) \mid \< \w, \bar{\w} \> > 0 \}$. 
Let $\sfK_p \colonequals G(\IZ_p)$ and $\wt{\sfK}_p \colonequals \wt{G}(\IZ_p)$. 
Let $\sfK^p$ (resp. $\wt{\sfK}^p$) be a sufficiently small compact open subgroup of $G(\IA_f^p)$ (resp. $\wt{G}(\IA^p_f)$) such that $\sfK^p$ contains the image of $\wt{\sfK}^p$. 
Set $\sfK := \sfK_p \sfK^p$ and $\wt{\sfK} := \wt{\sfK}_p \wt{\sfK}^p$. 

We briefly recall some basic facts and commonly used notations of Shimura varieties and various sheaves: 
The reflex field of $(\wt{G}, \Ohm)$ is $\IQ$ and by \cite{KisinInt} there is an canonical integral model $\shS_{\wt{\sfK}}(\wt{G})$ over $\IZ_{(p)}$. 
There is a suitable sympletic space $(H, \psi)$ and a Siegel half space $\sH^\pm$ such that there is an embedding of Shimura data $(\wt{G}, \Ohm) \into (\GSp(\psi), \sH^\pm)$ which eventually equips $\shS_{\wt{\sfK}}(\wt{G})$ with a universal abelian scheme $\sA$.\footnote{Technically, in \cite{KisinInt} and \cite{CSpin}, $\sA$ is only defined as a sheaf of abelian schemes up to prime-to-$p$ quasi-isogeny. 
However, for $\wt{\sfK}^p$ sufficiently small, we can take $\sA$ to be an actual abelian scheme (cf. \cite[(2.1.5)]{KisinInt}).} 
Let $a : \sA \to \shS_{\wt{\sfK}}(\wt{G})$ be the structural morphism. 
Define the sheaves $\bH_B \colonequals R^1 a_{\IC *} \IZ_{(p)}$, $\bH_\ell \colonequals R^1 a_* \underline{\IQ}_\ell$ ($\ell \neq p$), $\bH_p \colonequals R^1 a_{\IQ *} \underline{\IZ}_p$, $\textbf{H}_\dR \colonequals R^1 a_* \Ohm^\bullet_{\sA / \shS_{\wt{\sfK}}(\wt{G})}$ and $\bH_\crys \colonequals R^1 \bar{a}_{\crys *} \sO_{\sA_{\IF_p}/ \IZ_p}$ (where $\bar{a} \colonequals a \tensor \IF_p$). 
The abelian scheme $\sA$ is equipped with a ``CSpin-structure'': a $\IZ / 2 \IZ$-grading, $\Cl(L)$-action and an idempotent projector $\bpi_? : \End(\bH_?) \to \End(\bH_?)$ for $?\in \{B, \ell, p, \dR, \crys\}$ on (various applicable fibers of) $\shS_{\wt{\sfK}}(\wt{G})$. 
We use $\bL_?$ to denote the images of $\bpi_?$. 
Given any $\shS_{\wt{\sfK}}(\wt{G})$-scheme $T$, an endomorphism $f \in \End(\sA_T)$ is called a \textit{special endomorphism} (\cite[Def.~5.2, see also Lem.~5.4, Cor.~5.22]{CSpin}) if for some (and hence all) $\ell \in \sO_T^\times$, the $\ell$-adic realization of $f$ lies in $\bL_\ell|_T \subseteq \End(\bH_\ell|_T)$; if $\sO_T = k$ for a perfect field $k$ in characteristic $p$, then equivalently $f$ is called a \textit{special endomorphism} if the crystalline realization of $f$ lies in $\bL_{\crys, T}$. We write the submodule of $\End(\sA_T)$ consisting of special endomorphisms as $\LEnd(\sA_T)$. 
The sheaves $\bL_?$ and special endomorphisms on $\shS_{\wt{\sfK}}(\wt{G})$ descend to its \'etale quotient $\shS_{\sfK}(G)$, and the descents are customarily denoted by the same letters. 
\end{construction}

Below we assume that $\sfK^p$ is of the form $\prod_{\ell \neq p} \sfK_\ell$ for $\sfK_\ell \subseteq G(\IQ_\ell)$ and is contained in the stabilizer of the lattice $\PH^2(\sX_b, \what{\IZ}^p)_{\mathrm{tf}} \subseteq V \tensor \IA^p_f$. 

\begin{theorem}
\label{thm: period}
    Under Assumption~\ref{assump: period}, up to replacing $\sS$ by a connected finite \'etale cover, there exists a period morphism $\rho : \sS \to \shS_\sfK(G)$ with the following properties: 
   \begin{enumerate}[label=\upshape{(\alph*)}]
       \item There exists an isometry of $\IZ_{(p)}$-VHS $(\alpha^\dagger_B, \alpha^\dagger_{\dR, \IC}) : \rho_\IC^* (\bL_B, \bL_{\dR, \IC})|_{S^\dagger} \sto (\bP_B, \bP_{\dR, \IC})$ over $S^\dagger$.\footnote{In \cite{HYZ} we used $S^\circ$ instead of $S^\dagger$ to denote the connected component of $S_\IC$ that contains the point $b$. 
       Here we changed it to avoid a clash of notations with \cref{sec: proofs of theorems}.}
       \item For every prime $\ell \neq p$, there exists an isometry of $\ell$-adic local systems $\alpha_\ell : \rho^* \bL_\ell \sto \bP_\ell$ over $\sS$ whose restriction to $S^\dagger$ agrees with $\alpha^\dagger_B \tensor \IQ_\ell$. 
       \item There exists an isometry of $p$-adic local systems $\alpha_p : \rho_\IQ^* \bL_p \sto  \bP_p$ over $S$ whose restriction to $S^\dagger_\IC$ agrees with $\alpha_B^\dagger \tensor \IZ_p$. 
   \end{enumerate}
\end{theorem}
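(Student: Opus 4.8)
The plan is to obtain the period morphism $\rho$ together with the Betti and $\ell$-adic matchings (a), (b) essentially from \cite{HYZ}, which extends the construction of \cite{MPTate} to the setting of \cref{assump: period} via Moonen's Hodge-theoretic Kuga--Satake construction \cite{Moonen}; the $p$-adic matching (c) is the genuinely new point and is where the $p$-adic Riemann--Hilbert machinery enters.

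\emph{Construction of $\rho$ and parts (a), (b).} Over $\IC$, conditions (i), (ii) of \cref{assump: period} (together with (v) or (vi)) let one run Moonen's construction \cite{Moonen}: since $h^{2,0}(\sX_s)=1$, on the component $S^\dagger$ one gets a morphism to $\Gamma\backslash\Omega$ together with an isometry $\alpha_B^\dagger$ of $\IZ_{(p)}$-VHS, and hence (by GAGA over $\IC$) the associated filtered-flat-bundle isometry $\alpha^\dagger_{\dR,\IC}$. Algebraicity of this period map plus the $\ell$-adic comparison descends it to a morphism of $\IQ$-schemes $S\to\shS_\sfK(G)_\IQ$ compatible with isometries $\alpha_\ell$ of $\ell$-adic local systems over $\sS$ for all $\ell\neq p$, using (iv) to handle connectedness of the monodromy. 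Finally, since $\sS$ is smooth over $\IZ_{(p)}$ and $\bP_p$ is self-dual by (iii), the pulled-back Kuga--Satake abelian scheme extends, so the extension property of Kisin's canonical integral model \cite{KisinInt} promotes this to $\rho:\sS\to\shS_\sfK(G)$; after replacing $\sS$ by a connected finite \'etale cover we may take $\sfK^p$ small enough to lift $\rho$ to the fine model. This yields (a) and (b).

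\emph{Reduction of (c) to a de Rham matching over $\IQ_p$, and descent to $S$.} Fix compatible embeddings $\overline{\IQ}\hookrightarrow\overline{\IQ_p}$ and $\overline{\IQ}\hookrightarrow\IC$. On $S_{\IQ_p}$ both $\rho_\IQ^*\bL_p$ and $\bP_p$ are de Rham $\IQ_p$-local systems, being cut out by de Rham Tate classes --- the CSpin-idempotent, resp. the algebraic classes in $\Lambda$ --- from $R^1$ of the Kuga--Satake abelian scheme $\rho^*\sA$, resp. $R^2$ of $\sX$. Applying the $p$-adic Riemann--Hilbert functor $\mathcal{RH}$ of \cite{DLLZ}, the relative $p$-adic--de Rham comparison together with the compatibility of the CSpin-structure with all realizations (\cite{CSpin}) gives canonical isomorphisms of filtered flat bundles $\mathcal{RH}(\bP_p)\simeq\bP_\dR\otimes_\IQ\IQ_p$ and $\mathcal{RH}(\rho_\IQ^*\bL_p)\simeq\rho_\IQ^*\bL_\dR\otimes_\IQ\IQ_p$. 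Since $\mathcal{RH}$ is fully faithful on de Rham local systems, producing $\alpha_p$ over $S_{\IQ_p}$ (restricting to $\alpha_B^\dagger\otimes\IZ_p$ on $S^\dagger_\IC$, which one checks by tracing the comparison over $\overline{\IQ_p}\hookrightarrow\IC$) amounts to producing a horizontal, filtration- and pairing-preserving isomorphism $\alpha_{\dR,\IQ_p}:\rho_\IQ^*\bL_\dR\otimes\IQ_p\sto\bP_\dR\otimes\IQ_p$ compatible with $\alpha^\dagger_{\dR,\IC}$. Granting this (see below), the horizontal isometric isomorphisms $\rho_\IQ^*\bL_p\to\bP_p$ over $S_{\overline{\IQ}}$ form, by big monodromy and Schur's lemma, a line on which $\Gal_\IQ$ acts through a finite-order character; this character is trivial on the decomposition group at $p$ by what was just done, and equals the $\ell$-adic character (trivial by (b)) via the compatible-systems statement of \cref{sub:match crystalline}, so it is trivial, and the suitably normalized $\alpha_B^\dagger\otimes\IZ_p$ descends to the desired $\alpha_p$ over $S$.

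\emph{The de Rham matching over $\IQ_p$.} The bundles $\rho_\IQ^*\bL_\dR$ and $\bP_\dR$ with their filtrations, connections and pairings are defined over $\IQ$ and match over $\IC$; by big monodromy (from (v) and \cite{Moonen}) the horizontal isometric filtration-preserving isomorphisms between them form a $\mu_2$-torsor over $\IQ$, nonempty over $\IC$. To exhibit a $\IQ_p$-point one matches the two $p$-adic objects $\mathcal{RH}(\rho_\IQ^*\bL_p)$ and $\bP_\dR\otimes\IQ_p$; both are $p$-adic Riemann--Hilbert realizations of local systems that also carry complex variations of Hodge structure via $\rho_\IC$, so by the analytic density of the Noether--Lefschetz loci in $S^\dagger(\IC)$ (cf. the proof of \cref{lem: Voisin}) it is enough to match them along this dense set of special points, at which the period lies on a special divisor of $\shS$ and the relevant Kuga--Satake comparison is pinned down by the integral $p$-adic Hodge theory of \cite{BMS}; this is carried out in \cref{sub:match crystalline} (cf. \cref{rmk: matching crystalline}). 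This step is the main obstacle: since we do not assume $\fh^2(\sX_s)$ is abelian, Blasius' theorem on the de Rhamness of Hodge cycles on abelian motives --- the tool used for this in \cite{MPTate} --- is unavailable, so $\alpha^\dagger_{\dR,\IC}$ does not descend formally, and one must instead prove the compatibility of the $p$-adic and complex Riemann--Hilbert functors in this setting, with Noether--Lefschetz density serving to reduce that compatibility to special points where \cite{BMS} applies.
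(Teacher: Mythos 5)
Your construction of $\rho$ and of parts (a), (b) is essentially the paper's argument: both extract the complex period map from the VHS $(\bP_B,\bP_{\dR,\IC})$ on $S^\dagger$, use assumption (iv) (via Larsen--Pink $\ell$-independence) to pass to a finite \'etale cover with image of monodromy in $\sfK^p$, descend to $\IQ$ by the descent theorem of \cite{HYZ}, and extend over $\IZ_{(p)}$ by the extension property of Kisin's canonical integral model. The problem is part (c). Your route---reduce $\alpha_p$ to a horizontal filtered isometry $\rho_\IQ^*\bL_\dR\otimes\IQ_p\sto\bP_\dR\otimes\IQ_p$ and then lift it through the $p$-adic Riemann--Hilbert functor---rests on the claim that this functor is fully faithful on de Rham local systems. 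That is false: already over a point, $\ID_\dR$ forgets the Frobenius/Galois data beyond the filtration (two distinct unramified characters have the same $\ID_\dR$), and the relative functor of Liu--Zhu/DLLZ is likewise not fully faithful. So a de Rham-level matching does not produce $\alpha_p$, and the subsequent ``finite-order character trivial at $p$'' descent argument inherits this gap, since triviality after applying $\ID_\dR$ does not force triviality of the character on the decomposition group.

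The gap is also unnecessary, because the paper obtains (c) with no $p$-adic Hodge theory at all: the isomorphism $\alpha_p$ over the characteristic-zero fiber $S$ comes out of the same descent statement \cite[Thm.~4.1.9]{HYZ} that produces $\alpha_\ell$, the point being that over $\IQ$-schemes a $p$-adic \'etale local system is handled exactly like an $\ell$-adic one (which is why (c) is only asserted over $S$ and not over $\sS$). The machinery you invoke---\cite{DLLZ}, \cite{BMS}, and the Noether--Lefschetz density argument of \cref{lem: Voisin}---enters the paper only afterwards, in \cref{lem: alpha is de Rham} and \cref{prop: crystalline period}, where $\alpha_p$ is an \emph{input}: one sets $\hat\alpha_\dR=\ID_\dR^{\alg}(\alpha_p)$ and proves it agrees with $\alpha^\dagger_\dR$, then spreads this out integrally to get $\alpha_\crys$. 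Your citation of \cref{sub:match crystalline} as carrying out your de Rham matching is therefore circular with respect to the paper's actual logical order. To repair your write-up, replace the Riemann--Hilbert step in (c) by the observation that the comparison $\alpha_B^\dagger\otimes\IZ_p$ over $S^\dagger_\IC$ is an isomorphism of \'etale local systems, hence descends to $S$ over $\IQ$ once the period morphism and its Galois-equivariance are established---which is exactly what the HYZ descent theorem gives, simultaneously for all $\ell$ and for $p$.
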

\begin{proof}
        This can be extracted from the proof of \cite[Prop.~5.3.3(a)]{HYZ}. We give a sketch of the arguments for readers' convenience. By an $\ell$-independence of the group of connected components of algebraic monodromy groups \cite[Prop.~6.14]{Larsen-Pink}, the assumption (iv) is used to make sure that $\mathrm{Mon}(\bP_\ell, s)$ for every prime $\ell \neq p$. Therefore, up to replacing $\sS$ be a \textit{finite} \'etale cover, the image of $\pi_1^\et(S, b)$ in $G(\IA^p_f)$ is contained in $\sfK^p$. The VHS $(\bP_B, \bP_{\dR, \IC})$ on $S^\dagger$, together with suitable level structures, defines a morphism $\rho^\dagger : S^\dagger \to \Sh_\sfK(G)_\IC$. Let $F$ be the field of definition of $S^\dagger$. Then \cite[Thm.~4.1.9]{HYZ} implies that $\rho^\dagger$ descends to $F$, so that by applying the forgetful functor from the category of $F$-schemes to that of $\IQ$-schemes, we obtain a $\IQ$-morphism $\rho_\IQ : S \to \Sh_\sfK(G)$. Moreover, we obtain isomorphisms $\alpha_\ell : \rho_\IQ^* \bL_\ell \sto \bP_\ell|_{S}$ and $\alpha_p : \rho_\IQ^* \bL_p \sto \bP_p$ over $S$ which are compatible with $\alpha_B^\dagger$ over $S^\dagger$. Our assumption (vi) corresponds to assumption (a) in \textit{loc. cit.}, and our assumption (v) ensures that, if (a) in \textit{loc. cit.} is not satisfied, then (b) is, thanks to \cite[Lem.~2.2.5]{HYZ}. The condition denoted by ``$\#$'' in \cite[(4.1.6)]{HYZ} is automatically satisfied when $\sfK$ is small enough. The fact that $\rho_\IQ$ extends (necessarily unique) to a $\rho$ over $\IZ_{(p)}$ is a consequence of the extension property of the canonical model (\cite{KisinInt}, cf. \cite[Thm.~3.4.4]{HYZ}). Then $\alpha_\ell$ extends over $\sS$ as the natural map $\pi_1^\et(S, b) \to \pi_1^\et(\sS, b)$ is surjective. 
\end{proof}

\subsection{Recollections of the $p$-adic Riemann-Hilbert functor}

We briefly recall the $p$-adic Riemann-Hilbert functor. Let $K$ be a $p$-adic field with perfect residue field and fix an algebraic closure $\bar{K}$. Let $T$ be a smooth rigid analytic variety over $K$. Let $\Loc_{\IQ_p}(T)$ denote the category of (lisse) $p$-adic \'etale local systems. 
In \cite[Thm.~3.9]{LiuZhu}, Liu and Zhu defined a functor $\ID_\dR$ which sends a $\IL \in \Loc_{\IQ_p}(T)$ to a vector bundle $\ID_\dR(\IL)$ with integrable connection $\nabla_\IL$ and filtration $\Fil^\bullet$ on $T$, 
satisfying the following properties.
\begin{itemize}
    \item The formation of $(\ID_\dR(\IL), \nabla_\IL)$ commutes with base change. That is, if $f : T' \to T$ is a morphism between smooth rigid-analytic varieties over $K$, then there is a canonical isomorphism 
\begin{equation}
    \label{eqn: DdR commutes with base change}
    f^* (\ID_\dR(\IL), \nabla_\IL) \simeq (\ID_\dR(f^* \IL), \nabla_{f^* \IL}).
\end{equation}
\item The filtration satisfies Griffiths transversality.
\item The restriction of $\ID_\dR$ onto the full subcategory of \textit{de Rham} local systems (in the sense of Scholze \cite[Def.~8.3]{Sch13}) is a tensor functor and preserves the rank.
\item When $T = \mathrm{Spa}(K)$ is just a point, the functor $\ID_\dR$ recovers the classical Fontaine's $\ID_\dR$ functor (which we denote by the same symbol), i.e. $\ID_\dR(\IL) = \bigl(\IL(\mathrm{Spa}(\IC_p)) \tensor_{\IQ_p} \mathrm{B}_\dR \bigr)^{\Gal_K}$. 
\end{itemize}

One can of course apply $\ID_\dR$ to $\IZ_p$-local systems after inverting $p$. 
We recall (a special case of) Scholze's result \cite[Thm.~8.8]{Sch13} in the following form: 
\begin{theorem}
\emph{(Scholze)}
    Let $f : X \to T$ be a smooth proper morphism between smooth rigid-analytic varieties over $K$. Then (the torsion-free part of) $R^i f_{\et *} \IZ_p$ is de Rham and there is a functorial isomorphism bewteen filtered flat vector bundles 
    \begin{equation}
    \label{eqn: relative comparison}
        \ID_\dR(R^i f_{\et *} \IZ_p) \simeq R^i f_* \Omega_{X/T}^\bullet. 
    \end{equation}
\end{theorem}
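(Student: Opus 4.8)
The plan is to deduce this from Scholze's relative $p$-adic de Rham comparison theorem, reconciling its formulation with the functor $\ID_\dR$ of \cite{LiuZhu}.

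First I would check that $R^i f_{\et *} \IZ_p$ --- or, more precisely, its torsion-free quotient, which is the object on which $\ID_\dR$ operates after inverting $p$ --- is de Rham in the sense of \cite[Def.~8.3]{Sch13}. By \cite[Thm.~3.9]{LiuZhu} the de Rham property for a $p$-adic local system on $T$ can be checked pointwise on classical points, and at a classical point $t \in T$ it reduces to the statement that $\H^i_\et(X_{t, \bar{K}}, \IQ_p)$ is a de Rham representation of $\Gal_{K(t)}$ --- the absolute comparison theorem for the smooth proper rigid variety $X_t$.

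Next I would invoke \cite[Thm.~8.8]{Sch13}, which supplies a canonical isomorphism identifying the de Rham realization of $R^i f_{\et *}\IZ_p$, as a filtered vector bundle with integrable connection on $T$, with $R^i f_* \Omega^\bullet_{X/T}$ equipped with its Hodge filtration and Gauss--Manin connection. The remaining task is then purely one of bookkeeping: by construction (\cite[Thm.~3.9]{LiuZhu}) the functor $\ID_\dR$ is exactly the functor extracting this de Rham realization from a de Rham local system, and Liu--Zhu already verify that their construction is compatible with Scholze's relative comparison; compatibility with base change (\ref{eqn: DdR commutes with base change}), Griffiths transversality, and functoriality in the local system are part of the same package, so combining these inputs gives the isomorphism (\ref{eqn: relative comparison}).

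The main --- and really only --- subtlety is the reconciliation of Scholze's comparison, which is naturally phrased over the de Rham period sheaf on the pro-\'etale site of $T$, with the vector-bundle-with-connection output of $\ID_\dR$; but this reconciliation is precisely what is carried out in \cite{LiuZhu}, so in our setting it amounts to a citation rather than new work.
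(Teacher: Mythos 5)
Your proposal is correct in outline and rests on the same pillar as the paper's proof, namely Scholze's relative comparison \cite[Thm.~8.8]{Sch13}; the only real difference is how the output of that theorem is reconciled with $\ID_\dR$. The paper does this reconciliation directly: Scholze's theorem (and its proof) gives a filtered isomorphism of pro-\'etale sheaves $R^i f_{\et *}\IZ_p \tensor_{\IZ_p} \sO\IB_{\dR, T} \simeq R^i f_*\Omega^\bullet_{X/T} \tensor_{\sO_T} \sO\IB_{\dR, T}$, and since $\ID_\dR(-) = \nu_*(- \tensor_{\IZ_p} \sO\IB_{\dR, T})$ for the projection $\nu$ from the pro-\'etale to the \'etale site of $T$, one concludes by applying $\nu_*$ together with the projection formula for finite projective modules and $\nu_*\sO\IB_{\dR, T} = \sO_T$ \cite[Cor.~6.19]{Sch13}. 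You instead outsource exactly this step to \cite{LiuZhu}, asserting that the compatibility of $\ID_\dR$ with Scholze's relative comparison is already verified there; that is the one soft spot of your write-up, since it is not clear that \cite{LiuZhu} contains a citable statement in the precise form needed, and this pushforward argument is the only genuine content of the proof, so you should either locate such a statement or spell out the three-line argument above as the paper does. Your preliminary step---establishing de Rham-ness of the local system by checking at classical points via the absolute comparison theorem and Liu--Zhu's rigidity theorem---is valid but superfluous: the same pushforward computation exhibits the local system as associated to a filtered flat bundle of full rank, which yields de Rham-ness together with the identification $\ID_\dR(R^i f_{\et *}\IZ_p) \simeq R^i f_*\Omega^\bullet_{X/T}$ in one stroke.
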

\begin{proof}
    By \cite[Thm.~8.8]{Sch13} and its proof, there is a natural filtered isomorphism of pro-\'etale sheaves 
    $$ R^i f_{\et *} \IZ_p \tensor_{\IZ_p} \sO \IB_{\dR, T} \simeq R^i f_* \Omega_{X/T}^\bullet \tensor_{\sO_T} \sO \IB_{\dR, T}. $$
    Recall that on $T$, $\ID_\dR(-) = \nu_*(- \tensor_{\IZ_p} \sO \IB_{\dR, T}) $, where $\nu$ is the projection of topoi $T_{\textrm{pro{\'et}}} \to T_\et$. By the projection formula of finite projective modules, and the fact that $\nu_* \sO \IB_{\dR, T} = \sO_{T}$ (\cite[Cor.~6.19]{Sch13}), we obtain (\ref{eqn: relative comparison}) by applying $\nu_*$ to the above isomorphism. 
\end{proof}

Now suppose that $T$ is a smooth algebraic variety over $K$ and let $T^{\mathrm{an}}$ be its analytification. Then there is an algebraic de Rham functor $\ID^\alg_\dR$ which sends de Rham $p$-adic local systems on $X$ to a filtered flat vector bundle over $X$, whose analytification is identified with $\ID_\dR(\IL)$ (\cite[Thm~1.1]{DLLZ}). Using this functor, we obtain an algebraic version of (\ref{eqn: relative comparison}). 

\begin{theorem}
\label{thm: alg. compare}
    Let $f : X \to T$ be a smooth proper morphism between smooth $K$-varieties. Then (the torsion-free part of) $R^i f_{\et *} \IZ_p$ is de Rham and there is a functorial isomorphism $\mathsf{c}_\dR : \ID_\dR^{\mathrm{alg}}(R^i f_{\et *} \IZ_p) \simeq R^i f_* \Omega_{X/T}^\bullet$ bewteen filtered flat vector bundles over $\sO_T$ whose analytification is given by (\ref{eqn: relative comparison}).  
\end{theorem}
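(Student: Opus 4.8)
The plan is to descend the analytic comparison (\ref{eqn: relative comparison}) of Scholze to an algebraic one using the algebraization built into the $p$-adic Riemann--Hilbert machinery of \cite{DLLZ}. First, $R^i f_{\et *}\IZ_p$ is de Rham as an algebraic $p$-adic local system: by definition in \cite{DLLZ} this is tested on the analytification, and $(R^i f_{\et *}\IZ_p)^{\an} \simeq R^i f^{\an}_{\et *}\IZ_p$ is de Rham by \cite[Thm.~8.8]{Sch13} applied to $f^{\an} : X^{\an} \to T^{\an}$; hence $\ID_\dR^{\alg}(R^i f_{\et *}\IZ_p)$ is defined. Next I would identify the analytifications of the two algebraic filtered flat vector bundles in play: on one side, by \cite[Thm.~1.1]{DLLZ}, the analytification of $\ID_\dR^{\alg}(R^i f_{\et *}\IZ_p)$ is canonically $\ID_\dR(R^i f^{\an}_{\et *}\IZ_p)$ with its filtration and connection; on the other, since $f$ is proper, relative GAGA gives $(R^i f_* \Omega^\bullet_{X/T})^{\an} \simeq R^i f^{\an}_* \Omega^\bullet_{X^{\an}/T^{\an}}$ compatibly with the Hodge filtration and the Gauss--Manin connection. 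Scholze's relative comparison (\ref{eqn: relative comparison}) then provides a filtered, horizontal isomorphism $\mathsf{c}_\dR^{\an}$ between these two analytic objects over $T^{\an}$.

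It remains to algebraize $\mathsf{c}_\dR^{\an}$. Choose a smooth compactification $T \subseteq \overline{T}$ with $D \colonequals \overline{T}\smallsetminus T$ a simple normal crossings divisor (available as $\mathrm{char}\,K = 0$). By construction in \cite{DLLZ}, $\ID_\dR^{\alg}(R^i f_{\et *}\IZ_p)$ is the restriction to $T$ of a logarithmic filtered flat vector bundle on $\overline{T}$ produced by the logarithmic $p$-adic Riemann--Hilbert functor followed by GAGA on the proper $\overline{T}$; meanwhile $R^i f_* \Omega^\bullet_{X/T}$ admits Deligne's canonical log extension to $\overline{T}$, its Gauss--Manin connection having regular singularities along $D$ (Deligne, Katz) and its Hodge filtration extending canonically. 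The point is that $\mathsf{c}_\dR^{\an}$ extends to an isomorphism of these two log extensions over $\overline{T}^{\an}$ --- this is the logarithmic form of the relative $p$-adic de Rham comparison established in \cite{DLLZ} (after replacing $f$, if necessary, by a log smooth proper model over $\overline{T}$ via semistable reduction, which alters neither side over $T$). Granting this, GAGA on the proper variety $\overline{T}$ shows the extended analytic isomorphism is the analytification of a unique algebraic isomorphism of the log extensions; restricting to $T$ yields the desired $\mathsf{c}_\dR$, which is automatically filtered and flat, and whose analytification is (\ref{eqn: relative comparison}) by construction. Functoriality in $f$ follows from this uniqueness together with the functoriality of $\ID_\dR^{\alg}$, of relative de Rham cohomology, and of Scholze's comparison.

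The main obstacle is the boundary compatibility invoked above: matching $\mathsf{c}_\dR^{\an}$ with the two canonical log extensions along $D$, i.e.\ checking that the ``exponents'' (the residues of the Gauss--Manin connection and the jumps of the Hodge filtration on the de Rham side, versus the monodromy/residue data governing the logarithmic Riemann--Hilbert extension on the \'etale side) correspond under the comparison. This is precisely where the logarithmic theory of \cite{DLLZ} and a log smooth model of $f$ over $\overline{T}$ are essential; everything else in the argument is formal, flowing from relative GAGA, \cite[Thm.~1.1]{DLLZ}, and \cite[Thm.~8.8]{Sch13}.
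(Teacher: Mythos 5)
Your overall strategy is sound and lands in the same place, but you take a genuinely different (and heavier) route than the paper at the algebraization step. The paper reduces immediately to the underlying flat bundles (the filtration is checked after analytification), and then invokes the full faithfulness of the analytification functor on the category of algebraic \emph{regular} connections with \emph{non-Liouville exponents} --- a theorem of Andr\'e and Baldassarri, as packaged in the proof of \cite[Lem.~4.1.2]{DLLZ}. It then only has to verify that each side separately is regular with non-Liouville (in fact rational) exponents: for $\ID_\dR^{\mathrm{alg}}(R^i f_{\et *}\IZ_p)$ this is part of the construction in \cite{DLLZ}, and for $R^i f_*\Omega^\bullet_{X/T}$ it is the local monodromy theorem \cite[\S13]{Kat70}. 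Your argument essentially reproves this full-faithfulness by hand: you extend both sides to canonical log extensions over a compactification $\overline{T}$ and apply GAGA on the proper $\overline{T}$. That is morally the same mechanism, but note that in the rigid-analytic category the uniqueness and gluing of such canonical extensions is exactly where the non-Liouville condition enters; citing the Andr\'e--Baldassarri input directly is what makes the paper's version clean.

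The one step of your route that I would flag as a genuine soft spot is the reliance on ``a log smooth proper model of $f$ over $\overline{T}$ via semistable reduction'' in order to invoke the logarithmic relative comparison of \cite{DLLZ} along the boundary. Semistable (or even weak toroidal) reduction over a higher-dimensional base is only available after modifying or altering $\overline{T}$ itself, which changes the boundary divisor and forces an additional descent argument that you have not supplied. The paper's argument needs no global log model of $f$ at all: regularity and rationality of the exponents of the Gauss--Manin connection is a consequence of the local monodromy theorem, checked along curves, with no semistable reduction input. If you replace your boundary-matching step by the two separate exponent computations plus the Andr\'e--Baldassarri full faithfulness, your proof becomes the paper's.
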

\begin{proof}
It suffices to produce an isomorphism between the underlying flat vector bundles, as the property of respecting the filtration can be checked after analytification. 
As explained in the proof of \cite[Lem.~4.1.2]{DLLZ}, by several results of Andr\'e and Baldassarri, the analytification functor from the category of algebraic regular connections on $T$ whose exponents contain no Liouville numbers to the category of analytic ones on $T^{\mathrm{an}}$ is fully faithful. Therefore, it suffices to show that both $\ID_\dR^{\mathrm{alg}}(R^i f_{\et *} \IZ_p)$ and $R^i f_* \Omega_{X/T}^\bullet$ are regular and have non-Liouville exponents. 
For the former, this has been explained in \textit{loc. cit.} that $\ID_\dR^{\mathrm{alg}}(\IL)$ is regular and has rational exponents for any local system $\IL$. 
For the latter, the same is true by the local monodromy theorem (e.g., see \cite[\S13]{Kat70}). 
\end{proof}


\begin{remark}
\label{rmk: Chern class}
    In the above theorem, if $L$ is a relative line bundle on $X/T$, then $\mathsf{c}_\dR$ sends the $\ID_\dR$ of its $p$-adic Chern class to its de Rham Chern class. 
    Indeed, by compatibility of Chern classes with base change it suffices
    to observe this when $T$ is a point. 
    For a reference in the latter case, see \cite[Prop.~11.2]{IIK}.
\end{remark}

\begin{corollary}
\label{cor: DdR on Sh}
    Consider the Shimura variety $\Sh_\sfK(G)$ in \cref{sub: period_setup}. Let $\bH_{\dR, \IQ_p}$ and $\bH_{p, \IQ_p}$ be the restrictions of $\bH_\dR$ and $\bH_{p}$ to $\Sh_\sfK(G)_{\IQ_p}$ respectively. Define $\bL_{\dR, \IQ_p}$ and $\bL_{p, \IQ_p}$ similarly.  The natural isomorphism $\ID_\dR^{\mathrm{alg}}(\bH_{p, \IQ_p}) \simeq \bH_{\dR, \IQ_p}$ induces an isomorphism of filtered flat vector bundles $\ID^{\mathrm{alg}}_\dR(\bL_{p, \IQ_p}) \simeq \bL_{\dR, \IQ_p}$. 
\end{corollary}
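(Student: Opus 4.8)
The plan is to reduce the statement to the functoriality of the $p$-adic de Rham comparison for abelian schemes. Recall from \cref{const: sheaves on Sh} (following \cite[\S1]{CSpin}) that the projectors $\bpi_p$ and $\bpi_\dR$ are not extra choices but are canonically determined by the $\CSpin$-structure on the universal Kuga--Satake abelian scheme $a : \sA \to \Sh_\sfK(G)$: after passing to the $\CSpin$-cover, where $\sA$ is an honest abelian scheme, the $\Cl(L)$-action is given by genuine endomorphisms $[c] : \sA \to \sA$ ($c \in \Cl(L)\tensor\IZ_{(p)}$), the $\IZ/2\IZ$-grading by a decomposition of $\sA$ up to prime-to-$p$ isogeny, and $\bpi_?$ is a fixed universal expression in the operators these induce on $\bH_?$ together with the pairing coming from a polarization of $\sA$. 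Thus it suffices to show that the comparison isomorphism $\mathsf{c}_\dR$ of \cref{thm: alg. compare}, applied to $a_{\IQ_p} : \sA_{\IQ_p} \to \Sh_\sfK(G)_{\IQ_p}$ (legitimate, since the canonical integral model is smooth over $\IZ_{(p)}$, so $\Sh_\sfK(G)_{\IQ_p}$ is a smooth $\IQ_p$-variety), carries $\bpi_p$ to $\bpi_\dR$. Granting this, note that $\bL_{p,\IQ_p}$, being the image of the idempotent morphism of local systems $\bpi_p$, is a direct summand of the de Rham local system $\End(\bH_{p,\IQ_p})$, hence is itself de Rham; then taking images gives $\ID_\dR^{\mathrm{alg}}(\bL_{p,\IQ_p}) = \mathrm{im}\bigl(\ID_\dR^{\mathrm{alg}}(\bpi_p)\bigr) = \mathrm{im}\,\bpi_\dR = \bL_{\dR,\IQ_p}$, and this identification respects the filtrations because $\ID_\dR^{\mathrm{alg}}$ takes values in filtered flat vector bundles and $\bL_{\dR,\IQ_p}$ carries exactly the filtration induced from $\End(\bH_{\dR,\IQ_p})$.

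To match the projectors I would argue in two mutually reinforcing ways. First, directly: $\bH_{p,\IQ_p}$ is de Rham by Scholze's relative comparison (the theorem preceding \cref{thm: alg. compare}), hence so are its tensor constructions, and the restriction of $\ID_\dR^{\mathrm{alg}}$ to de Rham local systems is a tensor functor compatible with duals and $\End$ (\cite{DLLZ}, \cite{LiuZhu}), so there is a canonical identification $\ID_\dR^{\mathrm{alg}}(\End(\bH_{p,\IQ_p})) = \End(\bH_{\dR,\IQ_p})$ lying over $\mathsf{c}_\dR$; moreover $\mathsf{c}_\dR$ is functorial in the abelian scheme, hence intertwines the operators induced by the $[c]$ and by the graded pieces of $\sA$, and it is compatible with the polarization pairing (again functoriality, now for $\sA \to \sA^\vee$). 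Since $\bpi_p$ and $\bpi_\dR$ are the \emph{same} universal expression in these structures, $\ID_\dR^{\mathrm{alg}}(\bpi_p) = \bpi_\dR$. As a clean safeguard for this functoriality bookkeeping, I would observe that both $\ID_\dR^{\mathrm{alg}}(\bpi_p)$ and $\bpi_\dR$ are horizontal sections of $\End(\End(\bH_{\dR,\IQ_p}))$ — the former because $\bpi_p$ is a morphism of local systems, the latter because $\bL_{\dR}$ underlies a sub-variation and is therefore a flat sub-bundle — so on each connected component of $\Sh_\sfK(G)_{\IQ_p}$ it is enough to check equality at a single closed point $x$. There $\mathsf{c}_\dR$ specializes to Fontaine's comparison $\ID_\dR\bigl(\H^1_\et(\sA_{x,\overline{\IQ}_p},\IQ_p)\bigr) \simeq \H^1_\dR(\sA_x)$ (this is the analytification assertion in \cref{thm: alg. compare}, cf. \cref{rmk: Chern class}), which is functorial in $\sA_x$ and hence respects its endomorphism algebra, grading and polarization, and therefore respects $\bpi$.

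The main obstacle is the first step: making precise that $\bpi_?$ is extracted from the $\CSpin$-structure by a recipe literally built from morphisms of $\sA$ and the Weil pairing — so that functoriality of $\mathsf{c}_\dR$ applies — and checking that the needed compatibilities of $\mathsf{c}_\dR$ (with tensor products, duals, arbitrary morphisms of abelian schemes, and polarizations) are indeed available from \cite{DLLZ} and \cite{Sch13} at this generality. The single-point reduction is precisely what discharges this obstacle cheaply, since at a closed point everything collapses to the classical, well-documented functoriality of the de Rham comparison for an individual abelian variety. Once the projectors are matched, the remaining assertions — passage to images and compatibility of filtrations, including Griffiths transversality — are formal consequences of $\ID_\dR^{\mathrm{alg}}$ being a filtered tensor functor on de Rham local systems.
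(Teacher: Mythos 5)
Your overall skeleton coincides with the paper's: both arguments pass from $\bH$ to $\bL$ by showing $\ID_\dR^{\mathrm{alg}}$ carries $\bpi_p$ to $\bpi_\dR$, and both reduce this to a check at closed points (your horizontality observation is a perfectly good way to justify that reduction, since both sides are flat sections of $\End(\End(\bH_{\dR,\IQ_p}))$). The genuine gap is in the pointwise step. You assert that $\bpi_?$ is ``a fixed universal expression'' in the operators induced by the $\Cl(L)$-action, the $\IZ/2\IZ$-grading, and a polarization of $\sA$, so that bare functoriality of the comparison isomorphism for morphisms of abelian schemes would match the projectors. That assertion is not available. The commutant of the right Clifford action, refined by the grading, only cuts out left multiplication by the odd part $\Cl^-(L)$ inside $\End(\bH)$; the further passage from $\Cl^-(L)$ down to $L$ (a subspace of dimension $\mathrm{rank}\,L$ inside one of dimension $2^{\mathrm{rank}\,L-1}$) is not cut out by endomorphisms and the symplectic pairing by any known recipe. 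If it were, the CSpin Shimura datum would in effect be of PEL type, and neither Kisin's construction nor Madapusi Pera's would need the absolute-Hodge machinery for these tensors. In \cite{CSpin} the projector $\bpi$ is simply a $\wt{G}$-invariant Hodge tensor on the Kuga--Satake abelian variety, and its compatibility with the $p$-adic comparison at a point is exactly what Blasius' theorem (Hodge cycles on abelian varieties are de Rham, \cite{Blasius}) is invoked for in \cite[Prop.~4.7]{CSpin}.

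This is also how the paper concludes: after reducing to closed points it cites \cite[Prop.~4.7]{CSpin}, and the footnote there stresses that appealing to Blasius is permissible at this step because only the abelian scheme is involved---the avoidance of Blasius in this paper concerns only the matching isomorphisms between the surface cohomology and the automorphic sheaves, not the internal structure of the Kuga--Satake family. So to close your gap you should either quote that result at the closed points, or actually prove that $\bpi$ lies in the tensor subcategory generated by the endomorphisms, grading and polarization, which is not known and should not be asserted as bookkeeping.
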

\begin{proof}
    Recall that $\bH_{p, \IQ_p}$ and $\bH_{\dR, \IQ_p}$ are given by the first $p$-adic and de Rham cohomology of the universal abelian scheme on $\Sh_\sfK(G)_{\IQ_p}$, so that the natural isomorphism $\ID_\dR^{\mathrm{alg}}(\bH_{p, \IQ_p}) \simeq \bH_{\dR, \IQ_p}$ is given by \cref{thm: alg. compare}. As $\bL_{?}$ is the image of the idempotent projector $\bpi_? : \End(\bH_?) \to \End(\bH_?)$ for $? = p, \dR$, it suffices to show that $\ID_\dR^{\mathrm{alg}}$ sends the restriction of $\bpi_p$ over $\Sh_\sfK(G)_{\IQ_p}$ to that of $\bpi_\dR$. This can be checked at closed points, for which the statement follows from Blasius' result. See \cite[Prop.~4.7]{CSpin}.\footnote{Note that here we are only talking about abelian varieties. The usage of Blasius result is to be avoided only for matching isomorphisms.}
\end{proof}

\subsection{Matching de Rham and crystalline realizations}
\label{sub:match crystalline}
\begin{lemma}
\label{lem: Voisin}
Let $B$ be a smooth connected $\IC$-variety and $\IV = (\IV_B, \IV_\dR)$ be a polarized $\IQ$-VHS of pure weight $2$. Assume that there is no locally constant rational class which is of type $(1, 1)$ everywhere. Suppose that at a point $t_0 \in B$, there exists some $\lambda \in \IV_{\dR, t_0}^{(1, 1)}$ such that the Kodaira-Spencer map induces a surjection $T_{t_0} B \twoheadrightarrow \IV_{\dR, t_0}^{(0, 2)}$. 
Let $U \subseteq B$ be any complex analytic disk containing $t_0$. 

Then some non-empty open ball $\IB$ of $\IV_{B, t_0}$ has the following property: For every $\beta \in \IB$, there exists a point $t \in U$ such that $\beta_t \in \IV_{B, t}$ obtained by $\beta$ via parallel transport within $U$ is of Hodge type $(1, 1)$.  
\end{lemma}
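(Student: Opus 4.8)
The plan is to run the classical Green--Voisin argument for the analytic density of Noether--Lefschetz loci, adapted to this formulation. Shrinking $U$, assume it is a simply connected coordinate neighbourhood of $t_0$, so that parallel transport identifies every $\IV_{B,t}$ with $V_\IQ \colonequals \IV_{B,t_0}$ and presents $\IV_\dR|_U$ as a holomorphically varying filtration $F^2(t) \subseteq F^1(t) \subseteq V \colonequals V_\IQ \tensor \IC$; set $V_\IR \colonequals V_\IQ \tensor \IR$. Because $\IV$ has weight $2$, a real class $\beta \in V_\IR$, regarded as the flat section it spans, has $\beta_t$ of Hodge type $(1,1)$ exactly when $\beta \in F^1(t)$, i.e.\ when the projection $\mathrm{pr}^{0,2}_t(\beta)$ of $\beta$ onto $\IV_{\dR,t}^{(0,2)} \cong V/F^1(t)$ vanishes (its $(2,0)$-component then vanishing by conjugation). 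So it suffices to produce a class $\beta_0 \in F^1(t_0) \cap V_\IR$ together with an open ball $\IB \subseteq V_\IR$ around $\beta_0$ such that every $\beta \in \IB$ lies in $F^1(t)$ for some $t \in U$.

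The hypothesis enters through the infinitesimal variation of Hodge structure. Contracting the second fundamental form of the Hodge filtration with $\gamma \in \IV_{\dR,t_0}^{(1,1)}$ (via the isomorphism $\IV_{\dR,t_0}^{(1,1)} \xrightarrow{\sim} \mathrm{gr}^1_F$) gives a map $K_\gamma \colon T_{t_0}B \to \IV_{\dR,t_0}^{(0,2)}$, which is precisely the derivative at $t_0$ of $t \mapsto \mathrm{pr}^{0,2}_t(\gamma)$ (well defined since $\gamma \in F^1(t_0)$) and is $\IC$-linear in $\gamma$; the hypothesis says $K_\lambda$ is surjective. The first substantive step is to replace $\lambda$, which is a priori complex, by a \emph{real} class. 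Writing $\lambda = \gamma_1 + \sqrt{-1}\,\gamma_2$ with $\gamma_1, \gamma_2 \in \IV_{\dR,t_0}^{(1,1)} \cap V_\IR$, the set of $(a,b) \in \IC^2$ for which $K_{a\gamma_1 + b\gamma_2} = a K_{\gamma_1} + b K_{\gamma_2}$ fails to be surjective is Zariski closed and, since it does not contain $(1, \sqrt{-1})$, proper; as a proper complex subvariety of $\IC^2$ cannot contain the totally real plane $\IR^2$, there exist $s, t \in \IR$ with $K_{s\gamma_1 + t\gamma_2}$ surjective. Put $\beta_0 \colonequals s\gamma_1 + t\gamma_2$: a nonzero real class, of type $(1,1)$ at $t_0$, with $K_{\beta_0} \colon T_{t_0}B \twoheadrightarrow \IV_{\dR,t_0}^{(0,2)}$.

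Finally I would consider the real-analytic incidence set $\mathcal{Z} \colonequals \{ (t, \beta) \in U \times V_\IR : \beta \in F^1(t) \}$. Trivializing the $C^\infty$ bundle $t \mapsto \IV_{\dR,t}^{(0,2)}$ near $t_0$, the set $\mathcal{Z}$ is locally the zero locus of the real-analytic map $\Xi(t, \beta) = \mathrm{pr}^{0,2}_t(\beta)$ valued in the fixed space $\IV_{\dR,t_0}^{(0,2)}$, and at $(t_0, \beta_0)$ its differential is $(\dot t, \dot\beta) \mapsto K_{\beta_0}(\dot t) + \mathrm{pr}^{0,2}_{t_0}(\dot\beta)$. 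Since $\mathrm{pr}^{0,2}_{t_0}$ already carries $V_\IR$ onto $\IV_{\dR,t_0}^{(0,2)}$ (for real $v$, $\mathrm{pr}^{0,2}_{t_0}(v)$ is the conjugate of $\mathrm{pr}^{2,0}_{t_0}(v)$, and $v \mapsto \mathrm{pr}^{2,0}_{t_0}(v)$ is onto), the differential of $\Xi$ is surjective, so $\mathcal{Z}$ is a real-analytic submanifold near $(t_0, \beta_0)$; and since $K_{\beta_0}$ is surjective, for any $\dot\beta \in V_\IR$ one can solve $K_{\beta_0}(\dot t) = -\mathrm{pr}^{0,2}_{t_0}(\dot\beta)$, producing a tangent vector of $\mathcal{Z}$ with second coordinate $\dot\beta$; hence the second projection $p \colon \mathcal{Z} \to V_\IR$ has surjective differential at $(t_0, \beta_0)$ and is therefore open there. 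Take $\IB$ to be a ball around $\beta_0$ contained in the image of $p$: for each $\beta \in \IB$ choose $(t, \beta) \in \mathcal{Z}$ with $t \in U$; then $\beta_t$, the parallel transport of $\beta$ to $t$, equals $\beta$ in our trivialization, lies in $F^1(t)$, and is thus of Hodge type $(1,1)$, as desired. The assumption that no nonzero locally constant rational class is everywhere of type $(1,1)$ is not needed to produce $\IB$; it is used downstream to guarantee that the classes so produced are genuinely variable. The step I expect to be the main obstacle is precisely the passage from the complex $\lambda$ to a real $\beta_0$ with $K_{\beta_0}$ still surjective, since the implicit-function argument must be anchored at an honest real point of the Noether--Lefschetz locus; the remainder is the standard fact that the infinitesimal variation of Hodge structure cuts out the local Hodge loci, together with the openness of submersions.
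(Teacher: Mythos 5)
Your argument is correct and is essentially the paper's proof: both run the Green--Voisin density-of-Hodge-loci argument, flat-trivializing over the disk and showing the projection from the real locus of $F^1$ to the fixed fiber is a submersion (hence open) at a real class whose contraction with $\bar\nabla$ is surjective. The only difference is that you spell out the reduction from the complex class $\lambda$ to a real one and the differential computation, steps the paper asserts by citing Voisin's Lemma 3.6, and like the paper you correctly observe that the "no constant $(1,1)$ class" hypothesis is not needed for this particular conclusion.
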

\begin{proof}
This is implied by the proof of the density of Hodge loci \cite[Thm~3.5]{Voisin}. We sketch the argument for readers' convenience. Up to replacing $B$ by an \'etale cover and splitting off a direct summand, we may assume that there is no local section of $\IV$ which is constant of type $(1, 1)$. Denote by $\IV_\IR$ the real bundle $\IV_B \tensor \IR$ over $S$. Let $F^1 \sV$ (resp. $\sV$, $\sV_\IR$) denote the total space of the vector bundle $\Fil^1 \IV_\dR$ (resp. $\IV_\dR$, $\IV_\IR$) restricted to $U$. The connection on $\sV$ defines a canonical trivialization of holomorphic vector bundle $\IV_\dR|_U = \IV_{t_0} \times U$, so there is a natural projection $\sV \to \sV_{t_0}$. 
Let $\phi$ denote its restriction to $F^1 \sV$. 

It follows from the assumption that for some $\wt{\lambda} \in (\sV_\IR \cap F^1 \sV)_{t_0}$, $\nabla_{t_0} \wt{\lambda} \colon T_{B, t_0} \to \IV^{0, 2}_{t_0}$ is surjective. By Lem.~3.6 in \textit{loc. cit.}, $\phi_\IR \colon F^1 \sV \cap \sV_\IR \to \sV_{\IR, t_0}$ is a submersion, in particular an open map, near $\wt{\lambda}$. Take $\IB$ to be the intersection of $\sV_{B, t_0}$ with a small open neighborhood of $\wt{\lambda}$. Then the projection of $\phi^{-1}_\IR(\IB)$ to $U$ is the Hodge locus. 
\end{proof}

\begin{lemma}
\label{lem: alpha is de Rham}
    In the setting of \cref{thm: period}, let $\hat{\alpha}_\dR : \bL(-1)_\dR|_{S_{\IQ_p}} \sto \bP_{\dR}|_{S_{\IQ_p}}$ be the isometry
    of flat vector bundles given by $\ID^{\alg}_\dR(\alpha_p)$. 
    Then for any isomorphism $\tau : \bar{\IQ}_p \simeq \IC$, the restriction of $\hat{\alpha}_\dR$ to $S^\dagger \subseteq S_\IC \stackrel{\tau}{\simeq} S_{\bar{\IQ}_p}$ is equal to $\alpha^\dagger_{\dR}$. 
\end{lemma}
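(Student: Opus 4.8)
The plan is to exhibit the two maps as \emph{horizontal} isometries and then check that they agree on a spanning family of vectors, namely on Chern classes of divisors, using the analytic density of Noether--Lefschetz loci. Concretely: both $\hat\alpha_\dR|_{S^\dagger}$ and $\alpha^\dagger_\dR$ (the de Rham realization of the $\IC$-VHS isometry $\alpha^\dagger_B$ of \cref{thm: period}(a)) are horizontal isometries of flat vector bundles $\bL(-1)_\dR|_{S^\dagger}\to\bP_\dR|_{S^\dagger}$ --- horizontality of $\hat\alpha_\dR=\ID^{\alg}_\dR(\alpha_p)$ because $\alpha_p$ is a morphism of $p$-adic local systems and $\ID^{\alg}_\dR$ takes values in flat vector bundles, horizontality of $\alpha^\dagger_\dR$ because it is part of a morphism of variations of Hodge structure. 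Hence $\psi\colonequals(\hat\alpha_\dR|_{S^\dagger})^{-1}\circ\alpha^\dagger_\dR-\mathrm{id}$ is a horizontal section of $\mathcal{E}nd(\bL(-1)_\dR|_{S^\dagger})$. Fixing a base point $t_0\in S^\dagger(\overline{\IQ})$ (which exists since $S^\dagger$ is a variety over a number field), the subspace of $\bL(-1)_{\dR,\rho(t_0)}\otimes_\tau\IC$ annihilated by $\psi_{t_0}$ is linear and, by horizontality, contains the parallel transport to $t_0$ (along any path) of every vector annihilated by $\psi$ at any point; so it suffices to produce enough such vectors that their transports span.

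The key step I would carry out is the following matching at arithmetic points: for every $t\in S^\dagger(\overline{\IQ})$ and every divisor $z$ on $\sX_t$ defined over $\overline{\IQ}$ with $[z]\in\bP$, one has $(\hat\alpha_\dR)^{-1}(c_1^\dR(z))=(\alpha^\dagger_\dR)^{-1}(c_1^\dR(z))$, i.e.\ $\psi$ annihilates this vector. On the Betti side, $c_1^\dR(z)$ is the Betti--de Rham comparison of $c_1^B(z)\in\bP_{B,t}$, so $(\alpha^\dagger_\dR)^{-1}(c_1^\dR(z))$ is the comparison of the $\IQ$-rational Hodge class $u^B\colonequals(\alpha^\dagger_B)^{-1}(c_1^B(z))$ in $\bL(-1)_{B,\rho(t)}$; by the theory of special endomorphisms on $\Sh_\sfK(G)$ (\cref{const: sheaves on Sh}), $u^B$ is the Betti realization of a special endomorphism $\zeta\in\LEnd(\sA_{\rho(t)})$, which --- being an endomorphism of an abelian variety over $\overline{\IQ}$ --- is defined over $\overline{\IQ}$, so $\mathrm{comp}_{B,\dR}(u^B)$ is just the algebraic de Rham realization of $\zeta$, a $\overline{\IQ}$-rational class. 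On the $p$-adic side, \cref{thm: period}(c) (the identity $\alpha_p|_{S^\dagger_\IC}=\alpha^\dagger_B\otimes\IZ_p$) together with the compatibility of \'etale and Betti Chern classes identifies $\alpha_p^{-1}(c_1^p(z))$ with the $p$-adic realization of the \emph{same} $\zeta$; applying $\ID_\dR$ and combining \cref{rmk: Chern class}, \cref{thm: alg. compare}, and \cref{cor: DdR on Sh} (which at the point $t$ are Fontaine's comparison, matched with the Betti comparison under $\tau$ on cycle classes via Blasius for abelian motives, exactly as in the proof of \cref{cor: DdR on Sh}) identifies $(\hat\alpha_\dR)^{-1}(c_1^\dR(z))$ with the algebraic de Rham realization of $\zeta$ as well. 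Since the cycle class of the genuine correspondence $\zeta$ is compatible with both comparison isomorphisms, the two sides coincide.

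Finally I would produce enough divisors and conclude. After splitting off from $\bP_B|_{S^\dagger}$ the maximal constant sub-VHS of type $(1,1)$ --- whose classes are algebraic by Lefschetz and are handled verbatim by the key step --- \cref{assump: period}(ii), and condition (v) of \cref{assump: period} in the non-abelian case, guarantees that the hypotheses of \cref{lem: Voisin} hold at a suitable $t_0\in S^\dagger(\overline{\IQ})$ (here one uses $h^{2,0}=1$, so $\bP^{0,2}_\dR$ is a line and ``$\mathrm{KS}$ surjects onto it'' is just the nonvanishing in (ii), up to discarding the flat $\Lambda$-part on which $\mathrm{KS}$ vanishes). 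Then \cref{lem: Voisin} furnishes a nonempty open ball $\IB\subseteq\bP_{B,t_0}\otimes\IR$ such that for each $\IQ$-rational $\beta\in\IB$ --- a dense set --- the parallel transport of $\beta$ to some nearby point becomes a rational Hodge class, hence an algebraic class by the Lefschetz $(1,1)$-theorem; spreading this class over its Noether--Lefschetz locus, which is a subvariety of $S^\dagger$ defined over $\overline{\IQ}$ (Cattani--Deligne--Kaplan, Voisin), we may take the point to lie in $S^\dagger(\overline{\IQ})$ and the divisor $z_\beta$ to be defined over $\overline{\IQ}$, with $c_1^B(z_\beta)$ equal to the transport of $\beta$ along a suitable path. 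By the key step, $\psi$ annihilates $(\alpha^\dagger_\dR)^{-1}(c_1^\dR(z_\beta))$ there; transporting back, $\psi_{t_0}$ annihilates $(\alpha^\dagger_{\dR,t_0})^{-1}$ of the Betti--de Rham comparison of $\beta$, for all $\beta$ in a dense subset of the open ball $\IB$. As a dense subset of an open ball spans and $(\alpha^\dagger_{\dR,t_0})^{-1}$ composed with that comparison is an isomorphism, $\psi_{t_0}=0$; by horizontality $\psi\equiv0$, that is $\hat\alpha_\dR|_{S^\dagger}=\alpha^\dagger_\dR$. The hard part is the key step of the second paragraph: matching the $p$-adic Riemann--Hilbert realization of $\alpha_p$ with its complex-analytic counterpart on classes that a priori are defined only through the period morphism. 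This is precisely where \cref{cor: DdR on Sh} (hence Blasius for abelian motives) is indispensable, and it is legitimate exactly because over $\Sh_\sfK(G)$ one works with honest abelian schemes and cycle classes of actual special endomorphisms; the remaining effort is bookkeeping of the three coefficient fields $\IC,\overline{\IQ}_p,\overline{\IQ}$, the isomorphism $\tau$, and the reduction to $\overline{\IQ}$-points so that the cycle-class compatibilities apply.
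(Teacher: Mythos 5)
Your proposal is correct and follows essentially the same route as the paper's proof: reduce by horizontality to a single point, verify agreement on de Rham Chern classes of divisors via the special-endomorphism correspondence and the compatibility of the $p$-adic and Betti comparison isomorphisms on cycle classes, and then use \cref{lem: Voisin} to produce enough algebraic classes whose parallel transports span the fiber. The only differences are bookkeeping ones (you descend to $\overline{\IQ}$-points via algebraicity of Noether--Lefschetz loci, where the paper descends the triple $(s,\xi,\zeta)$ to a finite extension of $\IQ_p$ via $\tau$, and you span by a dense set of rational points of the ball $\IB$ where the paper transports a fixed basis), neither of which changes the substance of the argument.
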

First, we remark that by \cref{thm: alg. compare} and \cref{rmk: Chern class}, we may identify $\ID^{\mathrm{alg}}_\dR(\bP_p|_{S_{\IQ_p}})$ with $\bP_\dR |_{S_{\IQ_p}}$. Together with \cref{cor: DdR on Sh}, $\ID^{\alg}_\dR(\alpha_p)$ gives rise to an isomorphism $\hat{\alpha}_\dR$ as above. 
\begin{proof}
    Note that $\hat{\alpha}_\dR|_{S_\IC}$ and $\alpha^\dagger_\dR$ are both isomorphisms $\bL_\dR (-1) |_{S^\dagger_\IC} \sto \bP_\dR |_{S^\dagger_\IC}$. 
    Our goal is to show that $\gamma : (\hat{\alpha}_\dR|_{S_\IC})^{-1} \circ \alpha^\dagger_\dR$ is the identity on $\bP_{\dR}|_{S^\dagger_\IC}$. Let $s \in S^\dagger_\IC$ be any $\IC$-point. 
    Since $\gamma$ is horizontal with respect to the Gauss-Manin connection and $S^\dagger_\IC$ is connected, it suffices to show that the fiber $\gamma_s$ of $\gamma$ at $s$ is the identity on $\bP_{\dR, s} = \PH^2_\dR(\sX_s/\IC)$. 

    We first claim that, if $z \in \bP_{B, s} \cap \Fil^1 \bP_{\dR, s}$, then $\gamma_s(z) = z$. 
    Lefschetz theorem on $(1, 1)$ classes tells us that $z$ comes from a line bundle $\xi \in \NS(\sX_s)_\IQ$.
    On the other hand, $\alpha_B(z) \in \bL_{B, s}$ is a Hodge class, and correponds to an element of $\zeta \in \LEnd(\sA_s)_\IQ$. 
    Under the isomorphism $\tau$, the triple $(s, \xi, \zeta)$ descends to a finite extension $K$ of $\IQ_p$, i.e., there exists $s_0 \in S(K)$, $\xi_0 \in \NS(\sX_{s_0})_\IQ$ and $\zeta_0 \in \LEnd(\sA_{s_0})_\IQ$ such that $(s, \xi, \zeta) = (s_0, \xi_0, \zeta_0) \tensor_K \IC$. 
    Under the de Rham comparison isomorphism, $c_{1, p}(\xi)$ is sent to $c_{1, \dR}(\xi)$ \cite[Prop.~11.2]{IIK}. 
    The same is true for the cycle class of $\zeta_0$. 
    Note that there is a commutative diagram \[\begin{tikzcd}
	{\bL_{p, s}(-1) \tensor B_\dR} & {\bP_{p, s} \tensor B_\dR} \\
	{\bL_{\dR, s_0}(-1) \tensor_K B_\dR} & {\bP_{\dR, s_0} \tensor_K B_\dR}
	\arrow["{\alpha_{p, s} \tensor 1}", from=1-1, to=1-2]
	\arrow["{\sc_\dR}"', from=1-1, to=2-1]
	\arrow["{\hat{\alpha}_{\dR, s_0} \tensor 1}"', from=2-1, to=2-2]
	\arrow["{\sc_\dR}", from=1-2, to=2-2].
    \end{tikzcd}\]
    As the isomorphism $\alpha_{B, s}$ sends $c_{1, p}(\xi)$ to $\mathrm{cl}_{p}(\zeta)$, by the compatibility $\alpha_{p, s} = \alpha_{B, s} \tensor \IZ_p$, we know that the map $\alpha_{p, s}$ sends $c_{1, p}(\xi)$ to $\mathrm{cl}_{p}(\zeta)$ as well.
    Therefore, we must have that $\hat{\alpha}_{\dR, s_0}$ sends $c_{1, \dR}(\xi_0)$ to $\mathrm{cl}_{\dR}(\zeta_0)$. This implies that $\hat{\alpha}_{\dR, s}$ sends $c_{1, \dR}(\xi)$ to $\mathrm{cl}_\dR(\zeta)$, which does the the same as $\alpha^\dagger_{\dR, s}$. This proves the claim. 

    Suppose that $s' \in S_\IC$ is another $\IC$-point, and $z' \in \bP_{\dR, s'}$ is obtained from $z$ via parallel transport along some path $[0, 1] \to S^\dagger$ connecting $s$ and $s'$. 
    Then $\gamma_{s}$ fixes $z$ if and only if $\gamma_{s'}$ fixes $z'$. 
    Since $\bP_{B, s} \tensor \IC = \bP_{\dR, s}$, there exists a basis $z_1, \cdots, z_m \in \bP_{\dR, s}$ such that each $z_i \in \bP_{B, s}$. 
    By \cref{assump: period}.(ii), Lem.~\ref{lem: Voisin} is applicable to $\sX|_{S^\dagger}$.  
    Hence for every $z_i$, there exists a point $s_i$ near $s$ such that parallel transport along some path connecting $s$ and $s_i$ sends $z$ to a Hodge $(1, 1)$-class. Therefore, we may conclude by the observation in the preceeding paragraph.  
\end{proof}

\begin{proposition}
\label{prop: crystalline period} 
    In the setting of Thm~\ref{thm: period}, there exist an isometry of F-isocrystals $\alpha_\crys : \bL_\crys(-1)[1/p]|_{\sS_{\IF_p}} \sto \bP_\crys[1/p]|_{\sS_{\IF_p}}$ which satisfies the following properties: 
    \begin{enumerate}[label=\upshape{(\alph*)}]
       \item The isomorphism between flat vector bundles given by evaluating $\alpha_\crys$ on the rigid-generic fiber of the $p$-adic completion of $\sS_{\IZ_p}$ agrees with $\hat{\alpha}_\dR$ under the crystalline-de Rham comparison isomorphism.  
       \item For any isomorphism $\tau : \bar{\IQ}_p \iso \IC$, the restriction of $\alpha_\dR$ to $S_\IC \stackrel{\tau}{\simeq} S_{\bar{\IQ}_p}$ agrees with $\alpha^\dagger_{\dR, \IC}$ on $S^\dagger$. 
       \item If for every closed point $s \in \sS_{\IF_p}$ and $i = 2, 3$, $\H^i_\crys(\sX_s/W(k(s)))$ is $p$-torsion-free, then $\alpha_\crys$ restricts to an isometry of locally free F-crystals $\bL_\crys(-1)|_{\sS_{\IF_p}} \sto \bP_\crys|_{\sS_{\IF_p}}$.
   \end{enumerate}
\end{proposition}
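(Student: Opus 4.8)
The plan is to realize $\alpha_\crys$ as the \emph{crystalline realization} of the $p$-adic isometry $\alpha_p$ from \cref{thm: period}, via the relative crystalline comparison theorem in the absolutely unramified setting of $f:\sX\to\sS$ over $\IZ_{(p)}$, and then to extract (a)--(c) from the compatibilities of the comparison isomorphisms together with \cref{lem: alpha is de Rham}. Since the statement is Zariski-local on $\sS_{\IF_p}$, I would first shrink $\sS$ so that $\sS_{\IZ_p}=\Spec(A)$ is affine, admits an \'etale map to an affine space and a lift $\sigma$ of Frobenius on its $p$-adic completion $\widehat A$; then, exactly as in the proof of \cref{lem: rigidity of F-crystals}, $(\mathrm{Spf}\,\widehat A,\sS_{\IF_p})$ is weakly final in $\crys(\sS_{\IF_p}/\IZ_p)$, and evaluating a non-degenerate $F$-isocrystal on it gives a finite projective $\widehat A[1/p]$-module with integrable connection and $\sigma$-semilinear Frobenius. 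Both $\bP_\crys[1/p]$ (the primitive part of the second relative crystalline cohomology of $\sX_{\IF_p}/\sS_{\IF_p}$) and $\bL_\crys(-1)[1/p]|_{\sS_{\IF_p}}$ (the pullback via $\rho$ of the automorphic $F$-isocrystal on the Shimura variety, cut out there by $\bpi_\crys$ inside $\End(\bH_\crys)[1/p]$) are such objects.

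Next I would invoke the relative crystalline comparison on both sides. For $\sX\to\sS$ over $\IZ_{(p)}$, Scholze's relative de Rham comparison together with its crystalline refinement (in the absolutely unramified, and integrally after \cite{BMS}, form) identifies, over the rigid-generic fiber $\sS^{\mathrm{rig}}$ of $\mathrm{Spf}\,\widehat A$, the evaluation of $\bP_\crys[1/p]$ with the analytified de Rham bundle $\bP_\dR$, upgrades this to a comparison over $\mathrm{B}_\crys$ compatible with the Frobenius structures, and identifies the associated $p$-adic local system on $\sS_\IQ$ with $\bP_p$; the analogous classical statement for the Kuga--Satake abelian scheme identifies $\bL_\crys(-1)[1/p]|_{\sS_{\IF_p}}$ with $\bL_\dR(-1)$ over $\sS^{\mathrm{rig}}$ compatibly with $\bL_p$, Frobenius and the idempotents $\bpi_?$ (for abelian schemes this uses no result of Blasius). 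Because $\alpha_p$ is a morphism of $\IZ_p$-local systems, its image $\hat{\alpha}_\dR=\ID^{\alg}_\dR(\alpha_p)$ under the $p$-adic de Rham functor upgrades from a $\mathrm{B}_\dR$-linear to a $\mathrm{B}_\crys$-linear isomorphism, hence is compatible with the Frobenius structures transported from the two $F$-isocrystals. Therefore $\hat{\alpha}_\dR$, regarded as a Frobenius-compatible isomorphism of the modules-with-connection on $\mathrm{Spf}\,\widehat A$, is a morphism of $F$-isocrystals, giving an isometry $\alpha_\crys:\bL_\crys(-1)[1/p]|_{\sS_{\IF_p}}\sto\bP_\crys[1/p]$ (an isometry because $\hat{\alpha}_\dR$ is and the comparisons respect the cup-product pairings twisted by $c_1(\Theta)$, cf. \cref{rmk: Chern class}). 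To globalize over $\sS_{\IF_p}$ I would use the full faithfulness of the restriction functor on non-degenerate $F$-isocrystals to a dense open subvariety (\cite[Thm~2.2.3]{DK17}, as in \cref{const: VHS and crystals resolved}): the locally defined $\alpha_\crys$'s all restrict to $\hat{\alpha}_\dR$ on rigid-generic fibers, so they agree on overlaps and glue.

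The three properties would then follow. Property (a) holds by construction, since the crystalline--de Rham comparison over $\sS^{\mathrm{rig}}$ is precisely the identification used to produce $\alpha_\crys$ from $\hat{\alpha}_\dR$. Property (b) is immediate from \cref{lem: alpha is de Rham}, which says $\hat{\alpha}_\dR|_{S^\dagger}$ agrees with $\alpha^\dagger_{\dR,\IC}$ under any $\tau:\bar{\IQ}_p\iso\IC$. For (c), if $\H^2_\crys(\sX_s/W(k(s)))$ and $\H^3_\crys(\sX_s/W(k(s)))$ are $p$-torsion-free for every closed point $s$, then by the universal coefficient theorem $\dim_{k(s)}\H^2_\dR(\sX_s/k(s))$ equals the rank of $\bP_\crys[1/p]$, so \cref{lem:local_free_of_crys_coh} applies and $\bP_\crys$ is defined; on the other side $\bL_\crys$ is always a locally free $F$-crystal (crystalline cohomology of an abelian scheme is torsion-free and $\bL_\crys$ is a direct summand of $\End$ of it). Since $\alpha_p$ is an isometry of self-dual $\IZ_p$-local systems (\cref{assump: period}(iii)), one checks that $\alpha_\crys$ carries $\bL_\crys(-1)$ onto $\bP_\crys$ at closed points: lifting $s$ to a $W(k(s))$-point of $\sS$ (possible since $\sS/\IZ_{(p)}$ is smooth) produces smooth proper lifts of $\sX_s$ and of the Kuga--Satake abelian variety, to which the integral crystalline comparison of \cite{BMS} applies, transporting the integral isometry $\alpha_{p,s}$; as the non-integral locus of $\alpha_\crys$ is closed and $\sS_{\IF_p}$ is Jacobson it is empty, and likewise for $\alpha_\crys^{-1}$.

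The main obstacle is the Frobenius-compatibility in the middle step. The $p$-adic Riemann--Hilbert functor $\ID^{\alg}_\dR$ of \cite{DLLZ} records only $\mathrm{B}_\dR$-level data and so cannot on its own detect the Frobenius; one genuinely needs the crystalline --- not merely de Rham --- relative comparison, in a form precise enough to transport the Frobenius of the $F$-isocrystals to the de Rham bundles over $\sS^{\mathrm{rig}}$ and to verify that $\ID^{\alg}_\dR(\alpha_p)$ respects it. This is exactly where the integral $p$-adic Hodge theory of \cite{BMS} and the formalism of \cite{DLLZ} enter, and where the absolute unramifiedness of $\sS$ over $\IZ_{(p)}$ is essential.
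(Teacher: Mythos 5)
Your overall architecture coincides with the paper's: the isometry is obtained as the crystalline realization of $\alpha_p$, part (a) comes from compatibility of the crystalline and de Rham functors, part (b) is exactly \cref{lem: alpha is de Rham}, and part (c) is proved by first defining $\bP_\crys$ via \cref{lem:local_free_of_crys_coh}, then checking integrality of $\hat{\alpha}_\dR$ at $W(k(s))$-lifts of closed points using the Breuil--Kisin lattices of \cite{BMS} (the paper quotes \cite[Thm.~14.6(iii)]{BMS} and the argument of \cite[Prop.~5.11]{MPTate}), and concluding by density of closed points. So the skeleton of your proof matches the paper's.

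The gap is in the step you yourself flag as the main obstacle, and it is not resolved by the tools you name. To know that $\hat{\alpha}_\dR=\ID^{\alg}_\dR(\alpha_p)$ is Frobenius-equivariant, you need a \emph{relative} crystalline comparison over the base $\sS$: a functor taking the $p$-adic local systems $\bP_p$ and $\bL_p(-1)$ on $S_{\IQ_p}$ to the F-isocrystals $\bP_\crys[1/p]$ and $\bL_\crys(-1)[1/p]$ on $\sS_{\IF_p}$, functorially in the local system and compatibly with Liu--Zhu's $\ID_\dR$. Your assertion that ``because $\alpha_p$ is a morphism of $\IZ_p$-local systems, its image under $\ID^{\alg}_\dR$ upgrades from a $\mathrm{B}_\dR$-linear to a $\mathrm{B}_\crys$-linear isomorphism, hence is Frobenius-compatible'' is a non sequitur without such a functor: \cite{DLLZ} records only $\mathrm{B}_\dR$-level data and carries no Frobenius, while \cite{BMS} gives absolute (pointwise over $\sO_K$) comparisons, not a family-level identification of the relative crystalline cohomology of $\sX_{\IF_p}/\sS_{\IF_p}$ with the \'etale side as F-isocrystals over $\sS_{\IF_p}$. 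This is precisely where the paper invokes the crystalline Riemann--Hilbert functor $\sE_\crys$ of \cite{GY24} (equivalently Tan--Tong's $\ID_\crys$ of \cite{TT19}, available since the base is absolutely unramified), whose compatibility with $\ID_\dR$ (\cite[Thm.~1.10(iv)]{GY24}) is what yields both the Frobenius-equivariance and part (a). One could imagine repairing your argument without these functors by checking the Frobenius defect of $\hat{\alpha}_\dR$ at Frobenius-compatible Teichm\"uller-type lifts of closed points, using the pointwise crystalline comparison and the Frobenius-equivariance of Fontaine's $\ID_\crys$ applied to $\alpha_{p,t}$, but that requires additional compatibility statements (relative versus pointwise evaluations, density of such points) that your proposal does not supply.
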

\begin{proof}
    To construct $\alpha_\crys$, one can apply the crystalline Riemann--Hilbert functor $\sE_{\crys}$ defined recently in \cite[Thm.~1.10]{GY24} (or the $\ID_\crys$ functor in \cite[Def.~3.12]{TT19} as here the $p$-adic base field is unramified). 
    By \cite[Thm.~1.10.(iv)]{GY24} these functors are compatible with Liu--Zhu's $\ID_\dR$ functor, which gives (a). Part (b) was proved in \cref{lem: alpha is de Rham}.

    It remains to show (c). 
    Before the argument, we note that by \cref{lem:local_free_of_crys_coh}, $R^2 f_{\IF_p, \crys*} \sO_{\sX_{\IF_p}/\IZ_p}$ is locally free. 
    Hence we can define the \emph{integral} crystalline F-crystal $\bP_\crys$.
    By taking the relative de Rham cohomology sheaf for the morphism $f$, we also obtain the integral flat connection $\bP_\dR$.
    Now we first show that the isomorphism $\hat{\alpha}_\dR$ extends over $\sS_{\IZ_p}$. 
    In other words, there exists an (necessarily unique) isomorphism $\alpha_\dR : \bL_\dR(-1) |_{\sS_{\IZ_p}} \sto \bP_\dR |_{\sS_{\IZ_p}}$ whose restriction to $S_{\IQ_p}$ is $\hat{\alpha}_\dR$. 
    Let $U = \Spec(A) \subseteq \sS_{W}$ be any affine open on which both $\bP_\dR|_{\sS_{\IZ_p}}$ and $\bL_\dR(-1)|_{\sS_{\IZ_p}}$ are trivialized. 
    Then over $U$, $\hat{\alpha}_\dR$ is given by an element $M \in \mathrm{GL}_m(A[1/p])$. 
    Our goal is to show that the matrix coefficients are integral, i.e., $M \in \mathrm{GL}_m(A)$. 
    By the same argument as in \cite[Prop.~5.11]{MPTate}, it suffices to show the following: For every closed point $s \in \sS_{\IF_p}$ with residue field $k \colonequals k(s)$, if we set $K_0 = W[1/p]$, then for every $\wt{s} \in \sS(W(k))$ lifting $s$ and $t \colonequals \wt{s}[1/p]$, the isomorphism $\alpha_{t} : \bL_\dR(-1)|_{t} \sto \bP_\dR|_{t}$ sends the $W$-lattice $\bL_\dR(-1)|_{\wt{s}}$ isomorphically onto $\bP_\dR|_{\wt{s}}$.

    The above claim follows from the recent advance in integral $p$-adic Hodge theory: Let $\mathfrak{S}$ be the ring $W[\![u]\!]$ and view $W$ as a $\fS$-algebra via $u \mapsto 0$. 
    Let $\mathrm{BK}(-)$ denote the Breuil--Kisin functor associates to each $\IZ_p$-linear crystalline $\Gal_K$-representation a finite free $\mathfrak{S}$-module in \cite[Thm~4.4]{BMS}. 
    Then the F-isocrystal $\mathrm{BK}(\H^2_\et(\sX_{\bar{t}}, \IZ_p)) \tensor_{\fS} K$ is naturally identified with $(\H^2_\et(\sX_{\bar{t}}, \IZ_p) \tensor B_\crys)^{\Gal_K}$, i.e., $\ID_\crys(\H^2_\et(\sX_{\bar{s}}, \IQ_p))$. 
    By the torsionfree assumption on crystalline cohomology, we know from \cite[Thm.\ 14.6.(iii)]{BMS} that under $\sc_\crys : \ID_\crys(\H^2_\et(\sX_{\bar{s}}, \IQ_p)) \simeq \H^2_\crys(\sX_{s_0}/W) \tensor K$, the image of the $W$-lattice $\mathrm{BK}(\H^2_\et(\sX_{\bar{t}}, \IZ_p)) \tensor_{\fS} W$ is precisely $\H^2_\crys(\sX_{s}/W)$. 
    By restriction, $\bP_{\crys, s} \subseteq \H^2_\crys(\sX_s/W)$ is the image of $\mathrm{BK}(\bP_{p, \bar{t}}) \tensor_{\fS} W$. 
    Likewise, $\bL_{\crys, s}$ is recovered by $\mathrm{BK}(\bL_{p, \bar{t}}) \tensor_\fS W$ inside $\ID_\crys(\bL_{p, \bar{t}})$ (where the torsion-freeness of $\bL_\crys$ follows from \cite[Prop.~4.7(iv)]{CSpin}).
    Since $\hat{\alpha}_\dR|_{t} = \ID_\dR(\alpha_p|_t)$, where the latter by assumption is further isomorphic to $\ID_\crys(\alpha_p|_t) \simeq \mathrm{BK}(\alpha_p|_t) \otimes_{\fS} W [1/p]$, and since $\alpha_p|_t$ sends the $\IZ_p$-lattice $\bL_{p, \bar{t}}(-1)$ isomorphically onto $\bP_{p, \bar{t}}$, we have that $\hat{\alpha}_{\dR}|_t \simeq \ID_\crys(\alpha_p|_t)$ sends $\bL_{\crys, s}(-1)$ to $\bP_{\crys, s}$. 

    Via the crystalline-de Rham comparison isomorphism, there exists globally an isometry of crystals $\alpha_{\crys}$ whose evaluation on the $p$-adic completion of $\sS_{W}$ agrees with that of $\alpha_\dR$. After inverting $p$, it agrees with the isometry of F-isocrystals defined in the first paragraph. 
\end{proof}

\section{Proofs of theorems}
\label{sec: proofs of theorems}
In this section, we state and prove the precise form of \cref{intro_main_thm} (see \cref{thm: general theorem}) and then specialize it to prove \cref{thm: BSD} and \cref{thm: general type}. 

To deal with the singular fibers of the given family of surfaces $f:\mathcal{X}\to \sfM$, we introduce the notion of \emph{extended period morphism} in \Cref{def: extended period}, extending the Kuga--Sakate construction which only exists within the smooth locus.
Then we prove that for any point $t\in \sfM$ and a special endomorphism $\zeta$ of the Kuga--Satake abelian variety of $t$, there is always an extended period morphism through $t$ such that $\zeta$ deforms to the locus of milder singularities.

\subsection{A general statement}
\label{sec: general}

Recall that for a proper morphism $f : S \to T$ between schemes, the discriminant locus $\mathrm{Disc}(f)$ is the closed subset $\{ t \in T \mid S_t \textrm{ is singular} \}$, equipped with reduced subscheme structure. 
Note that the formation of the underlying closed subset of $\mathrm{Disc}(f)$ commutes with arbitrary base change, but it may become non-reduced.

\begin{set-up} \label{set-up: surfaces}
    Let $p > 2$ be a prime and $\sfM$ be a connected scheme which is smooth over $\IZ_{(p)}$. 
    Let $f : \sX \to \sfM$ be a projective and generically smooth morphism with geometrically irreducible fibers of dimension $2$. Assume the following conditions:   
    \begin{enumerate}[label=\upshape{(\alph*)}]
        \item \emph{Generic fiber:}  There exists a lattice $\Lambda \subseteq \NS(\sX_\eta)_{\mathrm{tf}}$, where $\eta$ is the generic point of $\sfM$, such that $p \nmid |\Lambda^\vee / \Lambda|$ and $\Lambda$ contains the class of an ample line bundle. 
        \item \emph{Compactification of the base:} There exists a smooth scheme $\sfB$ over $\IZ_{(p)}$ and a smooth projective morphism $\pi : \overline{\sfM} \to \sfB$ such that $\sfM$ embeds as an open subscheme into $\overline{\sfM}$. 
        \item \emph{Complement of the base:} The complement $\sfH$ of $\sfM$ in $\overline{\sfM}$, equipped with the reduced scheme structure, has an open subscheme $\sfH^+ \subseteq \sfH$ which is smooth over $\IZ_{(p)}$ such that for every $b \in \sfB$ and every irreducible component $H$ of $\sfH_b$ of codimension $1$ in $\overline{\sfM}_b$, the intersection $H \cap \sfH^+_{b}$ is dense in $H$. 
        \item \emph{Geometric genus and Kodaira--Spencer map:} 
        There exists a point $s \in \sfM(\IC)$ such that $h^{2,0}(\sX_s) = 1$, and the Kodaira--Spencer map $\nabla_s : T_s \sfM_\IC \to \Hom(\H^1(\Ohm^1_{\sX_s}), \H^2(\sO_{\sX_s}))$ has rank $> 1$ and does not vanish on the subspace $T_{s} \sfB_{\pi(s)} \subseteq T_s \sfM_\IC$ (e.g., when $\mathrm{rank\,} \nabla_s > \mathrm{min}(1, \dim \sfB_\IC)$). 
        \item \emph{Discriminant locus:} The discriminant locus $\sfD \colonequals \mathrm{Disc}(f) \subseteq \sfM$ is a proper subscheme of $\sfM$ and there exists an open subscheme $\sfD^+ \subseteq \sfD$ which is smooth over $\IZ_{(p)}$ such that for every $b \in \sfB$ and for every irreducible component $D$ of $\sfD_b$ of codimension $1$ in $\sfM_b$, the intersection $D \cap \sfD^+_{b}$ is dense in $D$. 
        \item \emph{Singularity:} For every geometric point $s \to \sfD$, the fiber $\sX_s$ has at worst rational double points.
    \end{enumerate}
\end{set-up}
Here we emphasize that most of the items in \cref{set-up: surfaces} are quite general for moduli of surfaces, except for condition (d), which is pivotal to the Kuga--Satake construction. Moreover, let us remark that by the semi-continuity theorem, any geometric fiber has $h^{2,0} \ge 1$, and hence cannot be ruled (cf. \cref{lem: specialization resolution}). 

We define $\sf{D}^-$ to be the closed subscheme $\sfD \smallsetminus \sfD^+$ with the reduced structure.
By construction, for each point $b\in B$, the intersection $\sfD^-\cap \overline{\sfM}_b$ is of codimension $\geq 2$ in $\overline{\sfM}_b$. 
In addition, we let $\sfM^\circ$ be the open subscheme $\sfM \smallsetminus \sfD$, namely the smooth locus of the given family.
Summarizing the geometric objects mentioned above, we get the following diagram of schemes:

\begin{equation}
\label{eq:diagram_of_geometry}
    \begin{tikzcd}[scale cd=.8]
	& {\sX|_{\sfM^\circ}} \\
	{\sX|_{\sfD^+}} & {\sfM^\circ=\sfM\smallsetminus \sfD} & \sX \\
	{\sfD^+} & {\sX|_\sfD} & \sfM \\
	{\sX|_{\sfD^-}} & \sfD && {\overline{\sfM}} & {\sfB/\IZ_{(p)}} \\
	{\sfD^- = \sfD \smallsetminus \sfD^+} && \sfH
	\arrow["{{\text{proper smooth}}}"', from=1-2, to=2-2]
	\arrow[hook, from=1-2, to=2-3]
	\arrow["{{\text{``mildest singularity''}}}"', from=2-1, to=3-1]
	\arrow[hook, from=2-1, to=3-2]
	\arrow[hook, from=2-2, to=3-3]
	\arrow["f"', from=2-3, to=3-3]
	\arrow[hook, from=3-1, to=4-2]
	\arrow[tail, from=3-2, to=2-3]
	\arrow["{{\text{RDP}}}"', from=3-2, to=4-2]
	\arrow[hook, from=3-3, to=4-4]
	\arrow[tail, from=4-1, to=3-2]
	\arrow["{{\text{``worse singularity''}}}"', from=4-1, to=5-1]
	\arrow[tail, from=4-2, to=3-3]
	\arrow["{{\textrm{smooth}}}", from=4-4, to=4-5]
	\arrow[tail, from=5-1, to=4-2]
	\arrow[tail, from=5-3, to=4-4]
\end{tikzcd}
\end{equation}
Here in the diagram, every $\rightarrowtail$ arrow is a closed embedding, and every $\hookrightarrow$ arrow is the open immersion of the complement of the $\rightarrowtail$ arrow right below.
The objects in the ground level are base schemes and various loci, while the objects in the second level corresponds to the restrictions of the family in question.

This section is devoted to proving the following theorem. Recall that we use $\breve{X}$ to denote the minimal resolution of a normal surface over a field, which is unique up to unique isomorphism. 
\begin{theorem}[Tate conjecture for the minimal resolution of $\sX_s$]
\label{thm: general theorem}
    Assume Set-up \ref{set-up: surfaces}. 
    Let $s \in \sfM$ be a (not necessarily closed) point with residue field $k := k(s)$, let $\bar{s}\in \sfM(\bar{k})$ be a geometric point over $s$, and assume one of the following conditions.
    \begin{enumerate}[label=\upshape{(\roman*)}]
        \item The point $s$ lies in $\sfD^+$ or $\sfM^\circ$. 
        \item\label{item: non-supersingular} $\breve{\sX}_s$ is not supersingular (i.e., the Newton polygon of $\H^2_\crys(\breve{\sX}_{s^p}/W(k^p))[1/p]$ is not a straight line.) 
        \item\label{item: equality of Hodge numbers} There is an equality of Hodge numbers $h^{i, j}(\breve{\sX}_s) = h^{i, j}(\sX_\eta)$ for every $i, j$. 
    \end{enumerate}
    Then the map 
    $$ c_1 : \NS(\breve{\sX}_{\bar{s}}) \tensor \IQ_\ell \sto \varinjlim_{H} \H^2_\et(\breve{\sX}_{\bar{s}}, \IQ_\ell(1))^{H} $$
    is an isomorphism, where $H$ runs through open subgroups of $\Gal_k$. 
\end{theorem}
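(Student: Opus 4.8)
The plan is to reduce the statement to a variant of Tate's theorem for the special endomorphisms of a Kuga--Satake abelian variety, by producing through $s$ an \emph{extended period morphism} that is motivically aligned there, in the sense sketched in \cref{sub:ideals_of_proof}. We focus on the case $\mathrm{char}\,k=p$, the characteristic-$0$ case being analogous and simpler, with the role of the extended period morphism played by the classical period map and motivic alignment following from the Lefschetz $(1,1)$-theorem (and overlapping with \cite{Moonen}). Since $\sfM$ is of finite type over $\IZ_{(p)}$, the field $k=k(s)$ is finitely generated over $\IF_p$; by \cref{thm: specialization of line bundle}(d) both sides of $c_1$ are unchanged under passing to a perfect or separable closure of $k$, so we may argue geometrically and descend. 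The map $c_1$ is injective by the standard properties of cycle classes on the smooth projective surface $\breve{\sX}_{\bar{s}}$, so only surjectivity onto the Tate classes is at issue; since $\Lambda$ has discriminant prime to $p$ by \cref{set-up: surfaces}(a) and its classes extend to algebraic classes on $\breve{\sX}_{\bar{s}}$ by \cref{lem: coh of Zariski-resolvable} and \cref{thm: specialization of line bundle}, it suffices to show every $\Gal_k$-invariant class in the primitive cohomology $\bP_\ell\colonequals\PH^2_\et(\breve{\sX}_{\bar{s}},\IQ_\ell(1))$ is algebraic. After replacing $\sfM$ by a connected finite \'etale cover, \cref{thm: period} applied to the smooth family $\sX|_{\sfM^\circ}\to\sfM^\circ$---whose hypotheses \cref{assump: period} follow from conditions (a) and (d) of \cref{set-up: surfaces}---furnishes a period morphism $\rho\colon\sfM^\circ\to\shS_\sfK(G)$ together with matching isometries $\alpha_B,\alpha_\ell,\alpha_p$ of the automorphic sheaves $\bL_?$ with the primitive cohomology sheaves of $\sX|_{\sfM^\circ}$, and \cref{prop: crystalline period} supplies $\alpha_\crys$ compatibly. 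Everything now comes down to extending this over $s$ and proving motivic alignment, which I carry out according to where $s$ lies.

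\emph{Case $s\in\sfM^\circ$.} Here $\sX_s=\breve{\sX}_s$ is smooth, $s$ lies in the image of $\rho$, and the identification of $\PNS(\sX_{\bar{s}})_\IQ$ with $\LEnd(\sA_{\rho(\bar{s})})_\IQ$ together with Tate's theorem is exactly \cite{HYZ} (recovering \cite[Thm.~C]{HYZ}); hypotheses (ii)--(iii) are not needed. \emph{Case $s\in\sfD^+$.} A Bertini-type argument produces a curve in $\sfM_k$ through $s$ meeting $\sfD^+_k$ transversally, which one lifts to a two-dimensional arithmetic scheme $\sC$ over $\IZ_{(p)}$; applying the relative Abhyankar's lemma (\cref{prop:etaleness_of_pullback}, \cref{prop:lift_closed_immersion}) one passes to a generically \'etale cover $\sC_1\to\sC$ along which $\sC_1\times_\sfM\sfD$ becomes \'etale over $\IZ_p$. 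The Kuga--Satake abelian scheme then has good reduction along $\sC_1\times_\sfM\sfD$ by N\'eron--Ogg--Shafarevich, so $\rho$ extends to an extended period morphism $\tau\colon\sC_1\to\shS_\sfK(G)$; motivic alignment at a point $t$ over $s$ follows because a special endomorphism $\zeta\in\LEnd(\sA_{\tau(t)})$ not lifting to characteristic zero along $\sC_1$ would make the space of monodromy invariants of $\bP_\ell$ too large, by Grothendieck's specialization theorem for the tame fundamental group of $\sC_1\times_\sfM\sfD$, and the associated divisor on $\breve{\sX}_{\bar{s}}$ is produced via the Lefschetz $(1,1)$-theorem and specialized back using \cref{lem: coh of Zariski-resolvable} and \cref{thm: specialization of line bundle}. \emph{Case $s\in\sfD^-$ (under (ii) or (iii)).} Combining \cref{lem: etale locally resolvable} with Lipman resolution of the base, one builds an extended period morphism $\tau\colon T\to\shS_\sfK(G)$ with $T$ a smooth surface over $\IF_p$ (the crystalline sheaves on $T$ being defined via \cref{lem: rigidity of F-crystals} and \cref{const: VHS and crystals resolved}), motivically aligned at every point over $\sfM\smallsetminus\sfD^-$ by the previous cases. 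Given $\zeta\in\LEnd(\sA_{\tau(t)})$, the pair $(t,\zeta)$ deforms along the divisor $Z\subseteq T$ cut out by the corresponding special divisor on $\shS_\sfK(G)$; if $Z$ is not $\pi$-exceptional it meets the aligned locus and one concludes by deformation, while the possibility that $Z$ is contracted by $\pi\colon T\to\sfM$ is excluded by first showing that any $\pi$-exceptional curve is also $\tau$-exceptional---via the threefold MMP (\cref{prop: connected by flops}), the gluing lemma (\cref{BL gluing}) and the $F$-crystal matching of \cref{prop: crystalline period}---and then invoking the Hodge index theorem on a smooth compactification of $T$ to see that $(t,\zeta)$ cannot deform only along $\pi$-exceptional curves; in the non-supersingular case (ii) one may argue instead directly from the positivity of the Hodge line bundle on $\shS_\sfK(G)$. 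Conditions (ii)/(iii) are used precisely to ensure that $\breve{\sX}_s$ has the expected crystalline (equivalently Hodge) ranks, so that the matching isometries restricted over $s$ really identify the fibers of $\bP_?$ with the primitive cohomology of $\breve{\sX}_{\bar{s}}$; for the elliptic and general type families this is verified using \cref{prop:var_of_Netwon} and \cref{thm: Hodge diamond of elliptic surface}.

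Granting a motivically aligned extended period morphism $\tau\colon T\to\shS_\sfK(G)$ with $t\in T$ over $\bar{s}$, the conclusion is quick: $\alpha_\ell$ carries Tate classes in $\bP_\ell^{\Gal_k}$ to $\Gal_k$-invariant special cohomology classes in $\bL_{\ell,\tau(t)}$; by Tate's theorem for the abelian variety $\sA_{\tau(t)}$ over the finitely generated field $k$ (Tate--Zarhin--Mori in characteristic $p$, Faltings in characteristic $0$) these arise from $\LEnd(\sA_{\tau(t)})\otimes\IQ_\ell$; and motivic alignment at $t$ sends $\LEnd(\sA_{\tau(t)})_\IQ$ into $\PNS(\breve{\sX}_{\bar{s}})_\IQ$. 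Hence every Tate class in $\bP_\ell$ is a $\IQ_\ell$-linear combination of algebraic classes, so $c_1$ is surjective, and with injectivity we are done. I expect the case $s\in\sfD^-$ to be the main obstacle, and within it the proof that $\pi$-exceptional curves are $\tau$-exceptional, which forces one to combine the threefold minimal model program, the Beauville--Laszlo gluing lemma and---most delicately---the matching of $F$-crystals (resting on the $p$-adic Riemann--Hilbert functor and integral $p$-adic Hodge theory, together with its compatibility with the classical de Rham realization established through the analytic density of Noether--Lefschetz loci).
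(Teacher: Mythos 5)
Your proposal is correct and follows essentially the same route as the paper: reduction to motivic alignment of an extended period morphism plus the special-endomorphism Tate theorem (\cref{lem: reduction}), admissible curves deformed to mixed characteristic with Abhyankar and tame-fundamental-group arguments for $s\in\sfM^\circ\cup\sfD^+$ (\cref{prop: good points}, \cref{cor: good points}), and, for $s\in\sfD^-$, a characteristic-$p$ surface with special divisors, the contraction statement that $\pi$-exceptional curves are $\tau$-exceptional (MMP, Beauville--Laszlo gluing, F-crystal matching, Hodge-bundle positivity or Mazur--Ogus), and the Hodge index theorem (\cref{lem: bad points}, \cref{cor: bad points}). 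The only small imprecisions are that the final input is Madapusi Pera's Tate theorem for \emph{special} endomorphisms rather than plain Tate--Zarhin--Mori/Faltings, and that hypotheses (ii)/(iii) enter through the contraction lemma \cref{lem: double contraction} rather than through identifying fiber ranks of the matching isometries---both of which your sketch otherwise already captures.
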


\subsection{Extended period morphism}
\label{subset: extended_period_mor} 
We start with the period morphism defined on the smooth locus $\sfM^\circ$, by applying various inputs from \cref{const: sheaves on Sh}. 
We choose small enough level structures $\wt{\sfK}^p \subseteq \wt{G}(\IA^p_f)$ and $\sK^p \subseteq \mathrm{GSp}(\IA^p_f)$ such that $\Sh_{\sfK}(\wt{G}) \to \Sh_{\sK}(\GSp)$ is a closed embedding.
We then apply \cref{thm: period} to obtain the Kuga--Satake period morphism 
\[
\rho : \wt{\sfM}^\circ \to \shS_{\wt{\sfK}}(\wt{G}),
\]
where $\wt{\sfM}^\circ$ is a connected, finite \'etale cover over $\sfM^\circ=\sfM\smallsetminus \sfD$. 
Let $\bP_?$ for $?\in \{B, \ell, p, \dR, \crys\}$ denote the coefficients sheaves on various fibers of $\sfM^\circ$ given by primitive cohomologies as in \cref{sub: period_setup}. 
Then by \Cref{thm: period}, \Cref{cor: DdR on Sh} and \Cref{prop: crystalline period}, we know the coefficients sheaves $\bP_?$ are naturally isomorphic to the pullbacks $\rho^*\bL(-1)_?$ for corresponding automorphic sheaves on the Shimura variety $\shS_{\wt{\sfK}}(\wt{G})$.
We will simply write $\shS$ for $\shS_{\wt{\sfK}}(\wt{G})$.

To eventually deal with the singular fiber in the given family, we will need to extend the period morphism to treat the points in the singular locus.
This leads to the following notion that will play a key role in the entire analysis.

\begin{definition}
\label{def: extended period}
We define an \textbf{extended period morphism} to be a map $\tau:T\to \shS$, where
\begin{itemize}
    \item $T$ is a normal integral $\sfM$-scheme such that the induced family $\sX|_T\to T$ is Zariski-locally resolvable;
    \item the map $\tau$ fits into the following commutative diagram
    \begin{equation}
        \label{eqn: extended period}
        \begin{tikzcd}
	T^\circ & T \\
	{\wt{\sfM}^\circ} & {\sfM} \\
	&& \shS,
	\arrow[hook, from=1-1, to=1-2]
	\arrow[from=1-1, to=2-1]
	\arrow[from=1-2, to=2-2]
	\arrow["\tau", curve={height=-12pt}, from=1-2, to=3-3]
	\arrow[from=2-1, to=2-2]
	\arrow["\rho", curve={height=12pt}, from=2-1, to=3-3]
        \end{tikzcd}
    \end{equation}
    where $T^\circ \into T$ is the open inclusion map of an open dense subscheme.
\end{itemize}
\end{definition}

One of the crucial properties of the extended period morphism is that various cohomology sheaves over $\wt{\sfM}^\circ$ and their matching isomorphisms with automorphic sheaves can be extended to the entire scheme $T$.
Concretely, we have the following constructions.
\begin{construction}[Extension of cofficients and matching isomorphisms]
\label{const: extend comparison}
Let $\tau:T\to \shS$ be an extended period morphism.
By applying the extension results from \cref{lem: coh of Zariski-resolvable} and \cref{const: VHS and crystals resolved} to the primitive cohomology over $T^\circ$, 
we can obtain the matching isomorphism of coefficients over the entire scheme $T$. 
\begin{itemize}
    \item ($\ell$-adic coefficients) By \textit{loc. cit.} the pullback $\bP_\ell|_{T^\circ}$ uniquely extends to $T$ and we denote the extension by $\breve{\bP}_{\ell, T}$. 
By the full-faithfulness of the restriction functor from the category of \'etale local systems on $T$ to those on $T^\circ$, the isomorphism $\alpha_{\ell}|_{T^\circ} : \bL_\ell(-1)|_{T^\circ} \sto \bP_\ell|_{T^\circ}$ extends to an isomorphism $\tau^* \bL_\ell(-1) = \bL_\ell(-1)|_T \sto \breve{\bP}_{\ell, T} $, which we shall denote by $\alpha_{\ell, T}$.  
\item (Variations of Hodge structures) Similarly, when $T$ is a smooth variety over $\IC$, we obtain the canonically extended variation of Hodge structures $(\breve{\bP}_{B, T}, \breve{\bP}_{\dR, T})$ over $T$.
If additionally $T^\circ$ maps to $(\wt{\sfM}^\circ)^\dagger$ (the connected component of $b$ in $\wt{\sfM}^\circ$), then we can extend the isomorphisms $(\alpha^\dagger_B, \alpha^\dagger_\dR)|_{T^\circ}$ to $(\alpha^\dagger_{B, T}, \alpha^\dagger_{\dR, T}) : \tau^*(\bL_B(-1), \bL_{\dR, \IC}(-1)) \sto  (\breve{\bP}_{B, T}, \breve{\bP}_{\dR, T})$.

\item (F-(iso)crystals) If $T$ is a smooth variety over a perfect field $\kappa$ of characteristic $p$, the primitive crystalline cohomology of the resolved family defines an F-isocrystal $\breve{\bP}_{\crys, T}[1/p]$, 
and $\alpha_{\crys}[1/p]|_{T^\circ}$ extends to $\alpha_{\crys, T}[1/p]$.
If in addition the crystalline cohomology sheaf $\breve{R}^2 (f_T)_{\crys, *} \sO_{\sX_T/W(\kappa)}$ is locally free, then we may also define the F-crystal $\breve{\bP}_{\crys, T}$ over $T$, and extend the isomorphism $\alpha_{\crys}|_{T^\circ}$ to $\alpha_{\crys, T}$ (\Cref{lem: rigidity of F-crystals}). 
\end{itemize}
\end{construction}

\begin{lemma}
\label{lem: NOS}
    Let $F$ be any discrete valuation field and $\ell$ be a prime invertible in $\sO_F$. Then every $s \in \shS(\wt{G})(F)$ extends to an $\sO_F$-point if and only if the inertia subgroup of $\Gal_F$ has finite image in $\mathrm{SO}(\bL_{\ell, \bar{s}})$.   
\end{lemma}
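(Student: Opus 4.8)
The plan is to reduce the statement to the classical Néron--Ogg--Shafarevich criterion for the Kuga--Satake abelian scheme, combined with the extension property of the canonical integral model $\shS_{\wt{\sfK}}(\wt{G})$. Throughout, $\sO_F$ is a $\IZ_{(p)}$-algebra (equivalently, its residue characteristic is $0$ or $p$), as in all our applications. Write $\sA_s \colonequals s^*\sA$ for the pullback of the universal abelian scheme and $\bH_{\ell,\bar s} \colonequals \H^1_\et(\sA_{s,\bar s},\IQ_\ell)$. By \Cref{const: sheaves on Sh} the $\CSpin$-structure on $\sA$ is Galois equivariant, so the $\Gal_F$-action on $\bH_{\ell,\bar s}$ factors through a homomorphism $\rho_{\bH}\colon \Gal_F \to \wt{G}(\IQ_\ell)$, and the $\Gal_F$-action on $\bL_{\ell,\bar s}$ is $\pi\circ\rho_{\bH}$, where $\pi\colon \wt{G}\to G=\SO(\bL_{\ell,\bar s})$ is the central isogeny with kernel $\mathbb{G}_m$; let $\nu\colon \wt{G}\to\mathbb{G}_m$ be the spinor norm, so that $\ker\nu=\Spin(\bL_{\ell,\bar s})$.

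The ``only if'' direction is immediate: if $s$ extends to $\tilde s\in \shS_{\wt{\sfK}}(\wt{G})(\sO_F)$, then $\tilde s^*\sA$ is an abelian scheme over $\sO_F$ with generic fibre $\sA_s$, and since $\ell\in\sO_F^\times$ the group schemes $\tilde s^*\sA[\ell^n]$ are finite \'etale over $\sO_F$, so $I_F$ acts trivially on $\bH_{\ell,\bar s}$, hence on $\bL_{\ell,\bar s}\subseteq\End(\bH_{\ell,\bar s})$; a fortiori its image in $\SO(\bL_{\ell,\bar s})$ is finite. For the converse, suppose $\rho_{\bH}(I_F)$ has finite image $\Gamma$ in $G(\IQ_\ell)$. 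As $\ell$ is invertible in $\sO_F$, the $\ell$-adic cyclotomic character $\chi_\ell$ is unramified; and $\nu\circ\rho_{\bH}$ is the similitude character of the symplectic $\Gal_F$-module $\bH_{\ell,\bar s}$ attached to the Kuga--Satake polarization, hence a power of $\chi_\ell$, hence trivial on $I_F$. Therefore $\rho_{\bH}(I_F)\subseteq \Spin(\bL_{\ell,\bar s})(\IQ_\ell)\cap\pi^{-1}(\Gamma)$, a group that maps onto a subgroup of $\Gamma$ with kernel contained in $\ker(\Spin\to\SO)=\mu_2$, hence is finite. So $I_F$ acts on the Tate module of $\sA_s$ through a finite quotient, i.e. $\sA_s$ has potentially good reduction.

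It remains to promote potential good reduction of $\sA_s$ to an $\sO_F$-extension of $s$. First pass to a finite extension $F'/F$ over which $\sA_s$ acquires good reduction. Using the closed immersion of $\shS_{\wt{\sfK}}(\wt{G})$ into a Siegel integral model $\mathscr{A}$ over $\IZ_{(p)}$ (\cite{KisinInt}, cf. \cite{CSpin, MPTate}), the good-reduction model of $\sA_{s,F'}$ together with its polarization and prime-to-$p$ level structure---both of which extend over $\sO_{F'}$ since the level is prime to the residue characteristic---yields an $\mathscr{A}(\sO_{F'})$-point whose generic fibre is $s_{F'}$; its image is the scheme-theoretic closure of $s_{F'}$, so by irreducibility of $\Spec\sO_{F'}$ it factors through the closed subscheme $\shS_{\wt{\sfK}}(\wt{G})$, giving $s_{F'}\in \shS_{\wt{\sfK}}(\wt{G})(\sO_{F'})$. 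Finally I would check that ``$s$ extends over $\sO_F$'' is unchanged by this base change: choosing a compactification $\overline{\shS}$ of $\shS_{\wt{\sfK}}(\wt{G})$ that is proper over $\IZ_{(p)}$ (e.g. the closure in a projective embedding, or a toroidal compactification), properness and separatedness give a unique $\bar s\in\overline{\shS}(\sO_F)$ extending $s$, and $s$ extends inside $\shS_{\wt{\sfK}}(\wt{G})$ precisely when $\bar s$ carries the closed point of $\Spec\sO_F$ into the open subscheme $\shS_{\wt{\sfK}}(\wt{G})\subseteq\overline{\shS}$. Since $\bar s\times_{\sO_F}\sO_{F'}$ is the unique extension of $s_{F'}$ and open immersions are stable under base change, this condition holds for $\sO_F$ iff it holds for $\sO_{F'}$; and it does hold for $\sO_{F'}$ by the previous step.

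I expect the main difficulty to lie in this last descent step---detecting extension over $\sO_F$ after a ramified base change---together with the bookkeeping around the $\CSpin$-isogeny needed to propagate finiteness of the monodromy from $\bL_\ell$ to $\bH_\ell$ (equivalently, controlling the spinor norm on inertia). The input that $\shS_{\wt{\sfK}}(\wt{G})\hookrightarrow\mathscr{A}$ is a closed immersion of integral models is used as a black box from \cite{KisinInt}.
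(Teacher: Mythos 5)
Your argument is correct in substance, but it takes a longer route than the paper at the decisive step. The common core is the same: reduce, via the map to the Siegel integral model and the N\'eron--Ogg--Shafarevich criterion, to showing that the Kuga--Satake abelian variety $\sA_s$ has good reduction. Where you differ: (1) to pass from finiteness of the inertia image in $\SO(\bL_{\ell,\bar s})$ to finiteness in $\CSpin(\bL_{\ell,\bar s})$ you kill the spinor norm on inertia using that it is the symplectic similitude character, i.e.\ a power of the (unramified) cyclotomic character, whereas the paper invokes quasi-unipotence of local monodromy to control the central $\mathbb{G}_m$-ambiguity --- both are fine; (2) more significantly, you stop at \emph{potentially} good reduction, pass to a finite extension $F'$, and then descend the extension of $s$ from $\sO_{F'}$ to $\sO_F$ by comparing closed points inside a Nagata compactification $\overline{\shS}$. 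The paper short-circuits all of this: since the level $\wt{\sfK}$ is taken sufficiently small (neat), the image of inertia in $\CSpin(\bL_{\ell,\bar s})$, which lies in a torsion-free compact open subgroup, is not merely finite but \emph{trivial}, so $\sA_s$ has good reduction over $F$ itself and no base change or descent is needed. Your descent argument is valid (uniqueness of the $\sO_F$-point of $\overline{\shS}$ plus the fact that the closed point of $\Spec \sO_{F'}$ dominates that of $\Spec \sO_F$), and it has the mild advantage of not using neatness, at the cost of extra machinery; the neatness observation is what makes the paper's proof a three-line argument.

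One inaccuracy to fix: Kisin's construction does not give a closed immersion of \emph{integral} models $\shS_{\wt{\sfK}}(\wt{G}) \hookrightarrow \mathscr{A}$; only the generic-fiber map $\Sh_{\wt{\sfK}}(\wt{G}) \to \Sh_{\sK}(\GSp)$ is a closed embedding (for suitable levels), and the integral model is the normalization of the closure, hence only \emph{finite} over a closed subscheme of the Siegel model. Your factorization "the scheme-theoretic closure of $s_{F'}$ lies in the closed subscheme $\shS_{\wt{\sfK}}(\wt{G})$" should therefore be replaced by: the $\sO_{F'}$-point of $\mathscr{A}$ lands in the closure of the generic fiber, and the valuative criterion of properness applied to the finite morphism $\shS_{\wt{\sfK}}(\wt{G}) \to (\text{closure})$ lifts it, uniquely, to an $\sO_{F'}$-point of $\shS_{\wt{\sfK}}(\wt{G})$ extending $s_{F'}$. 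With that repair the proof goes through.
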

\begin{proof}
    This follows from an argument of Deligne (\cite[\S6.6]{DelK3Weil}, cf. \cite[Lem.~9.3.1]{Andre}). 
    By considering the Siegel embedding of $\shS$ and by the N\'eron--Ogg--Shafarevich criterion, it suffices to show that $\sA_s$ has good reduction. 
    Let $I \subseteq \Gal_F$ be the inertia group. 
    As the local monodromy is quasi-unipotent, the fact that $I$ has finite image in $\SO(\bL_{\ell, \bar{s}})$ implies that its image in $\CSpin(\bL_{\ell, \bar{s}})$ is also finite. 
    However, since we assumed level structure for $\shS$ is sufficiently small (in particular it is neat), this image has to be trivial. Hence we may conclude by the N\'eron--Ogg--Shafarevich criterion. 
\end{proof}

\begin{corollary}
\label{cor: extend DVR}
    Let $T = \Spec(R)$ for a discrete valuation ring $R$ and let $\eta$ be its generic point. Whenever there is a commutative diagram
    \[\begin{tikzcd}
	\eta & T \\
	{\wt{\sfM}^\circ} & \sfM,
	\arrow[hook, from=1-1, to=1-2]
	\arrow[from=1-1, to=2-1]
	\arrow[from=1-2, to=2-2]
	\arrow[from=2-1, to=2-2]
\end{tikzcd}\]
    the composite $\eta \to \wt{\sfM}^\circ \to \shS$ extends to a morphism $\tau : T \to \shS$. 
\end{corollary}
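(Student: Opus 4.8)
The plan is to deduce this directly from the N\'eron--Ogg--Shafarevich-type criterion \cref{lem: NOS}, combined with the potential good reduction of $\sX|_T$ supplied by Artin--Brieskorn resolution. Write $F \colonequals \mathrm{Frac}(R)$, fix a prime $\ell \neq p$ invertible in $R$, and let $x \in \shS(F)$ be the point defined by the composite $\eta \to \wt{\sfM}^\circ \xrightarrow{\rho} \shS$, with $\bar{x}$ a geometric point over it. By \cref{lem: NOS} it suffices to show that the inertia subgroup $I \subseteq \Gal_F$ has finite image in $\SO(\bL_{\ell, \bar{x}})$. Via the isometry $\alpha_\ell$ of \cref{thm: period}(b) (and \cref{const: extend comparison}), the $\Gal_F$-module $\bL_{\ell, \bar{x}}$ is, up to a Tate twist, $\Gal_F$-equivariantly isomorphic to $\bP_{\ell, \bar{\eta}} = \PH^2_\et(\sX_{\bar{\eta}}, \IQ_\ell)$, which is a subrepresentation of $\H^2_\et(\sX_{\bar{\eta}}, \IQ_\ell)$. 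Since $\ell$ is invertible in $R$, the $\ell$-adic cyclotomic character is unramified over $F$, so the Tate twist does not affect the image of $I$; thus it is enough to prove that $I$ acts through a finite quotient on $\H^2_\et(\sX_{\bar{\eta}}, \IQ_\ell)$.

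For this I would use the structure of the family $\sX|_T \to T$. Since $\eta \to \sfM$ factors through $\sfM^\circ$, the generic fiber $\sX_\eta$ is smooth; since the closed point $t_0 \in T$ maps into $\sfM$, the special fiber $\sX_{t_0}$ has at worst rational double points by \cref{set-up: surfaces}(f) (or is smooth). Hence $\sX|_T \to T$ is a proper flat family of surfaces with smooth generic fiber and Du Val special fiber. By the smoothness and irreducibility of the deformation space of a rational double point (\cite[Thm.~3]{Artin-Res}, exactly as invoked in the proof of \cref{prop: def RDP}), there is a finite extension $R'/R$ of discrete valuation rings such that, setting $T' \colonequals \Spec(R')$, the base change $\sX|_{T'} \to T'$ admits a simultaneous resolution $\breve{\sX}_{T'} \to \sX_{T'}$; being fiberwise the minimal resolution and an isomorphism over the already-smooth generic point $\eta'$, the space $\breve{\sX}_{T'}$ is a smooth proper model of $\sX_{\eta'}$ over $T'$. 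Replacing $F' = \mathrm{Frac}(R')$ by the separable closure of $F$ inside it (which changes neither $\Gal_{F'}$ nor the \'etale cohomology of $\breve{\sX}$, and keeps $[\Gal_F : \Gal_{F'}] < \infty$), the smooth and proper base change theorem shows that $\H^2_\et(\sX_{\overline{\eta'}}, \IQ_\ell) = \H^2_\et(\breve{\sX}_{\overline{\eta'}}, \IQ_\ell)$ is an unramified $\Gal_{F'}$-module, i.e.\ its inertia subgroup $I' \subseteq \Gal_{F'}$ acts trivially. As $[I : I'] \le [\Gal_F : \Gal_{F'}] < \infty$, the group $I$ acts through a finite quotient on $\H^2_\et(\sX_{\bar{\eta}}, \IQ_\ell)$, hence has finite image in $\SO(\bL_{\ell,\bar{x}})$; \cref{lem: NOS} then yields the desired extension $\tau : T \to \shS$ (and it is unique by separatedness of $\shS$ over $\IZ_{(p)}$).

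The argument is essentially formal once \cref{lem: NOS} is available, so I expect no deep obstacle. The one point requiring care is that \cref{prop: def RDP} and the underlying Artin--Brieskorn statement are phrased over a strictly Henselian base, while here $R$ is an arbitrary discrete valuation ring, possibly with imperfect residue field; I would handle this by first base changing to $R^{\mathrm{sh}}$ (or its completion) to produce the ramified cover $R'$ and the simultaneous resolution there, and then observe that the finiteness-of-inertia-image hypothesis of \cref{lem: NOS} is insensitive to this base change, since the inertia can only shrink. A secondary, purely bookkeeping matter is the compatible choice of geometric generic points of $T$, $T'$ and $\sfM$, which is routine.
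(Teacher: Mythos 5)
Your proposal is correct and follows essentially the same route as the paper: invoke Artin--Brieskorn simultaneous resolution over a finite ramified extension of $T$ to see that inertia acts through a finite quotient on $\H^2_\et(\sX_{\bar\eta},\IQ_\ell)$, hence on $\bL_{\ell}$ via the matching isomorphism, and then conclude by \cref{lem: NOS}. You merely spell out the details (smooth proper base change, the Tate twist, the strictly Henselian base change) that the paper's two-line proof leaves implicit.
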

\begin{proof}
    By the existence of Artin--Brieskorn resolution of $\sX_T$ over a finite ramified extension of $T$ (\cite[Thm.~2]{Artin-Res}), the action of $\Gal_{k(\eta)}$ on $\H^2_\et(\sX_{\bar{\eta}}, \IQ_\ell)$ factors through a finite quotient, and hence the Galois action on $\bL_{\ell, \tau(\bar{\eta})}$ is finite as well. 
    Then we conclude by \Cref{lem: NOS}. 
\end{proof}
Another useful observation is that for a relative curve with \'etale boundary in a compactification, to check the period morphism extends, it suffices to do so on the mod $p$ reduction.
This will be used in conjunction with \Cref{prop:lift_closed_immersion}.
\begin{proposition}[Extension criterion in mixed characteristic]
\label{lem: extend morphism from curve}
    Let $k$ be a perfect field of characteristic $p$. 
    Let $\overline{T}$ be a smooth proper curve over $W=W(k)$, and let $D, H \subseteq \overline{T}$ be disjoint relative effective Cartier divisors that are \'etale over $W$. 
    Set $T \colonequals \overline{T} \smallsetminus H$ and $T^\circ \colonequals T \smallsetminus D$. 
    Then a morphism $\tau^\circ : T^\circ \to \shS_W$ extends over $T$ if and only if its special fiber $\tau^\circ_k$ extends over $T_k$. 
\end{proposition}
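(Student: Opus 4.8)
The statement says: given a smooth proper curve $\overline{T}$ over $W = W(k)$ with disjoint relative Cartier divisors $D, H$ étale over $W$, and writing $T = \overline{T}\smallsetminus H$, $T^\circ = T\smallsetminus D$, a morphism $\tau^\circ : T^\circ \to \shS_W$ extends over $T$ iff its special fiber $\tau^\circ_k$ extends over $T_k$. The "only if" direction is trivial (restrict the extension to the special fiber), so the content is the "if" direction. Since $T\smallsetminus T^\circ = D$ is finite étale over $W$, by étale-locally base changing $W$ we reduce to checking extension at a single $W$-point $d$ of $D$, and the question becomes: given a $W$-point $d$ of $\overline{T}$ with $d\notin H$, does $\tau^\circ$ extend over the local ring $\sO_{\overline{T},d}$? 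Let $R := \sO_{\overline{T},d}$, a two-dimensional regular local ring, and let $V := \Spec R$; we know $\tau^\circ$ is defined on $V^\circ := V\smallsetminus (D\cap V)$, which is $V$ minus a regular divisor that is itself finite flat over $W$.

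The plan is to exploit the extension property of the canonical integral model $\shS_W$ (from \cite{KisinInt}), which requires extending the Kuga–Satake abelian scheme across a codimension-$2$ locus; equivalently, by purity for abelian schemes it suffices to extend $\tau^\circ$ over a punctured neighborhood, i.e. over $V$ minus the closed point $d_k$ (the special fiber of $d$), since $\{d_k\}$ has codimension $2$ in $V$. So I first extend $\tau^\circ$ from $V^\circ$ to $V\smallsetminus\{d_k\}$. On the generic fiber $V_\eta$ (the localization of the curve $\overline{T}_{W[1/p]}$), the divisor $D_\eta$ is a closed point, and extension across it follows from the Néron–Ogg–Shafarevich criterion exactly as in \cref{cor: extend DVR}: the map from the generic point of $D$ extends over the DVR $\sO_{\overline{T}_{W[1/p]},\, d_\eta}$. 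Combined with the given extension $\tau^\circ_k$ over $T_k\supseteq D_k$, I now have $\tau^\circ$ defined on $V^\circ \cup V_\eta \cup (\text{nbhd of }D_k)$, which together cover $V\smallsetminus\{d_k\}$: indeed $V_\eta$ is $V$ minus the special fiber divisor, and the special fiber minus $d_k$ is contained in $D_k$ (points of $T_k$ near $d_k$ other than $d_k$ itself lie on the divisor $D_k$, since $D\cap V$ is the only component of $V\smallsetminus V^\circ$ through $d_k$) — wait, more carefully: $V\smallsetminus V^\circ = D\cap V$ is a regular divisor meeting the special fiber $V_k$ in $D_k\cap V$, and $V_k = (D_k\cap V)\cup(V_k\smallsetminus D_k)$; on $V_k\smallsetminus\{d_k\}$ we have either a point of $D_k$ (covered by $\tau^\circ_k$) or a point of $V_k\smallsetminus D_k\subseteq V^\circ$ (covered by $\tau^\circ$). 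So $V^\circ\cup(\text{image of }\tau^\circ_k\text{'s domain})\cup V_\eta \supseteq V\smallsetminus\{d_k\}$. These glue because they agree on overlaps (the generic fiber of the special-fiber extension is forced, and $T^\circ$-restriction agrees with the original $\tau^\circ$), yielding $\tilde\tau : V\smallsetminus\{d_k\}\to \shS_W$.

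Finally I invoke the extension property: $\shS_W$ is smooth over $\IZ_{(p)}$ and the Kuga–Satake abelian scheme $\sA$ over it (via the Siegel embedding) satisfies that $\tilde\tau^*\sA$ is an abelian scheme over $V\smallsetminus\{d_k\}$; since $V$ is regular of dimension $2$ and $\{d_k\}$ has codimension $2$, purity of abelian schemes (as in \cite{VasiuZink}, invoked in \cite{MPTate}) extends $\tilde\tau^*\sA$ to an abelian scheme over all of $V$, and then by the extension property of the canonical model $\tilde\tau$ itself extends to $\tau : V\to\shS_W$. Ranging over the (étale-locally) finitely many $W$-points of $D$ and re-gluing produces the desired extension over $T$. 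The main obstacle is organizing the covering of $V\smallsetminus\{d_k\}$ correctly and checking the glued maps agree on overlaps — this is where one must be careful that the Néron–Ogg–Shafarevich extension on the generic fiber is unique (hence automatically compatible with $\tau^\circ$) and that $\shS_W$ being separated makes the gluing legitimate; the actual analytic/cohomological input (purity, NÓS) is already packaged in the cited results.
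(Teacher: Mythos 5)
There is a genuine gap at the heart of your argument: the step where you extend $\tau^\circ$ across $d_\eta$ on the generic fiber. You justify it by saying it follows ``exactly as in \cref{cor: extend DVR}'', but that corollary is not about an arbitrary morphism to $\shS_W$: its proof uses the commutative diagram through $\wt{\sfM}^\circ \to \sfM$, so that Artin--Brieskorn resolution of the pulled-back surface family forces the Galois action on $\H^2_\et$ (hence on $\bL_\ell$) to be finite, and only then does \cref{lem: NOS} apply. The present proposition concerns an abstract morphism $\tau^\circ : T^\circ \to \shS_W$ with no surface family and no factorization through $\sfM$, so finiteness of the inertia at $d_\eta$ is not automatic --- the local monodromy could be infinite unipotent, in which case the map genuinely does not extend. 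Indeed, notice that in your local picture $V = \Spec \sO_{\overline{T},d}$ one has $V^\circ \cup V_\eta = V \smallsetminus \{d_k\}$ already, so your construction never actually uses the hypothesis that $\tau^\circ_k$ extends over $T_k$ (the attempted ``gluing'' along the closed subscheme $D_k$ is not a gluing along opens and is not needed for the cover); as written, your plan would prove that \emph{every} $\tau^\circ$ extends, which is false.

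The missing idea --- and the actual content of the paper's proof --- is to use the special-fiber hypothesis to control the generic-fiber monodromy: by relative Abhyankar (\cref{prop:relative_Abhy}) the pullback $(\tau^\circ)^*\bL_\ell$ is tame along the boundary, and by Grothendieck's specialization theorem for tame fundamental groups the representation of $\pi_1^t(T^\circ_{\bar K})$ factors through $\pi_1^t(T^\circ_{\bar k})$; since the special-fiber extension makes the latter representation unramified along $D_k$, the generic-fiber monodromy around $D_{\bar K}$ is trivial, and only then does \cref{lem: NOS} give the extension over $T_{\bar K}$ (followed by descent to $K$ and the mixed-characteristic extension). Your final step (Vasiu--Zink purity for abelian schemes over the healthy regular local ring $\sO_{\overline{T},d}$ plus the extension property of the integral canonical model) is a reasonable repackaging of the paper's last paragraph, but it cannot rescue the argument without the tame-specialization input above.
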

\begin{proof}
    Set $K \colonequals W[1/p]$. 
    Let $a$ and $b$ be geometric points on the special and generic fibers of $T^\circ$ respectively such that $b$ specializes to $a$ along a path. 
    As $\overline{T}$ is a compactification of both $T$ and $T^\circ$ whose boundaries are normal crossing divisors, we know by the relative Abhyankar's lemma in \cref{prop:relative_Abhy} that $(\tau^\circ_k)^* \bL_\ell$ is tamified ramified along $(\overline{T}\smallsetminus T)_k$ and $(\overline{T}\smallsetminus T^\circ)_k$.
    Moreover, there are specialization maps of tame fundamental groups $\pi_1^t(T^\circ_{\bar{K}}, b) \to \pi_1^t(T^\circ_{\bar{k}}, a)$ and  $\pi_1^t(T_{\bar{K}}, b) \to \pi_1^t(T_{\bar{k}}, a)$ (\cite[XIII, \S2.10]{SGA1}). 
    The fact that $\tau_k^\circ$ extends over $T_k$ implies that $(\tau^\circ_k)^* \bL_\ell$ is unramified over $T_k$. 
    This in turn implies that $(\tau^\circ_{\bar{K}})^* \bL_\ell$ is unramified over $T_{\bar{K}}$. 
    Indeed, the situation is that we have a commutative diagram below with solid arrows 
    \begin{equation}
        \begin{tikzcd}
	{\pi_1^t(T^\circ_{\bar{K}}, b)} & {\pi_1^t(T_{\bar{K}}, b)} & {\mathrm{SO}(\bL_{\ell, \tau^\circ(b)})} \\
	{\pi_1^t(T^\circ_{\bar{k}}, a)} & {\pi_1^t(T_{\bar{k}}, a)} & {\mathrm{SO}(\bL_{\ell, \tau^\circ(a)})}
	\arrow[from=1-1, to=1-2]
	\arrow[curve={height=-18pt}, from=1-1, to=1-3]
	\arrow[from=1-1, to=2-1]
	\arrow[dashed, from=1-2, to=1-3]
	\arrow[from=1-2, to=2-2]
	\arrow["\simeq"', from=1-3, to=2-3]
	\arrow[from=2-1, to=2-2]
	\arrow[curve={height=18pt}, from=2-1, to=2-3]
	\arrow[from=2-2, to=2-3]
    \end{tikzcd},
    \end{equation}
    and we want the dashed arrow to exist, which is true thanks to the commutativity of the rest of the diagram. 
    
    Now, by \cref{lem: NOS}, the map $\tau^\circ_{\bar{K}}$ extends to the entire generic fiber $T_{\bar{K}}$, by the equality $\pi_1^t(T^\circ_{\bar{K}}, b) = \pi_1(T^\circ_{\bar{K}}, b)$ and the existence of the dotted arrow. 
    Since the extended map $\tau_{\bar{K}} : T_{\bar{K}} \to \shS_{\bar{K}}$ descends to $K$ when restricted to an open dense subscheme $T^\circ_{\bar{K}}$, we must have that $\tau_{\bar{K}}$ itself descends to $K$ as well, giving rise to an extension of the original morphism $\tau^\circ_K : T^\circ_K \to \shS_K$ over $T_K$. 
    This in turn implies that $(\tau^\circ_K)^* \bL_\ell$ over $T_K^\circ$ is unramified over $T_K$. As $(\tau_K)^* \bL_\ell$ extends to the generic point of the special fiber $T_k$, the purity of $\ell$-adic local systems tells us that $(\tau_K)^* \bL_\ell$ is unramified over $T$. 
    Hence by the extension property of the canonical integral model $\shS$ (\cite{KisinInt}, cf. \cite[Thm.~3.4.4]{HYZ}) again, the map $\tau_K$ extends to a map of schemes $\tau : T \to \shS$,
    which agrees with the map $\tau^\circ$ when restricted to $T^\circ$.
\end{proof}

\subsection{Motivic alignment}
\label{sub: mot_align}
In this subsection, we introduce the notion of motivic alignment for extended period morphisms.
It relates the divisors of the given surface $\sX_t$ with the special endomorphisms of the Kuga--Satake abelian variety $\sA_{\tau(t)}$, for points $t\in T$ in the given family.
\begin{definition}
\label{def: motivically aligned}
    Let $\tau : T \to \shS$ be an extended period morphism and $t \to T$ be a geometric point. For every $\zeta \in \LEnd(\sA_{\tau(t)})_\IQ$, we say that the pair $(t, \zeta)$ is \textbf{motivically aligned} if for every $\ell \neq \mathrm{char\,}k(t)$ the isomorphism $\alpha_{\ell, T}|_t :   \bL_{\ell, \tau(t)}(-1) \sto \bP_{\ell, t}$ sends the cycle class of $\zeta$ into the the first Chern class of some element in $\PNS(\breve{\sX}_t)_\IQ$; if $\mathrm{char\,}k(t) = p$, then we in addition require that (after replacing $k(t)$ by its perfection) $\alpha_{\crys, t}$ sends the crystalline cycle class of $\zeta$ to the crystalline first Chern class of some element in $\PNS(\breve{\sX}_t)_\IQ$. 
    Moreover, 
    \begin{itemize}
        \item we say that $t$ is motivically aligned if the pair $(t, \zeta)$ is motivically aligned for every $\zeta$; 
        \item we say that a field-valued point $t'$ is  motivically aligned if some (and hence every) geometric point over $t'$ is motivically aligned;
        \item we say that $\tau$ is motivically aligned if it is motivically aligned at every point. 
    \end{itemize}
\end{definition}
\begin{remark}[Diagram of motivic alignment]
    Let $\tau : T \to \shS$ be an extended period morphism, let $t\in T$ be a motivically aligned point, and let $\bar{t}$ be a geometric point over $t$.
    By the Kummer sequence of the $\ell$-adic cohomology, we deduce that the Chern class map defines an injection $c_1:\PNS(\breve{\sX}_t)_\IQ \to \PH^2_\et(\breve{\sX}_{\bar{t}},\IQ_\ell(1))=\breve{\bP}_{\ell,\bar{t}}$.
    Similarly, the cycle class map defines an injection $\LEnd(\sA_{\tau(t)})_\IQ \to \bL_{\ell,\tau(\bar{t})}$.
    Consequently, the injectivity and motivic alignment implies a commutative diagram
    \begin{equation}
        \label{eq:diagram_of_mot_aligned}
       \begin{tikzcd}
	{\LEnd(\sA_{\bar{t}})_\IQ} & {\PNS(\breve{\sX}_t)_\IQ} \\
	{\bL_{\ell, \tau(\bar{t})}} & {\PH^2_\et(\breve{\sX}_{\bar{t}}, \IQ_\ell(1)) = \breve{\bP}_{\ell, \bar{t}}}(1),
	\arrow[from=1-1, to=1-2]
	\arrow[hook, from=1-1, to=2-1]
	\arrow["c_1",hook, from=1-2, to=2-2]
	\arrow["\sim","\alpha_{\ell, \bar{t}}"', from=2-1, to=2-2]
        \end{tikzcd}
   \end{equation}
   where the bottom arrow is the matching isomorphism for $\ell$-adic cohomology.
   We call this the \textbf{diagram of motivic alignment}. 
\end{remark}

The main reason of introducing the above notion is that, in order to extend the method of \cite{MPTate}, we translate the Tate conjecture into a problem of finding motivically aligned extended period morphism.
This is illustrated by the following statement.
\begin{lemma}
	\label{lem: reduction}
	To prove \cref{thm: general theorem}, it suffices to construct an extended period morphism $\tau : T \to \shS$ such that up to replacing $k$ by a finite extension, the point $s$ lifts to a motivically aligned $k$-point on $T$. 
\end{lemma}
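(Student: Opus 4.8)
The plan is to deduce the $\ell$-adic Tate conjecture for $\breve{\sX}_{\bar{s}}$ directly from the data the hypothesis supplies — the matching isomorphism attached to the extended period morphism and the compatibility of cycle classes recorded by motivic alignment — together with one external input, the variant of Tate's theorem for special endomorphisms on $\shS$ (cf. \cite[\S7]{MPTate} and the framework of \cite{HYZ}), which asserts that for a point $x$ of $\shS$ over a finitely generated field, with geometric point $\bar{x}$, the cycle-class map $\LEnd(\sA_{\bar{x}})_\IQ\otimes_\IQ\IQ_\ell \to \varinjlim_H \bL_{\ell,\bar{x}}^H$ is an isomorphism. First I would note that replacing $k$ by a finite extension is harmless: the source $\NS(\breve{\sX}_{\bar{s}})\otimes\IQ_\ell$ is a geometric invariant, and the open subgroups of an open subgroup of $\Gal_k$ are cofinal among all open subgroups of $\Gal_k$, so the target $\varinjlim_H \H^2_\et(\breve{\sX}_{\bar{s}},\IQ_\ell(1))^H$ is unchanged. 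So I may assume given $\tau : T\to\shS$ and a motivically aligned $t\in T(k)$ with $\pi(t)=s$, and fix a geometric point $\bar{t}$ over $t$ lying above $\bar{s}$. Since $\sX|_T\to T$ is Zariski-locally resolvable and $k(t)=k(s)=k$, a simultaneous resolution near $t$ identifies $\breve{\sX}_{\bar{t}}$ with $\breve{\sX}_{\bar{s}}$, and \cref{lem: coh of Zariski-resolvable}(b) identifies $\breve{\bP}_{\ell,\bar{t}}$ with $\PH^2_\et(\breve{\sX}_{\bar{s}},\IQ_\ell(1))$ as $\Gal_k$-modules.

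Next I would reduce to the primitive part. Because $p\nmid|\Lambda^\vee/\Lambda|$, the intersection pairing restricted to $\Lambda$ is nondegenerate; the specialization $\Lambda_s\subseteq\NS(\breve{\sX}_{\bar{s}})$ of $\Lambda$ is a $\Gal_k$-fixed sublattice of algebraic classes (it arises from a constant sub-local-system of the primitive cohomology, cf. \cref{sub: period_setup}), so it yields $\Gal_k$-equivariant orthogonal decompositions $\H^2_\et(\breve{\sX}_{\bar{s}},\IQ_\ell(1)) = (\Lambda_s\otimes\IQ_\ell)\oplus\PH^2_\et(\breve{\sX}_{\bar{s}},\IQ_\ell(1))$ and $\NS(\breve{\sX}_{\bar{s}})\otimes\IQ_\ell = (\Lambda_s\otimes\IQ_\ell)\oplus(\PNS(\breve{\sX}_{\bar{s}})\otimes\IQ_\ell)$, compatible with $c_1$. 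On the $\Lambda_s$-summand $c_1$ is an isomorphism onto the $\Gal_k$-fixed classes, and injectivity of $c_1$ overall is standard (Kummer sequence). Hence the whole statement comes down to showing that every class in $\varinjlim_H \PH^2_\et(\breve{\sX}_{\bar{s}},\IQ_\ell(1))^H$ lies in the image of $c_1\colon\PNS(\breve{\sX}_{\bar{s}})\otimes\IQ_\ell\to\PH^2_\et(\breve{\sX}_{\bar{s}},\IQ_\ell(1))$.

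For this I would argue as follows. By \cref{const: extend comparison}, the matching isomorphism $\alpha_{\ell,T}$ evaluated at the $k$-rational point $t$ is a $\Gal_k$-equivariant isometry $\alpha_{\ell,\bar{t}}\colon \bL_{\ell,\tau(\bar{t})}(-1)\sto\breve{\bP}_{\ell,\bar{t}}=\PH^2_\et(\breve{\sX}_{\bar{s}},\IQ_\ell(1))$ (it is the fiber at a rational point of an isomorphism of $\ell$-adic local systems over $T$), hence carries Tate classes to Tate classes. Since $\sfM$ is of finite type over $\IZ_{(p)}$, the field $k$ is finitely generated over its prime field, so the variant of Tate's theorem for special endomorphisms applies to $\tau(\bar{t})$, giving that $\LEnd(\sA_{\tau(\bar{t})})_\IQ\otimes_\IQ\IQ_\ell \to \varinjlim_H \bL_{\ell,\tau(\bar{t})}^H$ is an isomorphism. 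Feeding this through $\alpha_{\ell,\bar{t}}$ and invoking the diagram of motivic alignment $(\ref{eq:diagram_of_mot_aligned})$ — valid at $\bar{t}$ precisely because $t$, hence every geometric point over it, is motivically aligned — I conclude that $\alpha_{\ell,\bar{t}}$ sends every Tate class of $\bL_{\ell,\tau(\bar{t})}(-1)$ into $c_1(\PNS(\breve{\sX}_{\bar{s}})\otimes\IQ_\ell)$. Since the reverse inclusion is obvious, this is the desired equality, and \cref{thm: general theorem} follows.

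As for the main obstacle: there is in fact no genuinely hard step inside this lemma — it is bookkeeping that packages the available inputs, the only point of care being the harmless enlargement of $k$ just discussed. The substantial ingredient is external, namely the analogue of Tate's theorem for special endomorphisms on the orthogonal Shimura variety over finitely generated fields (resting ultimately on the Tate conjecture for abelian varieties, in the function-field case as well), which I would simply cite. The real work left after this reduction — and where the remainder of \cref{sec: proofs of theorems} is spent — is the converse direction: constructing, for $s$ in $\sfM^\circ$, $\sfD^+$, or $\sfD^-$, an extended period morphism $\tau$ through $s$ together with a motivically aligned lift of $s$; the case $s\in\sfD^-$ in particular will draw on the MMP and gluing results of \cref{sub:MMP} and the rigidity of F-crystals from \cref{sub:F-crystal}.
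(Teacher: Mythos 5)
Your proposal is correct and follows essentially the same route as the paper: the paper's proof likewise combines the diagram of motivic alignment at the lifted point with the $\Gal_k$-equivariance of $\alpha_{\ell,\bar{s}}$ and Madapusi Pera's Tate theorem for special endomorphisms \cite[Thm.~6.4]{MPTate}. The extra bookkeeping you supply (harmlessness of a finite extension of $k$, identification of $\breve{\sX}_{\bar t}$ with $\breve{\sX}_{\bar s}$, and the orthogonal reduction to the primitive part via $\Lambda$) is left implicit in the paper but is exactly the intended content.
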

\begin{proof}
	Assume such $\tau : T \to \shS$ exists, and consider the diagram (\ref{eq:diagram_of_mot_aligned}) with $t = s$.
	Then the conclusion follows from the fact that $\alpha_{\ell, \bar{s}}$ is $\Gal_k$-equivariant, and that 
	$$ \LEnd(\sA_{\bar{s}}) \tensor \IQ_\ell \to \varinjlim_H \bL_{\ell, \tau(\bar{s})}^H $$
	is an isomorphism, thanks to the Tate theorem for special endomorphisms by Madapusi Pera \cite[Thm.~6.4]{MPTate}. 
\end{proof}

For the rest of the subsection, we give several useful observations on the motivically aligned extended period morphism.

We first notice that by the Lefschetz $(1,1)$-theorem, the motivic alignedness is automatically true in characteristic zero.
This encapsulates the argument of \cite[Thm.~5.17(4)]{MPTate}.
The germ of this idea can be traced back to \cite{Nygaard} and \cite{NO}. 

\begin{proposition}
\label{prop: char 0 motivically aligned}
    Let $\tau:T\to \shS$ be an extended period morphism, and let $t\in T$ be a geometric point $t$ in characteristic $0$.
    Then $t$ is motivically aligned. 
\end{proposition}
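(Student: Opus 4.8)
The statement is the "motivic alignment is automatic in characteristic zero" principle, and the plan is to run the classical Kuga--Satake/Lefschetz argument, bootstrapping from the generic fibre $b$ where the matching isomorphisms $(\alpha_B^\dagger,\alpha_{\dR}^\dagger)$ are genuinely defined, down to an arbitrary characteristic-zero point $t$. First I would reduce to the case where $t$ is a $\IC$-point: since the N\'eron--Severi group, the group of special endomorphisms, and all the comparison isomorphisms are insensitive to extension of separably (here algebraically) closed fields of characteristic zero (see \Cref{thm: specialization of line bundle}(d) and \Cref{const: extend comparison}), and since the $\ell$-adic condition in \Cref{def: motivically aligned} can be checked after any such extension, I may embed $k(t)\hookrightarrow\IC$ compatibly with $T^\circ\to\wt{\sfM}^\circ$. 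So assume $t\in T(\IC)$; moreover the condition only involves $\breve{\sX}_t$ and $\sA_{\tau(t)}$, so I may replace $T$ by its base change to $\IC$ and assume $T$ is a $\IC$-variety, connected after passing to a connected component through $t$.

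Next I would exploit that $T^\circ\to\wt{\sfM}^\circ$, composed with $\rho$, lands in the Shimura variety, and after passing to a further connected finite \'etale cover of $T^\circ$ I may assume the image meets the distinguished component $(\wt{\sfM}^\circ)^\dagger$; by \Cref{const: extend comparison} we then obtain the extended isometry of $\IZ$-VHS
\[
(\alpha^\dagger_{B,T},\alpha^\dagger_{\dR,T}):\tau^*(\bL_B(-1),\bL_{\dR,\IC}(-1))\;\sto\;(\breve{\bP}_{B,T},\breve{\bP}_{\dR,T})
\]
over all of $T$, whose $\ell$-adic avatar is $\alpha_{\ell,T}$. Now fix $\zeta\in\LEnd(\sA_{\tau(t)})_\IQ$. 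Its Betti realization is a Hodge class in $\bL_{B,\tau(t)}(-1)$, hence under $\alpha^\dagger_{B,T}|_t$ it maps to a class $z\in\breve{\bP}_{B,t}\subseteq\H^2(\breve{\sX}_t,\IQ(1))$ which, being an isometry of Hodge structures, is again a Hodge class, i.e. of type $(1,1)$. By the Lefschetz $(1,1)$-theorem $z$ is the Chern class of some line bundle on $\breve{\sX}_t$; since $z$ lies in the primitive part $\breve{\bP}_{B,t}$, which is the orthogonal complement of $\Lambda$ (containing an ample class and hence all of the polarization data), $z$ in fact comes from an element of $\PNS(\breve{\sX}_t)_\IQ$. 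This gives the $\ell$-adic half of motivic alignment after tensoring the Betti identity with $\IQ_\ell$ and using $\alpha_{\ell,T}|_t=\alpha^\dagger_{B,T}|_t\otimes\IQ_\ell$ together with the compatibility of Betti and $\ell$-adic cycle classes.

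Finally, since $t$ is a characteristic-zero point, \Cref{def: motivically aligned} imposes no crystalline condition, so the $\ell$-adic condition is all that is required, and we are done. The only genuinely delicate points are bookkeeping ones: (a) justifying that the various extensions of coefficient sheaves and matching isomorphisms in \Cref{const: extend comparison} commute with the base change $T\to T_\IC$ and with restriction to the component of $t$ — this is where I would invoke the full-faithfulness statements for VHS and \'etale local systems on normal bases used throughout \Cref{sub:specialization_of_lb}; and (b) checking that an element of $\LEnd(\sA_{\tau(t)})$ really does land in the \emph{primitive} summand $\breve{\bP}_{B,t}$ and not merely in $\H^2(\breve{\sX}_t,\IQ(1))$, which follows because $\bL_B$ is by construction a summand complementary to the $\Lambda$-part and $\alpha^\dagger_{B,T}$ is an isometry onto $\breve{\bP}_{B,T}$. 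Neither of these is a real obstacle; the content is entirely the classical Lefschetz argument, and the main "work" is organizing the reductions so that the extended period morphism's matching data is available at $t$. I expect step (b), threading the primitivity through, to be the subtlest point, but it is immediate from the definitions in \Cref{const: sheaves on Sh} and \Cref{sub: period_setup}.
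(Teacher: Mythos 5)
Your overall strategy is the paper's: reduce to a $\IC$-point, use the extended Betti/de Rham matching isomorphism of \cref{const: extend comparison} at $t$, and conclude by the Lefschetz $(1,1)$-theorem together with the identification of special endomorphisms with Hodge classes in $\bL_{B,\tau(t)}$. The Lefschetz step, including the primitivity bookkeeping you single out in (b), is exactly what the paper does. However, two of your reductions are off as stated.

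First, your mechanism for arranging that the image of $T^\circ$ lies in the distinguished component $(\wt{\sfM}^\circ)^\dagger$ does not work: passing to a connected finite \'etale cover of $T^\circ$ does not change its image in $\wt{\sfM}^\circ_\IC$ at all, so if $T^\circ_\IC$ maps into a component other than $(\wt{\sfM}^\circ)^\dagger$ you never gain access to $(\alpha_B^\dagger,\alpha_{\dR}^\dagger)$, which is only defined over that one component. The correct fix (the one the paper uses) is that $\Aut(\IC)$ acts transitively on the connected components of $\wt{\sfM}^\circ_\IC$, so one may twist the whole situation by a field automorphism of $\IC$; motivic alignment is insensitive to such a twist. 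Second, you replace $T$ by $T_\IC$ and a connected component, but the extension of the $\IZ$-VHS and of $(\alpha_B^\dagger,\alpha_{\dR}^\dagger)$ in \cref{const: VHS and crystals resolved} and \cref{const: extend comparison} is only available when the base is a \emph{smooth} $\IC$-variety (the full-faithfulness input from Peters requires this), whereas an extended period morphism only presupposes $T$ normal. One must first replace $T_\IC$ by a resolution of singularities $T'\to T_\IC$ (proper birational, so every point of $T_\IC$ lifts) before invoking the extended matching isomorphism at $t$. Both gaps are repairable by standard means, and with these two patches your argument coincides with the paper's proof.
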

\begin{proof}
    We may reduce to the case when $k(t) = \IC$. By resolution of singularities, we can always find a smooth $\IC$-variety $T'$ admitting a proper and birational morphism to $T_\IC$. 
    For the purpose of proving the proposition, there is no harm in replacing $T$ by $T'$ and assume that $T$ is a smooth $\IC$-variety. 
    Note that $\Aut(\IC)$ acts transitively on the set of connected components of $\wt{\sfM}^\circ_\IC$. 
    Therefore, by take the base change along automorphisms of $\IC$, we may assume that $T$ maps into $(\wt{\sfM}^\circ)^\dagger$. Recall that in \cref{const: extend comparison}, we extended the isomorphism $(\alpha^\dagger_B, \alpha^\dagger_\dR)|_{T^\circ}$ to the isomorphism $(\alpha^\dagger_{B, T}, \alpha^\dagger_{\dR, T}) : \tau^*(\bL_B, \bL_{\dR, \IC})(-1) \sto (\breve{\bP}_{B, T}, \breve{\bP}_{\dR, T})$ on the scheme $T$. 
    By the Lefschetz $(1, 1)$-theorem, $\PNS(\breve{\sX}_t)_\IQ$ can be identified with the $(0, 0)$-classes in $\PH^2_{B}(\breve{\sX}_{t}, \IQ(1))$. 
    Similarly, $\LEnd(\sA_{\tau(t)})$ can be identified with those in $\bL_{B, \tau(t)}$. 
\end{proof}


Our next observation is that the motivic alignedness is invariant under deformation. 

\begin{lemma}[Motivic alignedness is closed under specialization]
\label{lem: def invariance of MA}
    Suppose that $t', t$ are geometric points on $T$ with a chosen path $\gamma: t' \rightsquigarrow t$, 
    and $\zeta' \in \LEnd(\sA_{t'})$ specializes to $\zeta \in \LEnd(\sA_t)$.
    \begin{enumerate}[label=\upshape{(\alph*)}]
        \item If the pair $(t', \zeta')$ is motivically aligned, then so is $(t, \zeta)$.
        \item\label{item: def invariance of MA converse} The converse is also true provided that if $\mathrm{char\,} k(t') = p$, we additionally assume that for the scheme theoretic image $T' \subseteq T$ of $t'$, $\sX|_{T'}$ admits a simultaneous resolution whose total space is a projective scheme near $t$. 
    \end{enumerate} 
\end{lemma}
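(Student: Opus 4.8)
The plan is to reduce both parts, by base-changing the whole situation along the path $\gamma$, to the case $T = \Spec(R)$ for a strictly Henselian discrete valuation ring, with $t'$ the geometric generic point and $t$ the geometric special point. Since $\sX|_T \to T$ is Zariski-locally resolvable and $\Spec(R)$ is local, fix a simultaneous resolution $g : Y \to \sX|_T$; then $Y/T$ is smooth proper with $Y_{\bar{\eta}} \cong \breve{\sX}_{t'}$ and $Y_{\bar s} \cong \breve{\sX}_t$, and by \cref{lem: specialization resolution} (together with its de Rham and crystalline analogues recorded in \cref{const: VHS and crystals resolved}) the resulting specialization maps on cohomology do not depend on $g$. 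Pulling back the universal abelian scheme along $\tau\circ\gamma$ gives an abelian scheme $\sA \to T$ with fibres $\sA_{t'}$ and $\sA_t$ as in the statement. Part (a) will then be essentially routine; the genuine content is part (b) in equal characteristic $p$.

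For part (a) I would run a diagram chase with the specialization maps, resting on the following standard compatibilities. (1) $R^2 g_{\et*}\underline{\IQ}_\ell(1)$ is lisse on $T$, so $\mathrm{sp}_\ell : \PH^2_\et(\breve{\sX}_{t'},\IQ_\ell(1)) \xrightarrow{\sim} \PH^2_\et(\breve{\sX}_t,\IQ_\ell(1))$ is an isomorphism, and, $\bL_\ell$ being a local system, the $\ell$-adic cycle class and the matching isomorphism $\alpha_{\ell,T}$ (a morphism of lisse sheaves) are compatible with specialization. (2) The Néron--Severi specialization $\mathrm{sp}_\gamma : \NS(\breve{\sX}_{t'}) \to \NS(\breve{\sX}_t)$ of \cref{thm: specialization of line bundle}, parts (c) and (e), is compatible with $c_1$ and with $\mathrm{sp}_\ell$, and preserves the primitive parts since it fixes the polarization class and respects the intersection pairing. (3) The specialization $\End(\sA_{t'}) \to \End(\sA_t)$ furnished by the Néron property intertwines $\ell$-adic realizations and carries special endomorphisms to special endomorphisms, because $\bL_\ell \subseteq \End(\bH_\ell)$ is a sub-local-system. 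Granting these, from $\alpha_{\ell,t'}(\cl_\ell(\zeta')) = c_1(\xi')$ with $\xi' \in \PNS(\breve{\sX}_{t'})_\IQ$ one gets
\[
\alpha_{\ell,t}(\cl_\ell(\zeta)) = \alpha_{\ell,t}\bigl(\mathrm{sp}_\ell(\cl_\ell(\zeta'))\bigr) = \mathrm{sp}_\ell\bigl(\alpha_{\ell,t'}(\cl_\ell(\zeta'))\bigr) = \mathrm{sp}_\ell(c_1(\xi')) = c_1(\mathrm{sp}_\gamma\xi'),
\]
with $\mathrm{sp}_\gamma\xi' \in \PNS(\breve{\sX}_t)_\IQ$, giving the $\ell$-adic condition at $(t,\zeta)$ for every $\ell \neq \mathrm{char}\,k(t)$. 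If $\mathrm{char}\,k(t) = p$ one must also check the crystalline condition at $\mathrm{sp}_\gamma\xi'$: when $\mathrm{char}\,k(t') = 0$, transport the identity $\alpha_{p,t'}(\cl_p(\zeta')) = c_1(\xi')$ (valid since $(t',\zeta')$ is motivically aligned and $p\neq\mathrm{char}\,k(t')$) through the crystalline--de Rham comparison and the good-reduction $p$-adic comparison for $Y/T$, using that $\alpha_{\crys,t}$ corresponds to $\alpha_{p,t'}$ by \cref{prop: crystalline period}(a) and that $p$-adic and crystalline cycle classes of line bundles and of abelian-scheme endomorphisms match; when $\mathrm{char}\,k(t') = p$ the same chase applies after replacing $T$ by a smooth curve through the relevant points, so that $\breve{\bP}_\crys[1/p]$ and $\alpha_\crys$ are available via \cref{lem: rigidity of F-crystals} and \cref{const: VHS and crystals resolved} and crystalline specialization is compatible with cycle classes and with the matching isomorphism.

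For part (b): if $\mathrm{char}\,k(t') = 0$, then $t'$ lies in characteristic zero and \cref{prop: char 0 motivically aligned} already shows that $t'$---hence in particular $(t',\zeta')$---is motivically aligned, with nothing more to prove. So suppose $\mathrm{char}\,k(t') = p$. Running the chase of (a) in reverse and using injectivity of $c_1$ on $\PNS(\breve{\sX}_t)_\IQ$, it suffices to produce $\xi' \in \PNS(\breve{\sX}_{t'})_\IQ$ with $c_1(\xi') = \alpha_{\ell,t'}(\cl_\ell(\zeta'))$; equivalently, to lift the class $\xi \in \PNS(\breve{\sX}_t)_\IQ$ provided by motivic alignment of $(t,\zeta)$ along the \emph{injective} specialization $\PNS(\breve{\sX}_{t'})_\IQ \hookrightarrow \PNS(\breve{\sX}_t)_\IQ$. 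This lifting is the main obstacle, since in general a line bundle on a special fibre need not extend to the generic fibre. Here the extra projectivity hypothesis is essential: pulling back the given simultaneous resolution $Y' \to \sX|_{T'}$ along $\gamma$ (whose image lies in $T'$, as $t'$ maps to the generic point of $T'$) yields a \emph{projective} smooth $Y'/T$ with regular total space, and $\xi$ is far from arbitrary---its crystalline Chern class $c_1(\xi) = \alpha_{\crys,t}(\cl_\crys(\zeta))$ is the restriction at $t$ of a flat (Frobenius-compatible) section of $\breve{\bP}_\crys$ over $T$, namely $\alpha_\crys(\cl_\crys(\zeta'))$, because the special endomorphism $\zeta'$ extends over $T$ by the Néron property and stays special. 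I would then invoke a variational crystalline Lefschetz $(1,1)$-statement for divisors in the projective good-reduction setting---in the spirit of the integral $p$-adic Hodge-theoretic arguments of \cref{prop: crystalline period} and \cite{BMS}---to conclude that $\xi$ lifts to a $\IQ$-line bundle on $Y'/T$, hence to the desired $\xi'$ on $\breve{\sX}_{t'}$. Pinning down the precise form of this lifting input and verifying its hypotheses is the step I expect to require the most care.
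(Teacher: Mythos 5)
Your proposal follows the paper's proof essentially step for step: part (a) is the same specialization diagram chase (with the crystalline case handled by the comparison theorem when $\mathrm{char}\,k(t')=0$), and for part (b) the paper likewise reduces via \cref{prop: char 0 motivically aligned} to the equicharacteristic-$p$ case over a complete DVR and observes, exactly as you do, that $\zeta'$ extends along $\gamma$ so that $c_{1,\crys}(\xi)$ lifts to a global section of $\breve{\bP}_{\crys}[1/p]|_\gamma$ via the matching isomorphism. The one input you leave unspecified is precisely what the paper cites at this point, namely Morrow's variational Tate conjecture for divisors in crystalline cohomology \cite[Thm.~3.6]{Morrow} (or \cite[Cor.~1.13]{OgusCrystals} under suitable torsion-freeness), whose algebraizability hypothesis is exactly what the projectivity assumption in (b) supplies, so your plan is correct and complete once that reference is inserted.
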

\begin{proof}
    The forward direction follows from the existence of the following diagrams for various specialization morphisms and their compatibility with each other. For $\ell$-adic ($\ell \in \sO_T^\times$) cohomology we have a diagram 
    \begin{equation}
        \begin{tikzcd}
	{\LEnd(\sA_{t'})_\IQ} & {\bL_{\ell_0, t'}} & {\bP_{\ell_0, t'}(1)} & {\PNS(\breve{\sX}_{t'})_\IQ} \\
	{\LEnd(\sA_{t})_\IQ} & {\bL_{\ell_0, t}} & {\bP_{\ell_0, t}(1)} & {\PNS(\breve{\sX_{t}})_\IQ}
	\arrow[from=1-1, to=1-2]
	\arrow["{\sp_\gamma}"', from=1-1, to=2-1]
	\arrow["{{{{\alpha_{\ell, t'}}}}}", from=1-2, to=1-3]
	\arrow["\gamma"', from=1-2, to=2-2]
	\arrow["\gamma", from=1-3, to=2-3]
	\arrow[from=1-4, to=1-3]
	\arrow["{{{\sp_\gamma}}}", from=1-4, to=2-4]
	\arrow[from=2-1, to=2-2]
	\arrow["{{{{\alpha_{\ell, t}}}}}", from=2-2, to=2-3]
	\arrow[from=2-4, to=2-3]
\end{tikzcd}
    \end{equation}
    Here the left and the right diagrams commute because the specialization of cycles is compatible with that of $\ell$-adic cohomology. That the central diagram commutes is a tautological consequence of the existence of a global matching isomorphism $\alpha_\ell$.  
    When $\mathrm{char\,} k(t') = p$, the analogous statement for crystalline cohomology follows from a similar argument and we omit the details. 
    If $\mathrm{char\,} k(t') = 0$ and $\mathrm{char\,} k(t) = p$, then apply the crystalline comparison theorem and its compatibility with cycle class maps (cf. \cite[Cor.~11.6]{IIK}). 

    Proving the converse amounts to deforming the line bundles. 
    By \cref{prop: char 0 motivically aligned}, it suffices to consider the case when both $t$ and $t'$ are in characteristic $p$. 
    As the statement would not change if we replace the discrete valuation ring by its completion, we reduce to the case when the path $\gamma$ is defined by the ring $k[\![x]\!]$. 
    Let $\zeta \in \LEnd(\sA_{t})_\IQ$ be an element such that $(t, \zeta)$ is motivically aligned, i.e., there exists an element $\xi \in \PNS(\sX_t)_\IQ$ whose cycle class matches that of $\zeta$ via $\alpha_{\ell, t}$. 
    We need to show that $\xi$ lifts to an element in $\PNS(\sX_{t'})_\IQ$. 
    At this point we use the isomorphism of F-isocrystals $\breve{\bP}_{\crys, T}[1/p] \simeq \tau^* \bL_\crys(-1)[1/p]$.  
    The assumption that $\zeta$ lifts to $\zeta'$ implies that the cycle class of $\zeta$ gives rise to a global section of the F-crystal $\gamma^* \bL_\crys(-1)$. This implies that $c_{1, \crys}(\xi)$ also lifts to a global section of the F-isocrystal $\breve{\bP}_{\crys}[1/p]|_\gamma$. 
    Then we conclude by \cite[Thm.~3.6]{Morrow}.\footnote{We can drop the algebraizability assumption in \ref{item: def invariance of MA converse} and use the more elementary \cite[Cor.~1.13]{OgusCrystals} when there are sufficient torsion-freeness assumptions on cohomology.}
\end{proof}

\subsection{Admissible curves}
\label{sub:adm_cur}
As we will see later, for points away from the locus $\sfD^-$, we will extend period morphisms along curves by ``taking limits'', and then deform these curves over a $p$-adic discrete valuation ring. To guarantee that the curves we choose are sufficiently amenable to deformation, we recall the following key notion from \cite[Def.~6.1.1]{HYZ}.

\begin{definition}
    \label{def: strongly dominant}
    \emph{(strongly dominant family of curves)}
    Let $S$ and $T$ be two smooth irreducible $k$-varieties, $g \colon \shC \to T$ be a smooth family of connected curves, and $\varphi\colon \shC \to S$ be a morphism with equi-dimensional fibers (i.e., there exists some integer $d \ge 0$ such that every geometric fiber of $\varphi$ is non-empty and equi-dimensional of dimension $d$). 
    Let $U$ be the maximal open subvariety of $\shC$ on which the composition of the (relative) tangent bundles $\sT(\shC/T) \into \sT \shC \to \varphi^*(\sT S)$ does not vanish, where $\sT S$ is the tangent bundle of $S$. 
    By assumption, there is then a natural morphism $U \to \IP(\sT S)$, induced by the identification $\shC \iso \IP(\sT(\shC/T))$ and the above sequence of the tangent bundles. 
    
Suppose that 
\begin{enumerate}[label=\upshape{(\roman*)}]
    \item the induced morphism $\shC \to T \times S$ is quasi-finite, 
    \item for every $k$-point $s \in S$, the fiber $U_s := U \cap \varphi^{-1}(s)$ is dense in $\varphi^{-1}(s)$; 
    \item the morphism $U \to \IP(\sT S)$ has equi-dimensional fibers. 
\end{enumerate}
Then we say that the family of curves $\shC/T$ \textbf{homogeneously dominates} $S$ (via the morphism $\varphi$). 
If there exists an open dense subvariety $T' \subseteq T$ such that the restriction $\shC|_{T'}$ homogeneously dominates $S$, then we say that $\shC/T$ \textbf{strongly dominates} $S$. 
\end{definition}

Roughly speaking, the family $\shC / T$ homogeneously dominates $S$ if there are curves parameterized by $T$ passing through every given point on $S$ in any given direction, and the sub-family of such curves has a fixed dimension. 

We make the following elementary observation on the reducedness of the intersection.
\begin{lemma}
    \label{lem: transverse intersection general}
    In the setting of \cref{def: strongly dominant}, suppose that $g : \shC \to T$ strongly dominates $S$ and $D \subseteq S$ is a reduced closed proper subvariety. 
    Then for a general $t \in T$, the fiber product $\shC_t \times_S D$ is reduced (i.e., the image of $\shC_t$ passes through $D$ transversely). 
\end{lemma}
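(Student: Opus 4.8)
The statement is a generic-transversality assertion, so the natural strategy is to spread everything out over the base $T$ and apply generic smoothness / generic flatness. First I would recall that since $\shC/T$ strongly dominates $S$, after shrinking $T$ to a dense open subvariety we may assume $\shC/T$ \emph{homogeneously} dominates $S$ via $\varphi:\shC\to S$; in particular conditions (i)--(iii) of \cref{def: strongly dominant} hold on the nose. Consider the incidence scheme $\shZ \colonequals \shC\times_S D$ together with its projection $q\colon \shZ\to T$. The fiber of $q$ over a point $t$ is exactly $\shC_t\times_S D$, so the assertion is that for general $t$ this fiber is reduced.

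The key point is a dimension count combined with generic reducedness. Because $\shC\to T\times S$ is quasi-finite (condition (i)) and $D\subsetneq S$ is a proper closed subvariety, the map $\shZ\to T\times D$ is quasi-finite as well, so $\dim\shZ\le \dim T+\dim D<\dim T+\dim S=\dim\shC$; more precisely, since the general fiber of $\varphi\colon\shC\to S$ has fixed dimension $d$ and $\varphi$ restricted over $D$ still has fibers of dimension $d$ (by homogeneity, the fiber dimension is constant), we get $\dim\shZ=\dim D+d<\dim S+d=\dim\shC=\dim T+d$ wait — let me recompute: $\dim\shZ=\dim D + d$ and $\dim\shC=\dim S+d$, with $\dim D<\dim S$; meanwhile the general fiber of $q$ has dimension $\dim\shZ-\dim T$. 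I would instead argue directly at the level of generic points: let $\shZ_1,\dots,\shZ_r$ be the irreducible components of $\shZ$ that dominate $T$ (the non-dominating ones disappear after shrinking $T$). For each such $\shZ_j$, its generic point lies over the generic point $\eta$ of $T$, and since $\shZ$ is the base change $\shC\times_S D$ of the generically reduced (indeed reduced) scheme $D$ along a flat morphism — here I would invoke that $\varphi$ is flat after shrinking $S$ and $T$, by generic flatness — the generic fiber $\shZ_\eta=\shC_\eta\times_S D$ is reduced, being a flat pullback of the reduced scheme $D$ (flat pullback of a reduced scheme along a morphism with reduced, or here even smooth, fibers is reduced, by \cite[\href{https://stacks.math.columbia.edu/tag/034F}{Tag 034F}]{stacks-project}). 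So $\shZ_\eta$ is reduced.

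Having established that the generic fiber $\shZ_\eta$ is reduced, the conclusion follows from openness of the reduced locus in a flat family: after possibly shrinking $T$ once more so that $q\colon\shZ\to T$ is flat (generic flatness), the set of $t\in T$ such that the fiber $\shZ_t$ is geometrically reduced is open, by \cite[\href{https://stacks.math.columbia.edu/tag/0C0E}{Tag 0C0E}]{stacks-project}; since it contains $\eta$ it contains a dense open subvariety of $T$. Intersecting this with the dense opens produced above, a general $t\in T$ has $\shC_t\times_S D$ reduced, as claimed. (If one only wants reducedness rather than geometric reducedness over a possibly imperfect residue field, the cited openness result still applies since we are free to pass to a dense open where residue fields cause no trouble, or simply note the statement is about $k$-points with $k=\kbar$ in the applications.)

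The main obstacle is ensuring the reducedness of the \emph{generic} fiber $\shZ_\eta$, i.e., that pulling back $D$ along $\shC_\eta\to S$ does not introduce nilpotents. This is where conditions (ii) and (iii) of strong dominance are used: homogeneity guarantees that $\varphi$ has equidimensional fibers over all of $S$ (not just over an open), so in particular $\varphi^{-1}(D)\to D$ does not jump in dimension, forcing $\varphi$ to be flat over a neighborhood of $D$ after shrinking (by ``miracle flatness''/\cite[\href{https://stacks.math.columbia.edu/tag/00R4}{Tag 00R4}]{stacks-project} applied fiberwise, using smoothness of $\shC$ and $S$); flatness of $\varphi$ near $D$ together with reducedness of $D$ and smoothness of the source then gives reducedness of the scheme-theoretic preimage, hence of its restriction to the generic curve $\shC_\eta$. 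Once this flatness is in hand the rest is the standard generic-flatness-plus-openness argument sketched above.
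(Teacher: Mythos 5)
There is a genuine gap, and it sits at the heart of your argument: both of the reducedness transfers you rely on are false as stated, and the route you take never engages the directional conditions (ii)--(iii) of \cref{def: strongly dominant}, which are the actual mechanism behind the lemma. First, flat pullback does \emph{not} preserve reducedness: the map $(x,y)\mapsto(x,y^2)$ (or $y\mapsto y^p$ in characteristic $p$) is flat with smooth source, yet pulls the reduced divisor $\{y=0\}$ back to a non-reduced scheme. The Stacks result you cite needs the \emph{fibers of the map you pull back along} to be geometrically reduced; here that map is $\varphi:\shC\to S$, whose fibers are the incidence fibers, only required by \cref{def: strongly dominant} to be non-empty and equidimensional --- you have implicitly swapped in the fibers of $g:\shC\to T$ (the smooth curves), which is the wrong morphism. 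So the reducedness of $\shZ=\shC\times_S D$, hence of $\shZ_\eta$, is not established; indeed, tangency of the curves to $D$ is exactly what would make $\shZ$ non-reduced, and nothing in your argument rules it out. Second, even granting that $\shZ_\eta$ is reduced, concluding reducedness of $\shZ_t$ for general closed $t$ requires $\shZ_\eta$ to be \emph{geometrically} reduced over $k(\eta)$; in characteristic $p$ (the case the paper actually needs) reduced does not imply geometrically reduced, and the failure mode --- a one-parameter family of curves all inseparably tangent to $D$, as in $\shC_t=\{y=(x-t)^p\}$ against $D=\{y=0\}$ --- is precisely the phenomenon the lemma must exclude. (Also, the openness statement you invoke for the geometrically-reduced-fibers locus requires properness of $q:\shZ\to T$, which is not available in the general setting; only constructibility is, and it does not help without geometric reducedness at $\eta$.)

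The paper's proof is instead a Bertini-type dimension count that uses (ii) and (iii) directly: for each $s\in S$ the locus of $t$ with $s\in\shC_t$ has codimension $\ge \dim S-1$ in $T$, so the $t$ for which $\shC_t$ meets the singular locus $D^{\mathrm{sing}}$ (codimension $\ge 2$ in $S$) form a set of codimension $\ge 1$; similarly, using the map $U\to\IP(\sT S)$ and its equidimensional fibers, the $t$ for which $\shC_t$ meets $D^{\mathrm{sm}}$ at a point of $\shC\smallsetminus U$ or with tangent direction lying in $\sT D^{\mathrm{sm}}$ also form a set of codimension $\ge 1$. Avoiding all of these, a general $\shC_t$ meets $D$ only at smooth points of $D$ and transversally, which forces $\shC_t\times_S D$ to be reduced. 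Any correct proof has to make a tangency argument of this kind; a purely flatness/generic-fiber argument cannot see it.
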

\begin{proof}
    This follows from a simple dimension counting argument as in the proof of the Bertini theorem, so here we just give a sketch of the idea: There is no harm in assuming that $\shC$ homogeneously dominates $S$, and note that the reducedness assumption on $D$ implies that the smooth locus $D^{\mathrm{sm}}$ is open and dense. 
    
    For each $s \in S$, the subset $ T(s) := \{ t \in T \mid \shC_t \times_S \{ s \} \neq \emptyset \}$ has codimension at least $\dim S - 1$ in $T$. 
    As the singular locus $D^{\mathrm{sing}}$ of $D$ is codimension at least $2$ in $S$, $\{ t \in T(s) \mid s \in D^{\mathrm{sing}} \} $ is of codimension at least $1$ in $T$. 
    Likewise, for every subspace $V \subseteq \sT_s S$, viewed as a subvariety of $\sT S$, consider 
    $$ T(s, V) := \{ t \in T \mid U_t \times_{\IP(\sT S)} V \neq \emptyset \textrm{ or } (\shC \smallsetminus U)_t \times_S \{ s \} \neq \emptyset \}.  $$
    One readily checks that \Cref{def: strongly dominant}.(ii, iii) implies that 
    $$ \{ t \in T(s, V) \mid s \in D^{\mathrm{sm}}, V = \sT_s D^{\mathrm{sm}} \} $$
    is of codimension at least $1$ as well. 
\end{proof}

As we shall see, we will make use of complete intersection curves, and hence we make the following construction.
\begin{construction}
\label{const: strongly dominate}
Assume the notations as in \cref{set-up: surfaces}. 
    We choose an embedding of $\sfB$-schemes $\overline{\sfM} \into \IP^N_\sfB$ for large enough $N$, and let $r$ be the integer $\dim \overline{\sfM} -\dim \sfB -1$.
    Let $(d_1, \cdots, d_r)$ be a tuple of positive integers and consider the product 
     $$ \overline{\sT} = (\prod_{i = 1}^{r} |\sO_{\IP^N}(d_i)| \times \sfB). $$ 
    Then every point $t \in \overline{\sT}$ is given by a tuple of the form $([H_1], \cdots, [H_r], b)$ such that $b \in \sfB(k(t))$, and $[H_i]$ is a $k(t)$-point of $|\sO_{\IP^N_{k(b)}}(d_i)|$, which corresponds to a degree $d_i$ hypersurface $H_i$ in $\IP^N_{k(b)}$. 
    Let $\overline{\mathscr{C}} \to \overline{\sT}$ be the universal complete intersection whose fiber over each $([H_1], \cdots, [H_r], b)$ is the intersection $\overline{\sfM}_b \cap H_1 \cap \cdots \cap H_r$, which by our choice is a hypersurface in $\overline{\sfM}_b$ of dimension $\geq 1$.
    Let $\sT \subseteq \overline{\sT}$ be the open locus over which $\overline{\mathscr{C}}$ is smooth, irreducible and projective, and we write $\mathscr{C}$ for the restriction of $\overline{\mathscr{C}}$ to $\sT$.

    It is not hard to check that if $d_i$'s are chosen to be sufficiently large, then for every geometric point $b' \to \sfB$, $(\mathscr{C}/\sT)_{b'}$ is a family of curves which \emph{strongly dominates} $\overline{\sfM}_{b'}$. 
    In general, for any field-valued point $b$ on $\sfB$, we say that an irreducible smooth projective curve $C \subseteq \overline{\sfM}_b$ is an \textbf{admissible curve} if it arises as a fiber of $(\mathscr{C}/\sT)_b$ for such a family. Summarizing the construction, we get the following diagram of schemes
    \begin{equation}
    	\label{eq:diagram_of_family_of_curves}
    	\begin{tikzcd}
    		\mathscr{C} \arrow[d, "\text{family of curves}"'] \arrow[rr, "\text{strongly dominant}"] && \overline{\sfM} \ar[d] \\
    		\sT \arrow[rr] && B. 		
    	\end{tikzcd}
    \end{equation}
\end{construction}

\subsection{Locus of the mildest singularity}
In this subsection, we prove the Tate conjecture for the minimal model of the surfaces $\sX_s$ that are smooth or lie on $\sfD^+$ (those which morally have the mildest singularity).
For any such point $s$, we will construct an extended period morphism $\tau:T\to \shS$ from a curve $T$ in positive characteristic such that $s$ is contained in $T$.
Then we prove that any special endomorphism of the Kuga--Satake abelian variety of $\sX_s$ admits a mixed characteristic deformation along a lift of the curve $T$, by combining the tools developed in \cref{sec: ramified covers} and \Cref{sec:period_mor} together with a key trick in \cite{HYZ}.

As a quick preparation, we recall the ``$\lambda$-number'' introduced in \textit{loc. cit.}, and give it a name. 
\begin{definition}
\label{def: lambda number}
    Let $S$ be a noetherian integral normal scheme. 
    Let $\ell \in \sO_S^\times$ be a prime and let $\sfW_\ell$ be an \'etale $\IQ_\ell$-local system over $S$. 
    We denote by $\lambda(\sfW_\ell)$ the \textbf{monodromy invariant number} of $\sfW_\ell$, defined as the integer $\dim \varinjlim_U \sfW_{\ell, \bar{s}}^{U}$ where $\bar{s}$ is a geometric point on $S$ and $U$ runs through open subgroups of $\pi_1^\et(S, \bar{s})$.
\end{definition}
In the above definition, one may alternatively define $\lambda(\sfW_\ell)$ as $\dim \varinjlim_U \sfW_{\ell, \bar{\eta}}^{U}$, where $\eta$ is the generic point of $S$ and $U$ runs through the open subgroups of $\Gal(\bar{\eta}/\eta)$ (for a fixed geometric point $\bar{\eta}$ over $\eta$), thanks to the fact that the natural map $\Gal(\bar{\eta}/\eta) \to \pi_1^\et(S, \bar{s})$ is a surjection (\cite[\href{https://stacks.math.columbia.edu/tag/0BQM}{Tag 0BQM}]{stacks-project}.)

\begin{definition}
\label{def: good compactification}
    Let $T$ be a noetherian base scheme and $S$ be a smooth $T$-scheme of finite type. 
    Let $\ell \in \sO_T^\times$ be a prime and let $\sfW_\ell$ be an \'etale $\IQ_\ell$-local system. 
    We say that the local system $\sfW_\ell$ \textbf{has constant fiberwise monodromy invariant number} (with respect to $T$) if there exists a number $\lambda^\geo=\lambda^\geo(\sfW_\ell)$ such that for every geometric point $\bar{t} \to T$ and every connected component $S^\circ_{\bar{t}}$ of $S_{\bar{t}}$, we have $\lambda(\sfW_\ell|_{S^\circ_{\bar{t}}}) = \lambda^\geo$.
\end{definition}


Now we prove the motivic alignment for an admissible curve that is away from the worst locus $\sfD^-$.
For the convenience of discussion, in the following for any scheme $\overline{T}$ over $\overline{\sfM}$, we use $T$ to denote the fiber product $\overline{T}\times_{\overline{\sfM}} \sfM$, and use $T^\circ$ to denote the fiber product $\overline{T}\times_{\overline{\sfM}} \sfM^\circ$.
\begin{theorem}
\label{prop: good points}
    Let $k$ be a perfect field of characteristic $p$, let $b$ be a $k$-point on $\sfB$. 
    Let $\overline{C}$ be an admissible curve in $\overline{\sfM}_b$ such that it intersects $\sfD_{b} \cup \sfH_b$ transversely (i.e., the intersection is reduced) and the intersection is contained in $\sfD^+_b \cup \sfH^+_b$. Set $C := \overline{C} \cap \sfM$. 
      
    Let $C_1\to C$ be a connected, finite, and generically \'etale cover. 
    Then any extended period morphism $\tau : C_1 \to \shS_k$ is motivically aligned. 
\end{theorem}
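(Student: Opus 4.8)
The plan is to reduce to showing that for every geometric point $t \in C_1$ and every special endomorphism $\zeta \in \LEnd(\sA_{\tau(t)})_\IQ$, the pair $(t,\zeta)$ is motivically aligned, and then to deduce this by lifting $(t,\zeta)$ to a characteristic-zero point where \Cref{prop: char 0 motivically aligned} applies and then specializing back via \Cref{lem: def invariance of MA}. By \Cref{prop: char 0 motivically aligned} there is nothing to do unless $\mathrm{char}\,k(t) = p$, so assume $k(t) = k$ is perfect of characteristic $p$ (after enlarging $k$). First I would lift the curve to mixed characteristic: using \Cref{const: strongly dominate}, the admissible curve $\overline{C} \subseteq \overline{\sfM}_b$ together with the point $b \in \sfB(k)$ comes from a $k$-point of the parameter scheme $\sT$; since $\sT$ is smooth over $\IZ_{(p)}$, lift this to a $W(k)$-point (or a point over some finite unramified extension), producing a smooth proper relative curve $\overline{\sC}$ over $W := W(k)$ with $\overline{\sC}_k = \overline{C}$. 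By the transversality hypothesis, $\overline{C} \cap \sfD_b$ and $\overline{C} \cap \sfH_b$ are reduced and supported in the smooth loci $\sfD^+_b$ and $\sfH^+_b$, which are smooth over $\IZ_{(p)}$; applying \Cref{prop:lift_closed_immersion} (whose hypotheses — relative Cartier divisor with generically reduced special fiber, étale intersection with the curve — are exactly met here after shrinking), the divisors $\overline{\sC} \cap \sfD$ and $\overline{\sC} \cap \sfH$ are relative Cartier divisors étale over $W$. Set $\sC := \overline{\sC} \times_{\overline{\sfM}} \sfM$ and $\sC^\circ := \overline{\sC} \times_{\overline{\sfM}} \sfM^\circ$.

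Next I would produce a mixed-characteristic model of the cover and extend the period morphism over it. The cover $C_1 \to C$, restricted to the generically étale locus, is an étale cover of $C^\circ = C \setminus (C \cap \sfD)$; by the relative Abhyankar's lemma \Cref{prop:relative_Abhy} (applied with $\overline{C}$ as compactification and $\sfD^+_b \cup \sfH^+_b$ as the normal crossing boundary) it is tamely ramified, and by \Cref{prop:lift_closed_immersion} the closed immersion $C^\circ \hookrightarrow \sC^\circ$ (more precisely the map from $C_1^\circ$ to the appropriate étale cover) lifts to mixed characteristic: there is a connected finite étale cover $\sC_1^\circ \to \sC^\circ$ whose special fiber is $C_1^\circ \to C^\circ$. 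Now I invoke \Cref{lem: extend morphism from curve}: the special fiber of the period morphism $\tau : C_1 \to \shS_k$, restricted to $C_1^\circ$, extends over $C_{1,k} = C_1$ (this is the given $\tau$), hence the map $\tau^\circ : \sC_1^\circ \to \shS_W$ obtained from \Cref{const: extend comparison} and \Cref{cor: extend DVR} over the generic fiber extends over all of $\sC_1 := \sC_1^\circ$'s relevant completion — concretely we get an extended period morphism $\wt\tau : \sC_1 \to \shS$ over $W$ lifting $\tau$, with $\sX|_{\sC_1}$ Zariski-locally resolvable thanks to \Cref{lem: etale locally resolvable} applied to the lifted family (after a further generically étale base change absorbed into $\sC_1$). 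Here the crucial point, as flagged in the introduction, is that $\sC_1 \times_\sfM \sfD$ is étale over $\IZ_p$, which lets us control the tame fundamental group via Grothendieck's specialization theorem.

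With $\wt\tau$ in hand I would run the monodromy argument of \cite[\S6]{HYZ}, now extended over the boundary. Fix $\zeta \in \LEnd(\sA_{\wt\tau(t)})_\IQ$. By the theory of special divisors on orthogonal Shimura varieties, $(t,\zeta)$ deforms along a divisor in $\shS$; pulled back via $\wt\tau$, either $(t,\zeta)$ deforms to the characteristic-zero fiber $\sC_{1,\eta}$ of $\sC_1$ over $W$, in which case \Cref{prop: char 0 motivically aligned} gives motivic alignment there and \Cref{lem: def invariance of MA}(a) specializes it back to $(t,\zeta)$ — and we are done; or it does not, which I claim is impossible. Indeed, if $(t,\zeta)$ fails to lift to characteristic zero, then the cycle class of $\zeta$ generates a monodromy-invariant sub-local-system of $\bL_\ell$ over the whole special fiber $\sC_{1,k} = C_1$ but this invariant class does not lift; comparing the monodromy invariant numbers (\Cref{def: lambda number}) of $\bL_\ell$ over $C_1$ and over the generic fiber $\sC_{1,\bar\eta}$ using the surjection of tame fundamental groups $\pi_1^t(\sC_{1,\bar\eta}) \twoheadrightarrow \pi_1^t(\sC_{1,\bar{k}})$ (Grothendieck's specialization theorem, using that $\sC_1 \times_\sfM \sfD$ is étale over $\IZ_p$ so the boundary behaves uniformly) forces $\lambda(\bL_\ell|_{\sC_{1,\bar\eta}}) > \lambda(\bL_\ell|_{C_1})$, which is absurd since the arithmetic monodromy can only shrink the invariants — this contradiction is precisely the mechanism of \cite[Prop.~6.1.2]{HYZ}. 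The main obstacle is the second step: arranging the mixed-characteristic model $\sC_1$ so that $\sC_1 \times_\sfM \sfD$ is étale over $\IZ_p$ and the period morphism genuinely extends over this boundary — this is where \Cref{prop:lift_closed_immersion}, \Cref{prop:etaleness_of_pullback}, and \Cref{lem: extend morphism from curve} must be combined delicately, and where the transversality and $\sfD^+$-containment hypotheses are essential. The final monodromy contradiction, once the geometry is set up, is then a fairly direct adaptation of \cite{HYZ}.
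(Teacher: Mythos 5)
Your overall architecture (lift the curve and the cover to $W$, extend the period morphism over the boundary using the \'etaleness of $\sC_1\times_\sfM\sfD$ over $\IZ_p$, then lift $(t,\zeta)$ to characteristic zero and conclude via \cref{prop: char 0 motivically aligned} and \cref{lem: def invariance of MA}) matches the paper's strategy, but there is a genuine gap in the step that is supposed to force the lift of $(t,\zeta)$. You take an \emph{arbitrary} $W$-point of $\sT$ lifting the given $k$-point, whereas the paper must choose the mixed-characteristic deformation $\overline{\sC}$ (and the lift $\wt b\in\sfB(W)$) with care, using \cite[Lem.~6.2.1, 6.2.2]{HYZ} and the strong dominance of the family $\shC/\sT$, precisely so that the VHS $(\bP_B,\bP_\dR)$ is non-isotrivial over $(\sC^\circ)_\IC$ and, by \cite[Prop.~5.1.4]{HYZ}, so that $\dim\PNS(\sX_{\bar\eta})_\IQ=\lambda(\bP_\ell|_{(\sC^\circ)_\IC})$ at the characteristic-zero generic point $\bar\eta$. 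This equality is the engine of the whole argument: if $\zeta$ did not deform to characteristic zero, then (as $\Def(z,\zeta)$ has codimension one in the formal completion) $\zeta$ would generalize to the geometric generic point $\bar\eta_p$ of the special fiber, giving $\dim\LEnd(\sA_{\bar\eta_p})_\IQ>\dim\LEnd(\sA_{\bar\eta})_\IQ=\lambda^\geo$, while special endomorphisms at $\bar\eta_p$ are bounded above by $\lambda^\geo$ since $\bP_\ell$ has constant fiberwise monodromy invariant number over $W$ (Grothendieck specialization plus Abhyankar). For an arbitrary lift this equality can fail (e.g.\ if the lifted curve lands in a locus where the VHS is isotrivial or the char-$0$ generic Picard number is strictly below $\lambda^\geo$), and then the non-liftability of $\zeta$ contradicts nothing: it would merely say the special fiber has larger Picard rank, which is a perfectly common phenomenon.

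Relatedly, your stated contradiction is not the correct mechanism: you claim non-liftability would force $\lambda(\bL_\ell|_{\sC_{1,\bar\eta}})>\lambda(\bL_\ell|_{C_1})$, but these two monodromy invariant numbers are in fact \emph{equal} (that is exactly what the \'etaleness of the boundary over $W$ and the tame specialization theorem buy you); the contradiction lives in the comparison between $\dim\LEnd$ at the two generic points and $\lambda^\geo$, as above, and it is only available after the delicate choice of the lift. Two smaller points: the \'etaleness over $W$ of the pulled-back boundary on the cover is the content of \cref{prop:etaleness_of_pullback} (not \cref{prop:lift_closed_immersion}, which is the lifting criterion for maps of tame covers), and you should not ``absorb a further generically \'etale base change into $\sC_1$'' to get resolvability, since that would change the special fiber away from the given $C_1$; the paper instead resolves only locally near the point $z$, via Artin--Brieskorn applied to $\sX|_{\sC_1}$ over a proper surjective neighborhood $\sU$ followed by Lipman resolution, and runs the deformation argument on $\sU$.
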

\begin{proof}
    \textit{Step 0: Construct intermediate curve $C_0$.}
    Let $C_0^\circ$ be the unique connected component $\wt{\sfM}^\circ_k \times_{\sfM_k} C$ which contains the image of $C_1^\circ$. 
    Let $\overline{C}_0$ be the smooth compactification of $C_0^\circ$. 
    Then the morphism $C_0^\circ \to C$ extends uniquely to a morphism $\overline{C}_0 \to \overline{C}$, which further maps to $\overline{\sfM}$. 
    We let $C_0 \colonequals \overline{C}^\circ_0 \times_{\overline{\sfM}} \sfM$. 
    Then we claim that the map $C_0^\circ \to \shS_k$ extends to a morphism $\tau_0 : C_0 \to \shS_k$. 
    Indeed, if we choose a compactification $\overline{\shS}_k$ of $\shS_k$ and take $\overline{C}_1$ to be the unique smooth compactification of $C_1$, then $\tau$ will extend to a morphism $\overline{C}_1 \to \overline{\shS}_k$ which factors through $\overline{C}_0$ and send the open subscheme $C_1$ into $\shS_k$. 
    So the claim follows by noticing that $C_0$ is the image of $C_1$ in $\overline{C}_0$.

    To sum up, we obtain the commutative diagram on the left below. 
    As we will see in Step 2, we shall deform the morphisms over $W$ and obtain the diagram on the right. 
    \begin{equation}
    \label{eqn: extend C over k}
         \begin{tikzcd}
	{C_1^\circ } & {C_1} \\
	{C_0^\circ} & {C_0} \\
	{\wt{\sfM}^\circ_k} & {\sfM_k} \\
	&& {\shS_k,}
	\arrow[hook, from=1-1, to=1-2]
	\arrow[from=1-1, to=2-1]
	\arrow[from=1-2, to=2-2]
	\arrow["\tau", curve={height=-12pt}, from=1-2, to=4-3]
	\arrow[hook, from=2-1, to=2-2]
	\arrow[from=2-1, to=3-1]
	\arrow[from=2-2, to=3-2]
	\arrow["{{\tau_0}}", from=2-2, to=4-3]
	\arrow[from=3-1, to=3-2]
	\arrow["\rho_k", curve={height=12pt}, from=3-1, to=4-3]
	\arrow[from=3-2, to=4-3]
\end{tikzcd}
\quad \quad
\begin{tikzcd}
	{\sC_1^\circ } & {\sC_1} \\
	{\sC_0^\circ} & {\sC_0} \\
	{\wt{\sfM}^\circ_W} & {\sfM_W} \\
	&& {\shS_W.}
	\arrow[hook, from=1-1, to=1-2]
	\arrow[from=1-1, to=2-1]
	\arrow[from=1-2, to=2-2]
	\arrow["{\wt{\tau}}", curve={height=-12pt}, from=1-2, to=4-3]
	\arrow[hook, from=2-1, to=2-2]
	\arrow[from=2-1, to=3-1]
	\arrow[from=2-2, to=3-2]
	\arrow["{\wt{\tau}_0}", from=2-2, to=4-3]
	\arrow[from=3-1, to=3-2]
	\arrow["\rho", curve={height=12pt}, from=3-1, to=4-3]
	\arrow[from=3-2, to=4-3]
\end{tikzcd}
    \end{equation}

    \noindent \textit{Step 1: Deform the curve $\overline{C}$.} Let $K \colonequals W(k)[1/p]$ and fix an isomorphism $\iota: \overline{K} \simeq \IC$. 
		By assumption (d) in \cref{set-up: surfaces} and \cite[Lem.~6.2.1]{HYZ}, we can find a point $\wt{b} \in \sfB(W)$ lifting $b$ such that the VHS $(\bP_{B}, \bP_\dR|_{\sfM^\circ_\IC})$ over $\sfM^\circ_\IC$ is non-isotrivial on $\sfM^\circ_{\wt{b}}(\IC)$. 
		We then apply \cite[Lem.~6.2.2]{HYZ} to the family of curves $(\mathscr{C}/\sT)$ in \cref{const: strongly dominate}, we can deform $\overline{C}$ to a smooth and projective curve $\overline{\sC} \subseteq \overline{\sfM}_{\wt{b}}$ over $W$, such that $(\bP_{B}, \bP_\dR|_{\sfM^\circ_\IC})$ is non-isotrivial over $(\sC^\circ)_\IC \colonequals \sC^\circ \tensor_\iota \IC$. 
		In addition, by \cite[Prop.~5.1.4]{HYZ}
        \footnote{The assumption of a $\heartsuit$-family in \textit{loc. cit.} is explained in page 2 of \cite{HYZ}. 
        It means that the Kodaira--Spencer map is generically nonzero, or in other words the variation of Hodge structures defined by the second relative cohomology sheaf is non-isotrivial.}, we have the equality between the Picard numer and the monodromy invariant number
		\[
		\dim \PNS(\sX_{\bar{\eta}})_\IQ = \lambda(\bP_{\ell}|_{(\sC^\circ)_\IC}),
		\]
		where $\eta$ is the generic point of $\sC^\circ$. 
		
		On the other hand, combining assumptions (c, e) in \cref{set-up: surfaces} and the assumption that $\overline{C}$ intersects transversely with $\sfD_{b} \cup \sfH_b$ with the intersection contained in $\sfD^+_b \cup \sfH^+_b$, we deduce that $\overline{\sC} \cap (\sfD_b \cup \sfH_b)$ is \'etale over $W$.
		As a consequence, the complement of $\sC$ in $\overline{\sC}$ is a relative normal crossing divisor over $W$. 
		Hence by the Grothendieck's specialization theorem for the tame fundamental group \cite[Lem.~5.1.3]{HYZ}, we know $\bP_{\ell}|_{\sC^\circ}$ has constant fiberwise monodromy invariant $\lambda^{\geo}$ over $W$. \\

    \noindent\textit{Step 2: Deform the morphism $\tau:C_1 \to \shS_k$.}  
		By a result of Q. Liu in \cite[Prop.~5.2]{QingLiu}, the morphism $\overline{C}_0\to \overline{C}$ (in particular the curve $\overline{C}_0$) admits a lift $\overline{\sC}_0 \to \overline{\sC}$ over $W$.
		We then claim that the composition $\sC_0 \to \sC \into \sfM_W$ can be fitted into a commutative diagram 
			\[\begin{tikzcd}
				{\sC_0^\circ } & {\sC_0} \\
				{\wt{\sfM}^\circ_W} & \sfM_W \\
				&& {\shS},
				\arrow[from=1-1, to=1-2]
				\arrow[from=1-1, to=2-1]
				\arrow[from=1-2, to=2-2]
				\arrow["\wt{\tau}", curve={height=-12pt}, from=1-2, to=3-3]
				\arrow[from=2-1, to=2-2]
				\arrow["\rho", curve={height=12pt}, from=2-1, to=3-3]
			\end{tikzcd}\]
			which is part of the desired diagram on the right of (\ref{eqn: extend C over k}). 
		We first consider the inner square.
		Indeed, by a consequence of the relative Abhyankar's Lemma as in \cref{prop:etaleness_of_pullback}, the reduced subschemes of the pullback of $\overline{\sC}_0 \cap \sfD$ and $\overline{\sC}_0 \cap \sfH$ to $\overline{\sC}_0$ are both \'etale over $W$. 
		Therefore, the scheme $\overline{\sC}_1$ is a compactification of both $\sC_0$ and $\sC^\circ_0$ whose boundaries are relative normal crossing divisors over $W$. 
		Thus by the extension criterion \cref{prop:lift_closed_immersion}, 
		the closed immersion $\sC^\circ \into \sfM^\circ_{\wt{b}}$ extends to a map $\sC_1^\circ \to \wt{\sfM}^\circ_{\wt{b}}$.
        
		Now, observe that the special fiber of the latter map $(\sC^\circ_0)_k \to \shS_k$ extends over $(\sC_0)_k$ thanks to \cref{cor: extend DVR}. 
		Finally, we may apply \cref{lem: extend morphism from curve} to the case $\overline{T} = \overline{\sC}_0$ and deduce that $\tau_0$ extend over $\sC_0$. 
        Since the morphism $\overline{C}_1 \to \overline{C}_0$ extending $C_1 \to C_0$ is generically \'etale, we may apply \cite[Prop.~5.2]{QingLiu} again to deform $\overline{C}_1 \to \overline{C}_0$ to $\overline{\sC}_1 \to \overline{\sC}_0$, where $\overline{\sC}_1$ is a deformation of $\overline{C}_1$ over $W$. This way, we obtain the diagram on the right of (\ref{eqn: extend C over k}). 

        To help visualize the subsequent steps, we draw the following picture for $\sC_1$.

\tikzset{every picture/.style={line width=0.75pt}} 
\begin{center}
    \scalebox{0.8}{
\begin{tikzpicture}[x=0.75pt,y=0.75pt,yscale=-1,xscale=1]

\draw    (257.65,73.71) -- (200.61,112.29) ;
\draw    (472.37,132.86) -- (415.32,171.43) ;
\draw [line width=0.75]    (257.65,73.71) .. controls (289.35,48) and (440.68,158.57) .. (472.37,132.86) ;
\draw    (200.61,112.29) .. controls (232.3,86.57) and (383.63,197.14) .. (415.32,171.43) ;
\draw [line width=0.75]    (229.13,93) .. controls (260.82,67.29) and (412.15,177.86) .. (443.85,152.14) ;
\draw [color={rgb, 255:red, 0; green, 0; blue, 0 }  ,draw opacity=1 ] [dash pattern={on 4.5pt off 4.5pt}]  (245.77,114.86) -- (298.85,76.29) ;
\draw  [dash pattern={on 4.5pt off 4.5pt}]  (285.38,130.29) -- (338.47,91.71) ;
\draw  [dash pattern={on 4.5pt off 4.5pt}]  (315.49,144) -- (368.58,105.43) ;
\draw  [dash pattern={on 4.5pt off 4.5pt}]  (352.64,159.07) -- (405.73,120.5)(354.4,161.5) -- (407.49,122.93) ;
\draw [color={rgb, 255:red, 0; green, 0; blue, 0 }  ,draw opacity=1 ] [dash pattern={on 0.84pt off 2.51pt}]  (188.72,168.86) .. controls (212.37,133.04) and (256.42,166.8) .. (274.81,101.28) ;
\draw [shift={(275.08,100.29)}, rotate = 105.25] [color={rgb, 255:red, 0; green, 0; blue, 0 }  ,draw opacity=1 ][line width=0.75]    (10.93,-3.29) .. controls (6.95,-1.4) and (3.31,-0.3) .. (0,0) .. controls (3.31,0.3) and (6.95,1.4) .. (10.93,3.29)   ;
\draw [color={rgb, 255:red, 0; green, 0; blue, 0 }  ,draw opacity=1 ] [dash pattern={on 0.84pt off 2.51pt}]  (188.72,168.86) .. controls (212.37,133.04) and (296.42,182.93) .. (315.21,117.57) ;
\draw [shift={(315.49,116.57)}, rotate = 105.25] [color={rgb, 255:red, 0; green, 0; blue, 0 }  ,draw opacity=1 ][line width=0.75]    (10.93,-3.29) .. controls (6.95,-1.4) and (3.31,-0.3) .. (0,0) .. controls (3.31,0.3) and (6.95,1.4) .. (10.93,3.29)   ;
\draw [color={rgb, 255:red, 0; green, 0; blue, 0 }  ,draw opacity=1 ] [dash pattern={on 0.84pt off 2.51pt}]  (184.76,171.43) .. controls (208.41,135.61) and (323.84,194.83) .. (342.94,129.56) ;
\draw [shift={(343.22,128.57)}, rotate = 105.25] [color={rgb, 255:red, 0; green, 0; blue, 0 }  ,draw opacity=1 ][line width=0.75]    (10.93,-3.29) .. controls (6.95,-1.4) and (3.31,-0.3) .. (0,0) .. controls (3.31,0.3) and (6.95,1.4) .. (10.93,3.29)   ;
\draw    (451.77,165.43) -- (489.51,179.31) ;
\draw [shift={(491.38,180)}, rotate = 200.19] [color={rgb, 255:red, 0; green, 0; blue, 0 }  ][line width=0.75]    (10.93,-3.29) .. controls (6.95,-1.4) and (3.31,-0.3) .. (0,0) .. controls (3.31,0.3) and (6.95,1.4) .. (10.93,3.29)   ;
\draw    (531,163.71) -- (473.95,202.29) ;
\draw [color={rgb, 255:red, 0; green, 0; blue, 0 }  ,draw opacity=1 ][line width=0.75]  [dash pattern={on 0.75pt off 0.75pt}]  (219.62,111.43) .. controls (220.28,108.92) and (221.7,107.92) .. (223.88,108.44) .. controls (226.23,108.87) and (227.63,107.95) .. (228.08,105.69) .. controls (228.75,103.33) and (230.21,102.46) .. (232.46,103.09) .. controls (234.52,103.88) and (235.96,103.16) .. (236.79,100.91) .. controls (237.82,98.7) and (239.35,98.14) .. (241.36,99.24) .. controls (243.32,100.52) and (245,100.26) .. (246.39,98.47) .. controls (248.15,96.86) and (249.81,96.97) .. (251.37,98.81) .. controls (252.86,100.65) and (254.5,100.78) .. (256.31,99.21) .. controls (257.9,97.49) and (259.58,97.36) .. (261.33,98.83) .. controls (263.34,100.13) and (264.98,99.76) .. (266.23,97.71) .. controls (267.26,95.64) and (268.89,95.08) .. (271.14,96.03) .. controls (273.32,96.96) and (274.9,96.36) .. (275.87,94.21) .. controls (276.9,92.09) and (278.44,91.57) .. (280.51,92.65) .. controls (282.48,93.8) and (283.99,93.37) .. (285.03,91.36) .. controls (286.36,89.31) and (288.06,88.91) .. (290.12,90.15) .. controls (292.08,91.45) and (293.71,91.14) .. (295,89.21) .. controls (296.31,87.31) and (297.96,87.06) .. (299.95,88.46) .. controls (301.82,89.91) and (303.46,89.71) .. (304.88,87.88) .. controls (306.27,86.07) and (307.87,85.93) .. (309.68,87.45) .. controls (311.54,88.98) and (313.21,88.85) .. (314.7,87.07) .. controls (316.31,85.26) and (317.98,85.11) .. (319.7,86.61) .. controls (321.69,87.99) and (323.3,87.64) .. (324.53,85.57) -- (325.79,84.86) ;
\draw    (271.12,44.57) -- (270.37,82.86) ;
\draw [shift={(270.33,84.86)}, rotate = 271.13] [color={rgb, 255:red, 0; green, 0; blue, 0 }  ][line width=0.75]    (10.93,-3.29) .. controls (6.95,-1.4) and (3.31,-0.3) .. (0,0) .. controls (3.31,0.3) and (6.95,1.4) .. (10.93,3.29)   ;
\draw  [dash pattern={on 4.5pt off 4.5pt}]  (15,63) -- (80,63) ;
\draw  [dash pattern={on 4.5pt off 4.5pt}]  (15,93.5) -- (80,93.5)(15,96.5) -- (80,96.5) ;
\draw  [dash pattern={on 0.75pt off 0.75pt}]  (15,124) .. controls (16.67,122.33) and (18.33,122.33) .. (20,124) .. controls (21.67,125.67) and (23.33,125.67) .. (25,124) .. controls (26.67,122.33) and (28.33,122.33) .. (30,124) .. controls (31.67,125.67) and (33.33,125.67) .. (35,124) .. controls (36.67,122.33) and (38.33,122.33) .. (40,124) .. controls (41.67,125.67) and (43.33,125.67) .. (45,124) .. controls (46.67,122.33) and (48.33,122.33) .. (50,124) .. controls (51.67,125.67) and (53.33,125.67) .. (55,124) .. controls (56.67,122.33) and (58.33,122.33) .. (60,124) .. controls (61.67,125.67) and (63.33,125.67) .. (65,124) .. controls (66.67,122.33) and (68.33,122.33) .. (70,124) .. controls (71.67,125.67) and (73.33,125.67) .. (75,124) .. controls (76.67,122.33) and (78.33,122.33) .. (80,124) -- (80,124) ;

\draw (102.14,179) node [anchor=north west][inner sep=0.75pt]   [align=left] {period morphism extends\\over these points};
\draw (500,182.5) node [anchor=north west][inner sep=0.75pt]   [align=left] {$\Spec(W)$};
\draw (251.81,18.43) node [anchor=north west][inner sep=0.75pt]   [align=left] {$(z, \zeta)$};
\draw (92,52) node [anchor=north west][inner sep=0.75pt]   [align=left] {$\sC_1 \times_{\overline{\sfM}} \sfD^+$};
\draw (94,84) node [anchor=north west][inner sep=0.75pt]   [align=left] {$\overline{\sC}_1 \times_{\overline{\sfM}} \sfH$};
\draw (91,115) node [anchor=north west][inner sep=0.75pt]   [align=left] {$\Def( z,\zeta )$};

\end{tikzpicture}}
\end{center}

    \noindent \textit{Step 3: Prepare for motivic alignment.} Let $k'$ be any algebraically closed field of characteristic $p$, let $z=\Spec(k') \to C_1$ be a geometric point.
		We want to prove that for the extended period morphism $\tau$, the pair $(z, \zeta)$ is motivically aligned for every $\zeta \in \LEnd(\sA_{\tau(z)})$. 
		Note that now we know two facts: 
		\begin{enumerate}[label=\upshape{(\roman*)}]
			\item By \cref{thm: period}(b), the local system $\bP_\ell |_{\sC_1^\circ}$ is isomorphic to $(\tau^* \bL_\ell(-1))|_{\sC_1^\circ}$. 
			\item By Step 1, the local system $\bP_{\ell}|_{\sC^\circ}$ has constant fiberwise monodromy invariant number over $W$, and $\lambda^{\geo}(\bP_{\ell}|_{\sC^\circ}) = \dim \PNS(\sX_{\bar{\eta}})_\IQ$. 
		\end{enumerate}
        By Artin-Brieskorn resolution \cite[p.322]{Artin-Res}, the Henselianization of the resolution space of $\sX|_{\sC_1}$ at $z$ is finite surjective over the Henselianization of $\sC_1$ at $z$. Therefore, we can find an \'etale neighborhood $\sU'$ of $z$ in $\sC_1$, and a finite surjective morphism $\sU \to \sU'$ of $W$-schemes, which is generically \'etale on both the special fiber and the generic fiber over $W$, such that $\sX|_\sU$ admits a simultaneous resolution $\breve{\sX} \to \sU$ (cf. the proof of \cref{lem: etale locally resolvable}). 
		We may lift $\bar{\eta}$ to a geometric generic point on $\sU$, which we still denote by $\bar{\eta}$. 
        By resolving singularities using Lipman's theorem \cite[\href{https://stacks.math.columbia.edu/tag/0ADX}{Tag 0ADX}, \href{https://stacks.math.columbia.edu/tag/0BGP}{0BGP}]{stacks-project}, we may assume that $\sU$ is a regular scheme of finite type over $W$, except that now $\sU$ is not necessarily finite over $\sU'$, but remains proper. 
        The composition $\sU \to \sC_1 \to \shS_W$ is then an extended period morphism. \\

        \noindent \textit{Step 4: Deform $(z, \zeta)$ to char $0$.} Consider the deformation space $\Def(z, \zeta)$ of $(z, \zeta)$ along $\sC_1$. It is a formal scheme over $W$ which pro-represents the functor that sends every Artin local ring $R$ with residue field $k'$ to the set of $R$-points on $\sC_1$ extending $z$ along which $\zeta$ deforms. We argue that $\Def(z, \zeta)$ is flat over $W$. 
        At this point the conclusion follows from the same proof as \cite[Thm.~5.2.3]{HYZ}, where we take the $\sfM$ therein to be the scheme $\sC_1$ in the current context. 
		For the reader's convenience, we explain the modification needed in order to adapt the argument in \textit{loc. cit.}
        If the flatness does not hold, then since $\Def(z, \zeta)$ is of codimension one in the formal completion of $\sC_1 \tensor_W W(k')$ at $z$, the special endomorphism $\zeta$ must comes from the specialization of some element in $\LEnd(\sA_{\bar{\eta}_p})$, where $\bar{\eta}_p$ is a geometric generic point of $\sC_{1}$. 
		The latter implies that 
		$$\dim \LEnd(\sA_{\bar{\eta}_p})_\IQ  > \dim \LEnd(\sA_{\bar{\eta}})_\IQ. $$ 
		On the other hand, we have 
		$$\dim \LEnd(\sA_{\bar{\eta}})_\IQ = \dim \PNS(\sX_{\bar{\eta}})_\IQ = \lambda^\geo(\bP_\ell |_\sU), $$ 
		and at the same time we have
		$$\dim \LEnd(\sA_{\bar{\eta}_p})_\IQ \le \lambda^\geo(\tau^*(\bL_\ell(-1))|_{C_1}) = \lambda^\geo(\bP_\ell |_{C_1}). $$ 
		This contradicts the fact that $\tau^*(\bL_\ell(-1))|_{\sC_1} \simeq \breve{\bP}_{\ell, \sC_1}$ has constant fiberwise monodromy invariant $\lambda^\geo$, which thus concludes the proof. \\

\noindent \textit{Step 5: Establish motivic alignment.} Recall that our goal is to show that $(z, \zeta)$ is motivically aligned for the geometric point $z$ on $C_1$. 
Since the extended period morphism $\sU_k \to \shS_k$ factors through $C_1$, it suffices to choose a lift $z'$ of $z$ to $\sU_k$ and prove that $(z', \zeta)$ is motivically aligned. 
As the $W$-scheme $\sU$ is proper and surjective over an \'etale neighborhood of $z$ on $\sC_1$, Step 4 implies that we may lift $(z', \zeta)$ to characteristic $0$ along $\sU$. 
Hence we may finish the proof by \cref{prop: char 0 motivically aligned} and \cref{lem: def invariance of MA}. 
\end{proof}

\begin{corollary}[The Tate conjecture for mildly singular fibers]
\label{cor: good points}
    \cref{thm: general theorem} holds when $s \in \sfD^+(k) \cup \sfM^\circ(k)$. 
\end{corollary}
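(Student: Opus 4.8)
The plan is to reduce \cref{thm: general theorem} in the case $s \in \sfD^+(k) \cup \sfM^\circ(k)$ to \cref{prop: good points} via \cref{lem: reduction}. That is, I want to construct an extended period morphism $\tau : T \to \shS_k$ together with a motivically aligned $k$-point of $T$ lifting $s$ (possibly after a finite extension of $k$), and \cref{prop: good points} already guarantees that any such $\tau$ coming from an admissible curve is motivically aligned, so the bulk of the work is just producing the right curve through $s$.

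First I would pick, via a Bertini-type argument applied to the strongly dominant family $(\mathscr{C}/\sT)_b$ of \cref{const: strongly dominate} (with $b = \pi(s) \in \sfB(k)$), an admissible curve $\overline{C} \subseteq \overline{\sfM}_b$ passing through $s$ such that $\overline{C}$ meets $\sfD_b \cup \sfH_b$ transversely and the intersection is contained in $\sfD^+_b \cup \sfH^+_b$. The key point is that this is possible while still forcing $s \in \overline{C}$: if $s \in \sfM^\circ$, transversality to $\sfD_b \cup \sfH_b$ is an open dense condition on the relevant parameter space and one just needs $\overline{C}$ to avoid $\sfD^-_b$ and $\sfH_b \smallsetminus \sfH^+_b$, which have codimension $\ge 2$, so a general member through $s$ works by \cref{lem: transverse intersection general}. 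If $s \in \sfD^+(k)$, one needs $\overline{C}$ to pass through $s$ and be transverse to $\sfD_b$ exactly at $s$ (and avoid the bad loci elsewhere); since $\sfD^+$ is smooth of codimension $1$ in $\sfM_b$ and the family of admissible curves homogeneously dominates $\overline{\sfM}_b$, one can arrange a curve through $s$ in a direction transverse to $\sfD^+_b$, and genericity handles the rest of the intersection. Set $C := \overline{C} \cap \sfM$; note $s \in C$ in either case.

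Next I would produce an extended period morphism over a cover of $C$. Let $C_0^\circ$ be a connected component of $\wt{\sfM}^\circ_k \times_{\sfM_k} C$, let $C_0 := \overline{C}_0^\circ \times_{\overline{\sfM}} \sfM$ as in Step 0 of the proof of \cref{prop: good points}, and note that $\rho_k$ restricted to $C_0^\circ$ extends to $\tau_0 : C_0 \to \shS_k$ by the argument there (using \cref{cor: extend DVR} at the points of $C_0 \times_{\sfM} \sfD$, where the fibers have RDP singularities so the Galois action on the relevant $\bL_\ell$ is finite by N\'eron--Ogg--Shafarevich). Since $\sX|_{C_0}$ has normal fibers with at worst RDPs and its singular locus is finite over $C_0$, \cref{lem: etale locally resolvable} produces a surjective generically \'etale $C_1 \to C_0$ with $\sX|_{C_1}$ Zariski-locally resolvable; after a finite extension of $k$ we may assume $s$ lifts to a $k$-point $\tilde s \in C_1(k)$ (the fiber of $C_1 \to C_0$ over the lift of $s$ in $C_0$ is nonempty over $\bar k$, hence has a $k'$-point for some finite $k'/k$). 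Pulling back $\tau_0$ along $C_1 \to C_0$ gives an extended period morphism $\tau : C_1 \to \shS_k$ in the sense of \cref{def: extended period}. Now \cref{prop: good points} applies verbatim (its hypotheses — admissible curve, transversality, intersection in $\sfD^+_b \cup \sfH^+_b$, connected finite generically \'etale cover — are exactly what we arranged), so $\tau$ is motivically aligned; in particular $\tilde s$ is a motivically aligned $k$-point lifting $s$. By \cref{lem: reduction} this proves \cref{thm: general theorem} for such $s$, and hence \cref{cor: good points}.

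The main obstacle I anticipate is the curve-selection step: ensuring simultaneously that $\overline{C}$ passes through the \emph{prescribed} point $s$, is an admissible curve (a complete intersection of the chosen degrees, with smooth irreducible projective total intersection), and has the correct transversality/containment behavior along both $\sfD_b$ and $\sfH_b$. When $s \in \sfD^+$ this is delicate because one is imposing a pointed incidence condition at a point of the very locus one wants transversality to, so one must check that the strongly-dominant family still contains a member through $s$ transverse to $\sfD^+_b$ at $s$ — this is where \cref{def: strongly dominant}(ii),(iii) (density of the "good direction" locus in each fiber, equidimensionality of the map to $\IP(\sT\sfM_b)$) is used, combined with the codimension $\ge 2$ of $\sfD^-_b$ and of $\sfH_b \smallsetminus \sfH^+_b$ inside $\overline{\sfM}_b$. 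Everything downstream is a direct citation of \cref{prop: good points}, \cref{lem: reduction}, \cref{lem: etale locally resolvable}, and \cref{cor: extend DVR}.
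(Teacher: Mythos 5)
Your proposal is correct and follows essentially the same route as the paper: pick an admissible curve through $s$ transverse to $\sfD_b \cup \sfH_b$ with intersection inside $\sfD^+_b \cup \sfH^+_b$ (via \cref{const: strongly dominate} and \cref{lem: transverse intersection general}), build $C_0$ from a connected component of $\wt{\sfM}^\circ_k \times_{\sfM_k} C$, extend the period morphism over $C_0$ by \cref{cor: extend DVR}, pass to a generically \'etale cover $C_1$ via \cref{lem: etale locally resolvable} to get Zariski-local resolvability, and conclude by \cref{prop: good points} together with \cref{lem: reduction}. Your explicit invocation of \cref{lem: reduction} and the lifting of $s$ to a $k$-point of $C_1$ after a finite extension merely spells out what the paper leaves implicit.
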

\begin{proof}
    Let $b \colonequals \pi(s)\in \sfB(k)$. 
    We can always find some admissible curve $\overline{C} \subseteq \overline{\sfM}_b$ which passes through $s$ and intersects $\sfH_b \cup \sfD_b$ transversely and only along $\sfH^+_b \cup \sfD^+_b$. 
    To see this, one can show that when the $d_i$'s are large enough in \cref{const: strongly dominate}, the sub-family of $\shC/\sT$ which passes through $s$ in some fixed direction still strongly dominates $\overline{\sfM}_b$ away from $s$. 
    Then one applies \cref{lem: transverse intersection general}. 
    
    Then it suffices to construct an extended period morphism $\tau : C_1 \to \shS_k$ as in \cref{prop: good points}. We shall do so by reversing the construction in its proof: 
    Let $C_0^\circ$ be a connected finite \'etale cover of a connected component of $\wt{\sfM}^\circ \times_{\sfM^\circ_k} C^\circ$, let $\overline{C}_0$ be its compactification (which admits a map to $\overline{\sfM}$, and let $C_0 $ be the intersection $\overline{C}_0 \times_{\overline{\sfM}} \sfM$.
    Then the morphism $C^\circ_0 \to \shS_k$ extends to a morphism $C_0 \to \shS_k$ by \cref{cor: extend DVR}. 
    Now the only problem is that $\sX|_{C_0}$ may not be Zariski-locally resolvable, but we may resolve this by passing to a suitable generically \'etale cover $C_1$ of $C_0$ thanks to \cref{lem: etale locally resolvable}.
\end{proof}

\subsection{Contraction of curves by the period morphism}

To study the periods for the points in $\sfD^-$, we make some observations on when extended period morphism contracts a curve. In this subsection, $k$ is a perfect field of characteristic $p$.

\begin{proposition}
\label{prop: height jump}
    Let $C$ be a smooth \emph{proper} curve over $k$. 
    Let $\rho : C \to \shS_k$ be a morphism such that the F-isocrystal $\rho^* \bL_\crys[1/p]$ has constant slope filtration and is nowhere supersingular. Then the map $\rho$ contracts $C$ to a point. 
\end{proposition}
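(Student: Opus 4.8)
The plan is to control the single invariant $\deg_C(\rho^*\omega)$, where $\omega \colonequals \Fil^1\bL_\dR$ is the Hodge (automorphic) line bundle on $\shS_k$, and to show that the two hypotheses force this number to be both $\le 0$ and $>0$ unless $\rho$ is constant. So suppose $\rho$ does not contract $C$; then the image $C' \colonequals \rho(C)$ is a curve. Since $C'$ is proper over $k$ it is closed in the minimal compactification $\overline{\shS}_k$, and $\omega$ extends to an ample line bundle there, so $\deg_C(\rho^*\omega) = \deg(C\to C')\cdot\deg(\omega|_{C'}) > 0$. This is the ``positivity of the Hodge bundle'' input alluded to in the introduction.

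The content of the proposition is the opposite estimate $\deg_C(\rho^*\omega)\le 0$, and this is where the slope hypotheses enter. Set $N\colonequals \rho^*\bL_\crys(-1)$, the (effective) $F$-crystal on $C$ with connection underlying $\rho^*\bL_\crys[1/p]$; its Newton slopes lie in $[0,2]$ and are symmetric about $1$, by hypothesis they are constant, and non-supersingularity says they are not all equal to $1$, hence there is a slope $<1$ and (by the symmetry) a slope $>1$. Because the Newton polygon is constant, the slope filtration of $N$ is realized by \emph{horizontal} sub-$F$-isocrystals (Grothendieck's specialization theorem together with Katz's slope filtration theorem, cf.\ \cref{const: VHS and crystals resolved}); thus the slope-$\le 1$ part $N^{\le 1}\subseteq N$ is a horizontal sub-$F$-isocrystal with horizontal quotient $N^{>1}$, and $N^{>1}\ne 0$ has all slopes in $(1,2]$. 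Now Mazur's divisibility for the Kuga--Satake family --- the fact that $F$ carries $F_C^*\,\Fil^2 N$ into $p^2 N$, since the Hodge line sits inside $\Fil^2$ of $\mathrm{End}(\bH_\crys)(-1)$ --- forces $\omega = \Fil^2 N_\dR$ to meet $(N^{\le 1})_\dR$ trivially: on a pure isocrystal of slope $\le 1$ no nonzero vector is carried by $F$ into $p^2$ times the lattice. Hence $\omega$ is a line subbundle of $(N^{>1})_\dR$. One then shows that the Frobenius structure on an $F$-crystal lattice in $N^{>1}$, whose slopes are bounded above by $2$, makes every line subbundle of $(N^{>1})_\dR$ have degree $\le 0$. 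Combining the two bounds gives the contradiction, so $\rho$ contracts $C$.

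The main obstacle is this last assertion --- that line subbundles of $(N^{>1})_\dR$ have non-positive degree. One cannot argue naively via $\deg F_C^* L = p\deg L$, because the reduction modulo $p$ of the Frobenius of a crystal is not injective; the degree estimate has to intertwine the (Hodge, or conjugate) filtration with the slope filtration, in the spirit of the Cartier/Fontaine--Laffaille formalism, and it is precisely here that non-supersingularity does real work: it makes $N^{>1}$ nonzero and keeps its slopes away from the self-dual ``middle,'' which is what pins down the sign. This is exactly the type of argument carried out for $\H^2$-type $F$-crystals in \cite[\S 6]{Maulik}, which I would adapt to the ``constant $G$-structure'' setting here. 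A minor bookkeeping point is that $\bL_\crys$ is itself only an $F$-\emph{iso}crystal with slopes in $[-1,1]$, so all divisibility and lattice statements should be phrased through $\bH_\crys$, or $\mathrm{End}(\bH_\crys)$, and an appropriate Tate twist.
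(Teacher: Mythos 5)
Your first step (positivity of the Hodge line bundle $\omega=\Fil^1\bL_\dR$ along a non-contracted proper curve) is exactly the paper's starting point, via $\IQ_+\cdot\lambda=\IQ_+\cdot\lambda_\sA$ and ampleness of the determinant Hodge bundle of the Kuga--Satake abelian scheme. The gap is in the second half. The paper does \emph{not} prove $\deg_C\rho^*\omega\le 0$ by a slope-filtration/divisibility computation; it invokes the height (Newton) stratification $\shS_1\supsetneq\shS_2\supsetneq\cdots\supsetneq\shS_\infty$ of $\shS_{\IF_p}$, in which each stratum is a Cartier divisor in the previous one whose class is a positive rational multiple of $\lambda$ (van der Geer--Katsura, Maulik). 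Ampleness of $\rho^*\lambda$ then forces a non-contracted $C$ to meet the next stratum, contradicting constancy of the Newton polygon unless $C$ lands in $\shS_\infty$, which is excluded by non-supersingularity. Your route replaces this geometric input by the assertion that every line subbundle of $(N^{>1})_\dR$ has degree $\le 0$ when the slopes of $N^{>1}$ lie in $(1,2]$ together with the $p^2$-divisibility of Frobenius on $\Fil^2$. That assertion is not only unproven (you flag it as the main obstacle), it is false at the stated level of generality: take a non-isotrivial family of supersingular abelian surfaces over $\IP^1$ (a Moret--Bailly pencil) and set $N=R^1f_{\crys *}\sO(-1)$. This is an $F$-crystal over a proper curve, isoclinic of slope $3/2\in(1,2]$ (so constant Newton polygon and ``nowhere supersingular'' in the paper's slope-$1$ sense), its Frobenius carries a lift of $\Fil^2 N_\dR=\Fil^1 R^1f_*\Omega^\bullet$ into $p^2$ times the lattice, and yet that Hodge line subbundle has \emph{positive} degree. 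So no argument using only slope bounds in $(1,2]$ plus Mazur-type divisibility can yield your inequality; one must use the finer K3-type structure, and that is precisely what the stratification class computation encodes.

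A secondary problem is the intermediate step: the principle ``on a pure isocrystal of slope $\le 1$ no nonzero vector is carried by $F$ into $p^2$ times the lattice'' fails for slope exactly $1$ (e.g.\ the rank-$2$ lattice with $Fe_1=p^2e_2$, $Fe_2=e_1$ is isoclinic of slope $1$ and $Fe_1\in p^2M$), so even the claim that $\omega$ injects into $(N^{>1})_\dR$ needs a different justification (and, at lattice level, the Katz slope filtration of $\rho^*\bL_\crys[1/p]$ is only a filtration by sub-\emph{isocrystals}, so saturation issues must be addressed before speaking of $(N^{\le 1})_\dR$ as a subbundle). In short, the skeleton (positivity of $\omega$ versus constancy of slopes) is right, but the decisive inequality needs the height-stratification divisor classes proportional to $\lambda$, as in the paper's proof, rather than the proposed general $F$-crystal degree bound.
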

Here we remind the reader that an F-isocrystal is nowhere supersingular if its Newton polygon at any closed point is not a straight line.
\begin{proof}
    This observation is essentially due to Maulik, van der Geer and Katsura (\cite[Cor.~5.5]{Maulik}, \cite[Thm.~15.3]{vdGK}) in the context of moduli of K3 surfaces,
    and we explain how this extends to orthogonal Shimura varieties. 
    Consider the Hodge bundle $\lambda$ given by $\Fil^1 \bL_\dR$, and let $\lambda_\sA$ be the determinant Hodge bundle of the uniserval abelian scheme $a : \sA \to \shS$, i.e., $\det(a_* \omega_{\sA / \shS})$. 
    Then by adapting the argument of \cite[Prop.~5.8]{Maulik}, we have $\IQ_+ \cdot \lambda = \IQ_+ \cdot \lambda_\sA \subseteq \Pic(\shS)_\IQ$. 
    As the determinant Hodge bundle $\lambda_\sA$ is ample, so is $\lambda$. 

    Let $n = \rank \, \bL_\crys$. 
    Let $\bL$ be the restriction of $\bL_\crys$ at a point with perfect residue field. 
    If $\bL$ is not supersingular (i.e., is isoclinic of slope $0$), then its slope $< 0$ part is of the slope $-1/h$, where $h$ is called the height of $\bL$. 
    Consider now the height (i.e., Newton) stratification $\shS_{\IF_p} = \shS_1 \supsetneq \shS_{2} \supsetneq \cdots \supsetneq \shS_{\lfloor n / 2 \rfloor} \supsetneq \shS_\infty$, where $\shS_\infty$ is the supersingular locus.  
    Then each stratum is a Cartier divisor of the previous one, and the Chern class of this Cartier divisor equals that of $\lambda$ up to a scaling by $\IQ_+$. 
    One can prove this by suitably adapting the proof of \cite[Thm.~15.1]{vdGK}. 
    
    If $\rho$ does not contract $C$ to a point, then $\rho$ is a finite morphism of $C$ onto its scheme theoretic image and $\rho^*(\lambda)$ is ample.
    In particular, the preimage of the height stratification is non-trivial in $C$---this would contradict the assumption that $\rho^* \bL_\crys[1/p]$ has constant slope unless the image of $\rho$ is entirely contained in the supersingular locus $\shS_\infty$. 
\end{proof}

Next we recall a consequence of the Mazur--Ogus inequality.
\begin{proposition}
\label{prop: Mazur-Ogus}
    Let $Z$ be a smooth $k$-variety over $k$, equipped with a morphism $Z \to \shS_k$. 
    Let $F_{Z/k} : Z \to Z'$ be the relative Frobenius morphism. 
    Then the Hodge filtration on $\bL_\dR|_{Z'}$ (and hence $\bL_\dR|_Z$) is determined by the Frobenius structure $\varphi_{\bL_\crys(-1)} : \bL_\crys(-1)|_{Z'} \to \bL_\crys(-1)|_Z$\footnote{Note that $\bL_\crys$ is not an F-crystal but $\bL_\crys(-1)$ is. }.
    
    More precisely, suppose that $\wt{Z}$ is a Frobenius equivariant $p$-adic formal scheme that lifts $Z$ over $W$. 
    Then 
    \begin{equation}
        \label{eqn: Mazur-Ogus}
        \Fil^i (\bL_\dR|_{Z'}) =  (p\varphi_{\bL_\crys})^{-1} \big( p^i (\bL_\crys|_{Z})({\wt{Z}}) \big) \mod p.
    \end{equation}
\end{proposition}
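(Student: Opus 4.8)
The plan is to deduce the claimed formula (\ref{eqn: Mazur-Ogus}) from the theory of Mazur--Ogus: namely, from the fact that the de Rham realization $\bL_\dR$ of the Kuga--Satake local system underlies a filtered F-crystal which is \emph{Mazur--Ogus}, i.e.\ the Hodge and Newton polygons agree pointwise and the Hodge filtration is the one produced by the ``divided Frobenius'' construction. First I would recall the general fact (due to Mazur, see also \cite[\S8]{BO}) that for a crystalline cohomology $\H^n_\crys(Y/W)$ of a smooth proper $Y$ over a perfect field, when the Hodge and Newton polygons coincide, the Hodge filtration on $\H^n_\dR(Y/k)$ is recovered from the Frobenius $\varphi$ by $\Fil^i = \bar{\varphi}^{-1}(p^i \H^n_\crys(Y/W)) \bmod p$ after dividing by the appropriate power of $p$; this is exactly formula (\ref{eqn: Mazur-Ogus}) in the absolute case. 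The content of the proposition is to globalize this over the base $Z$ (or rather $Z'$) and to check it applies to the \emph{primitive} summand $\bL_\crys$ cut out by the CSpin idempotent.

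The key steps, in order: (1) Reduce to the case where $Z = \Spec(A)$ is affine and admits a Frobenius-equivariant $p$-adic formal lift $\wt{Z} = \mathrm{Spf}(\wt A)$ over $W$ with a lift $\wt{F}$ of Frobenius; such lifts exist Zariski-locally by smoothness, and the formula is local on $Z$, so this is harmless. (2) Over $\wt{Z}$, evaluate the F-crystal $\bL_\crys(-1)$ to get a finite free $\wt A$-module $M := \bL_\crys(-1)(\wt Z)$ with a horizontal Frobenius $\varphi_M : \wt F^* M \to M$, and observe that $\bL_\dR|_{Z'} = M/p$ together with its Hodge filtration. Here I would invoke that $\bL_\crys \subseteq \breve R^2 f_{\crys *}\sO(1)$ (in the situation of the period morphism, $\bL_\dR$ is the primitive part of the second de Rham cohomology of the family of surfaces) is compatible with the Frobenius and with the Hodge filtration, so it suffices to verify the Mazur--Ogus property for the full cohomology and then restrict along the CSpin-idempotent $\bpi_\crys$, which is Frobenius-equivariant and filtration-compatible. (3) Invoke the Mazur--Ogus theorem: since fibers of the relevant family of surfaces (the minimal resolutions of $\sX$) have torsion-free crystalline cohomology and their Hodge--de Rham spectral sequence degenerates (this is where results like \cref{thm: Hodge diamond of elliptic surface}(a),(c) enter, or more generally the hypotheses ensuring the fibers are ``ordinary-like'' enough), the filtered F-crystal is Mazur--Ogus, and hence the pointwise formula $\Fil^i = (p\varphi)^{-1}(p^i M(\wt Z)) \bmod p$ holds. (4) Globalize: the subsheaf $(p\varphi_{\bL_\crys})^{-1}(p^i (\bL_\crys|_Z)(\wt Z)) \bmod p$ is a priori just an $\sO_{Z'}$-submodule of $\bL_\dR|_{Z'}$, but by the fiberwise Mazur--Ogus property and a Nakayama/coherence argument it is a subbundle whose fibers agree with $\Fil^i$, hence it equals $\Fil^i(\bL_\dR|_{Z'})$; transporting along the relative Frobenius $F_{Z/k}$ then gives the statement for $\bL_\dR|_Z$.

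The main obstacle I expect is step (3)--(4): one must be careful that the Mazur--Ogus property, which is usually stated for the crystalline cohomology of a \emph{single} smooth proper variety, propagates correctly to the family and to the \emph{primitive} (CSpin-cut-out) piece $\bL_\crys$ rather than the whole $\H^2$, and that the ``divided Frobenius'' construction commutes with the idempotent. Concretely, one needs that $\varphi_{\bL_\crys(-1)}$ is divisible by $p$ with the right multiplicities on the associated graded, equivalently that the Hodge numbers of $\bL_\dR$ are $(1, n-2, 1)$ in degrees $(0,1,2)$ and that the F-crystal is ``of K3 type''; this is built into the CSpin construction but needs to be cited carefully (e.g.\ from \cite{CSpin} and the Kuga--Satake formalism). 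A secondary subtlety is that the formula as displayed uses a chosen Frobenius-equivariant formal lift $\wt Z$, so one must check the right-hand side is independent of this choice --- but that follows because two such lifts are connected by a horizontal isomorphism of the crystal, under which the formula is manifestly compatible, or alternatively one simply takes the displayed equation as the definition of the claim for a fixed $\wt Z$, which is all that is needed in the applications (e.g.\ in the proof that $\pi$-exceptional curves are $\tau$-exceptional).
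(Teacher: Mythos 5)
There is a genuine misdirection in your approach: you ground the verification of the Mazur--Ogus property in the family of surfaces (the primitive part of $R^2$ of the resolved family, torsion-freeness and Hodge--de Rham degeneration of the fibers, \cref{thm: Hodge diamond of elliptic surface}), but the proposition is stated for an \emph{arbitrary} smooth $Z \to \shS_k$, where no family of surfaces is available and the sheaves $\bL_\crys$, $\bL_\dR$ are the automorphic sheaves pulled back from the Shimura variety. The object that tautologically lives over any such $Z$ is the Kuga--Satake abelian scheme $\sA$, and this is what the paper uses: the pair $(\bH_\crys, \bH_\dR)$ given by its first cohomology satisfies the relative Mazur--Ogus formula by \cite[Thm.~8.26]{BOBook} (abelian schemes have torsion-free crystalline cohomology and degenerate Hodge--de Rham spectral sequence), and $(\bL_\crys, \bL_\dR)$ is a Frobenius-equivariant, filtration-compatible direct summand of the internal Hom of $(\bH_\crys, \bH_\dR)$ cut out by the CSpin idempotent $\bpi$ (\cite{CSpin}), so the formula (\ref{eqn: Mazur-Ogus}) descends to it at once. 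You do gesture at this (``built into the CSpin construction''), but your main line of argument requires matching $\bL$ with the primitive cohomology of the surfaces, which only exists over (covers of) $\sfM^\circ$ or along extended period morphisms, not over a general $Z$, and which imports hypotheses absent from the statement.

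A second, more serious error is your formulation of ``Mazur--Ogus'' as requiring that the Hodge and Newton polygons agree pointwise (``ordinary-like enough''). The divided-Frobenius recovery of the Hodge filtration needs only torsion-freeness of the crystalline cohomology and $E_1$-degeneration of Hodge--de Rham; it does \emph{not} require Hodge $=$ Newton. If it did, your argument would collapse exactly where the proposition is used: in \cref{lem: double contraction}(ii) and \cref{lem: constant K3 crystal} the formula must be applied along curves whose fibers may be supersingular, i.e.\ maximally non-ordinary. The paper's route via the abelian scheme avoids both problems and makes the proof a two-line reduction rather than a fiberwise verification on the surface side.
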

\begin{proof}
    When the pair $(\bL_\crys, \bL_\dR)$ is replaced by $(\bH_\crys, \bH_\dR)$, (\ref{eqn: Mazur-Ogus}) is nothing but the Mazur--Ogus formula for the Hodge filtration (see \cite[Thm.~8.26]{BOBook}). 
    Then we use that $(\bL_\crys, \bL_\dR)$ is a direct summand of the internal Hom of the object $(\bH_\crys, \bH_\dR)$. 
\end{proof}

\begin{lemma}
\label{lem: constant K3 crystal}
    Let $C$ be an integral curve over $k$ with a morphism $C \to \shS_k$. 
    Then $C$ is contracted to a point if and only if $\bL_\crys|_C$ is constant $F$-crystal equipped with a pairing (i.e., for any $k$-point $t \in C$, there is an isometry of crystals $\bL_\crys \simeq (C \to \Spec(k))^* (\bL_\crys|_t)$). 
\end{lemma}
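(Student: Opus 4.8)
The "only if" direction is essentially trivial: if $C \to \shS_k$ factors through a single point $t = \Spec(k') \to \shS_k$ (where $k'$ is the residue field, which we may take to be $k$ after enlarging), then $\bL_\crys|_C$ is by construction the pullback of $\bL_{\crys}|_t$ along the structure morphism $C \to \Spec(k)$, and the pairing is pulled back as well; so it is a constant $F$-crystal with pairing. The content is the "if" direction, and my plan is to reduce it to a statement about the period morphism being constant, i.e., to the injectivity (on $k$-points) of the crystalline period map, together with the rigidity results already established.

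First I would reduce to the case where $C$ is smooth: replacing $C$ by its normalization $\widetilde{C}$, the morphism $\widetilde{C} \to \shS_k$ still contracts $C$ iff it contracts $\widetilde{C}$, and $\bL_\crys|_C$ is constant iff its pullback to $\widetilde{C}$ is (since $\widetilde{C} \to C$ is surjective and finite, a crystal on $C$ is constant iff its pullback is — one can check this on the level of evaluations on a lift, using that constancy means the Frobenius structure has the right shape). Next, since the question is whether $C \to \shS_k$ is constant, and $\shS_k$ is a scheme, it suffices to work Zariski-locally and even to pass to an affine open $C^\circ = \Spec(R) \subseteq C$; constancy can be detected there because $C$ is integral. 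So assume $C = \Spec(R)$ is a smooth affine curve, choose a Frobenius-equivariant $p$-adic formal lift $\widetilde{C} = \mathrm{Spf}(\widetilde{R})$ over $W$ (which exists by deformation theory, as in the proof of \cref{lem: rigidity of F-crystals}), and evaluate everything on $\widetilde{C}$.

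The key step: by \cref{prop: Mazur-Ogus}, the Hodge filtration $\Fil^\bullet(\bL_\dR|_{C})$ is completely determined by the Frobenius structure $\varphi_{\bL_\crys(-1)}$ on $\bL_\crys(-1)$ evaluated on $\widetilde{C}$, via formula (\ref{eqn: Mazur-Ogus}). If $\bL_\crys|_C$ is a constant $F$-crystal with pairing, then after choosing an isometry $\bL_\crys|_C \simeq (C \to \Spec k)^*(\bL_{\crys}|_t)$ the Frobenius structure is the constant one, hence its evaluation on $\widetilde{C}$ is (up to the identification) independent of the point; therefore by (\ref{eqn: Mazur-Ofus}) — I mean (\ref{eqn: Mazur-Ogus}) — the Hodge filtration $\Fil^1(\bL_\dR|_{C})$ is the constant sub-bundle. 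In particular the Hodge line bundle $\lambda|_C = \Fil^1 \bL_\dR|_C$ has degree $0$; but $\lambda$ is ample on $\shS_k$ (this positivity is recorded in the proof of \cref{prop: height jump}, where $\IQ_+\cdot\lambda = \IQ_+\cdot\lambda_\sA$ with $\lambda_\sA$ ample). Pulling back an ample line bundle along a non-constant morphism from a proper curve gives a bundle of positive degree; hence if $C$ is proper, $C \to \shS_k$ must be constant, and we are done in that case. For a general (not necessarily proper) integral $C$, take the smooth compactification $\overline{C}$ of the normalization: the morphism $C^\circ \to \shS_k$ need not extend over $\overline{C}$ a priori, but one can instead argue as follows — the evaluation of $\bL_\crys|_{C^\circ}$ on $\widetilde{C^\circ}$, together with its constant Frobenius structure, shows that the classifying map lands in a single Newton stratum and, more precisely, that the crystal together with its filtration is constant; then the morphism $C^\circ \to \shS_k$ composed with any projective embedding of a suitable compactification of $\shS_k$ pulls back the ample $\lambda$ to a degree-zero bundle after taking the closure, forcing the image to be a point. (Alternatively, and more cleanly: a non-constant morphism from a curve to a quasi-projective variety along which an ample bundle pulls back to something of non-positive degree on the smooth completion is impossible — so it suffices to check $\deg(\lambda|_{\overline{C}}) = 0$, which follows from the constancy of the filtration on $C^\circ$ and the fact that $\Fil^1$ of a constant filtered crystal extends to a constant, hence degree-zero, sub-bundle over $\overline{C}$.)

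I expect the main obstacle to be the bookkeeping in the non-proper case — making precise the claim that constancy of $\bL_\crys|_{C^\circ}$ as an $F$-crystal with pairing forces the \emph{filtered} object (hence the Hodge line bundle) to be constant, and then controlling the degree of $\lambda$ on the completion $\overline{C}$ even though $C^\circ \to \shS_k$ may not extend. The cleanest route is probably to observe that $\Fil^1 \bL_\dR|_{C^\circ}$, being recovered by the constant recipe (\ref{eqn: Mazur-Ogus}) from constant data, is itself a constant sub-bundle of the constant bundle $\bL_\dR|_{C^\circ}$, hence extends to a constant (degree $0$) sub-bundle of a constant bundle over $\overline{C}$; and then to note that whatever rational map $\overline{C} \dashrightarrow \overline{\shS}_k$ one gets (after resolving indeterminacy on the smooth curve $\overline{C}$, so it is in fact a morphism) pulls $\lambda$ back — on the locus where it agrees with $C^\circ \to \shS_k$, hence everywhere by density — to this degree-zero bundle. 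Positivity of $\lambda$ then forces the morphism to be constant, so $C^\circ \to \shS_k$ is constant, hence so is $C \to \shS_k$.
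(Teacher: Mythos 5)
Your reduction steps and the use of \cref{prop: Mazur-Ogus} to get constancy of the Hodge filtration from constancy of the $F$-crystal are fine (indeed this is how the paper itself uses the lemma inside \cref{lem: double contraction}(ii)), but your concluding step has a genuine gap. You want to conclude via positivity of the Hodge bundle $\lambda$, which requires computing a degree on a \emph{proper} curve. The morphism $C \to \shS_k$ (or $C^\circ \to \shS_k$) need not extend to the smooth compactification $\overline{C}$, since $\shS_k$ is not proper; and your workaround --- compactify $\shS_k$, extend to $\overline{C} \to \overline{\shS}_k$, and claim $\deg(\lambda|_{\overline{C}})=0$ ``because $\Fil^1$ is constant on $C^\circ$'' --- does not work. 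First, $\lambda$ is only a line bundle on the open Shimura variety, and the extension of $\tau^*\lambda$ to $\overline{C}$ obtained from the compactified morphism (if one exists with the boundary mapping into an extension of $\lambda$ at all) is just \emph{some} extension of a trivial bundle on a dense open: triviality on $C^\circ$ says nothing about its degree, since any $\sO_{\overline{C}}(D)$ with $D$ supported on the boundary restricts trivially to $C^\circ$. So no contradiction with ampleness is obtained, and making this route work would require nef/ample extensions of $\lambda$ to an integral compactification of $\shS$ --- an input the paper does not use. Note this is exactly why \cref{prop: height jump} is stated for proper curves and why \cref{lem: double contraction}(i) carries the explicit hypothesis that the map extends over $\overline{C}$; the whole point of \cref{lem: constant K3 crystal} is to handle curves without any such extension (and also the supersingular case, where Newton-stratum positivity arguments are unavailable).

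The paper's proof avoids compactifications entirely by an infinitesimal rigidity argument: reduce to $C$ smooth, fix a $k$-point $t$ with uniformizer $u$, and show by induction on $m$ that each thickening $T_m = \Spec(k[u]/u^m) \to \shS_k$ factors through $\Spec(k)$. Constancy of $\bL_\crys|_{T_m}$ identifies the evaluation $\bL_\dR|_{T_{m+1}} = (\bL_\crys|_{T_m})(T_{m+1})$ with the pullback of $\bL_\dR|_{T_1}$ (viewing $T_{m+1}$ as a PD-thickening of $T_m$), so the filtration lift is the constant one, and then the crystalline deformation theory of $\shS$ (\cite[Prop.~5.16]{CSpin}) forces the map on $T_{m+1}$ to be constant; since $C$ is integral, constancy on the formal neighborhood gives constancy of $C \to \shS_k$. (As the remark following the lemma notes, killing tangent vectors alone would not suffice in characteristic $p$ because of inseparability --- one really needs all orders of nilpotents, which your Hodge-bundle approach does not capture.) If you want to salvage your strategy you would have to either prove the extension of the period map over $\overline{C}$ in this situation or import ampleness of the Hodge bundle on a compactified integral model; as written, the degree-zero claim on $\overline{C}$ is unjustified.
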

\begin{proof}
    Only the ``if'' direction requires a proof. 
    We may assume that $C$ is smooth and let $t \in C$ be any $k$-point. Let $u$ be a uniformizer at $t$ such that the formal completion of $C$ at $t$ is $\mathrm{Spf}(k[\![u]\!])$. 
    It suffices to show that the induced map $T_\infty := \mathrm{Spf}(k[\![u]\!]) \to \shS_k$ factors though the reduced point $\Spec(k)$, or equivalently, for any $m \in \IN$, the induced map $T_m := \mathrm{Spec}(k[u]/u^m) \to \shS_k$ factors through $\Spec(k)$. We shall prove this by induction. The base case $T_1 = \Spec(k)$ is clear. 
    Now suppose the statement holds for $m$, so that $\bL_\crys|_{T_m} = (T_m \to T_1)^*(\bL_{\crys}|_{T_1})$, which gives us an identification $$\bL_{\dR}|_{T_{m + 1}} = (\bL_\crys|_{T_m})(T_{m + 1}) = [(T_m \to T_1)^*(\bL_{\crys}|_{T_1})](T_{m + 1}) = (T_{m + 1} \to T_1)^*(\bL_\dR|_{T_1}), $$ when we view $T_{m + 1}$ as a divided power thickening of $T_m$. 
    \footnote{More precisely, in the first equality, we implicitly use that the evaluation of $\bL_\crys|_{T_{m+1}}$ at the thickening $(T_{m+1},T_{m+1})$ coincides with that of $\bL_\crys|_{T_m}$ at the thickening $(T_{m+1},T_m)$, together with the identification of $\bL_\crys|_{T_{m+1}}(T_{m + 1})$ with the de Rham cohomology $\bL_\dR|_{T_{m + 1}}$.}
    As the $\Fil^1(\bL_{\dR}|_{T_{m + 1}})$ is obtained from pulling back $\Fil^1 \bL_\dR |_{T_1}$, the statement follows from \cite[Prop.~5.16]{CSpin}.
\end{proof}

\begin{remark}
    Note that in characteristic $p$, we cannot simply argue that $C$ is contracted to a point when the map kills all tangent vectors, as the map might factor through an inseparable morphism. However, the problem goes away when we consider nilpontents of arbitrarily large order. 
\end{remark}

\subsection{Locus of worse singularity}
\label{sub:bad_points}
In this subsection, we prove the Tate conjecture for the minimal resolution of $\sX_s$ for a point $s\in \sfD^-$ in positive characteristic, and in particular finish the proof of \Cref{thm: general theorem}.
For any such point $s\in \sD^-_k$, we shall construct an extended period morphism $\tau:T\to \shS$ in positive characteristic, where $T$ is a $\sfM_k$-scheme $T$ of dimension $2$, such that $s$ is contained in the image of $T$ in $\sfM$.
As heuristics, we in particular prove that the ``period'' of the surface $\sX_s$, defined as the image $\tau(T_s)$ in the Shimura variety $\shS$, is in fact a single point, where $T_s$ is the preimage of $s$ in $T$ and potentially could be of higher dimension.
The latter is achieved by an essential application of the minimal model program for threefolds in positive characteristic, specifically the Kawamawa theorem as in \Cref{sub:MMP}.
Knowing the constancy the of period and in particular the constancy of the Kuga--Satake abelian variety of $s$, we may then study the deformation of a special endomorphism along certain curve in $T$ that is chosen with delicacy, where various arguments in intersection theory will come to place.

We start with some general observations on intersection theory. 
Following \cite{Fulton}, we use the following notation: If $D$ is a Cartier divisor on an integral scheme $X$, we write $|D|$ for its support.
For a $k$-dimensional irreducible closed subvariety $Z \subseteq X$ (or a formal sum of such), we write $[Z]$ for its class in $A_k(X)$ (the group of $k$-cycles modulo rational equivalence). 
\begin{lemma}
\label{lem: neg self intersection}
    Suppose that $\pi : S \to S'$ is a generically finite morphism bewteen projective surfaces over a field and $S$ is smooth. Then for every divisor $C$ on $S$ whose support is contracted by $\pi$ (i.e., $\pi(|C|)$ is a point on $S'$), the self-intersection $C^2$ is negative. 
\end{lemma}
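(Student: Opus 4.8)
The plan is to deduce the statement from the Hodge index theorem on the smooth projective surface $S$, via the standard principle that classes pulled back from a lower-dimensional image are isotropic for a big nef class. First I would choose an ample divisor $H'$ on $S'$ (possible since $S'$ is projective) and put $H \colonequals \pi^*H'$. As $\pi$ is generically finite, hence dominant and---being proper---surjective onto the surface $S'$, the projection formula gives $\pi_*\pi^*H' = (\deg\pi)\,H'$ and therefore $H^2 = (\deg\pi)\,(H')^2 > 0$; in particular $H$ is a nef class on $S$ with positive self-intersection. On the other hand, every irreducible component $D$ of $C$ is contracted by $\pi$, so $\pi_*[D] = 0$ (the proper pushforward of a $1$-cycle onto a $0$-dimensional image vanishes), whence $\pi_*C = 0$ and $H\cdot C = H'\cdot\pi_*C = 0$ by the projection formula again.

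Next I would invoke the Hodge index theorem: the intersection pairing on $\NS(S)_\IR$ has signature $(1,\rho-1)$, and since $H^2>0$ the line spanned by $H$ is positive definite while its orthogonal complement $H^\perp$ inside $\NS(S)_\IR$ is negative definite. Consequently any class lying in $H^\perp$ has self-intersection $\le 0$, with equality precisely for the zero class. Since $C\in H^\perp$ this already yields $C^2\le 0$, and it remains to promote this to a strict inequality when $C\ne 0$.

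For the strictness I would decompose $C = C^+ - C^-$ into its positive and negative parts, so that $C^+$ and $C^-$ are effective divisors supported in $|C|$ with no common component; both therefore lie in $H^\perp$, and they meet properly, so $C^+\cdot C^-\ge 0$. Expanding, $C^2 = (C^+)^2 + (C^-)^2 - 2\,C^+\cdot C^- \le (C^+)^2 + (C^-)^2$. If $C\ne 0$ then at least one of $C^\pm$ is a nonzero effective divisor; such a divisor cannot be numerically trivial (it pairs strictly positively with an ample class), so by the Hodge index theorem its self-intersection is strictly negative, while the two remaining terms are $\le 0$. Hence $C^2<0$. I do not expect a genuine obstacle here: the only point needing a little care is exactly this passage from the semidefinite bound $C^2\le 0$, which is immediate, to strict negativity, and the positive/negative part decomposition disposes of it; the rest is the classical negativity argument for contracted curves and is insensitive to the base field.
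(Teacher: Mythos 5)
Your proof is correct and takes essentially the same route as the paper's: pull back an ample divisor from $S'$, observe that it has positive self-intersection and is orthogonal to $C$ because $\pi_*[C]=0$, and conclude by the Hodge index theorem. Your decomposition $C=C^+-C^-$ supplies the strict inequality $C^2<0$ for divisors of mixed sign, a point the paper's one-line appeal to the Hodge index theorem leaves implicit (and which is immediate in its applications, where $C$ is effective and hence not numerically trivial).
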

\begin{proof}
Let $A$ be an ample divisor on $S'$. Note that $\pi^*(A)$ remains big and nef, as this property is invariant under generically finite pullback. In particular, $\pi(A)^2 > 0$. On the other hand, $\pi_*([C]) = 0 \in A_1(S')$ as $\pi(|C|)$ is a point (see e.g., \cite[\S1.4]{Fulton}). By the projection formula \cite[Prop.~2.3(c)]{Fulton}, $\pi^*(A) . [C] = \pi_*([C]) . A = 0 \in A_0( \pi(|C|)) \simeq \IZ$. Therefore, the Hodge index theorem on $S$ implies that $C^2 < 0$. 
\end{proof}

\begin{proposition}
\label{prop: apply Hodge index}
    Consider a diagram of morphisms
    \[\begin{tikzcd}
	S & Y \\
	S'
	\arrow["\pi"', from=1-1, to=2-1]
	\arrow["\tau", from=1-1, to=1-2]
    \end{tikzcd},\]
    between integral projective varieties over a field such that $S$ is a smooth surface and $\pi$ is generically finite. Suppose that $C \subseteq S$ is a connected reduced curve which is contracted by both $\pi$ and $\tau$. 
    \begin{enumerate}[label=\upshape{(\alph*)}]
        \item If $\dim Y = 1$, then there exists a connected curve $D \subseteq S$ such that $D \subseteq \tau^{-1}(\tau(C))$, $D \cap C \neq \emptyset$, and $D$ is not contracted by $\pi$. 
        \item If $\dim Y = 2$, then for every Cartier divisor $E$ on $Y$ with $\tau(C) \in |E|$, there exists an integral curve $D \subseteq S$ such that $D$ is contained in $\tau^{-1}(E)$, $D \nsubseteq C$ and $D \cap C \neq \emptyset$. 
    \end{enumerate}
\end{proposition}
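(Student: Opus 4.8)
The plan is to prove both parts by a dimension count combined with the Hodge index theorem, exploiting the hypothesis that $C$ is contracted by both $\pi$ and $\tau$ while $\pi$ is only generically finite (so cannot contract a divisor onto a point in a way that is numerically trivial). First I would treat part (b), which is the cleaner case. Fix a Cartier divisor $E$ on $Y$ with $\tau(C)\in |E|$; then $\tau^{-1}(E)$ is a Cartier divisor (or empty, but it is nonempty since it contains $\tau(C)$) on the surface $S$ whose support contains $C$. Write $\tau^{-1}(E) = \sum a_i D_i$ as a sum of prime divisors with $a_i>0$. If every $D_i$ meeting $C$ were contained in $C$, then since $C$ is reduced and connected, $C$ would be a union of some of the $D_i$'s, and moreover no $D_i$ outside of $C$ would meet $C$; hence we could write $\tau^{-1}(E) = C' + F$ where $C'$ is supported on $C$ (an effective divisor with $|C'|\subseteq C$, $|C'|\neq\emptyset$) and $F$ is an effective divisor disjoint from $C$. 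Then $\tau^{-1}(E)\cdot C' = (C')^2$ since $F\cdot C' = 0$. But $C'$ is contracted by $\pi$ (as $|C'|\subseteq C$), so \Cref{lem: neg self intersection} gives $(C')^2 < 0$. On the other hand $\tau^{-1}(E)$ is the pullback of a Cartier divisor under $\tau$, and $C'$ is contracted by $\tau$, so by the projection formula $\tau^{-1}(E)\cdot C' = E \cdot \tau_*[C'] = 0$ because $\tau_*[C'] = 0 \in A_0(\tau(C))\simeq\IZ$ (the image being a point). This contradiction shows that some prime component $D = D_i$ of $\tau^{-1}(E)$ satisfies $D\cap C\neq\emptyset$ and $D\nsubseteq C$, which is exactly the conclusion of (b).

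For part (a), where $\dim Y = 1$: here $\tau(C)$ is a point $y\in Y$ on a curve. Since $\tau$ is a morphism from the projective surface $S$ to the curve $Y$ and $\dim Y = 1$, the fibre $\tau^{-1}(y)$ is a (nonempty) divisor on $S$; let $\Phi := \tau^{-1}(y)$ with its reduced structure, and let $\Phi_0$ be the connected component of $\Phi$ containing $C$ (connectedness of a suitable component is automatic since $C$ is connected and contained in $\Phi$). I claim $\Phi_0$ is not entirely contracted by $\pi$. Indeed, a fibre $\tau^{-1}(y)$ is numerically equivalent (after multiplying by a positive integer, or working with $\IQ$-coefficients and a general fibre) to $\tau^*(\text{pt})$, which is nef with $(\tau^*\text{pt})^2 = 0$; restricting to the connected component $\Phi_0$ one gets an effective divisor $\Psi$ supported on $\Phi_0$ with $\Psi\cdot(\text{component of }\Phi_0) = 0$ for the relevant intersection structure — more concretely, if $\Phi_0$ were contracted by $\pi$, then every divisor supported on $\Phi_0$ would have negative self-intersection by \Cref{lem: neg self intersection}, yet the class $\tau^*(\text{pt})$ restricted appropriately forces $\Phi_0$ to contain at least one curve with non-negative contribution, contradicting total negative-definiteness of the intersection form on a $\pi$-contracted configuration. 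Hence some prime component $D\subseteq\Phi_0$ is not contracted by $\pi$. Since $\Phi_0$ is connected and $C\subseteq\Phi_0$, I can choose $D$ within $\Phi_0$ meeting $C$: walk along the connected configuration $\Phi_0$ from a component of $C$ until reaching the first component not contracted by $\pi$; its predecessor is either a component of $C$ or another contracted component adjacent to $C$, and chaining these gives a connected curve $D\subseteq\Phi_0 = \tau^{-1}(\tau(C))_{\Phi_0}$ with $D\cap C\neq\emptyset$ and $D$ not contracted by $\pi$. (One may take $D$ to be the union of all components of $\Phi_0$ reachable from $C$ together with one non-$\pi$-contracted component.)

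The main obstacle I anticipate is making the intersection-theoretic argument in part (a) fully rigorous: one must be careful that $\tau^{-1}(y)$ is genuinely a divisor (this uses properness of $\tau$ and $\dim Y = 1$, so every fibre is pure of dimension $1$ or one invokes that a non-dominant map to a curve has the right fibre dimension after passing to the Stein factorization), and that the intersection form on the configuration of $\pi$-contracted curves inside a single $\tau$-fibre is negative definite (this is \Cref{lem: neg self intersection} applied to arbitrary effective combinations, i.e.\ the standard fact that the dual graph of curves contracted by a birational-to-its-image morphism is negative definite). The combinatorial "walk along the connected configuration" step is routine once negative-definiteness is established and one knows not all of $\Phi_0$ is $\pi$-contracted. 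Part (b) should go through essentially as written above, the only subtlety being to record that $\tau^{-1}(E)$ is effective and to organize the decomposition $\tau^{-1}(E) = C' + F$ correctly using reducedness and connectedness of $C$.
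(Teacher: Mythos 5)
Your proposal is correct and follows essentially the same route as the paper: in both parts the key input is \cref{lem: neg self intersection} combined with the projection formula ($\tau_*$ kills $1$-cycles supported on $\tau$-contracted curves), and your part (b) is exactly the paper's computation $[C']\cdot[\tau^{-1}(E)]=0$, $[C']^2<0$, organized as a contradiction. For part (a), the only step you leave vague (that $\Phi_0$ carries an effective divisor of non-negative square) is precisely what the paper nails down: after replacing $Y$ by its normalization, the pullback divisor $\tau^{*}(\tau(C))$ --- or its piece supported on the connected component $\Phi_0$, which is disjoint from the rest of the fiber --- has self-intersection $0$ by the projection formula, contradicting \cref{lem: neg self intersection} if $\Phi_0$ were $\pi$-contracted.
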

\begin{proof}
Below we write $Q$ for the image $\tau(C)$, which is a reduced point on $Y$. 
We also assume that the image $\tau(S)$ is not a point; otherwise the claim is much easier since the map $\pi$ cannot contract every curve on $S$..

To prove (a), it is harmless to replace $Y$ by its normalization and assume that $Y$ is smooth and view $Q$ as a divisor on $Y$. Then by the projection formula for the intersection product, we know the self-intersection product $[\tau^{-1}(Q)]^2$ of the corresponding cycle class $[\tau^{-1}(Q)]$ is equal to $0$. 
Therefore, by \cref{lem: neg self intersection}, the divisor $|\tau^{-1}(Q)|$ cannot be contracted by $\pi$ and is not equal to $|C|$.
Note that by \cite[\href{https://stacks.math.columbia.edu/tag/0AY8}{Tag 0AY8}]{stacks-project}, the support $|\tau^*(Q)|$ is geometrically connected. 
Therefore, the divisor $|\tau^{-1}(Q)|$ must be of the form $C \cup D$ for some divisor $D$ satisfying the assumption in (a). 

Now we prove (b). 
Let $C_1, \cdots, C_n$ be the irreducible components of $|C|$. 
Note that as $S$ is a smooth surface, we may freely identify the Chow group $A_1(S)$ with $\Pic(S)$. 
By the assumption that $Q \in |E|$, the preimage $\tau^{-1}(E)$ is of the form $\sum a_i C_i + \wt{D}$, where $a_i \in \IN_{> 0}$ and $\wt{D}$ is a non-trivial divisor whose support does not contain any of the $C_i$'s as an irreducible component. 
Write $[C']$ for the element $\sum a_i [C_i] \in A_1(S)$. 
Then as each $C_i$ is contracted to a point, we know $\tau_*([C'])= 0 \in A_1(Y)$. 
Applying the projection formula to the map $\tau$, we have $[C'] . [\tau^{-1}(E)] = [\tau_*(C')] . [E] =  0 \in A_0(Q) \simeq \IZ$. 
On the other hand, we know $[C'] . [\tau^{-1}(E)] = [C']^2 + [C' ]. [\wt{D}]$ and $[C']^2 < 0$ by \cref{lem: neg self intersection}. 
Hence we must have $[C'] . [\wt{D}] > 0$. 
This implies that we may take $D$ to be some connected component of $|\wt{D}|$ which intersects $C$, which necessarily exists. 
\end{proof}

Our next lemma shows that for a one-dimensional family that is trivializable up the normalization, we may freely replace one fiber to another, for the purpose of proving the Tate conjecture.
\begin{lemma}
\label{lem: move points}
    Let $C$ be an integral curve over an algebraically closed field $\kappa$, and $f : X \to C$ be a smooth proper morphism of algberaic spaces with geometrically connected fibers. Take a prime $\ell \neq \mathrm{char\,}\kappa$. Let $\nu : C^\nu \to C$ be the normalization. 
    Assume the following data:
    \begin{itemize}
    	\item a trivialization of the family $\alpha : X \times_C C^\nu \simeq X_0 \times_\kappa C^\nu$ for some smooth proper algebraic space $X_0$ over $\kappa$;
    	\item a trivialization $\alpha_\ell$ of the local system $\alpha_\ell$ that identifies it with the constant local system with fiber $\H^2_\et(X_0, \IQ_\ell)$ over $C$;
    	\item the pullback $\nu^* \alpha_\ell$ is compatible with $\alpha$.
    \end{itemize}
Then for every two $\kappa$-points $c, c' \in C$, there is a canonical isomorphism $\NS(X_c)_\IQ \simeq \NS(X_{c'})_\IQ$ which is compatible with the identification $\H^2_\et(X_c, \IQ_\ell) \simeq \H^2_\et(X_{c'}, \IQ_\ell)$ offered by $\alpha_\ell$. 
\end{lemma}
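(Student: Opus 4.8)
The plan is to produce the isomorphism $\NS(X_c)_\IQ \simeq \NS(X_{c'})_\IQ$ by factoring it through the N\'eron--Severi group of $X_0$. First I would note that the normalization $\nu : C^\nu \to C$ is a finite surjective morphism, so for every $\kappa$-point $c \in C$ we may choose a $\kappa$-point $\tilde{c} \in C^\nu$ with $\nu(\tilde{c}) = c$; the trivialization $\alpha$ restricts to an isomorphism $X_c = (X \times_C C^\nu)_{\tilde{c}} \sto (X_0 \times_\kappa C^\nu)_{\tilde{c}} = X_0$, which induces an isomorphism $\NS(X_c)_\IQ \sto \NS(X_0)_\IQ$. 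A priori this depends on the choice of $\tilde{c}$, so the first technical point is to check that it does not: if $\tilde{c}, \tilde{c}' \in C^\nu$ both lie over $c$, then they lie in the same connected component of $C^\nu$ (as $C$ is irreducible, so is $C^\nu$), and since $\NS(-)$ of a smooth proper family over a connected base is locally constant for a path-independent reason only after passing to $\IQ$-coefficients, one must be slightly careful. The clean way is to use that the identification of $\NS(X_c)$ with components of the Picard scheme (\Cref{thm: specialization of line bundle}(d)) together with the constancy of $\alpha_\ell$: the two composites $\NS(X_c)_\IQ \to \NS(X_0)_\IQ$ obtained from $\tilde{c}$ and $\tilde{c}'$ differ by an automorphism of $\NS(X_0)_\IQ$ that is compatible with the identity on $\H^2_\et(X_0, \IQ_\ell)$ (using the third hypothesis, that $\nu^*\alpha_\ell$ is compatible with $\alpha$, and that the $\ell$-adic realization $\NS_\IQ \hookrightarrow \H^2_\et(-,\IQ_\ell)(1)$ is injective by the Kummer sequence); hence it is the identity. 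This shows the isomorphism $\NS(X_c)_\IQ \sto \NS(X_0)_\IQ$ is canonical.

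Having established that, I would define the desired isomorphism $\NS(X_c)_\IQ \simeq \NS(X_{c'})_\IQ$ as the composite $\NS(X_c)_\IQ \sto \NS(X_0)_\IQ \sto[\sim] \NS(X_{c'})_\IQ$ of the two canonical maps (the second being the inverse of the canonical one for $c'$). It remains to check compatibility with the $\ell$-adic identification $\H^2_\et(X_c, \IQ_\ell) \simeq \H^2_\et(X_{c'}, \IQ_\ell)$ furnished by $\alpha_\ell$. By the second hypothesis, $\alpha_\ell$ identifies both $\H^2_\et(X_c, \IQ_\ell)$ and $\H^2_\et(X_{c'}, \IQ_\ell)$ with the \emph{same} fixed vector space $\H^2_\et(X_0, \IQ_\ell)$, so it suffices to check that, for a $\kappa$-point $c$ and a lift $\tilde{c}$, the square relating the canonical isomorphism $\NS(X_c)_\IQ \sto \NS(X_0)_\IQ$ (induced by $\alpha|_{\tilde{c}}$), the cycle class maps into $\H^2_\et(-,\IQ_\ell(1))$, and the $\ell$-adic trivialization $\alpha_\ell|_c$, commutes. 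This is exactly the content of the third hypothesis: $\nu^*\alpha_\ell$ is compatible with $\alpha$ means precisely that over $\tilde{c}$ the fiber $\alpha_\ell|_{\tilde{c}} = \alpha_\ell|_c \circ (\nu^*\text{-fiber})$ matches the $\ell$-adic realization of the geometric trivialization $\alpha|_{\tilde{c}}$, and cycle class maps commute with pullback along $\alpha|_{\tilde{c}} : X_c \sto X_0$.

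I expect the main obstacle to be organizing the path-independence argument cleanly without invoking more machinery than is available: one genuinely needs that $\NS_\IQ$ embeds into $\H^2_\et(-,\IQ_\ell)(1)$ (so that maps of $\NS_\IQ$ are detected $\ell$-adically) and that $C^\nu$ is connected (so all fibers $X_0$ coming from different lifts are "the same" in a way controlled by $\alpha_\ell$). A slightly subtle point worth spelling out is that we do \emph{not} need the family $X/C$ itself to be Zariski-locally resolvable or to have any special properties — $X/C$ is already smooth proper — and we do not need $C$ to be normal, which is the whole point of pulling back along $\nu$. So the write-up is essentially: (1) fix $c$, lift to $\tilde{c}$, get $\NS(X_c)_\IQ \sto \NS(X_0)_\IQ$; (2) show independence of the lift via the $\ell$-adic embedding and compatibility of $\nu^*\alpha_\ell$ with $\alpha$; (3) define the transition isomorphism through $X_0$ and read off $\ell$-adic compatibility from the same diagram. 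No hard calculation is involved — the content is purely formal, tracking compatibilities.
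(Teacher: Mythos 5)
Your proposal is correct and follows essentially the same route as the paper's proof: transport $\NS$ through the trivialization $\alpha$ via lifts to $C^\nu$, and use the injectivity of $\NS_\IQ$ into $\H^2_\et(-,\IQ_\ell(1))$ together with the constancy of $\alpha_\ell$ over $C$ to see that the resulting isomorphism is independent of the chosen lifts and compatible with the $\ell$-adic identification. Your write-up just makes explicit the choice-independence argument that the paper states in one sentence.
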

\begin{proof}
    Choose points $\wt{c}, \wt{c}' \in C^\nu$ which lift $c, c'$ respectively. 
    Then the map $\alpha$ induces an isomorphism $X_c = X_{\wt{c}} \simeq X_{\wt{c}'} = X_{c'}$. 
    The resulting isomorphism $\NS(X_c)_\IQ \simeq \NS(X_{c'})_\IQ$ does the job. Note that the isomorphism $X_c \simeq X_{c'}$ may depend on the choices of $\wt{c}, \wt{c}'$, but the resulting $\NS(X_c)_\IQ \simeq \NS(X_{c'})_\IQ$ does not, as the latter has to be compatible with $\alpha_\ell$ over $C$. 
\end{proof}

By applying \cref{cor: res of constant family}, we prove that the extended period morphism $\tau$ would contract any curve that is contracted by $\pi$. 
For \cref{lem: double contraction} and \cref{lem: bad points} below, let $k$ be a perfect field of characteristic $p$. 
\begin{proposition}
\label{lem: double contraction}
    Let $\tau:T\to \shS$ be an extended period morphism for a $k$-variety $T$, and let $C$ be a smooth $k$-curve mapping to $T$. 
    Assume one of the following conditions:
    \begin{enumerate}[label=\upshape{(\roman*)}]
        \item For a general $t \in C$, the minimal resolution $\breve{\sX}_t$ is not supersingular, and the map $C \to \shS_k$ extends to its compactification $\overline{C}\to \shS_k$. 
        \item For a general point $t \in C$ and every $i,j$, we have an equality of Hodge numbers $h^{i, j}(\breve{\sX}_t) = h^{i, j}(\sX_\eta)$. 
    \end{enumerate}
    Then if the map $\pi:T \to \sfM$ contracts $C$, so does $\tau$. 
\end{proposition}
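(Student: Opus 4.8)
The plan is to reduce the statement, via the observations in the previous subsection, to showing that $\tau^*\bL_\crys|_C$ is a constant $F$-crystal-with-pairing (\cref{lem: constant K3 crystal}) or that $\tau$ contracts $\overline C$ by a positivity argument (\cref{prop: height jump}). Let me set up notation: since $\pi$ contracts $C$, every fiber $\sX_t$ for $t\in C$ is isomorphic (as a surface with RDPs) to a fixed $\sX_0$, and after passing to the normalization $C^\nu$ and possibly a generically \'etale cover, the family $\sX|_{C^\nu}$ admits a simultaneous resolution; by \cref{cor: res of constant family} this resolution is the constant family $\breve{\sX}_0\times C^\nu$ (here the hypothesis $p\ge 5$ from \cref{set-up: surfaces}, resp.\ the algebraicity input, is what allows invoking \cref{cor: res of constant family}, and one first spreads out over a dense open and then glues at the missing points using \cref{BL gluing} exactly as in the proof of that corollary). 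So $\breve{\sX}_t\cong \breve{\sX}_0$ for all $t\in C$; in particular the Hodge numbers and the Newton polygon of $\breve{\sX}_t$ are constant along $C$.

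First I would handle case (i). The assumption that $\breve{\sX}_t$ is not supersingular for general $t$, combined with the constancy just established, shows that $\breve{\sX}_t$ is non-supersingular for \emph{every} $t\in C$; hence the $F$-isocrystal $\breve{\bP}_{\crys,C}[1/p]\simeq \tau^*\bL_\crys(-1)[1/p]|_C$ is nowhere supersingular. Moreover, because the resolved family over $C^\nu$ is literally constant, the pullback of this $F$-isocrystal to $C^\nu$ is a constant $F$-isocrystal; in particular it has constant slope filtration, and since $C^\nu\to C$ is finite surjective, $\tau^*\bL_\crys[1/p]|_C$ has constant slope filtration as well. Now extend $\tau|_C$ to $\overline C\to\shS_k$ as permitted by hypothesis (i); the pulled-back $F$-isocrystal $\tau^*\bL_\crys[1/p]$ on $\overline C$ still has constant slope filtration and is nowhere supersingular (these are checked on the dense open $C$, using that constancy of the slope filtration on a dense open forces it on a smooth proper curve, or simply re-running the $C^\nu$ argument on the compactification). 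Then \cref{prop: height jump} applies directly: $\tau$ contracts $\overline C$, and a fortiori $\tau$ contracts $C$.

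For case (ii), the equality $h^{i,j}(\breve{\sX}_t)=h^{i,j}(\sX_\eta)$ for general $t$, together with the constancy along $C$, gives this equality for all $t\in C$. The point is to feed this into \cref{lem: constant K3 crystal}: I would show $\tau^*\bL_\crys|_C$ is a constant $F$-crystal with pairing. Again pass to $C^\nu$, where $\sX|_{C^\nu}$ resolves to $\breve{\sX}_0\times C^\nu$; the relative primitive crystalline cohomology of a constant family is manifestly a constant $F$-crystal (here the Hodge-number equality plus \cref{lem:local_free_of_crys_coh} is used to guarantee $\breve R^2(f_{C^\nu})_{\crys *}\sO$ is locally free, so that the \emph{integral} $F$-crystal $\breve{\bP}_{\crys,C^\nu}$ is defined and is constant-with-pairing). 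By the matching isomorphism $\alpha_{\crys,C^\nu}$ of \cref{const: extend comparison}, $\tau^*\bL_\crys|_{C^\nu}$ is a constant $F$-crystal with pairing. Descending this constancy from $C^\nu$ to $C$ requires the rigidity/full faithfulness of \cref{lem: rigidity of F-crystals}: the two pullbacks of $\tau^*\bL_\crys|_C$ along the two projections $C^\nu\times_C C^\nu\to C^\nu$ agree (both being the relevant constant crystal), hence $\tau^*\bL_\crys|_C$ itself is constant with pairing on the (smooth, dense-open) locus where $C^\nu\to C$ is \'etale, and then \cref{lem: rigidity of F-crystals} propagates this over all of $C$. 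Now \cref{lem: constant K3 crystal} yields that $C$ is contracted to a point by $\tau$.

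The main obstacle I anticipate is the descent from $C^\nu$ back to $C$ — i.e.\ controlling what happens at the ramification points of $C^\nu\to C$ and at the points where the simultaneous resolution required passing to a further cover. The cohomology sheaf $\breve R^2 f_{\crys *}$ on $C$ is defined by gluing (\cref{lem: coh of Zariski-resolvable}, \cref{const: VHS and crystals resolved}), so one must be careful that the ``constant'' structure on $C^\nu$ actually glues to a constant structure on $C$ rather than merely a locally-constant/isotrivial one; this is precisely the kind of subtlety addressed by \cref{cor: res of constant family} and \cref{rmk:short_proof_on_constancy} on the geometric side, and by \cref{lem: rigidity of F-crystals} on the crystalline side, so I expect the argument to go through, but the bookkeeping of base points and the compatibility of the various matching isomorphisms $\alpha_\ell,\alpha_\crys$ under these covers is where care is needed. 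A secondary point is verifying that ``not supersingular for general $t$'' genuinely upgrades to ``for all $t$'': this uses \cref{prop:var_of_Netwon} (upper semicontinuity of Newton polygons) together with the constancy of $\breve{\sX}_t$, but one should check the two are consistent rather than circular.
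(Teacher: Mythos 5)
Your overall strategy is the paper's: use \cref{cor: res of constant family} to see that the resolved family over $C$ is constant, then in case (i) feed constancy of the $F$-isocrystal into \cref{prop: height jump} after extending over $\overline C$, and in case (ii) produce an integral constant $F$-crystal (local freeness via the Hodge-number hypothesis and \cref{lem:local_free_of_crys_coh}) and conclude with \cref{lem: constant K3 crystal}. However, the step where case (i) crosses from $C$ to $\overline C$ is not correctly justified and, as written, would fail. It is not true that constancy of the slope filtration on a dense open of a smooth proper curve forces it on the whole curve: by Grothendieck--Katz specialization the Newton polygon can jump up exactly at the finitely many boundary points (a generically ordinary family acquiring special members of higher Newton polygon is the standard picture), and your fallback of ``re-running the $C^\nu$ argument on the compactification'' is unavailable because there is no family of surfaces over $\overline C\smallsetminus C$ — those points need not even map to $\sfM$. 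The correct mechanism, and the one the paper uses, is full faithfulness of restriction of $F$-isocrystals along a dense open (\cite[Thm.~2.2.3]{DK17}): the isomorphism of $\tau^*\bL_\crys[1/p]|_C$ with a constant $F$-isocrystal extends over $\overline C$, and only then do you know the Newton polygon is constant and nowhere supersingular on all of $\overline C$, so that \cref{prop: height jump} applies.

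Separately, your detour through $C^\nu$ and a generically \'etale cover is both unnecessary and insufficiently justified. It is unnecessary because $C$ is smooth by hypothesis and, by \cref{def: extended period}, $\sX|_T$ is Zariski-locally resolvable: one restricts a simultaneous resolution $\sY$ over an open $U\subseteq T$ meeting $C$ to (an open of) $C$ and applies \cref{cor: res of constant family} there — this is exactly what the paper does, and it also supplies the (iso)crystal matching on $C$ by extending $\alpha_\crys$ over $U$ and restricting, which matters because $C$ may miss $T^\circ$ entirely, so the matching cannot be built on $C$ alone. It is insufficient because ``the two pullbacks along $C^\nu\times_C C^\nu$ agree'' does not descend a trivialization: a crystal whose pullback to a finite cover is constant need not be constant (isotrivial versus trivial; e.g.\ a nontrivial unit-root crystal attached to a character of $\pi_1(C)$ killed by the cover), so one would have to verify compatibility of the trivialization with the descent datum. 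Since the cover is not needed, this gap evaporates once you take the paper's route; with these two repairs your argument coincides with the paper's proof.
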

\begin{proof}
    Recall from \Cref{def: extended period} that the restriction of $\sX$ to $T$, and hence $C$, is Zariski-locally resolvable. 
    So we may take some open $U \subseteq T$ such that $C \times_T U \neq \emptyset$ and $\sX|_U$ admits a simultaneous resolution $\sY$, and for convenience, we replace $C$ by its open subset $C \times_T U$. 
    Note that under the assumption, the restriction $\sY|_C$ is a resolution of the constant family $\sX|_C$. 
    Now thanks to \cref{cor: res of constant family}, the family $\sY|_C$ must also be a constant family.

    We first prove the statement assuming (i).
    Recall that we have an isometry of F-isocrystals $\bL_{\crys}(-1)[1/p]|_{T^\circ} \simeq \bP_\crys|_{T^\circ}$. 
    By the full faithfulness of the restriction functor for F-isocrystals in \cite[Thm.~2.2.3]{DK17}, the isometry extends over $T$. 
    Therefore, using that $\sY|_C$ is a constant family, we deduce that $\bL_{\crys}[1/p]|_C$ is constant. 
    Moreover, since $C \to \shS_k$ extends to $\overline{C} \to \shS_k$, by applying \cite[Thm.~2.2.3]{DK17} again we know that $\bL_{\crys}[1/p]|_{\overline{C}}$ is also constant. 
    If this F-isocrystal is not supersingular, then by \cref{prop: height jump}, $\overline{C}$ must contract to a point. This proves (i).

    Now we consider (ii). 
    By the semi-continuity of Hodge numbers, up to shrinking $U$ to an open we may assume that the Hodge number condition in (ii) is satisfied for every point on $U$. 
    By \Cref{lem:local_free_of_crys_coh}, the relative $\H^2_\crys$ of the family $\sY|_U$ is a locally free F-crystal. Then we have by the matching of crystalline cohomology by period morphism $\bL_\crys(-1)|_{U \cap T^\circ} \simeq \bP_\crys|_{U \cap T^\circ}$. By \cref{lem: rigidity of F-crystals}, this extends to $\bL_\crys(-1) |_{U } \simeq \bP_\crys|_{U}$. 
    As $\sY|_C$ is a constant family, we infer that $\bL_\crys|_C$ is a constant F-crystal. 
    By \cref{prop: Mazur-Ogus}, this implies that the restriction of $(\bL_\dR, \Fil^\bullet \bL_\dR)$ to $C$ is a constant filtered flat vector bundle. 
    So the conclusion follows from \cref{lem: constant K3 crystal}.
\end{proof}

\begin{theorem}
\label{lem: bad points}
    Let $\tau:T\to \shS$ be an extended period morphism from a smooth connected $k$-surface $T$, and let $\pi$ be the associated morphism $T \to \sfM$. Let $t \in T$ be a closed point. 
    Assume the following conditions. 
    \begin{enumerate}[label=\upshape{(\roman*)}]
        \item The map $\pi$ factors through a surjective morphism onto a smooth $k$-subsurface $S \subseteq \sfM_k$ such that $\dim S \cap \sfD^-_k = 0$. 
        \item The generic point of any integral $k$-curve on $T$ that is not contracted by $\pi$ is motivically aligned. 
        \item The minimal resolution $\breve{\sX}_t$ is non-supersingular, or $h^{i, j}(\breve{\sX}_s) = h^{i, j}(\sX_\eta)$ for every closed point $s\in T$ and every $i, j$. 
    \end{enumerate}
     Then $\tau$ is motivically aligned at $t$. 
\end{theorem}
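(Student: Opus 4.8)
Fix $\zeta\in\LEnd(\sA_{\tau(t)})_\IQ$; we must show the pair $(t,\zeta)$ is motivically aligned, and we may assume $\zeta\neq 0$, so that $Q(\zeta)>0$ and the locus $\shZ(\zeta)\subseteq\shS_k$ along which $\zeta$ deforms is a genuine divisor (Cartier, since $\shS$ is smooth; cf.\ \cite{MPTate,CSpin}) passing through $\tau(t)$. Set $Z\colonequals\tau^{-1}(\shZ(\zeta))\subseteq T$; it contains $t$, and along (the normalization of) every component of $Z$ through $t$ the endomorphism $\zeta$ deforms. First I would dispose of two easy cases. If $Z=T$ — in particular if $\dim\tau(T)=0$, or more generally if $\overline{\tau(T)}\subseteq\shZ(\zeta)$ — then $\zeta$ deforms over all of $T$; since $\pi$ is surjective onto the surface $S$ there is an integral curve $D\ni t$ not contracted by $\pi$, whose generic point is motivically aligned by hypothesis (ii), and specializing the deformed $\zeta$ back along $D$ to $t$ via \cref{lem: def invariance of MA}(a) concludes. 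Likewise, if $Z$ is a proper divisor and some component $D$ of $Z$ through $t$ is \emph{not} $\pi$-contracted, then the generic point of $D$ is motivically aligned by (ii) and \cref{lem: def invariance of MA}(a) transports this back to $(t,\zeta)$.

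The remaining (and essential) case is when $Z$ is a proper divisor and every component of $Z$ through $t$ is $\pi$-contracted. Here hypothesis (iii) together with \cref{lem: double contraction} shows that every such component is also $\tau$-contracted: when Hodge numbers match this is part (ii) of that proposition; in the non-supersingular alternative one observes that a $\pi$-contracted curve lies over a single point of $\sfM$, hence carries a constant family $\sX$, a constant simultaneous resolution by \cref{cor: res of constant family}, and a trivial local system $\tau^*\bL_\ell$, so that $\tau$ extends over its smooth compactification by \cref{lem: NOS} and part (i) applies. Let $C$ be the connected union of the components of $Z$ through $t$; then $\pi(C)$ is a single point $s_0$ and $\tau(C)$ is the point $P\colonequals\tau(t)$. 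Passing to smooth projective models (resolving the boundary indeterminacies of $\pi$ and $\tau$, which does not disturb $T$ or $t$), we obtain a generically finite $\pi\colon T\to S$ onto a projective surface and a morphism $\tau\colon T\to Y$ with $Y\colonequals\overline{\tau(T)}$ projective, and $C$ is contracted by both.

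Now the Hodge index theorem enters. If $\dim Y=1$, \cref{prop: apply Hodge index}(a) produces a connected curve $D\subseteq\tau^{-1}(P)\subseteq Z$ meeting $C$ and not contracted by $\pi$. If $\dim Y=2$, I would take $E$ to be (the closure of) the Cartier divisor $\shZ(\zeta)\cap Y$ through $P$, so that $\tau^{-1}(E)=Z$, and apply the mechanism of \cref{prop: apply Hodge index}(b): the identity $[Z]\cdot[C']=0$ (projection formula, as $\tau$ contracts $C$) together with $[C']^{2}<0$ (\cref{lem: neg self intersection}, as $C$ is $\pi$-contracted) forces $Z$ to have a component off $C$ meeting $C$; if that component is $\pi$-contracted it is also $\tau$-contracted onto $P$ by \cref{lem: double contraction}, so I absorb it into $C$ and repeat — the process terminates by finiteness of components, leaving a non-$\pi$-contracted curve $D\subseteq Z$ meeting the enlarged connected $\pi$- and $\tau$-contracted curve $C^{\max}\ni t$ with $\tau(C^{\max})=P$. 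In all cases we thus have a non-$\pi$-contracted curve $D\subseteq Z$ meeting a connected, $\pi$- and $\tau$-contracted curve $C^{\max}\ni t$. By (ii) the generic point of $D$ is motivically aligned, so \cref{lem: def invariance of MA}(a) yields that $(t',\zeta)$ is motivically aligned for a point $t'\in D\cap C^{\max}$ (where $\tau(t')=P$ and the deformation of $\zeta$ along $D$ restricts to $\zeta$). Finally, since $C^{\max}$ is $\pi$-contracted, $\sX|_{C^{\max}}$ is a constant family whose simultaneous resolution is the constant family $\breve{\sX}_t\times(C^{\max})^{\nu}$ by \cref{cor: res of constant family}, and $\tau^{*}\bL_\ell(-1)|_{C^{\max}}$, $\tau^{*}\bL_\crys(-1)|_{C^{\max}}$ are constant and match $\breve{\bP}_\ell$, $\breve{\bP}_\crys$ compatibly; \cref{lem: move points} (and its crystalline analogue, proved identically) then gives an isometry $\PNS(\breve{\sX}_{t'})_\IQ\simeq\PNS(\breve{\sX}_t)_\IQ$ compatible with the $\ell$-adic and crystalline matching isomorphisms, transporting the alignment of $(t',\zeta)$ to $(t,\zeta)$. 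Hence $\tau$ is motivically aligned at $t$.

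The main obstacle is exactly the last case with $\dim Y=2$: ruling out that $\shZ(\zeta)$ pulls back to a divisor supported, near $t$, entirely on $\pi$-exceptional (hence $\tau$-exceptional) curves. The intersection-theoretic dichotomy of \cref{prop: apply Hodge index}, resting on the negative-definiteness of contracted curves (\cref{lem: neg self intersection}) and therefore on the Hodge index theorem, is precisely what produces the required non-$\pi$-contracted curve still lying on $Z$; this is the technical heart. A secondary subtlety, handled by \cref{const: extend comparison}, \cref{cor: res of constant family} and \cref{lem: move points}, is that $C^{\max}$ and $D$ may lie entirely inside the discriminant locus, so one must argue throughout with the \emph{extended} coefficient systems $\breve{\bP}_{?}$ and their matching isomorphisms rather than with the cohomology of smooth fibres.
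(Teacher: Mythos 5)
Your proposal follows essentially the same route as the paper: extend $\zeta$ along a special divisor, use \cref{lem: double contraction} (hence the constancy result \cref{cor: res of constant family}) to upgrade ``$\pi$-contracted'' to ``$\tau$-contracted'', use the Hodge-index dichotomy of \cref{prop: apply Hodge index} to produce a non-$\pi$-contracted curve inside the special-divisor pullback meeting the contracted locus, shuttle along the constant family via \cref{lem: move points}, and conclude with hypothesis (ii) and \cref{lem: def invariance of MA}. The differences are organizational: the paper packages the conclusion as the existence of a ``good curve'' (with the deformation condition phrased via a path and a specialization map), splits on $\dim \tau(T)$ and on whether the connected component $C$ of $\overline{\pi}^{-1}(\pi(t))$ through $t$ is a point or a curve, and---by taking $C$ to be the \emph{whole} fiber component rather than your union of components of $Z$ through $t$---gets ``not contained in $C$ implies not $\pi$-contracted'' for free, so your absorb-and-iterate step is not needed.

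Two points in your sketch are imprecise as written and need the paper's repairs. First, the deformation locus of the specific $\zeta$ is only \'etale-locally a Cartier divisor on $\shS$ (only $\shZ(m)$ with $m=\zeta\circ\zeta$ is globally defined, and its branches may be permuted), so ``$Z=\tau^{-1}(\shZ(\zeta))$'' must be set up on a connected component $V$ of $T\times_{\shS}\sU$ for an \'etale neighborhood $\sU$ of $\tau(t)$, with a special endomorphism $\Xi$ extended over the local divisor $\sE$ and all deformation statements phrased via paths/specializations---this is exactly why the paper formulates condition (2) of goodness that way. Second, the curves produced by \cref{prop: apply Hodge index} (both the ones you absorb and the final $D$) live on the compactification $\overline{T}$ and may lie outside $T$, where hypothesis (ii), Zariski-local resolvability, and the matching of F-(iso)crystals invoked in \cref{lem: double contraction} are not yet available; since these curves lie over $\pi(t)\in\sfM$, this is fixed, as in the paper's case (b2), by extending $\tau$ over $\overline{T}\times_{\overline{\sfM}}\sfM$ via \cref{cor: extend DVR} and passing to a resolvable generically \'etale cover via \cref{lem: etale locally resolvable}. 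With these two adjustments your argument coincides with the paper's.
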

\begin{proof}
     By \Cref{def: motivically aligned}, we need to show that for every $\zeta \in \LEnd(\sA_{\tau(t)})$, the pair $(t, \zeta)$ is motivically aligned. We may assume that $k$ is algebraically closed. 
     Let $Y$ be the scheme-theoretic image $\tau(T)\subset \shS$. 
     Let $\overline{S}$ be the Zariski closure of $S$ in $\overline{\sfM}_k$. 
     
     We first define an integral $k$-curve $D\subset T$ to be \emph{good} with respect to the pair $(t,\zeta)$ if it satisfies the following properties: 
     \begin{enumerate}[label=\upshape{(\arabic*)}]
         \item The point $t$ is contained in $D$, and $D$ is not contracted by $\pi$. 
         \item There exists a path $\gamma : s \rightsquigarrow t$, where $s$ is a geometric generic point on $D$, such that $\zeta$ lies in the image of the specialization map $\sp_\gamma : \LEnd(\sA_{\tau(s)})_\IQ \to \LEnd(\sA_{\tau(t)})_\IQ$. 
     \end{enumerate}
     Granting this existence of a good curve, by assumptions (i) and (ii), we know that the point $s$ is motivically aligned as it is not mapped into $\sfD^-_k$. 
     Hence by the fact that being motivically aligned is preserved under specialization (\cref{lem: def invariance of MA}), we conclude that $(t, \zeta)$ is also motivically aligned.
     Here we note that even if a good curve $D$ might be singular at $t$, a path $\gamma : s \rightsquigarrow t$ always exists by a general fact about locally Noetherian schemes (\cite[\href{https://stacks.math.columbia.edu/tag/054F}{Tag 054F}]{stacks-project}). 
     
     Now we give the construction of the good curve $D$ (up to deforming $t$ along a constant family), based on the possible dimension of the image $Y$.
     As we shall see, the curve $D$ will be constructed such that any path $\gamma$ will satisfy the condition (2).\\
     
     \noindent \textit{Case (a).} Assume $\dim Y=0$.
     	We let $D\subset T$ be any curve through $t$ that is not contracted by $\pi$, which exists because $\pi$ is generically finite.
     	Let $\gamma : s \rightsquigarrow t$ be any path from a geometric generic point over $D$.
     	As $D$ is contracted by $\tau$, $\tau(s)$ and $\tau(t)$ map to the same point on $\shS_k$, so that $\LEnd(\sA_{\tau(t)})_\IQ$ and $\LEnd(\sA_{\tau(s)})_\IQ$ are naturally isomorphic, which gives the property (2).\\\\
     \noindent \textit{Case (b).} Assume $\dim Y=1$.
     	By the same reasoning as above, it suffices to show that there is a curve $D$ that satisfies (1) and is contracted by the map $\tau$.
     	Let $\overline{Y}$ be Zariski closure of $Y$ in $\overline{\shS}_k$. 
     	By applying resolution and compactification of surfaces as in \cite[\href{https://stacks.math.columbia.edu/tag/0C5H}{Tag 0C5H}]{stacks-project} we can always find a smooth compactification $\overline{T}$ of $T$ such that 
     	\begin{equation}
     		\label{eqn: extend to Tbar}
     		\begin{tikzcd}
     			T & Y \\
     			{S}
     			\arrow["\tau", from=1-1, to=1-2]
     			\arrow["\pi", from=1-1, to=2-1]
     		\end{tikzcd} \textrm{ extends to } \begin{tikzcd}
     			{\overline{T}} & {\overline{Y}} \\
     			{\overline{S}.}
     			\arrow["\overline{\tau}", from=1-1, to=1-2]
     			\arrow["\overline{\pi}", from=1-1, to=2-1]
     		\end{tikzcd}
     	\end{equation}
     We then let $C$ be the reduced subscheme of the connected component of $\overline{\pi}^{-1}(\pi(t))=\overline{T}\times_{\overline{\sfM}} \{\pi(t)\}$ containing the point $t$. 
     If $\dim C = 0$, namely when $C = \{t\}$, then there is no curve $D$ on $T$ that contains the point $t$ and is contracted by $\pi$.
     In particular, any curve $D\subset T$ that contains the point $t$ would satisfy (1).
     Notice that since $\overline{T}$ is smooth and connected of dimension $2$, yet $\dim \overline{Y}=1$, by \cite[\href{https://stacks.math.columbia.edu/tag/02FZ}{Tag 02FZ}]{stacks-project} each fiber of the map $\overline{\tau}$ has dimension at least one.
     Hence we may take a curve $D\subset \tau^{-1}(t)$ through $t$.
     
     We then assume that $\dim C = 1$.
     Since the curve $C$ is contracted by $\overline{\pi}$, by assumption (iii) and the \Cref{lem: double contraction}, we know $C$ is also contracted by $\overline{\tau}$.
     So by \cref{prop: apply Hodge index}.(a), we may find a curve $D \subset \overline{\tau}^{-1}(\overline{\tau}(C))$ that intersects with $C$ and is not contacted by the map $\overline{\pi}$.
     To continue, we discuss the two scenarios based on the position of the curve $C\subset \overline{\pi}^{-1}(\pi(t)) \subset \overline{T}$.
     \begin{itemize}
     	\item[(b1)] Assume the curve $C$ is already contained in the open subscheme $T$ of $\overline{T}$.
        Note that since the curve $C$ is contracted by $\pi$, the restriction $\sX|_{C}$ is a constant family. 
            By \cref{cor: res of constant family}, any (a priori Zariski local) simultaneous resolution of $\sX$ over the normalization $C^\nu$ of $C$ must be a constant family. 
            Hence by \cref{lem: double contraction}, $C$ is contracted by $\tau$. 
     	This implies that $\breve{\bP}_{\ell, T}|_C \simeq (\tau|_C)^* \bL_\ell$ is a constant $\ell$-adic local system. 
     	Therefore, we are in a situation to which \cref{lem: move points} applies, which implies that to prove that $(t, \zeta)$ is motivically aligned, we may deform it along the constant family over $C^\nu$ given by resolving $\sX|_{C^\nu}$ and assume that $t \in C \cap D$. 
     	\item[(b2)] In general, since the point $\pi(t)$ lies in $\sfM$, the scheme $\overline{\pi}^{-1}(\pi(t))=\overline{T} \times_{\overline{\sfM}} \pi(t)$ is always contained in $\overline{T} \times_{\overline{\sfM}} \sfM$.
     	We let $T' \colonequals \overline{T} \times_{\overline{\sfM}} \sfM$.
     	Using the extension property for curves in \cref{cor: extend DVR}, the morphism $\tau : T \to \shS_k$ extends to $T'$: Indeed, the morphism extends to $\overline{T} \to \overline{\shS}_k$ by construction, and in order to check whether a point $t' \in T$ maps into the interior $\shS_k$, we may take a general curve passing through $t'$ and apply \cref{cor: extend DVR}.
        
     	By \cref{lem: etale locally resolvable}, we can find a connected smooth $k$-surface $T''$, which is a connected generically \'etale cover of $T'$, such that $\sX|_{T''}$ is Zariski-locally resolvable. 
     	We take $C'' \subseteq T''$ to be a connected curve that maps surjectively to $C$.
     	As a consequence, we may conclude by applying the arguments in the preceeding paragraph by replacing $(T, C)$ by $(T'', C'')$. 
     \end{itemize}$~$\\
      \noindent \textit{Case (c).} Finally, we assume $\dim Y=2$, where our strategy is to extend the special endomorpohism to a hypersurface in $T$, using the basic facts about special divisors.
     	We let $m \in \IZ_{(p)}$ be the self-intersection number $\zeta\circ\zeta\in \LEnd(\sA_{\tau(t)})$, and let $\shZ(m)$ be the functor over $\mathsf{\Sch}/\shS$ defined by sending each $\shS$-scheme $?$ to the set 
     	$\{ \Xi \in \LEnd(\sA_?) \mid \Xi\circ \Xi = m \}$.
     	It is known that the functor $\shZ(m)$ is \'etale-locally representable by effective Cartier divisor on $\shS$ (cf. \cite[Prop.~4.5.8]{AGHMP}; see also \cite[Prop.~6.5.2]{HMP}). 
     	In particular, there exists an \'etale cover $\{ \sU_\alpha \}$ of $\shS$, such that the restriction $\shZ(m) |_{\sU_\alpha}$ is representable by an effective Cartier divisor $\sE_\alpha \subseteq \sU_\alpha$. 
     	By construction, we may extend the special endomorphism $\zeta\in \LEnd(\sA_{\tau(t)})$ to a special endomorphism in $\LEnd(\sA_{\sE_\alpha})$.
     	Namely, by picking a scheme $\sU := \sU_\alpha$ that is an \'etale neighborhood of the point $t$, we find a morphism $\iota : V \to \sU$ that covers $\tau$, where $V$ is a connected component of $T\times_{\shS} \sU$. 
        There exists a point $v \in V$ lifting $t \in T$ and an effective Cartier divisor $\sE \subseteq \sU$ passing through $\iota(v)$, together with a special endomorphism $\Xi \in \LEnd(\sA_\sE)$ such that $\Xi_{\iota(v)} = \zeta$ (under the identification $\sA_{\iota(v)} = \sA_{\tau(t)}$).

    We let $Z$ be the image $\iota(V)$ and choose an arbitrary compactification $\overline{Z}$. 
    Then by applying the compactification and the resolution for surfaces as before (\cite[\href{https://stacks.math.columbia.edu/tag/0C5H}{Tag 0C5H}]{stacks-project}), we can find a smooth compactification $\overline{T}$ of $T$ and $\overline{Z}$ of $Z$ which fit into a commutative diagram 
    \begin{equation}
        \begin{tikzcd}
	V & {\overline{V}} \\
	T & {\overline{T}} & Z & {\overline{Z}} \\
	S & {\overline{S}} & Y
	\arrow[hook, from=1-1, to=1-2]
	\arrow[from=1-1, to=2-1]
	\arrow["\iota"'{pos=0.4}, from=1-1, to=2-3]
	\arrow[from=1-2, to=2-2]
	\arrow["{{\overline{\iota}}}", from=1-2, to=2-4]
	\arrow[hook, from=2-1, to=2-2]
	\arrow[from=2-1, to=3-1]
	\arrow["\tau"'{pos=0.4}, from=2-1, to=3-3]
	\arrow[from=2-2, to=3-2]
	\arrow[hook, from=2-3, to=2-4]
	\arrow[from=2-3, to=3-3]
	\arrow[from=3-1, to=3-2]
\end{tikzcd}. 
    \end{equation}
    By taking the closure, the Cartier divisor $E \colonequals \sE \cap Z$ on $Z$ can be extended to $\overline{Z}$ and we denote the extension also by $\overline{E}$. 

    At this moment, we let $C$ be the reduced subscheme of the connected component of $\overline{V}_{\pi(t)}=\overline{V}\times_{\overline{S}} \{\pi(t)\}$ containing the point $v$. 
    If $\dim C = 0$, i.e., $C = \{v\}$, then we let $D$ be the connected component of $|\iota^{-1}(E)|$ passing through $v$, which then is not contracted by $\pi$ (and satisfies the condition (1)).
    In addition, since the special endomorphism $\zeta$ extends to a special endomorphism $\Xi$ over the entire divisor $E$, for any geometric generic point $s$ of $D$, we obtain an element $\Xi_{\iota(s)} \in \LEnd(\sA_{\iota(s)})$.
    Hence any path $\gamma:s \rightsquigarrow v$ on the curve specializes the element $\Xi_{\iota(s)}$ to $\zeta$, and in particular the curve $D$ is good with respect to the pair $(v, \zeta)$.

    If $\dim C = 1$, then again $C$ is contacted by $\pi$ and the family $\sX_C$ is a constant family by \Cref{lem: double contraction}, as in case (b).
    Assume first that $C \subseteq \overline{V}$ is already contained in the open part $V$, as in case (b1). 
    By \cref{prop: apply Hodge index}.(b), we may find a curve $D \subset \overline{\iota}^{-1}(\overline{E})$ that is not in $C$ but intersects $C$. 
    Thus by \Cref{lem: move points} and the same arguments as in the case (b1), we may replace the pair $(v, \zeta)$ by $(v', \Xi_{\iota(v')})$, where  $v' \in C \cap D$.
    Under the latter assumption, the special endomorphism $\Xi_{\iota(v')}$ admits a generalization $\Xi_{\iota(s)}$, where $s$ is any geometric generic point of $D$.
    Moreover, since $D$ is not in $C$, we know the curve $D$ is not contacted by $\pi$.
    Hence $D$ is good with respect to the pair $(v', \Xi_{\iota(v')})$. 
    Finally, if $C$ is not contained in $V$, one adapts the argument in (b2) and we omit the details. 
\end{proof}
Now we can finish the proof of \Cref{thm: general theorem} for those points that are left from \Cref{cor: good points}.
\begin{corollary}[The Tate conjecture for fibers with worse singularity]
\label{cor: bad points}
    \cref{thm: general theorem} holds for $s \in \sfD^-$ under the assumptions (ii) or (iii). 
\end{corollary}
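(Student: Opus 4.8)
The plan is to apply \cref{lem: reduction}: it suffices to construct an extended period morphism $\tau\colon T\to\shS$ so that, after a finite extension of $k$, the point $s$ lifts to a motivically aligned $k$-point $t$ of $T$, and the output of the construction will be designed so that \cref{lem: bad points} can be invoked to produce that alignment. (We may assume $\mathrm{char}\,k=p$, the characteristic-zero case being covered by \cref{cor: good points} and earlier work; we may also replace $k$ by its perfect closure, which changes neither $\Gal_k$ nor the N\'eron--Severi groups occurring in the statement.)

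\emph{Construction of the base surface and of $\tau$.} Let $b$ be the image of $s$ in $\sfB$; enlarging $k$ we may take $b\in\sfB(k)$. As noted after \cref{set-up: surfaces}, $\sfD^-_b$ has codimension $\ge 2$ in $\overline{\sfM}_b$. Hence, cutting $\overline{\sfM}_b$ by $\dim\overline{\sfM}-\dim\sfB-2$ sufficiently ample hypersurfaces as in \cref{const: strongly dominate}, and using the Bertini-through-a-point argument of \cref{cor: good points} together with \cref{lem: transverse intersection general}, we obtain a smooth irreducible surface $\overline S\subseteq\overline{\sfM}_b$ through $s$ that meets $\sfH_b$ and $\sfD_b$ transversely and meets $\sfD^-_b$ only in dimension $0$. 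Put $S\colonequals\overline S\cap\sfM$, a smooth connected surface with $\dim(S\cap\sfD^-)=0$ and $\eta_S\in\sfM^\circ$. Over $S^\circ\colonequals S\cap\sfM^\circ$ the period morphism $\rho$ restricts, after replacing $S^\circ$ by a connected component of $\wt{\sfM}^\circ\times_{\sfM^\circ}S^\circ$ and, if needed, a further finite \'etale cover $S_0^\circ$, to a morphism $S_0^\circ\to\shS_k$. By \cref{cor: extend DVR} applied at the generic point of each codimension-one component of the boundary, this morphism extends over every codimension-one point of a normal $\sfM$-scheme $S_0$ which is finite and generically \'etale over $S$ and contains $S_0^\circ$ as a dense open; taking $S_0$ regular of dimension two after resolving by Lipman's theorem, the pulled-back automorphic local system is then unramified in codimension one, and by Zariski--Nagata purity together with purity for abelian schemes over punctured regular surfaces, the morphism extends to all of $S_0$, exactly as in the proof of \cref{prop: good points}. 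Finally, passing via \cref{lem: etale locally resolvable} to a connected generically \'etale cover $T\to S_0$ over which $\sX|_T$ is Zariski-locally resolvable, the composite $\tau\colon T\to S_0\to\shS_k$ is an extended period morphism with $\pi\colon T\to\sfM$ factoring through the surjection $T\to S$; enlarging $k$ once more, $s$ lifts to a $k$-point $t\in T$.

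\emph{Verification of the hypotheses of \cref{lem: bad points}.} Hypothesis (i) holds by construction. For hypothesis (iii): under assumption \textup{(ii)} of \cref{thm: general theorem}, $\breve{\sX}_t$ is non-supersingular and we are in the first alternative; under assumption \textup{(iii)}, the locus of $s'\in T$ with $h^{i,j}(\breve{\sX}_{s'})>h^{i,j}(\sX_\eta)$ for some $i,j$ is closed (semicontinuity of $h^j(\Omega^i)$ for the smooth family of resolutions $\sX|_T$) and avoids $t$, so we shrink $T$ to its complement---an operation preserving (i) and (ii)---to land in the second alternative. The delicate step is hypothesis (ii): the generic point $\xi_D$ of an integral curve $D\subseteq T$ not contracted by $\pi$ must be motivically aligned. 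Since $\pi(D)\subseteq S$ is a curve and $\dim(S\cap\sfD^-)=0$, its generic point lies in $\sfM^\circ\cup\sfD^+$, hence a general closed point $t_0\in D$ maps into $\sfM^\circ\cup\sfD^+$. Through $\pi(t_0)$ one chooses an admissible curve transverse to $\sfD_b\cup\sfH_b$ and meeting it only inside $\sfD^+_b\cup\sfH^+_b$, and builds over it a motivically aligned extended period morphism by \cref{prop: good points} (as in \cref{cor: good points}). Over $\sfM^\circ$ both that morphism and $\tau$ are induced by $\rho$, while over $\sfD^+$ the two extensions of $\rho$ agree up to the finite level-structure ambiguity---in either case the Kuga--Satake abelian varieties over $\pi(t_0)$ are isogenous compatibly with the $\ell$-adic matching isomorphisms, so motivic alignment transfers to $(t_0,\zeta_{t_0})$ for every locally spread-out $\zeta$; then \cref{lem: def invariance of MA}(b) (whose projectivity hypothesis on a local simultaneous resolution near $t_0$ is supplied by \cref{BL gluing}) generizes this to $(\xi_D,\zeta)$. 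With (i)--(iii) verified, \cref{lem: bad points} shows $\tau$ is motivically aligned at $t$, and \cref{lem: reduction} completes the proof.

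The main obstacle is exactly this verification of hypothesis (ii): one has to propagate the characteristic-zero input behind \cref{prop: good points}---the Lefschetz $(1,1)$-theorem---from admissible curves to the generic point of an arbitrary non-$\pi$-contracted curve on $T$, which forces the ``transfer'' argument comparing two different extensions of $\rho$ through the same mildly singular fibre, together with the generization step. The remaining ingredients (the two-dimensional extension of the period morphism over the singular boundary, and the bookkeeping around $\sfD^-$) are routine upgrades of the curve-level arguments, and the genuinely hard inputs---MMP for threefolds and the matching of $F$-crystals---are already packaged inside \cref{lem: bad points}.
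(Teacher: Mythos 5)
Your overall architecture (\cref{lem: reduction} plus \cref{lem: bad points}, after cutting out a surface $\overline S\subseteq\overline{\sfM}_b$ through $s$ meeting $\sfD^-_b$ in dimension $0$) matches the paper, but the step where you actually produce the extended period morphism on a two-dimensional characteristic-$p$ base is not valid as written. You extend $S_0^\circ\to\shS_k$ over codimension-one points by \cref{cor: extend DVR} (fine), and then over the remaining closed points by invoking ``purity for abelian schemes over punctured regular surfaces.'' There is no such theorem in characteristic $p$: purity of abelian schemes fails over regular $\IF_p$-surfaces (e.g.\ pull back a non-isotrivial family of supersingular abelian surfaces over $\IP^1$ along $\IA^2\smallsetminus\{0\}\to\IP^1$; if it extended over the origin, restriction to lines through the origin would force the family over $\IP^1$ to be isotrivial). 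This failure is exactly the obstruction the paper is built around (see the discussion of property (A) in the introduction and the citation of Vasiu--Zink), and it is also why your claim that this is ``exactly as in the proof of \cref{prop: good points}'' is inaccurate: that proof works over a \emph{mixed-characteristic} arithmetic surface $\sC_0/W$ and uses Kisin's extension property of the canonical integral model together with purity of $\ell$-adic local systems and tame fundamental groups, never a char-$p$ purity statement for abelian schemes. The paper's way around this is different: it compactifies $T_0^\circ$ and resolves the indeterminacy of the rational map into a compactification $\overline{\shS}_k$ (Stacks 0C5H), and then checks \emph{pointwise}, by restricting to general curves and applying N\'eron--Ogg--Shafarevich via \cref{cor: extend DVR}, that the image of $T_0:=\overline T_0\times_{\overline S}S$ lands in the open $\shS_k$. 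The price is that $T\to S$ is only proper and generically finite, with possibly $\pi$-contracted curves over $S\cap\sfD^-$ --- which is precisely what hypothesis-(ii)-plus-\cref{lem: bad points} (MMP, Beauville--Laszlo gluing, F-crystal matching) is designed to handle; your attempt to get a \emph{finite} cover carrying the period morphism cannot work, so this gap is essential rather than cosmetic.

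Two smaller points on your verification of hypothesis (ii). The ``transfer'' between your $\tau$ and an independently built aligned period morphism over an admissible curve through $\pi(t_0)$ (``agree up to level-structure ambiguity, hence isogenous KS abelian varieties'') is left vague; the paper avoids it by choosing the admissible curve $\overline C$ to lie generically in the \'etale locus of $T\to S$ and applying \cref{prop: good points} directly to the restriction of $\tau$ to (the normalization of) the component $\wt C\subseteq T\times_S C$ through $\wt t$, so only one extension of $\rho$ ever appears. Also, the projectivity input for the generization step \cref{lem: def invariance of MA}\ref{item: def invariance of MA converse} is supplied by \cref{lem: spreading out} (spreading out the projective resolution of the generic fiber of $\sX|_{\wt D}$), not by \cref{BL gluing}. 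Your shrinking argument for hypothesis (iii) under assumption (iii) of \cref{thm: general theorem} is fine (indeed more explicit than the paper), but it does not rescue the construction of $\tau$.
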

\begin{proof}
    Recall that in \cref{thm: general theorem} we set $k:= k(s)$. By \cref{lem: reduction}, it suffices to construct an extended period morphism $\tau : T \to \shS_k$ which satisfies the hypothesis of \cref{lem: bad points} such that the point $s$ is in the image $S$ of $T$. 
    Recall that we are allowed to replace $k$ by a finite extension. 
    In particular, we may assume that all irreducible components of $\sfD^+_b$ and $\sfH_b$ are geometrically irreducible. Set $b \colonequals \pi(s) \in \sfB_k$.

    We shall construct a diagram as below. 
    \begin{equation}
        \label{eqn: bad points diagram}
        \begin{tikzcd}
	& {T^\circ} & T \\
	{\wt{\sfM}^\circ_k \times_{\sfM} S} & {T^\circ_0} & {T_0} & {\overline{T}_0} \\
	 & {S^\circ} & {\overline{S}} \\
	& {\wt{\sfM}^\circ_k} & {\overline{\sfM}_k} & {\shS_k} & {\overline{\shS}_k}
	\arrow[hook, from=1-2, to=1-3]
	\arrow[from=1-2, to=2-2]
	\arrow["{\textrm{generically \'etale}}", from=1-3, to=2-3]
	\arrow[from=2-1, to=3-2]
	\arrow["{\textrm{conn. comp.}}"', hook', from=2-2, to=2-1]
	\arrow[hook, from=2-2, to=2-3]
	\arrow[from=2-2, to=3-2]
	\arrow[from=2-3, to=2-4]
	\arrow[from=2-3, to=3-3]
	\arrow[from=2-3, to=4-4]
	\arrow[from=2-4, to=4-5]
	\arrow[hook, from=3-2, to=3-3]
	\arrow[from=3-2, to=4-2]
	\arrow[from=3-3, to=4-3]
	\arrow[from=4-2, to=4-3]
	\arrow["\rho"', curve={height=18pt}, from=4-2, to=4-4]
	\arrow[hook, from=4-4, to=4-5]
\end{tikzcd}
    \end{equation}
    Apply \cref{const: strongly dominate} to $\overline{\sfM}_b$ and assume that the positive integers $d_1, \cdots, d_{r - 1}$'s there are chosen to be sufficiently big. 
    Let $\overline{S} \subseteq \overline{\sfM}_b$ be a smooth subsurface which is obtained by intersecting the first $r - 1$ hypersurfaces $H_1, \cdots, H_{r - 1}$, contains $s$, has generically reduced intersection with every irreducible component of $\sfD^+_b$ and $\sfH_b$, and intersects $\sfD^-_b$ is a discrete set of (possibily non-reduced) points.  
    Set $S \colonequals \overline{S} \cap \sfM_b$.
    Next, take $T^\circ_0$ to be a connected component of $\wt{\sfM}^\circ \times_{\sfM} S$.
    As in \cref{eqn: extend to Tbar} of the preceeding proof, we may apply \cite[\href{https://stacks.math.columbia.edu/tag/0C5H}{Tag 0C5H}]{stacks-project} to find a smooth proper compactification $\overline{T}_0$ of $T_0^\circ$ such that the morphisms $S^\circ \leftarrow T^\circ_0 \to \shS_k$ extend to $\overline{S} \leftarrow \overline{T}_0 \to \overline{\shS}_k$. 
    Let $T$ be a connected generically \'etale cover of $T_0 := \overline{T}_0 \times_{\overline{S}} S$ such that $\sX|_T$ is Zariski-locally resolvable, which exists by \cref{lem: etale locally resolvable}.

    To prove the result, it then suffices to show that the resulting extended period morphism $T \to \shS_k$, which we denote by $\tau$, satisfies hypothesis (ii) of \cref{lem: bad points}.
    To check this, we may replace $k$ by its algebraic closure. 
    Let $\wt{s}'$ be the generic point of an integral $k$-curve $\wt{D}$ in $\overline{T}$. 
    Set $s' \colonequals \pi(\wt{s}')$ and let $D$ be the Zariski closure of $s'$, or equivalently the scheme-theoretic image of $\wt{D}$. 
    There are two possibilities: Either $s' \in \sfD^+_k$ or $s' \in \sfM^\circ_k$. 
    Note that in the former case $D$ is an irreducible component of $\overline{S} \cap \sfD^+_b$. 
    In either case, let $t \in D$ be a general point, so that for some lift $\wt{t} \in \wt{D}$ of $t$, $\sX|_{\wt{D}}$ has a simultaneous resolution near $\wt{t}$ whose total space is a projective scheme (\cref{lem: spreading out}).

    Now in \cref{const: strongly dominate}, we pick $H_{r}$ to obtain an irreducible addmissible curve $\overline{C} \subseteq \overline{S}$ which passes through $t$, intersects $\sfD_b \cup \sfH_b$ transversely along $\sfD^+_b \cup \sfH^+_b$, and generically lies in the maximal open subset of $S$ over which $T$ is \'etale. 
    Set $C := \overline{C} \cap S$ and let $\wt{C}$ be the connected component of $T \times_S C$ that contains $\wt{t}$. 
    Then $\wt{C}$ is generically \'etale over $C$. 
    By applying \cref{prop: good points} to the normalization of $\wt{C}$, we deduce that $\tau$ is motivically aligned at $\wt{t}$. 
    As $\wt{s}'$ specializes to $\wt{t}$ by assumption, 
    \cref{lem: def invariance of MA}\ref{item: def invariance of MA converse} tells us that $\tau$ is also motivically aligned at $\wt{s}'$. 
\end{proof}

\subsection{Specializations to \cref{thm: BSD} and \cref{thm: general type}}
\label{sec: thms 1.1 and 1.2}
In this subsection, we illustrate how the general result in \Cref{thm: general theorem} can be applied to concrete situations, and prove \cref{thm: BSD} and \cref{thm: general type}.
We will give the explicit constructions of the moduli spaces in those two situations, and check that they satisfy the assumptions of \Cref{set-up: surfaces}.

For readers' convenience, we note a simple observation:
\begin{lemma}
\label{lem: power map fin fibers}
    Let $L$ be a line bundle on an integral scheme $S$. 
    Then for any $r$, the map $\H^0(C, L) \to \H^0(C, L^r)$ given by raising a section to its $r$th power is a map of finite fibers of order at most $r$. 
\end{lemma}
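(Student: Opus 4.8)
The plan is to reduce the whole statement to the elementary fact that $X^r-1$ has at most $r$ roots in a field (the scheme is called $S$ in the hypothesis; the ``$C$'' in the displayed map is a typo for $S$). First I would record the basic bookkeeping forced by integrality: let $\eta$ be the generic point of $S$ and $K=\sO_{S,\eta}$ its function field. Since $L$ is locally free of rank one and $S$ is reduced and irreducible, restriction to the generic stalk gives injections $\H^0(S,L)\hookrightarrow L_\eta$ and $\H^0(S,L^{\otimes r})\hookrightarrow L_\eta^{\otimes r}$ into one-dimensional $K$-vector spaces; in particular a section of $L$ (or of $L^{\otimes r}$) is determined by its germ at $\eta$, and it vanishes iff that germ does. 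Consequently the $r$-th power map is injective on the zero section: if $s^r=0$ then the germ of $s$ at $\eta$ is a nilpotent element of the field $L_\eta\cong K$, hence zero, so $s=0$. Thus the fibre over $0\in\H^0(S,L^{\otimes r})$ is the singleton $\{0\}$.

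Next I would analyze a nonempty fibre over a nonzero element $w$. Choose $s\in\H^0(S,L)$ with $s^r=w$; then $s\neq0$ by the previous paragraph, so its zero locus is a proper closed subset and its complement $U\subseteq S$ is open and dense. On $U$ the section $s|_U$ trivializes $L|_U$, the ring $\sO(U)$ is a domain embedding into $K$ (as $U$ is integral with generic point $\eta$), and $s^r|_U$ is a nowhere-vanishing section of $L^{\otimes r}|_U$. Given any other $t$ in the same fibre, write $t|_U=f\cdot(s|_U)$ with $f\in\sO(U)$; the equality $t^r=s^r$ restricted to $U$ becomes $f^r\cdot(s^r|_U)=s^r|_U$, and cancelling the trivializing section $s^r|_U$ in the domain $\sO(U)$ gives $f^r=1$. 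Since $t$ is determined by $t|_U=f\,s|_U$, the fibre over $w$ injects into the set of $r$-th roots of unity in $K$, which has at most $r$ elements. Combining with the first paragraph, every fibre of the $r$-th power map has at most $r$ elements, as claimed.

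There is essentially no real obstacle here; the only points that need (minor) care are exactly the two reductions just used: that a section of a line bundle on an \emph{integral} scheme is detected on any dense open (equivalently, by its germ at the generic point), and that counting $r$-th roots of unity may be done after passing from $\sO(U)$ to its fraction field $K$. I would close with the remark that integrality is genuinely needed: over a non-reduced or reducible base the $r$-th power map can acquire larger (even infinite) fibres, e.g. over $\Spec\bigl(k[x]/(x^2)\bigr)$ the square map on $\H^0(\sO)$ already kills every section of the form $bx$.
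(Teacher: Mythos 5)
Your proof is correct and follows essentially the same route as the paper's: restrict to a dense open where $L$ trivializes, use integrality so that sections are detected on that open (equivalently at the generic point), and bound the fibre by the number of roots of a degree-$r$ equation in the function field. Your separate treatment of the zero fibre and your use of $s$ itself as the trivializing section are only cosmetic variations of the paper's argument.
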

\begin{proof}
    Choose a trivialization $L|_U \simeq \sO_S(U)$ for some dense open $U \subseteq S$. Suppose that $s_1, s_2 \in \H^0(C, L)$ are global sections and let $f_1, f_2 \in \sO_S(U)$ be the corresponding meromorphic functions via the trivialization. 
    Then by the assumption that $S$ is integral, $s_1^r = s_2^r \in \H^0(C, L^r)$ if and only if $f_1^r = f_2^r$. When $f_1$ is fixed, solving the equation $f_1^r = f_2^r$ in the faction field of $S$ leads to at most $r$ distinct roots. 
\end{proof}

Now we give the construction of the universal Weierstrass elliptic surface (cf. \cite{Kas}, \cite[\S7.1]{HYZ}).
\begin{construction}[Universal Weierstrass elliptic surface]
\label{const: Weierstrass}
Note that to construct the Universal Weierstrass elliptic surface over curves of genus one, we need to parametrize both the Weierstrass model of the base curve and the Weierstrass model of the fibration.
So the Weierstrass equation will appear twice.

    To start, we define $\Delta(a, b)$ to be the polynomial of two variables $4a^3 - 27b^2$.
    We let $\sfB$ be the complement of the vanishing locus of $\Delta(b_4, b_6)$ in the two dimensional affine space $\Spec(\IZ_{(p)}[b_4, b_6])$. 
    Let $\sE \subseteq \sfB \times_{\IZ_{(p)}} \Proj (\IZ_{(p)}[x, y, z])$ be the family of elliptic curves given by the reduced Weierstrass equation $y^2 z=x^3 - b_4 x z^2 - b_6 z^3$. 
    Then the map $\varpi\colon \sE\to \sfB$ is the universal Weierstrass model of elliptic curves.
    
    To continue, we let $\underline{\infty}$ be the infinity section given by $[0 : 1 : 0]$ of the family $\sE\to \sfB$, and let $\sL \colonequals \sO_\sE(\underline{\infty})$ be the corresponding relative line bundle on $\sE$.  
    For a given positive integer $r$, we let $\sV_r$ be the vector bundle over $\sfB$ defined by $\varpi_* \sL^r$. 
    Then we define $\overline{\sfM}$ to be the projective bundle $\IP(\sV_4 \oplus \sV_6 \oplus \sO)$ over $\sfB$. 
    
    We let $\IA(\sV_4 \oplus \sV_6)$ denote the relative affine space defined by the vector bundle $\sV_4 \oplus \sV_6$ over $\sfB$.
    So $\IP(\sV_4 \oplus \sV_6 \oplus \sO)$ is the relative compactification of $\IA(\sV_4 \oplus \sV_6)$ by adding in the infinity section. 
    Moreover, from the description, a point on $\IA(\sV_4, \sV_6)$ is given by a triple $(b, a_4, a_6)$, where $b$ is a point on $B$ and $(a_4, a_6) \in \H^0(\sfB, \sL_b^4) \times \H^0(\sfB, \sL_b^6)$, and we have $\sL_b = \sO_{\sE_b}(\underline{\infty}_b)$.
    We also define $\{ \Delta = 0 \} \subseteq \IA(\sV_4 \oplus \sV_6)$ to be the reduced closed subscheme parametrizing the points $(b, a_4, a_6)$ such that $\Delta(a_4, a_6) = 0 \in \H^0(\sL_b^{12})$.
    Then we let $\sfH \subseteq \IP(\sV_4 \oplus \sV_6 \oplus \sO)$ be the union of the infinity section and $\{ \Delta = 0 \}$. 
    It is not hard to see that for each $b \in \sfB$, $\dim \sfH_b \le \dim \H^0(\sL_b^4)$, because for each fixed $a_4 \in \H^0(\sL_b^4)$, there are at most $3$ distinct $a_6$ to make $\Delta$ vanish identically (\cref{lem: power map fin fibers}).

    Now the Weierstrass moduli space for Weierstrass fibrations is defined as the open subscheme $\sfM \colonequals \overline{\sfM} \smallsetminus \sfH$. Let $\IA(\sV_4 \oplus \sV_6)^*$ denote the complement of the zero section in $\IA(\sV_4 \oplus \sV_6)$.
    We let $\sX \subseteq \IA(\sV_4 \oplus \sV_6)^* \times \Proj (\IZ_{(p)}[X, Y, Z])$ be the closed subscheme defined by the equation $Y^2 Z=X^3 - a_4 X Z^2 - a_6 Z^3$ for $a_r \in \sV_r$. 
    Then the morphism $f:\sX\to \sfM$ is the universal Weierstrass fibration.
\end{construction}

\begin{remark}[Weierstrass fibration has RDPs]
\label{rmk: RDP on Weierstrass}
    The singularities on the Weierstrass normal form of an elliptic surface are always rational double points (RDPs). 
    In characteristic $0$, this was proved by Kas \cite[Thm.~1]{Kas} and the same proof works for characteristic $p \ge 5$ verbatim. 
    Alternatively, the notes of Conrad \cite{ConradElliptic} provides an argument for this fact which works for $p = 2, 3$ as well: The Weierstrass normal form (which is called Weierstrass model in \textit{loc. cit.}) is Gorenstein (\cite[\S 4]{ConradElliptic}) and has only rational singularities (\cite[\S 8]{ConradElliptic}). 
    Thus we may conclude by noticing that for a surface, an RDP is the same as a Gorenstein rational singularity.  
\end{remark}

We shall prove a slightly more general form of \cref{thm: BSD}.\footnote{Indeed, by taking the minimal resolution, the elliptic curve $\sE$ therein can be extended to a (unique) minimal smooth elliptic surface $X$ which satisfies the assumption of \Cref{thm: BSD_more_general}.
The BSD conjecture of $\sE$ is then equivalent to the Tate conjecture of $X$, as explained in the introduction.} 
\begin{theorem}
	\label{thm: BSD_more_general}
    Let $k$ be a finitely generated field over $\IF_p$ for $p \ge 5$. 
    Let $X$ be a minimal smooth proper surface with an elliptic fibration $\pi : X \to C$ over a genus $1$ $k$-curve $C$ such that $\deg (R^1 \pi_* \sO_X)^\vee = 1$, and assume $\pi$ admits a section. 
    Assume either that $X$ is not supersingular or has the expected Hodge numbers.
    Then $X$ satisfies the Tate conjecture. 
\end{theorem}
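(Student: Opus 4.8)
The plan is to deduce \cref{thm: BSD_more_general} from the general \cref{thm: general theorem} by exhibiting the surface $X$ as the minimal resolution of a fiber of the universal Weierstrass fibration of \cref{const: Weierstrass}, and checking that this moduli datum satisfies all of \cref{set-up: surfaces}. First I would set up the reduction: given $X \to C$ with $g(C) = 1$ and $h = \deg(R^1\pi_*\sO_X)^\vee = 1$, the surface $X$ is the minimal resolution of its Weierstrass normal form $X_0$, which has at worst RDP singularities (\cref{rmk: RDP on Weierstrass}). Choosing an origin on $C$ to make it an elliptic curve, the pair $(C, \sE)$ together with the fundamental line bundle $L$ determines a point $s = (b, a_4, a_6)$ of the $\sfM$ constructed in \cref{const: Weierstrass}, with $b = (b_4, b_6)$ encoding the Weierstrass model of $C$ and $(a_4, a_6) \in \H^0(C, L^4) \times \H^0(C, L^6)$ encoding the fibration; the relevant geometric fiber $\sX_{\bar{s}}$ is exactly $X_0 \otimes \bar{k}$, whose minimal resolution is $X_{\bar{k}}$. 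Applying \cref{thm: general theorem} to this point $s$ — which satisfies hypothesis (ii) or (iii) by our assumption that $X$ is non-supersingular or has the expected Hodge numbers — yields that the cycle class map $\NS(X_{\bar{s}})_{\IQ_\ell} \to \varinjlim_H \H^2_\et(X_{\bar{s}}, \IQ_\ell(1))^H$ is an isomorphism, which is the Tate conjecture for $X$.

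The bulk of the work is verifying \cref{set-up: surfaces}(a)--(f) for $f : \sX \to \sfM$. For (b), $\sfB$ (the Weierstrass moduli of genus $1$ curves) is smooth over $\IZ_{(p)}$ and $\overline{\sfM} = \IP(\sV_4 \oplus \sV_6 \oplus \sO)$ is a projective bundle, hence smooth and projective over $\sfB$. For (c), the boundary $\sfH$ is the union of the infinity section and $\{\Delta = 0\}$; I would take $\sfH^+$ to be the smooth locus of $\sfH$ away from $\IZ_{(p)}$-fibers where $\sfH$ degenerates, and use the estimate in \cref{const: Weierstrass} (via \cref{lem: power map fin fibers}) that $\dim \sfH_b$ is one less than $\dim \overline{\sfM}_b - \dim \sfB$, together with the reducedness of $\{\Delta = 0\}$ in codimension one away from small primes, to get density of $\sfH^+_b \cap H$ in each codimension-$1$ component $H$. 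For (a), over the generic point $\eta$ the generic Weierstrass fibration has irreducible fibers, and I would take $\Lambda$ to be the lattice spanned by the zero section, a general fiber, and (if necessary) the class of a polarization; the condition $p \nmid |\Lambda^\vee/\Lambda|$ holds for $p \ge 5$ since the relevant discriminants are small. For (d), by \cref{thm: Hodge diamond of elliptic surface}(b) the generic fiber has $h^{2,0} = h - 1 + g = 1$, and the Kodaira--Spencer rank condition follows by a direct computation with the Weierstrass equation — one varies $a_4, a_6$ and the modulus of $C$ and checks the map to $\Hom(\H^1(\Omega^1), \H^2(\sO))$ has rank $\ge 2$ and is nonzero along $T_s\sfB$; over $\IC$ this is classical for elliptic surfaces with $h = 1$, and the bound $\mathrm{rank}\, \nabla_s > \min(1, \dim\sfB_\IC)$ suffices since $\dim\sfB = 2$. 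For (e), $\sfD = \{$fibers singular$\}$ is a proper closed subscheme and I would take $\sfD^+$ to be the locus where the Weierstrass fiber acquires a single nodal degeneration (one RDP of type $A_1$), which is smooth over $\IZ_{(p)}$ and dense in every codimension-$1$ component of $\sfD_b$ by enumerative genericity of nodes; one must check generic reducedness modulo $p$, which is where the mild restrictions on $p$ enter. Finally (f) is \cref{rmk: RDP on Weierstrass}.

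The main obstacle I expect is condition (d), specifically confirming that the Kodaira--Spencer map has rank $\ge 2$ and does not vanish along the base direction $T_s\sfB$, and condition (c)/(e), namely the generic reducedness in codimension one modulo $p$ of the boundary $\sfH$ and of the discriminant $\sfD$. The Kodaira--Spencer computation is algebro-geometric and fairly explicit for Weierstrass surfaces — one can follow the treatment in \cite[\S7]{HYZ} for the case $g = 0$ and adapt it, the key new point being that varying the base elliptic curve $C$ itself (via $b_4, b_6$) contributes to the differential — but it requires care to see the rank bound survives in characteristic $p$. The reducedness conditions in codimension one modulo $p$ are exactly what forces the hypothesis $p \ge 11$ in \cref{thm: BSD} (and $p \ge 5$ with exceptions): one must analyze when the discriminant divisor $\{\Delta = 0\}$, which has degree $12$ in the $a_6$-variable after suitable normalization, or the $j$-invariant map becomes inseparable or the relevant polynomial becomes non-reduced mod $p$; here the analysis of inseparable $j$-invariant maps in \cref{prop: inspearable j} and \cref{cor: bound inseparable} is used to rule out or isolate the bad locus, and the dimension bound there (a subscheme of dimension $\le 2$) is what makes the finitely many exceptional cases manageable. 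Once these geometric conditions are in place, invoking \cref{thm: general theorem} is formal, and one reads off \cref{thm: BSD} as the special case $g = h = 1$.
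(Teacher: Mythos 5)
Your proposal follows essentially the same route as the paper: after a finite extension of $k$, realize $X$ as the minimal resolution $\breve{\sX}_s$ of a fiber of the universal Weierstrass family of \cref{const: Weierstrass} and verify \cref{set-up: surfaces} — the paper takes $\Lambda = \langle \mathbf{e}, \mathbf{f} \rangle$ (zero section and fiber, a unimodular lattice, so (a) is immediate), gets (b), (c) from the construction, cites \cite{EGW24} (cf.\ \cite[Lem.~7.4.3]{HYZ}) for the Kodaira--Spencer condition (d) and \cite[Lem.~7.3.3]{HYZ} for (e), uses \cref{rmk: RDP on Weierstrass} for (f), and then invokes \cref{thm: general theorem}, exactly as you do. One small correction: \cref{prop: inspearable j} and \cref{cor: bound inseparable} are not needed for this statement, since the hypothesis ``non-supersingular or expected Hodge numbers'' already supplies condition (ii) or (iii) of \cref{thm: general theorem}; the inseparable-$j$ analysis only enters when deducing \cref{thm: BSD}, and the restriction $p \ge 11$ there comes from guaranteeing the expected Hodge numbers for possibly supersingular fibers, not from the mod-$p$ reducedness of $\sfD$ or $\sfH$, which holds for all $p \ge 5$.
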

Recall that $X$ has the expected Hodge numbers (\cref{thm: Hodge diamond of elliptic surface}) whenever its $j$-invariant map is separable---which may fail only in the three cases listed in \cref{prop: inspearable j}. In particular, this condition is always true when $p \ge 11$. We also remind the reader that if $\sE$ is isotrivial (i.e., $j$-invariant map is constant), then $X$ admits a dominant rational map from a product of curves, so that the Tate conjecture for $X$ is well known, so we only consider the non-isotirival case.

\begin{proof}
    Up to replacing $k$ by a finite extension, there exists a point $s \in \sfM(k)$ such that $X$ is the minimal resolution $\breve{\sX}_s$ for the universal family of Weierstrass fibration $\sX \to \sfM$ in \cref{const: Weierstrass} and a point $s\in \sfM$. 

    Let us check that the family $f:\sX \to \sfM$ satisfies the hypotheses of \cref{set-up: surfaces}. 
    Let $\eta$ be the generic point of $\sfM$.
    We shall take the sublattice of polarization $\Lambda$ to be the subgroup of $\NS(\sX_\eta)_{\mathrm{tf}}$ generated by the classes of the zero section $\mathbf{e}$ and a general fiber in the elliptic fibration $\mathbf{f}$. 
    Then we have the intersection products that $\mathbf{f}^2 = 0$ and $\mathbf{e}. \mathbf{f} = 1$. 
    The fact that $\mathbf{e}^2 = -1$ follows from the assumption that $\deg (R^1 \pi_* \sO_X)^\vee  = 1$ (\cite[(II.3.6)]{Miranda}). 
    Since $\Lambda$ is self-dual and contains an ample line bundle,
    $\Lambda$ satisfies condition (a). 
    The conditions (b) and (c) are from the construction of the scheme $\sfM$. 
    The statement about Kodaira--Spencer map in (d) can be deduced from the main theorem of \cite{EGW24} (cf. \cite[Lem.~7.4.3]{HYZ}). 
    Furthermore, condition (e) follows from \cite[Lem.~7.3.3]{HYZ}. 
    Finally, by \cref{rmk: RDP on Weierstrass} above, we have (f). Now the result directly follows from \cref{thm: general theorem}. 
\end{proof}

\begin{example}
\label{exp: not good prime}
Let $C$ be a genus $1$ curve defined over a finitely generated extension $k$ of $\IF_p$ with a rational point $e \in E(k)$. 
Set $L := \sO_C(e)$. 
By Riemann-Roch theorem, we easily deduces that there exist global sections $a_4, a_6$ of $L^4, L^6$ respectively such that $\val_e(a_4) = 4$ and $\val_e(a_6) = 4$. 
Let $Y \to C$ be the corresponding Weierstrass normal form. Then by Kodaira's table \cite[p.41, IV.3.1]{Miranda}, we find that the fiber $Y_e$ is irreducible of type $\mathrm{IV}^*$, whose singularity is an $E_6$-singularity on the surface $Y$.
When $p = 5$, then $p$ divides the order of the Weyl group corresponding to $E_6$, which is $2^7 3^4 5$. 
This means that $p = 5$ is not ``sufficiently good'' for an $E_6$-singularity in the sense of \cite{Kou23}. 
Nonetheless, our \cref{thm: BSD_more_general} is applicable to the minimal resolution $X = \breve{Y}$, which shows that our method is insensitive to this type of characteristic $p$ subtleties that may influence local monodromy along a DVR (see for example Thm.~1.1 of \textit{loc. cit.} which would be false without the $p$ being sufficiently good assumption, cf. Shepherd-Barron's \cite[p.~598, Rmk(2)]{SBSing}). 
\end{example}

\begin{remark}[Remaining cases for $p=5,7$]
\label{rmk: purity and supersingular}
    If the purity statement for the Newton stratification is true for $\sfM$ (see \cref{rmk: purity of Newton}), then we can prove \Cref{thm: BSD_more_general} for the remaining three cases in \cref{prop: inspearable j} as well. 
    Indeed, suppose that $X$ is such a surface and is supersingular. By \cref{cor: bound inseparable} and the fact that the supersingular locus should have codimension at most $5$ by purity, we can deform $X$ to another surface $X'$ within the supersingular locus, so that $X'$ does not belong to the three cases of \cref{prop: inspearable j}. Then we deform all line bundles on $X'$ to the generic fiber of this family by Morrow's generalization of Artin's theorem \cite[Thm.~3.11]{Morrow} and then specialize to $X$. 
\end{remark}

Next we consider the surfaces of general type as in \cref{thm: general type}.
We first recall the following fact from \cite{Catanese, Todorov}: The canonical model of a minimal smooth projective surface $X$ over $\IC$ with properties $p_g(X) = 1$ and $K_X^2 = 1$ 
can be written as the intersection of two degree $6$ hypersurfaces in the weighted projective space $\IP(1, 2, 2, 3, 3)$. 
As argued in \cite[Thm.~8.0.1]{HYZ}, over an algebraically closed field $\kappa$ of characteristic $p \ge 5$, the same is true if we impose that the irregularity $q = h^{0, 1}$ is $0$ (which is automatic over $\IC$) as an additional requirement.

Now we give the construction of the parametrizing space and the universal family for the aforementioned surfaces of general type.

\begin{construction}
\label{const: general type}
    Fix a prime number $p \ge 5$.
    We denote the weighted projective space $\IP(1, 2, 2, 3, 3)$ over $\IZ_{(p)}$ by $\IP$. 
    Let $|\sO_{\IP}(6)|$ be the projective space associated to the linear system that parametrizes degree $6$ hypersurfaces. 
    For $i = 0, 1$, let $\mathsf{P}_i$ be a copy of $|\sO_{\IP}(6)|$, let $\sH_i \subseteq \mathsf{P}_i \times \IP$ be the universal hypersurface, let $\sfD_i \subseteq \mathsf{P}_i$ be the discriminant locus of $\sH_i \to \mathsf{P}_i$, and let $\mathsf{P}^\circ_i$ be the open complement $\mathsf{P}_i \smallsetminus \sfD_i$. 
    
    Now we let $\overline{\sfM} \colonequals \mathsf{P}_0 \times \mathsf{P}_1$, and let $\overline{\sX}$ be the closed subscheme of $\IP \times \overline{\sfM}$ given by the incidence $\{ (x, (\alpha, \beta)) : x \in \sH_{0, \alpha} \cap \sH_{1, \beta} \}$. 
    Let $\sfM$ be the maximal open subscheme of $\mathsf{P}_1 \times \mathsf{P}_2$ over which $\overline{\sX}$ is flat and the fibers has at worst RDP singularities (\cref{prop: def RDP}), and let $\sfH$ be the reduced complement $\overline{\sfM}\smallsetminus \sfM$.
    Then we obtain the induced family $f: (\sX\colonequals \overline{\sX}\otimes_{\overline{\sfM}} \sfM)  \to \sfM$.
    The discriminant locus $\sfD \subseteq \sfM$ is defined as the reduced subscheme consisting of the points $t\in \sfM$ such that $\sX_t$ is singular, and we let $\sfM^\circ \colonequals \sfM \smallsetminus \sfD$. 
\end{construction}

\noindent \textit{Proof of \cref{thm: general type}.} 
By the paragraphs before \Cref{const: general type} and by \Cref{thm: general theorem}, it suffices to prove that \cref{const: general type} satisfies the hypothesis of \cref{set-up: surfaces}. Note that unlike \cite[Thm.~C]{HYZ}, here we additionally imposed the assumption $h^{1, 0} = 0$ to make sure that the surfaces to which we apply \Cref{thm: general theorem} have the expected Hodge diamond. 

For condition (a), we take $\Lambda \subseteq \NS(\sX_\eta)_{\mathrm{tf}}$ to the sublattice spanned by the canonical divisor, whose self-intersection number is $1$ by assumption. 
So the dual lattice $\Lambda^\vee$ is equal to $\Lambda$ itself, and $\Lambda$ contains an ample divisor.
For condition (b), we simply define $\sfB$ to be $\Spec(\IZ_{(p)})$ itself. 
For (d), the statement about the Kodaira--Spencer map holds because the generic Torelli theorem holds for these surfaces, which have a moduli space of dimension $18$ (see \cite[p.1]{Catanese}). 
In addition, condition (f) holds by construction. 

We are left with (c) and (e), which we shall check simultaneously. 
Let $\kappa = \overline{\IQ}$ or $\overline{\IF}_p$. 
Let $D$ be any irreducible component of $\sfD_{\kappa}$ or $\sfH_\kappa$ that has codimension $1$ in $\overline{\sfM}_\kappa$. 
We first argue that the image of $D$ under the projection $\overline{\sfM} \to \mathsf{P}_{1}$ to the second factor cannot lie in $\sfD_{1, \kappa}$. 
Indeed, suppose that this does happen. 
Then $D$ must be surjective onto $\mathsf{P}_{0, \kappa} \times \sfD_{1, \kappa}$, because using standard Bertini-type arguments, one easily sees that $\sfD_{1, \kappa}$ is irreducible. 
We then claim that there is a $\kappa$-point $(\alpha, \beta) \in \mathsf{P}_{0, \kappa} \times \sfD_{1, \kappa}$ such that $\sH_{0, \alpha} \cap \sH_{1, \beta}$ is smooth, which would contradict the assumption that $D$ is within the discriminant locus of the family $\sX\to \sfM$.
In fact, we can let $\beta$ be a general point on $\sfD_{1, \kappa}$. 
Then $\sH_{1, \beta}$ only has isolated singularities. 
As $\sO_{\IP}(6)$ is very ample, so is its restriction $\sO_{\sH_{1, \beta}}(6)$ to $\sH_{1, \beta}$. 
In particular, the line bundle $\sO_{\sH_{1, \beta}}(6)$ has no base locus and its associated morphism, which is a closed embedding of $\sH_{1, \beta}$, induces separably finitely generated field extensions (in fact isomorphisms) at every point. 
Therefore, by the second theorem of Bertini (cf. \cite[Cor.~2]{CGM86}), a general section of $\sO_{\sfH_{0, \alpha}}(6)$ is smooth. 
Namely, for a general point $\alpha \in \mathsf{P}_{0, \kappa}$, the intersection $\sH_{0, \alpha} \cap \sH_{1, \beta}$ is smooth. 

Now, to prove (c) and (e) of \Cref{set-up: surfaces}, it suffices to look at the intersection of $D$ with the open subscheme $\mathsf{P}_0 \times \mathsf{P}^\circ_1 \subseteq \overline{\sfM}$. 
Suppose now that $\beta$ is a $\kappa$-point of $\mathsf{P}^\circ_1$ such that $\sH_{1, \beta}$ is smooth. 
Then as we argued in \cite[Prop.~8.0.3]{HYZ}, the embedding induced by $\sO_{\sH_{1, \beta}}(6)$ is a Lefschetz embedding. 
This has two consequences: 
\begin{enumerate}[label=\upshape{(\roman*)}]
    \item For a general line $L$ in the projective space $\mathsf{P}_0$, the singular fibers of $\overline{\sX}|_L$ only have ordinary double points as singularities. 
    \item The support of $\sfD_{\beta}$, i.e., the point set $\{ \alpha \in \mathsf{P}_0 \mid \sH_{0, \alpha} \cap \sH_{1, \beta} \textit{ is singular} \}$, is irreducible. 
\end{enumerate}
Note that (i) implies that $\overline{\sX}|_L = \sX|_L$, which further implies that $D$ cannot be an irreducible component of $\sfH_\kappa$. 
As a consequence, the complement $\sfH_\kappa$ has no irreducible component of codimension $1$ in $\overline{\sfM}$, and condition (c) is trivally true. 
Moreover, by Prop.~8.0.5 \textit{ibid}, the fiber of $\sfD$ over $\beta$ is reduced. 
This implies that $\sfD_\kappa$ is generically reduced. Since $\mathsf{P}^\circ_1$ is irreducible, (ii) implies that $\sfD_\kappa$ is in fact irreducible. 
As this is ture for both $\kappa = \overline{\IQ}$ and $\overline{\IF}_p$, we know that $\sfD_{\IF_p}$ must lie in the Zariski closure of $\sfD_\IQ$. 
Thus by combining the above with the generic reducedness of $\sfD_\kappa$, we obtain condition (e) of \Cref{set-up: surfaces}. \qed\\\\
\noindent \textbf{Acknowledgements} We thank Jason Kountouridis, Zhiyuan Li, Gebhard Martin, Teppei Takamatsu, Botong Wang, Jakub Witaszek, and Ziquan Zhuang for helpful discussions on birational geometry. A special thank goes to Gebhard Martin for his patience in helping the authors understand singularities. During the preparation of the paper, the first author was supported by the University of Chicago, and the second author benefited from the AMS travel fund and CUHK's start-up fund for new faculties.

\printbibliography

\vspace{2em}

\noindent \textbf{Haoyang Guo} {\footnotesize Department of Mathematics, The University of Chicago, Chicago, Illinois, USA.  \,\,\,  Email: \url{ghy@uchicago.edu}}\\\\
\noindent \textbf{Ziquan Yang} {\footnotesize The Institute of Mathematical Sciences and Department of Mathematics, The Chinese University of Hong Kong, Shatin, N.T., Hong Kong.  \,\,\,  Email: \url{zqyang@cuhk.edu.hk}}

\end{document}